\documentclass[11pt]{amsart}
\usepackage{geometry}                
\geometry{letterpaper}                   
\usepackage{graphicx}
\usepackage{amssymb}
\usepackage{epstopdf}
\DeclareGraphicsRule{.tif}{png}{.png}{`convert #1 `dirname #1`/`basename #1 .tif`.png}

\newtheorem{theorem}{Theorem}[section]
\newtheorem{lemma}{Lemma}[section]
\newtheorem{corollary}{Corollary}[section]
\newtheorem{proposition}{Proposition}[section]
\newtheorem{conjecture}{Conjecture}[section]
\newtheorem{definition}{Definition}[section]
\newtheorem{example}{Example}[section]
\newtheorem{remark}{Remark}[section]

\numberwithin{equation}{section}

\def\Z{\mathbb Z}

\def\R{\mathbb R}
\def\P{\mathbb P}
\def\C{\mathbb C}

\def\D{\Delta}
\def\Om{\Omega}

\def\om{\omega}
\def\d{\partial}
\def\s{\sigma}
\def\e{\epsilon}
\def\a{\alpha}
\def\b{\beta}
\def\g{\gamma}

\def\sI{{\mathsf I}}
\def\sM{{\mathsf M}}
\def\sR{{\mathsf R}}

\def\bfc{{\mathbf c}}
\def\bfc\Theta{\mathcal c\Theta}
\def\cP{\mathcal P}
\def\bfOm{{\mathbf \Omega}}
\def\paOm#1#2{{\mathbf \Omega}_{#1,\,|\sim|' #2}}

\title[Spaces of polynomials as Grassmanians for immersions \& embeddings]{Spaces of polynomials with constrained divisors as Grassmanians for immersions \& embeddings}
\author{Gabriel Katz}
\date{01/07/2022}                                           
\address{5 Bridle Path Circle, Framingham, MA 01701, U.S.A.}
\email {gabkatz@gmail.com}

\begin{document}
\maketitle 

\begin{abstract} Let $Y$ be a smooth compact $n$-manifold. We study smooth embeddings and immersions $\b: M \to \mathbb R \times Y$ of compact $n$-manifolds $M$ such that $\beta(M)$ avoids some a priory chosen closed poset $\Theta$ of {\sf tangent patterns} to the fibers of the obvious projection $\pi: \mathbb R \times Y \to Y$. Then, for a fixed $Y$, we introduce an equivalence relation between such $\beta$'s; it is a crossover between pseudo-isotopies and bordisms. We call this relation {\sf quasitopy}. In the study of quasitopies, the spaces $\mathcal P_d^{\mathbf c\Theta}$ of real univariate polynomials of degree $d$ with real divisors, whose combinatorial patterns avoid a given closed poset $\Theta$, play the classical role of Grassmanians.  We compute the quasitopy classes $\mathcal{QT}_d^{\mathsf{emb}}(Y, \mathbf c\Theta)$ of $\Theta$-constrained embeddings $\beta$ in terms of homotopy/homology theory of spaces $Y$ and $\mathcal P_d^{\mathbf c\Theta}$. We prove also that the quasitopies of emeddings stabilize, as $d \to \infty$.
\end{abstract}

\section{Introduction}
This paper is the first in a series, inspired by the pioneering works of Arnold \cite{Ar}, \cite{Ar1}, \cite{Ar2}, and Vassiliev \cite{V}. It is based heavily on computations from \cite{KSW1}, \cite{KSW2}. In this introduction, we will describe our results informally, in a manner that clarifies their nature, but does not involve their most general forms, which carry the burden of combinatorial decorations. 

We study immersions and embeddings $\b: M \to \R \times Y$ of compact smooth $n$-manifolds $M$ into products $\R \times Y$, where $Y$ is a given compact smooth $n$-manifold. We denote by $\mathcal L$ the oriented $1$-dimensional foliation on $\R \times Y$, formed by the fibers of the obvious projection $\pi: \R \times Y \to Y$. We are interested in the ways $\b(M)$ may be tangent to the fibers of $\mathcal L$. More precisely, we impose restrictions $\Theta$ on the combinatorial patterns $\om$ of such tangencies and organize $\{\b: M \to \R \times Y\}$ that do not violate $\Theta$ into a set $\mathcal{QT}(Y, \mathbf c\Theta)$ of equivalence classes; we call equivalent $\b$'s {\sf quasitopic}. The quasitopies is a hibrid between classical notions of pseudo-isotopies and bordisms (see Fig. 1). In the paper, we deal with various brands of quasitopies, richly decorated by combinatorial data.  In this introduction, to simplify the description of our results, we omit the decorations. In particular, we may insist that the cardinality of the fiber of $\pi \circ \b$ is less than or equal to a given natural number $d$. In such a case, we are dealing with the quasitopies  $\mathcal{QT}_d(Y, \mathbf c\Theta)$.

The main observation of the paper is that spaces $\mathcal P_d^{\mathbf c\Theta}$ of real degree $d$ monic polynomials in one variable, whose real divisors do not belong to the forbidden poset $\Theta$, play a role of Grassmanians for immersions/embeddings $\b$ as above. This is reminiscent to the role played by the Stiefel manifolds of $k$-frames in $\R^m$ in the Smale theory of immersions \cite{S}. At least for embeddings,  we are able to compute $\mathcal{QT}_d^{\mathsf{emb}}(Y, \mathbf c\Theta)$ in terms of the homotopy theory of $\mathcal P_d^{\mathbf c\Theta}$ and $Y$. Our computation uses the results from \cite{KSW1}, \cite{KSW2} about homotopy and homology types of $\mathcal P_d^{\mathbf c\Theta}$ for many specific $\Theta$'s. This information allows us to describe cohomological invariants of $\mathcal{QT}_d^{\mathsf{emb}}(Y, \mathbf c\Theta)$ and $\mathcal{QT}_d^{\mathsf{imm}}(Y, \mathbf c\Theta)$ (characteristic classes of quasitopies) in purely combinatorial terms, related to $\Theta$. Here the superscript ``$\mathsf{imm}$" stands for immersions.  In many special cases, we see subtle connections between $\mathcal{QT}_d^{\mathsf{emb}}(Y, \mathbf c\Theta)$ and homotopy groups of spheres. Implicitly, this connection goes back to \cite{Ar}, \cite{Ar1} and \cite{V}. 

Let $D^n$ denote the standard $n$-ball. We notice that the sets $\mathcal{QT}_d^{\mathsf{emb}}(D^n, \mathbf c\Theta)$ and $\mathcal{QT}_d^{\mathsf{imm}}(D^n, \mathbf c\Theta)$ have a {\sf group structure} (abelian for $n>1$). We denote these groups by $\mathsf{G}_d^{\mathsf{emb}}(n; \mathbf c\Theta)$ and $\mathsf{G}_d^{\mathsf{imm}}(n; \mathbf c\Theta)$ and prove that $\mathsf{G}_d^{\mathsf{imm}}(n; \mathbf c\Theta)$ is a split extension of $\mathsf{G}_d^{\mathsf{emb}}(n; \mathbf c\Theta)$. Then we are able to validate a group isomorphism $\mathsf{G}_d^{\mathsf{emb}}(n; \mathbf c\Theta) \approx \pi_n(\mathcal P_d^{\mathbf c\Theta}, pt)$.

With the help of the boundary connected sum operation, the groups $\mathsf{G}_d^{\mathsf{emb}}(n; \mathbf c\Theta)$ and $\mathsf{G}_d^{\mathsf{imm}}(n; \mathbf c\Theta)$ act on the sets $\mathcal{QT}_d^{\mathsf{emb}}(Y, \mathbf c\Theta)$ and $\mathcal{QT}_d^{\mathsf{imm}}(Y, \mathbf c\Theta)$, respectively. 

For two pairs $A \supset B$, $C \supset D$ of topological spaces, let $[(A, B), (C, D)]$ denote the homotopy classes of continuous maps $f: (A, B) \to (C, D)$, where $f(B) \subset D$. 

We prove that the classifying map $\Phi^{\mathsf{emb}}: \mathcal{QT}_d^{\mathsf{emb}}(Y, \mathbf c\Theta) \to [(Y, \d Y), (\mathcal P_d^{\mathbf c\Theta}, pt)]$ is a {\sf bijection}, and the classifying map $\Phi^{\mathsf{imm}}: \mathcal{QT}_d^{\mathsf{imm}}(Y, \mathbf c\Theta) \to [(Y, \d Y), (\mathcal P_d^{\mathbf c\Theta}, pt)]$ is a {\sf surjection}. Moreover, these maps are equivariant with respect to the $\mathsf{G}_d^{\mathsf{emb}}(n; \mathbf c\Theta)$- or $\mathsf{G}_d^{\mathsf{imm}}(n; \mathbf c\Theta)$-actions on the quasitopies (the sources of the classifying maps $\Phi^{\mathsf{emb/imm}}$) and the corresponding action of the homotopy groups $\pi_n(\mathcal P_d^{\mathbf c\Theta}, pt)$ on the sets $[(Y, \d Y), (\mathcal P_d^{\mathbf c\Theta}, pt)]$ (the targets of the maps $\Phi^{\mathsf{emb/imm}}$). 

We present also a variety of results about the stabilization of $\mathcal{QT}_d^{\mathsf{emb}}(Y, \mathbf c\Theta)$ as $d \to \infty$.\smallskip

In \cite{K6}, we apply and extend the result of this article to a study of the traversing smooth flows on manifolds $X$ with boundaries, where the poset $\Theta$ controls  the combinatorial patterns of tangency of the flow trajectories to the boundary $\d X$.


\section{Spaces of real polynomials with constrained real divisors}
In this section, for our readers' convenience, we state a number of results from \cite{KSW1} and \cite{KSW2} about the topology of spaces of real monic univariate polynomials with constrained real divisors. These results will be crucial for the applications to follow.\smallskip

Let $\cP_d$ denote the space of real monic univariate polynomials of degree $d$. Given a polynomial $P(u) = u^d + a_{d-1}u^{d-1} + \cdots + a_0$ with real coefficients,  we consider its {\sf real divisor} $D_\R(P)$. 

Let $\om_i$ denotes the multiplicity of the $i$-th real root of $P$, the roots being ordered by their magnitude.  The ordered $\ell$-tuple 
$\om = (\omega_1 ,\ldots, \omega_\ell)$ is called 
the {\sf real root multiplicity pattern} of $P(x)$, or the {\sf multiplicity pattern} for short. 

 Such sequences $\om$ form a {\sf universal poset} $(\mathbf\Om, \succ)$. The partial order ``$\succ$" in $\mathbf\Om$ is defined in terms of two types of elementary operations: {\sf merges} $\{\mathsf M_i\}_i$ and {\sf inserts} $\{\mathsf I_i\}_i$. 
 
The operation $\mathsf M_i$ merges a pair of adjacent entries $\om_i, \om_{i+1}$ of $\om = (\om_1, \dots, \om_i, \om_{i+1}, \dots, \om_q)$ into a single component $\tilde\om_i = \om_i + \om_{i+1}$, thus forming a new shorter sequence $\mathsf M_i(\om) = (\om_1, \dots, \tilde\om_i, \dots, \om_q)$. The operation $\mathsf I_i$ either insert $2$ in-between $\om_i$ and $\om_{i+1}$, thus forming a new longer sequence $\mathsf I_i(\om) = (\dots, \om_i, 2, \om_{i+1}, \dots)$, or, in the case of $\mathsf I_0$, appends $2$ before the sequence $\om$, or,  in the case $\mathsf I_q$,  appends $2$ after the sequence $\om$. 

 So the {\sf merge} operation 
$\mathsf M_j: \mathbf\Om\to \mathbf\Om$ \;  
sends $\om = (\om_1,\ldots , \om_\ell)$ to the composition $$\mathsf M_j(\om) = (M_j(\om)_1,\ldots, M_j(\om)_{\ell-1}),$$ 
where, for any $j \geq \ell$, one has $\mathsf M_j(\om) = \om$,  and for $1 \leq j < \ell$, one has 
\begin{eqnarray}\label{eq2.M} 
\left\{
\begin{array}{lll}
\mathsf M_j(\omega)_i & = & \omega_i \; \textrm{ if }\, i < j,\\ 
\mathsf M_j(\omega)_j  & = & \omega_j + \omega_{j+1},\\ 
\mathsf M_j(\omega)_i  & = & \omega_{i + 1} \; \textrm{ if }\, i+1 < j \leq  \ell-1.
\end{array}\right.
\end{eqnarray}

Similarly, we introduce the {\sf insert} operation 
$\mathsf I_j: \mathbf\Om \to\mathbf \Om$ that 
sends $\om = (\om_1,\ldots , \om_\ell)$ to the composition $\mathsf I_j(\om) = (I_j(\om)_1,
\ldots, I_j(\om)_{\ell+1}),$ where for any $j > \ell+1$,  one has $\mathsf I_j(\om) = \om$, and  for $1 \leq j \leq \ell+1$, one has
\begin{eqnarray}\label{eq2.I}  
\left\{
\begin{array}{lll}
\mathsf I_j(\omega)_i & = & \omega_i\; \textrm{ if }\, i < j, \\ 
\mathsf I_j(\omega)_j & = & 2,\\ 
\mathsf I_j(\omega)_i & = & \omega_{i - 1} \; \textrm{ if }\, j \leq i \leq \ell+1. 
\end{array}\right.
\end{eqnarray}

\begin{definition}\label{def.order}
For $\om, \om' \in \mathbf\Om$, we define the order relation $\om \succ \om'$ if one can produce $\om'$ from $\om$ by applying a sequence of the elementary operations in (\ref{eq2.M}) and (\ref{eq2.I}).  \hfill $\diamondsuit$
\end{definition}

For a sequence $\omega = (\omega_1, \omega_2, \, \dots \, ,  \omega_q) \in \mathbf\Om$, we introduce the {\sf norm} and the {\sf reduced norm} of $\omega$ by the formulas: 
\begin{eqnarray}\label{eq2.2}
|\omega| =_{\mathsf{def}} \sum_i\; \omega_i \quad \text{and} \quad |\omega|'  =_{\mathsf{def}} \sum_i\; (\omega_i -1).
\end{eqnarray}
Note that $q$, the cardinality of the {\sf support} of $\omega$, is equal to $|\omega| - |\omega|'$.
\smallskip

Let $\mathring{\sR}^\omega_d$ be of the set of all polynomials with root multiplicity pattern $\omega$, and let $\sR_d^\omega$ be its closure in $\cP_d$.

 For a given collection $\Theta$ of multiplicity patterns $\{\om\}$ which share the pairity of their norms $\{|\om|\}$ and is closed under the merge and insert operations, we consider the union $\cP_d^\Theta$ of the subspaces $\{\sR_d^{\omega}\}_\om$, taken over all $\omega \in \Theta$ such that $|\om| \leq d$ and $|\om| \equiv  d \mod 2$. We denote by $\cP_d^{\mathbf c\Theta}$ its {\sf complement} $\cP_d \setminus \cP^\Theta_d$. \smallskip

Since $\cP^\Theta_d$ is contractible, it makes more sense to consider its one-point compactification $\bar{\cP}_d^\Theta$ (which is the union of the one point compactifications $\bar{\sR}_d^\omega$ for $\omega \in \Theta$ with all the points at infinity being identified). 
If the set $\bar{\cP}_d^{\Theta}$ is closed in $\bar{\cP}_d$, by the Alexander duality on the sphere  $\bar{\cP}_d \cong S^{d}$, we get
$$H^j(\cP_d^{\mathbf c\Theta}; \Z) \approx H_{d - j -1}(\bar{\cP}_d^{\Theta}; \Z).$$
This implies that  the spaces 
$\cP_d^{\mathbf c\Theta}$ and $\bar{\cP}_d^{\Theta}$ 
carry the same (co)homological information. Let us describe it in pure combinatorial terms. \smallskip

For a  subposet $\Theta \subset \bfOm$ and natural numbers $d, k$, we introduce the following notations:
\begin{eqnarray}\label{eq.Theta} 
\Theta_{[d]} =_{\mathsf{def}} &\{\om \in \Theta : \; |\om| = d \},\quad
\Theta_{\langle d]} =_{\mathsf{def}} \{\om \in \Theta : \; |\om| \leq d\}, \\ 
\Theta_{|\sim|' =k} =_{\mathsf{def}} & \{\om \in \Theta : \; |\om|' = k \}, \quad \Theta_{|\sim|' \geq k} =_{\mathsf{def}} \{\om \in \Theta : \; |\om|' \geq k \}.  \nonumber
 \end{eqnarray}

Assuming that $\Theta\subset \bfOm$ is a {\sf closed} sub-poset, let 
$$\mathbf c\Theta =_{\mathsf{def}} \mathbf\Om \setminus \Theta \text{\; and \;} \mathbf c \Theta_{\langle d]} =_{\mathsf{def}} \bfOm_{\langle d]} \setminus \Theta_{\langle d]}.$$

We denote by $\Z[\bfOm_{\langle d]}]$ the free $\Z$-module, generated by the elements of $\bfOm_{\langle d]}$. Using the merge operators $\sM_k$ and the insert operators $\sI_k$ on $\bfOm_{\langle d]}$,   
we define  two homomorphisms of $\Z[\bfOm_{\langle d]}]$: 
$$\d_{\sM} (\omega) =_{\mathsf{def}} - \sum_{k=1}^{s_\omega-1} (-1)^k \sM_k(\omega) \, \text{  and   }\, \partial_{\sI} (\omega) =_{\mathsf{def}}  \sum_{k=0}^{s_\omega} (-1)^k \sI_k(\omega),$$
where $s_\omega =_{\mathsf{def}} |\omega| - |\omega|'$ is the cardinality of the support of $\omega$. The homomorphisms $\d_{\sM}$ and $\d_{\sI}$ are anti-commuting differentials.

Next, we introduce the homomorphism 
$$\d  =_{\mathsf{def}} \d_{\sM} +\d_{\sI} : \Z[\bfOm_{\langle d]}] \to \Z[\bfOm_{\langle d]}]$$ by the formula
\begin{eqnarray}\label{eq.d+d}
\quad \quad \d(\omega) =_{\mathsf{def}} \left\{
\begin{array}{ll}
-\sum_{k = 1}^{s_\omega -1}(-1)^k\, \sM_k(\omega) + \sum_{k = 0}^{s_\omega}(-1)^k\, \sI_k(\omega),\;\; \text{for}\; |\omega| < d,   \\
 \\
-\sum_{k = 1}^{s_\omega -1}(-1)^k\, \sM_k(\omega),\;\; \text{for}\; |\omega| = d. 
\end{array}\right.
\end{eqnarray}
\smallskip

It is easy to see that for a closed poset $\Theta_{\langle d]} \subset \bfOm_{\langle d]}$, the restrictions of the differentials $\d, \d_{\sM}$, and $\d_{\sI}$ to the free $\Z$-module $\Z[\Theta_{\langle d]}]$ are well-defined. Thus, for any closed sub-poset  $\Theta_{\langle d]} \subset \mathbf\Om_{\langle d]}$, we may consider the differential complex $\d: \Z[\Theta_{\langle d]}] \to \Z[\Theta_{\langle d]}]$, whose $(d-j)$-grading is defined by the module $\Z[\Theta_{\langle d],\, |\sim|' = j}]$. 
We denote by $\d^\ast: \Z[\Theta_{\langle d]}]^\ast \to \Z[\Theta_{\langle d]}]^\ast$ its dual, where $\Z[\Theta_{\langle d]}]^\ast := \mathsf{Hom}(\Z[\Theta_{\langle d]}], \Z)$ and the operator $\d^\ast$ is the dual of the boundary operator $\d$. \smallskip

Then we consider the quotient set $\Theta^\#_{\langle d]} := \mathbf\Om_{\langle d]}\big / \Theta_{\langle d]}$. For the closed subposet $\Theta_{\langle d]}$, the partial order in $\mathbf\Om_{\langle d]}$ induces a partial order in the quotient $\Theta^\#_{\langle d]}$.

Next, we introduce a new differential complex $(\Z[\Theta^\#_{\langle d]}], \d^\#)$ by including it in the short exact sequence of differential complexes:
\begin{eqnarray}\label{eq.quotient_complex}
0 \to (\Z[\Theta_{\langle d]}], \d) \to (\Z[\mathbf\Om_{\langle d]}], \d) \to (\Z[\Theta^\#_{\langle d]}], \d^\#) \to 0.
\end{eqnarray}

We will rely on the following result from \cite{KSW2}, which reduces the computation of the (reduced) cohomology $\tilde H^\ast(\cP_d^{\mathbf c\Theta}; \Z)$ to Algebra and Combinatorics. 

\begin{theorem}\label{thA}{\bf (\cite{KSW2})} 
  Let $\Theta \subset \bfOm_{\langle d]}$ be a closed subposet. 
  Then, for any $j \in [0, d]$, we get group isomorphisms
   \begin{eqnarray}\label{eq.3.5A} 
    \tilde H^j(\cP_d^{\mathbf c\Theta}; \Z)\;  \stackrel{\mathcal D}{\approx} \;H_{d-j}(\bar{\cP}_d, \bar{\cP}_d^{\Theta}; \Z)\; 
    \approx  H_{d- j}\big(\d^\#:  \Z[\Theta^\#] \to \Z[\Theta^\#]\big), \\
    \quad \tilde  H_j(\cP_d^{\mathbf c\Theta}; \Z)\;  \stackrel{\mathcal D}{\approx} \;H^{d-j}(\bar{\cP}_d, \bar{\cP}_d^{\Theta}; \Z)\; 
    \approx  H_{d- j}\big((\d^\#)^\ast:  (\Z[\Theta^\#])^\ast \to (\Z[\Theta^\#])^\ast\big), \nonumber
      \end{eqnarray}
      where $\mathcal D$ denote the Poincar\'{e} duality isomorphisms. \hfill $\diamondsuit$
 \end{theorem}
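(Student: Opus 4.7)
The plan is to prove the two claimed isomorphisms separately. The first, $\tilde H^j(\cP_d^{\mathbf c\Theta}) \stackrel{\mathcal D}{\approx} H_{d-j}(\bar{\cP}_d, \bar{\cP}_d^\Theta)$, is an instance of Poincar\'e--Lefschetz duality on the sphere $\bar{\cP}_d \cong S^d$ (since $\cP_d \cong \R^d$ via coefficients). Because $\cP_d^{\mathbf c\Theta} = \cP_d \setminus \cP_d^\Theta$ is an open subset, Lefschetz duality yields $H^j_c(\cP_d^{\mathbf c\Theta}; \Z) \cong H_{d-j}(\cP_d, \cP_d^\Theta; \Z)$; excising the point at infinity identifies the right-hand side with $H_{d-j}(\bar{\cP}_d, \bar{\cP}_d^\Theta; \Z)$, while $H^j_c$ on a Euclidean open set agrees with the reduced cohomology of its one-point compactification, giving $\tilde H^j(\cP_d^{\mathbf c\Theta}; \Z)$ on the left. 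The only regularity needed is that $\bar{\cP}_d^\Theta$ be a semi-algebraic (hence ANR) subspace, which is automatic here.

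For the second isomorphism, I would construct a semi-algebraic cell decomposition of $\bar{\cP}_d$ whose open cells are precisely the strata $\mathring{\sR}_d^\omega$ with $|\omega|\equiv d \bmod 2$, together with the zero-cell $\{\infty\}$. Each $\mathring{\sR}_d^\omega$ is parameterized by its $s_\omega = |\omega| - |\omega|'$ ordered distinct real roots and $(d-|\omega|)/2$ unordered complex-conjugate pairs, so it is an open cell of dimension $d - |\omega|'$. The cellular chain group in degree $d-k$ is then free on $\bfOm_{\langle d],\, |\sim|'=k}$; restricting to $\bar{\cP}_d^\Theta$ gives the subcomplex $\Z[\Theta_{\langle d]}]$ because $\Theta$ is closed under $\succ$. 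The codimension-one degenerations in $\overline{\mathring{\sR}_d^\omega}$ fall into exactly two geometric types: two adjacent real roots of multiplicities $\omega_k, \omega_{k+1}$ coalescing, contributing $\sM_k(\omega)$; and a complex-conjugate pair crossing the real axis to a double real root inserted between positions $k$ and $k+1$, contributing $\sI_k(\omega)$. A local normal-form computation, combined with the orientation of the stratum induced by the coefficient coordinates and the linear order of real roots on $\R$, shows that the cellular incidence signs reproduce the alternating $(-1)^k$ in (\ref{eq.d+d}); when $|\omega|=d$ no complex pairs remain to insert, matching the second case of that formula. Thus the cellular boundary equals $\d = \d_\sM + \d_\sI$.

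To finish, I would feed the short exact sequence (\ref{eq.quotient_complex}) into its homology long exact sequence. The cellular complex $(\Z[\bfOm_{\langle d]}], \d)$ computes $\tilde H_\ast(\bar{\cP}_d) \cong \tilde H_\ast(S^d)$, concentrated in a single degree, so the connecting homomorphisms identify $H_{d-j}(\d^\#)$ with $H_{d-j}(\bar{\cP}_d, \bar{\cP}_d^\Theta; \Z)$, yielding the first line of (\ref{eq.3.5A}). The dual homology statement follows from the universal coefficient theorem applied to $((\Z[\Theta^\#])^\ast,(\d^\#)^\ast)$ combined with Poincar\'e duality on the open manifold $\cP_d^{\mathbf c\Theta}$. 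The principal obstacle, where most of the technical work lives, is the sign and orientation bookkeeping needed to match the cellular boundary with $\d$: one must fix a coherent orientation of each $\mathring{\sR}_d^\omega$ and verify by a local model at every codimension-one face that the incidence coefficient equals the prescribed $(-1)^k$. The insert faces are the most delicate, since they require reconciling the natural orientation of $\C$ (in which the colliding conjugate pair lives) with the position index $k$ among the real roots.
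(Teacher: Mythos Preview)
This paper does not prove Theorem~\ref{thA}; it is quoted verbatim from \cite{KSW2} and marked with $\diamondsuit$. There is therefore no proof here to compare against. Your outline is the natural strategy and is almost certainly the one used in \cite{KSW2}: Alexander duality on $\bar{\cP}_d\cong S^d$ for the isomorphism $\mathcal D$, a stratification of $\bar{\cP}_d$ by the cells $\mathring{\sR}_d^\omega$ (plus the point at infinity), identification of the cellular boundary with $\d=\d_\sM+\d_\sI$, and passage to the quotient complex for the relative homology.

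A few imprecisions are worth flagging. First, the duality you invoke is ordinary Alexander duality $\tilde H^j(S^d\setminus K)\cong\tilde H_{d-j-1}(K)$ followed by the long exact sequence of the pair $(\bar{\cP}_d,\bar{\cP}_d^\Theta)$; your detour through $H^j_c$ and one-point compactifications is unnecessary and slightly off (the theorem's left-hand side is ordinary reduced cohomology of an open set, not compactly supported). Second, once you know that $(\Z[\bfOm_{\langle d]}],\d)$ and $(\Z[\Theta_{\langle d]}],\d)$ are the cellular chain complexes of $\bar{\cP}_d$ and $\bar{\cP}_d^\Theta$ relative to $\{\infty\}$, the quotient complex $(\Z[\Theta^\#],\d^\#)$ computes $H_\ast(\bar{\cP}_d,\bar{\cP}_d^\Theta)$ directly---no connecting-homomorphism argument is needed. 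Third, the second line of (\ref{eq.3.5A}) follows by simply dualizing: the homology of the dual complex \emph{is} cellular cohomology, so the universal coefficient theorem is not the right tool here. Finally, you are right that the real work is the orientation bookkeeping for the incidence signs; the additional nontrivial point you underplay is verifying that the $\mathring{\sR}_d^\omega$ genuinely assemble into a regular CW structure on $\bar{\cP}_d$ (characteristic maps from closed balls, closure-finiteness), which requires the analysis carried out in \cite{KSW1},\cite{KSW2}.
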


For $d' \geq d$, $d' \equiv d \mod 2$, consider an embedding $\e_{d, d'}: \mathcal P_d \to \mathcal P_{d'}$, defined by 
\begin{eqnarray}\label{eq.stable} 
\e_{d, d'}(P)(u) =_{\mathsf{def}} (u^2+1)^{\frac{d' -d}{2}} \cdot P(u).
\end{eqnarray}
It preserves the 
$\bfOm$-stratifications of  the two spaces by the combinatorial types $\om$ of real divisors. 

\begin{definition}\label{def.profinite}
We call a closed poset $\Theta\subseteq \mathbf\Om$  {\sf profinite} if, for all  integers $q \geq 0$, there exist only finitely many elements $\om \in \Theta$ such that $|\om|' \leq q$. 
  \hfill $\diamondsuit$
\end{definition}

Note that any finitely generated (i.e., having a finite set of maximal elements) closed $\Theta$ is profinite.\smallskip

For a profinite $\Theta$, the embedding $\e_{d, d'}$ makes it possible to talk about \emph{stabilization of the homology/cohomology} of spaces $\bar{\mathcal P}_d^\Theta$ and $\mathcal P_d^{\mathbf c\Theta}$, as $d \to \infty$. \smallskip

For a closed poset $\Theta \subset \bfOm$, consider the closed finite poset $\Theta_{\langle d]} = \bfOm_{\langle d]} \cap \Theta$. It is generated by some    
\emph{maximal} elements $\omega_\star^{(1)}, \dots , \omega_\star^{(\ell)}$, $\ell = \ell(d)$.  We introduce two useful quantities: 
\begin{eqnarray}\label{eq.eta(Theta)}
\eta_\Theta(d) & =_{\mathsf{def}} & \max_{i \,\in \,[1,\, \ell(d)]} \big\{\big(|\om_\star^{(i)}| - 2|\om_\star^{(i)}|'\big)\big\}, \\
\psi_\Theta(d) & =_{\mathsf{def}} & \frac{1}{2}\big(d + \eta_\Theta(d)\big).
\end{eqnarray}

Note that $$\psi_\Theta(d) =  \frac{1}{2}\;\min_{i \, \in \, [1,\ell(d)]}\big\{ (d- |\om_\star^{(i)}|') + (|\om_\star^{(i)}|-|\om_\star^{(i)}|')\big\} < d,$$ 
where both summands, the ``codimension" $(d- |\om_\star^{(i)}|')$ and the ``support" $(|\om_\star^{(i)}|-|\om_\star^{(i)}|')$, are positive and each one does not exceed $d$. At the same time,
$\eta_\Theta(d)$ may be negative. \smallskip

The quantity $\xi_\Theta(d+2) := d +2 - \psi_\Theta(d+2)$ is employed on many occasions. It plays a key role in describing the stabilization in the homology $H_\ast(\mathcal P_d^{\mathbf c\Theta}; \Z)$.\smallskip
 \smallskip

Now we are in position to state one of the main stabilization results from \cite{KSW2}: 

\begin{theorem}\label{th.main_stab} {\bf (short stabilization: $\mathbf{\{d \Rightarrow d+2\}}$)} Let $\Theta$ be a closed subposet of $\mathbf\Om$. Let the embedding $\e_{d, d+2}:\, \mathcal P_{d} \subset \mathcal P_{d+2}$ be as in (\ref{eq.stable}). \begin{itemize}
\item Then, for all  $j \geq  \psi_\Theta(d+2) -1$, we get a homological isomorphism 
$$(\e_{d, d+2})_\ast: H_j(\bar{\mathcal P}_{d}^\Theta; \Z) \approx H_{j+2}(\bar{\mathcal P}_{d+2}^{\Theta}; \Z),$$

\item and, for all $j \leq d+2 - \psi_\Theta(d+2)$, 
a homological isomorphism  $$(\e_{d, d+2})_\ast: H_j( \mathcal P_{d}^{\mathbf c\Theta}; \Z) \approx H_j(\mathcal P_{d+2}^{\mathbf c\Theta}; \Z). \quad \quad \diamondsuit$$
 \end{itemize}
\end{theorem}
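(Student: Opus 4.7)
The plan is to reduce both stabilization claims, via Theorem~\ref{thA}, to a vanishing statement for a short exact sequence of combinatorial differential complexes, and then to bound the vanishing range in terms of the maximal generators of $\Theta_{\langle d+2]}$ through the quantity $\psi_\Theta(d+2)$.

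First, I would set up the geometry: since $u^2+1$ has no real roots, multiplying by it preserves real divisors, so $\e_{d,d+2}$ carries $\mathring\sR_d^\omega$ into $\mathring\sR_{d+2}^\omega$ for every $\omega$, and restricts to proper inclusions $\cP_d^{\mathbf c\Theta}\hookrightarrow \cP_{d+2}^{\mathbf c\Theta}$ and (after one-point compactification) $\bar\cP_d^\Theta\hookrightarrow \bar\cP_{d+2}^\Theta$. At the combinatorial level of Theorem~\ref{thA}, inspecting (\ref{eq.d+d}) at the two norm caps shows that the natural truncation
\begin{equation*}
\pi\colon \bigl(\Z[\Theta^\#_{\langle d+2]}],\,\d^\#\bigr)\twoheadrightarrow \bigl(\Z[\Theta^\#_{\langle d]}],\,\d^\#\bigr),
\end{equation*}
annihilating generators with $|\omega|=d+2$, is a chain map (the insert-part $\d_{\sI}$ on such generators would reach $|\omega|=d+4$ and is automatically absent). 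Its kernel is $(\Z[\mathbf c\Theta_{[d+2]}],\,\d_{\sM}^\#)$, carrying only the merge-part of the differential. Hence one obtains a short exact sequence of complexes
\begin{equation*}
0\to \bigl(\Z[\mathbf c\Theta_{[d+2]}],\,\d_{\sM}^\#\bigr)\longrightarrow \bigl(\Z[\Theta^\#_{\langle d+2]}],\,\d^\#\bigr)\xrightarrow{\,\pi\,} \bigl(\Z[\Theta^\#_{\langle d]}],\,\d^\#\bigr)\to 0,
\end{equation*}
together with its $\Z$-dual, which compute the relevant (co)homology on both sides via Theorem~\ref{thA}.

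The combinatorial core of the proof is then to locate the homological support of the kernel (respectively of its dual) in the $|\omega|'$-grading. For $\omega\in\mathbf c\Theta_{[d+2]}$, the failure of $\omega$ to be dominated by any maximal generator $\omega_\star^{(i)}$ of $\Theta_{\langle d+2]}$, combined with the norm identity $|\omega|=d+2$, confines $|\omega|'$ to a window whose extremal value is captured by $\eta_\Theta(d+2)=\max_i\{|\omega_\star^{(i)}|-2|\omega_\star^{(i)}|'\}$, and hence by $\psi_\Theta(d+2)=\tfrac{1}{2}(d+2+\eta_\Theta(d+2))$. Feeding this vanishing into the long exact sequence in homology of the SES above, and tracing through the grading shifts between $\d^\#_d$ and $\d^\#_{d+2}$ prescribed by Theorem~\ref{thA}, delivers the two stabilization isomorphisms; the shift by $2$ in the $\bar\cP^\Theta$-version reflects the $2$-unit increase in the ambient sphere dimension $\bar\cP_d\cong S^d\hookrightarrow\bar\cP_{d+2}\cong S^{d+2}$.

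The hard part will be making the combinatorial bound on $|\omega|'$ sharp. This is a structural case analysis on how a pattern $\omega\in\mathbf c\Theta_{[d+2]}$ can fail to be obtainable from each maximal generator by a sequence of merges and inserts; the bound is tight---attained exactly when $\omega$ ``just escapes'' the maximal generator realising $\eta_\Theta(d+2)$---and it is precisely this sharpness that forces the thresholds $\psi_\Theta(d+2)-1$ and $d+2-\psi_\Theta(d+2)$ to appear in the two halves of the theorem. The remaining steps---the geometric reduction and the homological extraction via the long exact sequence---are essentially formal once Theorem~\ref{thA} is in hand.
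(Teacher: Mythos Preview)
The paper does not prove this theorem: it is quoted from \cite{KSW2} as background (the statement ends with $\diamondsuit$, the paper's marker for results stated without proof). So there is no in-paper proof to compare against; I can only assess your sketch on its own merits and against what the combinatorial machinery of Section~2 makes available.

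Your overall strategy---reduce via Theorem~\ref{thA} to the complexes $(\Z[\Theta^\#_{\langle d]}],\d^\#)$ and then analyze the truncation short exact sequence
\[
0 \to \bigl(\Z[\mathbf c\Theta_{[d+2]}],\d_{\sM}^\#\bigr) \to \bigl(\Z[\Theta^\#_{\langle d+2]}],\d^\#\bigr) \xrightarrow{\pi} \bigl(\Z[\Theta^\#_{\langle d]}],\d^\#\bigr) \to 0
\]
---is the natural one, and your verification that $\pi$ is a chain map (the insert terms at the top norm are absent by~(\ref{eq.d+d})) is correct. This is almost certainly the skeleton of the argument in \cite{KSW2}.

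That said, the sketch has two genuine gaps. First, you assert but do not establish that the geometric map $(\e_{d,d+2})_\ast$ corresponds, under the isomorphisms of Theorem~\ref{thA}, to the map induced by $\pi$ (or its dual). Theorem~\ref{thA} is stated as an abstract isomorphism, not as a natural transformation, so this compatibility requires either a naturality statement from \cite{KSW2} or an explicit identification of the Poincar\'e--Alexander duality maps with the combinatorial ones; without it, the long exact sequence of your SES does not a priori compute $(\e_{d,d+2})_\ast$. Second, your ``hard part'' is not actually carried out, and the description you give is ambiguous: bounding the \emph{generators} of $\Z[\mathbf c\Theta_{[d+2]}]$ in the $|\omega|'$-grading is not the same as bounding the \emph{homology} of the merge complex $(\Z[\mathbf c\Theta_{[d+2]}],\d_{\sM}^\#)$, and it is the latter that feeds into the long exact sequence. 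The homological vanishing range for this merge-only complex, and why it is governed precisely by $\eta_\Theta(d+2)$, is the substantive content of the theorem. Finally, note that your single SES (for $\Theta^\#$) directly addresses the second bullet; for the first bullet on $\bar{\cP}_d^\Theta$ you would need either the analogous SES for $(\Z[\Theta_{\langle d]}],\d)$ itself, or a careful Alexander-duality translation with the correct degree shifts---your sketch does not separate these two threads.
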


\begin{corollary}\label{cor.stable_homology} {\bf (long stabilization: $\mathbf{\{d \Rightarrow \infty\}}$)} If  $\Theta\subset \mathbf\Omega$  is a closed profinite subposet,  
then for each $j$, the homomorphism
$$(\e_{d,d'})_\ast:\; H_j( \mathcal P_{d}^{\mathbf c\Theta}; \Z) \approx H_j(\mathcal P_{d'}^{\mathbf c\Theta}; \Z)$$
is an isomorphism for all sufficiently big $d \leq d'$, $d \equiv d' \mod 2$. 

As a result, for such profinite $\Theta$'s, we may talk about the stable homology $H_j(\mathcal P_{\infty}^{\mathbf c\Theta}; \Z)$, the direct limit  $\lim_{d \to \infty}H_j( \mathcal P_{d}^{\mathbf c\Theta}; \Z)$. \hfill  $\diamondsuit$
\end{corollary}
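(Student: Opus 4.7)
The plan is to bootstrap the short stabilization of Theorem~\ref{th.main_stab} into the long one. Since $\e_{d,d'} = \e_{d'-2,d'} \circ \cdots \circ \e_{d,d+2}$, once $j$ is fixed, iterating the isomorphisms $(\e_{d+2k,d+2k+2})_\ast : H_j(\mathcal P_{d+2k}^{\mathbf c\Theta}) \to H_j(\mathcal P_{d+2k+2}^{\mathbf c\Theta})$ yields the corollary, \emph{provided} that at every link of the chain the hypothesis $j \leq \xi_\Theta(d+2k+2)=(d+2k+2)-\psi_\Theta(d+2k+2)$ holds. Thus the whole content of the corollary reduces to the combinatorial assertion that, for profinite $\Theta$, $\xi_\Theta(d)\to\infty$ as $d\to\infty$.

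The first ingredient is a structural observation about $\Theta_{\langle d]}$. Both merges and inserts weakly \emph{increase} the norm $|\omega|$, so any $\omega^\ast \succ \omega$ automatically satisfies $|\omega^\ast|\leq|\omega|$. Consequently, whenever $\omega\in\Theta_{\langle d]}$, every $\Theta$-ancestor of $\omega$ lies in $\Theta_{\langle d]}$ as well. Hence the maximal elements of the truncated poset $\Theta_{\langle d]}$ are precisely the maximal elements of $\Theta$ itself with $|\omega|\leq d$---no ``boundary maximals'' are created by the cut. This identification is the mechanism that will let me leverage profiniteness.

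Using $|\omega|=|\omega|'+s_\omega$, I rewrite
$$d-\eta_\Theta(d)\;=\;\min_{\substack{\omega\text{ max.\ in }\Theta\\ |\omega|\leq d}}\bigl((d-|\omega|)+2|\omega|'\bigr).$$
Fix any $N\in\N$ and split this minimum according to whether $|\omega|'\leq N$ or $|\omega|'>N$. Terms in the second class contribute at least $2N$. The first class is \emph{finite} by profiniteness, so there is a uniform bound $M_N$ on $|\omega|$ across its members; for $d\geq M_N+2N$ those terms contribute at least $d-M_N\geq 2N$ as well. Thus $d-\eta_\Theta(d)\geq 2N$, equivalently $\xi_\Theta(d)\geq N$, for all sufficiently large $d$. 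Since $N$ was arbitrary, $\xi_\Theta(d)\to\infty$.

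With this in hand, for each fixed $j$ one chooses $d_0$ (depending on the parity) so that $\xi_\Theta(d')\geq j$ for every $d'\geq d_0$ of that parity; the chain of short-stabilization isomorphisms from Theorem~\ref{th.main_stab} then delivers the corollary. The only real conceptual hurdle is the invariance of the set of maximal elements under the truncation $\Theta\rightsquigarrow\Theta_{\langle d]}$; this is exactly what makes the profiniteness split work, and thanks to the norm-monotonicity of merges and inserts it comes almost for free.
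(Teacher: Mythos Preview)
Your proposal is correct and follows the same approach that the paper (implicitly) uses: chain the short stabilizations of Theorem~\ref{th.main_stab} and verify that the threshold $\xi_\Theta(d)=d-\psi_\Theta(d)$ tends to infinity for profinite~$\Theta$. In this paper the corollary is simply imported from \cite{KSW2} (note the~$\diamondsuit$), and the divergence $\lim_{d\to\infty}(d-\psi_\Theta(d))=+\infty$ is later invoked as ``Lemma~4.3 from \cite{KSW2}'' in the proof of Theorem~\ref{th.stable_embeddings}; you have supplied a clean self-contained proof of that lemma via the norm-monotonicity of merges/inserts and the resulting identification of the maximal elements of $\Theta_{\langle d]}$ with those of $\Theta$ of norm~$\leq d$.
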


Let us describe few special cases of posets $\Theta$ 
from \cite{KSW2}. 
For $k \geq 1$,  $q \in [0, d]$, and $q \equiv d \mod 2$, let us consider the closed poset
\begin{eqnarray}\label{Theta-wedge} 
\bfOm_{|\sim|^{'} \geq k}^{(q)} =_{\mathsf{def}} \big\{ \om \in \bfOm_{\langle d]} \text{ such that } |\omega|' \geq k \text{ and } |\om| \geq q\big\}.
\end{eqnarray}
Note that, for $\Theta = \bfOm_{|\sim|^{'} \geq k}^{(0)} := \bfOm_{|\sim|' \geq k}$, the space $\bar{\cP}_d^\Theta$ is the entire $(d-k)$-skeleton of $\bar{\cP}_d$. 
 
 \begin{eqnarray}\label{eq.A-bouquet} 
 \text{ Let \;} A(d, k, q) =_{\mathsf{def}}\; \big|\chi\big((\Z\big[\bfOm_{|\sim|^{'} \geq k}^{(q)}\big], \d)\big)\big|,
 \end{eqnarray}
 the absolute value of the Euler number of the differential complex $\big(\Z\big[\bfOm_{|\sim|^{'} \geq k}^{(q)}\big], \d\big)$.

\begin{proposition}{\bf (\cite{KSW2})} 
  \label{prop.skeleton}
  Fix $k \in [1, d)$ and $q \ge 0$ such that $q \equiv d \mod 2$, and set $\Theta = \bfOm_{|\sim|^{'} \geq k}^{(q)}$. 
  
  Then $\cP^{\mathbf c\Theta}_d$ has the homotopy type of 
	a bouquet of $(k-1)$-dimensional spheres. The number of spheres in the bouquet
	equals $A(d, k, q)$.\hfill $\diamondsuit$
\end{proposition}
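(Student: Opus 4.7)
The plan is to combine a codimension/connectivity bound on $\cP_d^{\mathbf c\Theta}$ with the combinatorial cohomology calculation of Theorem~\ref{thA}, and then to recognize the space as a wedge of spheres by a Hurewicz--Whitehead argument. For Step 1, every $\omega \in \Theta$ satisfies $|\omega|' \geq k$, so each stratum $\mathring{\sR}_d^\omega \subset \cP_d \cong \R^d$ has codimension $|\omega|' \geq k$. Hence $\bar{\cP}_d^\Theta = \bigcup_{\omega \in \Theta} \sR_d^\omega$ is a closed semi-algebraic subset of dimension $\leq d-k$, and a standard general-position argument shows that $\cP_d^{\mathbf c\Theta} = \cP_d \setminus \bar{\cP}_d^\Theta$ is $(k-2)$-connected: any $f\colon S^j \to \cP_d^{\mathbf c\Theta}$ with $j \leq k-2$ extends to a map $D^{j+1} \to \cP_d$ that can be perturbed off the codimension-$k$ set $\bar{\cP}_d^\Theta$. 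Dually, Alexander duality on $\bar{\cP}_d \cong S^d$ yields $\tilde H^j(\cP_d^{\mathbf c\Theta};\Z) \approx \tilde H_{d-j-1}(\bar{\cP}_d^\Theta;\Z) = 0$ for $j < k-1$, since $\dim \bar{\cP}_d^\Theta \leq d-k$.

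For Step 2, Theorem~\ref{thA} combined with the short exact sequence (\ref{eq.quotient_complex}) and the fact that $(\Z[\bfOm_{\langle d]}],\d)$ computes $\tilde H_\ast(S^d)$ identifies $\tilde H_\ast(\bar{\cP}_d^\Theta)$ with the homology of the finite chain complex $(\Z[\Theta],\d)$, graded by $d - |\omega|'$. The central claim is that this homology is concentrated in the top grading $d-k$, i.e.\ that $\bar{\cP}_d^\Theta$ is $(d-k-1)$-connected. I would prove this by producing an explicit acyclic discrete Morse matching on the poset $\Theta = \bfOm_{|\sim|'\geq k}^{(q)}$ whose only critical cells sit at $|\omega|' = k$: for every $\omega$ with $|\omega|' > k$, pair $\omega$ with an adjacent element of $\Theta$ obtained by one of the elementary operations $\sM_j$ or $\sI_j$ of (\ref{eq2.M})--(\ref{eq2.I}), taking care near the boundaries imposed by $|\omega| \geq q$ and $|\omega| \leq d$. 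Once concentration is in hand, the universal coefficient theorem (with the vanishings $H_{k-2}=H_k=0$ already supplied by Step 1 and duality) forces the surviving group to be free, and the Euler identity pins its rank to $|\chi((\Z[\Theta],\d))| = A(d,k,q)$.

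For Step 3, the space $\cP_d^{\mathbf c\Theta}$ is an open semi-algebraic subset of $\R^d$ and therefore has the homotopy type of a finite CW complex. By Steps 1 and 2 it is $(k-2)$-connected with reduced cohomology concentrated in degree $k-1$, free of rank $A(d,k,q)$. For $k \geq 2$, Hurewicz gives $\pi_{k-1}(\cP_d^{\mathbf c\Theta}) \approx \Z^{A(d,k,q)}$; a choice of $A(d,k,q)$ generators supplies a map
\[
\bigvee_{A(d,k,q)} S^{k-1} \;\longrightarrow\; \cP_d^{\mathbf c\Theta}
\]
inducing isomorphisms on all reduced (co)homology groups, hence a homotopy equivalence by Whitehead. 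The degenerate case $k=1$ is settled by a direct $\pi_0$-count: a wedge of $A(d,1,q)$ copies of $S^0$ has exactly $A(d,1,q)+1$ points, matching the $0$-dimensional computation of $(\Z[\Theta],\d)$ under Alexander duality.

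The main obstacle is the concentration claim of Step~2: showing that the purely combinatorial complex $(\Z[\bfOm_{|\sim|'\geq k}^{(q)}],\d)$ is acyclic outside its top grading. Step~1 is a routine general-position argument and Step~3 is a standard Hurewicz--Whitehead recognition, but constructing an acyclic matching (or an equivalent collapse) that respects both the ``codimension'' constraint $|\omega|'\geq k$ and the ``support'' constraint $|\omega|\geq q$ simultaneously is the substantive combinatorial work, and this is where I expect the main effort to lie.
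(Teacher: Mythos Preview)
The paper does not prove this proposition; it is quoted from \cite{KSW2}. However, the paper's own later arguments (in the proofs of Propositions~\ref{prop.skeleta_and_groups} and~\ref{prop.kq-skeleton}) derive the homotopy-type conclusion from the homological input in exactly the way you describe in Steps~1 and~3: the removed set $\bar{\cP}_d^\Theta$ is a $(d-k)$-dimensional $CW$-complex, so for $k>2$ the complement is simply connected; then Alexander duality concentrates the homology in degree $k-1$ and makes it free of rank $A(d,k,q)$; finally Hatcher's Theorem~4C.1 (your Hurewicz--Whitehead step) identifies the space with a wedge of spheres. So for $k\geq 3$ your outline matches the paper's treatment precisely, with Step~2 being the content imported from \cite{KSW2}.

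Two points are worth flagging. First, for the base case $q=0$ your proposed discrete-Morse argument is unnecessary: the paper remarks just above the proposition that in that case $\bar{\cP}_d^\Theta$ is literally the $(d-k)$-skeleton of the $CW$-structure on $\bar{\cP}_d\cong S^d$. The concentration of $\tilde H_\ast(\bar{\cP}_d^\Theta)$ in the top degree $d-k$, and its freeness, then follow immediately from the long exact sequence of the pair $(S^d,\,\text{$(d{-}k)$-skeleton})$ and the fact that $\tilde H_j(S^d)=0$ for $j<d$. The matching you propose is only genuinely needed to handle the extra constraint $|\omega|\geq q$.

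Second, your Step~3 has a gap at $k=2$. The connectivity from Step~1 gives only that $\cP_d^{\mathbf c\Theta}$ is path-connected, and Hurewicz in degree~1 yields $H_1=\pi_1^{\mathrm{ab}}$, not $\pi_1\cong\Z^{A}$. To conclude the space is a wedge of circles you need $\pi_1$ to be free; this does not follow from homology alone and requires a separate argument (compare the paper's reliance on \cite{KSW1} for fundamental-group computations). Notice that the paper's own applications of this proposition (Propositions~\ref{prop.skeleta_and_groups} and~\ref{prop.kq-skeleton}) explicitly restrict to $k>2$ for exactly this reason. Your $k=1$ remark about counting components is fine, but the $k=2$ case needs more than what you have written.
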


\begin{proposition}{\bf (\cite{KSW1})} 
  \label{prop.fundamental_groups}
  Let $\Theta \subseteq \paOm{\langle d]}{\geq 2}$ 
  be a closed poset. For $d' \geq d+2$ such that $d'  \equiv d \mod 2$, 
  let $\hat\Theta_{\{d'\}}$ be the 
  smallest closed poset in $\bfOm_{\langle {d'}]}$ containing
  $\Theta$. 
  
  Then for $d' \geq d+2$, we have an isomorphism 
	$\pi_1\big(\cP_{d'}^{\mathbf c \hat \Theta_{\{d'\}}}\big) \cong \pi_1\big(\cP_{d+2}^{\mathbf c\Theta_{\{d+2\}}}\big)$ of the fundamental groups. \hfill $\diamondsuit$
\end{proposition}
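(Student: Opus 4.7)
The plan is to combine a stratified van Kampen presentation for the fundamental group of the complement of a codimension-$\geq 2$ closed semialgebraic subset with a purely combinatorial observation that the codimension-$2$ and codimension-$3$ layers of $\hat\Theta_{\{d'\}}$ stabilize already at $d' = d+2$. The crucial arithmetic fact, which I would record first, is that each elementary operation $\sM_k$ and $\sI_k$ strictly raises $|\omega|'$ by exactly $1$: $\sM_k$ leaves $|\omega|$ fixed and shrinks the support by $1$, while $\sI_k$ adds $2$ to $|\omega|$ and $1$ to the support. Since $\Theta \subseteq \paOm{\langle d]}{\geq 2}$, every $\omega$ that ever occurs in $\hat\Theta_{\{d'\}}$ satisfies $|\omega|' \geq 2$, so every open stratum $\mathring{\sR}_{d'}^\omega \subset \mathcal P_{d'} \cong \R^{d'}$ has real codimension $|\omega|' \geq 2$.

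Under these codimension bounds, a van Kampen theorem for complements of stratified subvarieties (a real semialgebraic analogue of the classical Zariski--van Kampen theorem) yields a presentation of $\pi_1(\mathcal P_{d'}^{\mathbf c\hat\Theta_{\{d'\}}})$ in which the generators are meridian loops $m_\omega$ indexed by the codimension-$2$ strata (i.e.\ by $\omega \in \hat\Theta_{\{d'\}}$ with $|\omega|' = 2$) and the relations come from the local monodromy around each codimension-$3$ stratum (i.e.\ around each $\omega$ with $|\omega|' = 3$). Because each operation raises $|\omega|'$ by $1$, no nonempty chain of $\sM_k$'s and $\sI_k$'s applied to any element of $\Theta$ can land back at $|\omega|' = 2$; hence the codim-$2$ elements of $\hat\Theta_{\{d'\}}$ coincide with those of $\Theta$ itself and are independent of $d' \geq d$. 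Analogously, an element of $\hat\Theta_{\{d'\}}$ with $|\omega|' = 3$ is either already in $\Theta$ (so $|\omega| \leq d$) or is the one-step image of some $\omega^\circ \in \Theta$ with $|\omega^\circ|' = 2$ under a single $\sM_k$ or $\sI_k$, forcing $|\omega| \leq d+2$. Consequently the codim-$3$ layer also stabilizes at $d' = d+2$, and with it the set of defining relations of the presentation.

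To identify the two van Kampen presentations, I would verify that the stabilization embedding $\e_{d+2,d'}(P)(u) = (u^2+1)^{(d'-d-2)/2} P(u)$ is transverse to the Whitney stratification $\{\sR_{d'}^\omega\}$, with $\e_{d+2,d'}^{-1}(\sR_{d'}^\omega) = \sR_{d+2}^\omega$; this is immediate because multiplication by $(u^2+1)^m$ introduces no new real roots, so the real-divisor pattern is preserved. A tubular neighbourhood of $\mathring{\sR}_{d+2}^\omega$ therefore pushes into a tubular neighbourhood of $\mathring{\sR}_{d'}^\omega$ in a codimension-preserving way, so each meridian $m_\omega$ at level $d+2$ is carried to the corresponding meridian at level $d'$, and the codim-$3$ monodromy relations are identified verbatim. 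Combined with the stabilization of the codim-$2$ and codim-$3$ layers, this produces the asserted isomorphism on $\pi_1$.

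The principal obstacle is to pin down a form of the stratified van Kampen theorem that applies to these particular semialgebraic stratifications and rigorously justifies the \emph{codim-$2$ generators, codim-$3$ relations} presentation, together with the Whitney regularity of $\{\sR_{d'}^\omega\}$ needed for the tubular-neighbourhood identification. Once those standard-but-technical ingredients are in place, the combinatorial observation that both operations shift $|\omega|'$ by exactly $1$ carries the entire argument.
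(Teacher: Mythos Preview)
The paper itself does not prove this proposition; it is quoted verbatim from \cite{KSW1} (note the $\diamondsuit$ marking), and later the author remarks ``See \cite{KSW1} for an explicit description of presentations of such fundamental groups $\pi_1(\mathcal P_{d+2}^{\mathbf c \hat\Theta_{\{d+2\}}})$.'' So there is no in-paper proof to compare against; the result lives in the cited companion paper, whose title (\emph{Fundamental Groups}) and the remark just quoted indicate that it proceeds by computing explicit presentations. Your outline is therefore almost certainly aligned in spirit with the original.

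Your combinatorial core is correct and is exactly the right engine: both $\sM_k$ and $\sI_k$ raise $|\omega|'$ by precisely $1$, so the $|\omega|'=2$ layer of $\hat\Theta_{\{d'\}}$ equals that of $\Theta$, and the $|\omega|'=3$ layer is contained in $\bfOm_{\langle d+2]}$; hence both layers are already visible at $d'=d+2$. Since all strata have codimension $\geq 2$, the ``codim-$2$ generators, codim-$3$ relations'' heuristic for $\pi_1$ of the complement is the natural route, and the strata $\mathring{\sR}_{d'}^\omega$ are contractible (open simplices times products of upper half-planes), so meridians are well-defined.

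The gap you yourself flag is the only real one: you need a rigorous version of the stratified van Kampen theorem for these particular real semialgebraic stratifications, together with enough regularity (Whitney conditions, existence of normal slices) to identify meridians and monodromy relations across the embedding $\e_{d+2,d'}$. This is not a one-line citation in the real setting; \cite{KSW1} presumably carries this out in detail (likely via an explicit cell or handle decomposition rather than an abstract Zariski--van Kampen statement). So your proposal is a correct strategic outline, but to turn it into a proof you would have to either reproduce that machinery or cite it from \cite{KSW1}.
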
 



\section{Immersions and embeddings with restricted tangency patterns to the product $1$-foliations}

\subsection{On immersions $\{\b: M^n \to \R \times Y^n\}$ and their self-intersections loci}
\smallskip
We adopt the following definition of a proper immersion. 
 
\begin{definition}\label{def.im} Let $M$ be a compact smooth $n$-manifold and $W$ a compact smooth $(n+1)$-manifold. We call an immersion $\b: M \to W$ {\sf proper} if:
 \begin{enumerate}
\item $\b(\d M) \subset \d W$,
 \item $\b$ is transversal to $\d W$ (thus the kernel of $D(\b|_{\d M})$ is also trivial).  \hfill $\diamondsuit$ 
\end{enumerate}
\end{definition}   

When $M$ is closed, requirements (1) and (2) are vacuous. When $Y$ is closed, then $M$ must be closed as well. \smallskip 

\begin{remark}
\emph{In this paper, for a \emph{given} compact smooth $n$-manifold $Y$, we are only interested in proper immersions/embeddings $\b$ of compact smooth $n$-manifolds $M$ into the product $\R \times Y$. 
The existence of such $\b$ puts substantial restrictions on the \emph{topological nature} of $M$. For example, the tangent $n$-bundle $\tau_M$ must be a subbundle of the $(n+1)$-bundle $\b^\ast(\tau_Y) \oplus \underline\R$. By the classical Hirsch Theorem \cite{Hi}, the existence of such an immersion $\b$ in the  homotopy class of a given map  $\tilde\b: M \to \R \times Y$ is equivalent to the property of the stable bundle $\tilde\b^\ast(\tau_{\R \times Y}) \oplus \nu_M$ to be of {\sf geometric} dimension $\leq 1$. Here $\nu_M$ is the stable normal bundle of $M$. The geometric dimension of a stable bundle $\eta$ is defined to be the minimal dimension of a bundle $\eta^\sharp$ in the stable equivalence class of $\eta$. 
}
\hfill $\diamondsuit$
\end{remark}

Definition \ref{def.q-separable} below requires to introduce of the following notion.   Let $G, H$ be two smooth hypersurfaces in a smooth manifold $X$.  We fix a point $a \in G \cap H$ and consider its neighborhood $U_a \subset X$. Let $g: U_a \to \R$, $h: U_a \to \R$ be two smooth functions such that $0$ is a regular value for both, and $G = g^{-1}(0), H = h^{-1}(0)$. 

Let $q$ be a natural number. We say that two hypersurfaces $G, H$  are $q$-{\sf jet equivalent} at $a \in G \cap H$, if there exists smooth functions $\phi, \chi: U_a \to \R$ such that $\phi(a) \neq 0, \chi(a) \neq 0$ and the $q$-jet $jet^q_a(\chi \cdot g) = jet^q_a(\phi \cdot h)$. In other words, $jet^q_a(\chi) \cdot jet^q_a(g)= jet^q_a(\phi) \cdot jet^q_a(h)$ modulo the the $(q+1)^{st}$ power of the maximal ideal $\mathsf m_0$ in the polynomial ring in $\dim X$ variables. 

We denote by $\mathsf{j}^q_a(H)$ the $q$-jet equivalence class of $H \subset X$ at $a$, and by $\mathsf{J}^q_a$ the set of all such $q$-jet equivalence classes of hypersurfaces through $a$. The space $\mathsf{J}^q_a$ depends on finitely many parameters, the number of which is a function of $\dim X$ and $q$. 
Since $\mathsf{J}^1_a$ is the Grassmanian of hyperplanes, we view $\mathsf{J}^q_a$ as a generalized Grassmanian. We denote by $\mathsf{J}^q(X) \to X$ the fibration over $X$, whose fiber is $\mathsf{J}^q_a$.

\begin{definition}\label{def.q-separable}
 We call an immersion $\b: M \to X$ of  a smooth compact $n$-manifold $M$ into a smooth compact $(n+1)$-manifold $X$ {\sf $q$-separable} if: 
\begin{itemize}
\item for any point $a \in \b(M)$, the set $\b^{-1}(a)$ is finite,
\item there is a natural number $q$ such that, for any point $a \in \b(M)$, the $q$-jet equivalence classes of the local branches of $\b(M)$, labeled by points $\b^{-1}(a)$, are all distinct.  \hfill $\diamondsuit$
\end{itemize} 
\end{definition}

For the rest of the section, we fix a compact connected 
smooth $n$-manifold $Y$ and consider proper immersions 
$\b: M \to \R \times Y$ of  compact 
smooth $n$-manifolds $M$. By a small perturbation, we may assume that different local branches of $\b(M)$ are {\sf in  general position} at their mutual intersections (see \cite{GG}, Theorem 4.13). In particular, we may assume that $\b$ is $1$-separable in the sense of Definition \ref{def.q-separable}. 
\smallskip

Recall the following definition from \cite{LS} and \cite{Th},  adjusted for our setting. 

\begin{definition}\label{def.k-normal}  A smooth proper immersion $\b: M \to \R \times Y$ is called $k$-{\sf normal} if, for each $k$-tuple of distinct points $x_1, \ldots , x_k \in M$ with $\b(x_1) = \ldots = \b(x_k)$, the images of the tangent spaces $T_{x_1}M, \ldots, T_{x_k}M$ under the differential $D\b$ are in general position in the tangent space $T_y(\R \times Y)$, where $y = \b(x_i)$. Also, for such a $k$-tuple, the images of the tangent spaces $T_{x_1}(\d M), \ldots, T_{x_k}(\d M)$ under the differential $D(\b|_{\d M})$ are in general position in the tangent space $T_y(\R \times \d Y)$. \hfill $\diamondsuit$
\end{definition}

For a given proper immersion $\b: (M, \d M) \to (\R \times Y,\, \R \times \d Y)$, we consider the $k$-fold product map $(\b)^k: (M)^k \to (\R \times Y)^k$. Let $\Sigma_k$ be the preimage of the diagonal $\D \subset (\R \times Y)^k$ under the map $(\b)^k$. For a $k$-normal $\b$, $\Sigma_k$ is a smooth manifold of dimension $n-k+1$ \cite{LS}. Following  \cite{LS}, we call $\Sigma_k$ {\sf the $k$ self-intersection manifold of} $\b$. Note that the same construction applies to $\b|: \d M \to \R \times \d Y$, producing the boundary $\d\Sigma_k$. 

Let $p_1: (M)^k \to M$ be the obvious projection on the first factor of the product. Then $p_1: \Sigma_k \to M$ is an \emph{immersion} \cite{LS}. Its image $p_1(\Sigma_k)$ is the set of points $x_1 \in M$ such that there exist distinct $x_2, \ldots , x_k \in M$ with the property $f(x_2) = f(x_1),\, \ldots ,\, f(x_k) = f(x_1)$. 

If both $M$ and $Y$ are orientable, then so is $\Sigma_k$. In such a case, we denote by $[\Sigma_k]^\b_{\star} \in H_{n-k+1}(Y, \d Y; \Z)$ the image of the relative fundamental class $[\Sigma_k]$ under the composition $\pi \circ \b \circ p_1$, where $\pi: \R \times Y \to Y$ is the obvious projection. Without orientability assumptions, only the classes $[\Sigma_k]^\b_{\star} \in H_{n-k+1}(Y, \d Y; \Z_2)$ are available. \smallskip

Let $\mathcal D_M: H_\ast(M, \d M) \to H^{n-\ast}(M)$ be the Poincar\'{e} duality isomorphism. (Here, depending on the orientability of $M$, the coefficients are $\Z$ or $\Z_2$.)  We have also the Poincar\'{e} duality isomorphism $\mathcal D_{\R \times Y}: H_\ast(\R \times Y,\, \R \times \d Y) \to H^{n+1-\ast}_c(\R \times Y),$ where the subscript $c$ indicates the cohomology with compact support and the choice of coefficients $\Z$ or $\Z/2\Z$ depends on the orientability of $Y$. \smallskip

Let $\nu_\b := \b^\ast(\tau_{\R \times Y})/\b^\ast(D\b(\tau_M))$ be the $1$-bundle over $M$, normal to $\b(M)$, and  let $W^1(\nu_\b)$ denote its first Stiefel-Whitney characteristic class. Put $\tilde\Sigma_k := p_1(\Sigma_k) \subset M$. Then applying the main result of  \cite{LS} to our setting, for $k > 1$, we get a pleasing formula
\begin{eqnarray}\label{eq.L_S} 
\mathcal D_M([\tilde\Sigma_k]) = \pm\big(\b^\ast\circ\mathcal D_{\R \times Y}\circ \b_\ast([\tilde\Sigma_k]) - W^1(\nu_\b)\big)^{k-1}.
\end{eqnarray}

Applying $\pi \circ \b \circ p_1$ to $\Sigma_k$,these constructions have the following direct implication.

\begin{lemma}\label{lem.k-normal imm}
For compact oriented $n$-manifolds $M$ and $Y$ and $k \in [2, n+1]$, a proper $k$-normal immersion $\b: M \to \R \times Y$ generates the homology class $[\Sigma_k]^\b_{\star} \in H_{n-k+1}(Y, \d Y; \Z)$  and its Poincar\'{e} dual $[\Sigma_k]_\b^{\star} \in H^{k-1}(Y; \Z)$. For non-orientable $M$ or $Y$, a proper $k$-normal immersion $\b$ produces similar classes $[\Sigma_k]^\b_{\star}$ and $[\Sigma_k]_\b^{\star}$ in the homology/cohomology with coefficients in $\Z_2$. \hfill $\diamondsuit$
\end{lemma}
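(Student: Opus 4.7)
The plan is to assemble the asserted classes directly from the pieces that have already been set up in the paragraph preceding the lemma. By $k$-normality (Definition \ref{def.k-normal}) and the result of Laudenbach--Sikorav quoted just before the statement, the preimage $\Sigma_k \subset M^k$ of the diagonal $\Delta \subset (\R \times Y)^k$ under $(\b)^k$ is a smooth manifold of dimension $n-k+1$. The boundary condition $\b(\d M) \subset \R \times \d Y$ together with the analogous $k$-normality on the boundary yields $\d\Sigma_k = \Sigma_k \cap (\d M)^k$, mapping under $\pi \circ \b \circ p_1$ into $\d Y$. Since $\Sigma_k$ is the transverse preimage of the (oriented) diagonal in the oriented manifold $(\R \times Y)^k$ under the map $(\b)^k$ from the oriented manifold $M^k$, it inherits a canonical orientation from the normal-bundle splitting; hence $\Sigma_k$ carries a well-defined relative fundamental class $[\Sigma_k, \d\Sigma_k] \in H_{n-k+1}(\Sigma_k, \d\Sigma_k; \Z)$.

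Next I push this class forward. The composition
\[
\pi \circ \b \circ p_1: (\Sigma_k, \d\Sigma_k) \longrightarrow (Y, \d Y)
\]
is a continuous map of pairs, and I define
\[
[\Sigma_k]^\b_{\star} \;:=\; (\pi \circ \b \circ p_1)_\ast\bigl([\Sigma_k, \d\Sigma_k]\bigr) \;\in\; H_{n-k+1}(Y, \d Y; \Z).
\]
This is the first class asserted by the lemma. The Poincar\'e--Lefschetz duality isomorphism for the compact oriented $n$-manifold $Y$ with boundary,
\[
\mathcal D_Y: H_{n-k+1}(Y, \d Y; \Z) \stackrel{\approx}{\longrightarrow} H^{k-1}(Y; \Z),
\]
then produces the cohomology class $[\Sigma_k]_\b^{\star} := \mathcal D_Y\bigl([\Sigma_k]^\b_{\star}\bigr) \in H^{k-1}(Y; \Z)$.

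For the non-orientable case, $\Sigma_k$ need not itself be orientable, but its mod-$2$ fundamental class $[\Sigma_k, \d\Sigma_k]_{\Z_2}$ exists unconditionally, and Poincar\'e--Lefschetz duality over $\Z_2$ requires no orientation hypothesis on $Y$. Re-running the two displayed constructions with $\Z_2$-coefficients yields the corresponding $\Z_2$ homology and cohomology classes, completing the proof.

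The only non-cosmetic step is verifying the orientability (or mod-$2$ analogue), the boundary structure, and the dimension of $\Sigma_k$, all of which I would import from \cite{LS} rather than re-derive. Once these structural facts about $\Sigma_k$ are granted, the remainder is pure functoriality and Poincar\'e--Lefschetz duality, so the main obstacle is bureaucratic rather than conceptual: one must check that the boundary of the manifold $\Sigma_k$ lands inside $(\d M)^k$ and that the induced orientation agrees with the standard boundary-orientation convention, so that $(\pi\circ \b \circ p_1)_\ast$ genuinely lands in the \emph{relative} group $H_{n-k+1}(Y, \d Y; \Z)$.
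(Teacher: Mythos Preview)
Your proposal is correct and follows essentially the same approach as the paper: the paper does not give a separate proof of this lemma but treats it as a direct implication of the constructions in the preceding paragraphs (smoothness and orientability of $\Sigma_k$ from \cite{LS}, the pushforward via $\pi \circ \b \circ p_1$, and Poincar\'e--Lefschetz duality), which is exactly what you have spelled out. One small correction: the reference \cite{LS} in this paper is Lashof--Smale, not Laudenbach--Sikorav.
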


\begin{lemma}\label{lem.Sigma_k_is_bordism_invariant} Let  two proper $k$-normal immersions $$\b_0: (M_0, \d M_0) \to (\R \times Y \times \{0\},\, \R \times \d Y  \times\{0\})$$ $$\b_1: (M_1, \d M_1) \to (\R \times Y \times \{1\}, \R \times \d Y \times \{1\})$$ be cobordant with the help of a proper $k$-normal immersion $$B: (N, \delta N) \to (\R \times Y \times [0, 1],\, \R \times \d Y  \times [0, 1]),$$
where  the boundary $\d N = (M_0 \sqcup -M_1) \cup_{(\d M_0 \sqcup - \d M_1)} \delta N$,  $B|_{M_0} = \b_0$, $B|_{M_1} = \b_1$, and  $B(\delta N) \subset \R \times \d Y\times [0, 1]$.

Then the $k$-intersection manifolds $$\b_0 \circ (p_0)_1 : (\Sigma_k^{\b_0},  \Sigma_k^{\b_0^\d}) \to (\R \times Y \times \{0\},\, \R \times \d Y \times \{0\}),$$
 $$\b_1 \circ (p_1)_1 : (\Sigma_k^{\b_1},  \Sigma_k^{\b_1^\d}) \to (\R \times Y \times \{1\},\, \R \times \d Y \times \{1\})$$ 
are cobordant with the help of the map $$B \circ P_1: (\Sigma_k^B,  \Sigma_k^{B^\delta}) \to (\R \times Y \times [0,1],\, \R \times \d Y \times [0,1]).$$ Here $(p_0)_1: \Sigma_k^{\b_0} \to M_0$, $(p_1)_1: \Sigma_k^{\b_1} \to M_0$, and $P_1: \Sigma_k^B \to N$ denote the projections on the first factors of the $k$-th powers $(M_0)^k$, $(M_1)^k$, and $(N)^k$, respectively. \smallskip
The maps, $(p_0)_1$, $(p_1)_1$, and $P_1$, are immersions.

If the manifolds $M_0, M_1, N,$ and $Y$ are orientable, then so are the manifods $\Sigma_k^{\b_0}$, $\Sigma_k^{\b_1}$, and $\Sigma_k^{B}$.
\end{lemma}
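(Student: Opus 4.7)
The plan is to apply the self-intersection construction of \cite{LS} — already invoked immediately before the lemma for single $k$-normal immersions — to the cobording immersion $B$ itself, and then to unpack the resulting boundary structure. The content of the lemma is really just the naturality of the $\Sigma_k$ construction under passage to a cobordism.

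First, since $B: N \to \R \times Y \times [0,1]$ is a proper $k$-normal immersion of the compact $(n+1)$-manifold-with-corners $N$ into a smooth $(n+2)$-manifold, the cited results yield that $\Sigma_k^B = ((B)^k)^{-1}(\Delta) \setminus \{\textup{wide diagonals}\}$ is a smooth manifold of dimension $(n+1) - k + 1 = n-k+2$ and that the projection $P_1: \Sigma_k^B \to N$ onto the first factor is an immersion. The same reasoning applied to $B|_{\delta N}$, which inherits $k$-normality from the boundary clause of Definition 3.3, yields a smooth $\Sigma_k^{B^\delta}$ of dimension $(n-1)-k+2 = n-k+1$ with an immersive first-factor projection. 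This settles the dimensions and the immersion claims.

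The main step is to decompose the codimension-$1$ boundary of $\Sigma_k^B$. Every point $(x_1, \dots, x_k) \in \Sigma_k^B$ satisfies $B(x_1) = \dots = B(x_k)$, so all $k$ images share a common time coordinate $t \in [0,1]$ and a common image in $\R \times Y$. Hence the three codimension-$1$ strata of $\partial N$ contribute disjointly to $\partial \Sigma_k^B$: the horizontal face $t = 0$ forces every $x_i$ into $M_0$ and reproduces $\Sigma_k^{\beta_0}$; the face $t = 1$ analogously reproduces $\Sigma_k^{\beta_1}$ (with opposite induced orientation); and the vertical face where some $x_i \in \delta N$ forces, by the equality of images and the definition of $\delta N$, all the remaining $x_j$ to lie in $\delta N$ as well, contributing exactly $\Sigma_k^{B^\delta}$. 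This gives the pair-cobordism $\partial \Sigma_k^B = \Sigma_k^{\beta_0} \sqcup (-\Sigma_k^{\beta_1}) \cup_\partial \Sigma_k^{B^\delta}$.

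For the orientation statement, the thin diagonal $\Delta \subset (\R \times Y \times [0,1])^k$ has a canonically oriented normal bundle built from the orientations of the factors, and transversal pullback by $(B)^k$ orients $\Sigma_k^B$ compatibly with the corresponding orientations of $\Sigma_k^{\beta_0}$, $\Sigma_k^{\beta_1}$, and $\Sigma_k^{B^\delta}$. The one point that requires bookkeeping rather than new ideas — and is the closest thing to an obstacle — is the codimension-$2$ corner of $\Sigma_k^B$ where $\delta N$ meets $M_0 \sqcup M_1$; there one checks by running the same argument one dimension lower on $B|_{\delta N}$ that this corner is precisely $\Sigma_k^{\beta_0^\partial} \sqcup \Sigma_k^{\beta_1^\partial}$, so that the output is genuinely a cobordism of pairs, as asserted.
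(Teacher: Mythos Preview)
Your proposal is correct and follows the same approach as the paper's proof: both apply the $k$-self-intersection construction directly to the cobording immersion $B$, invoke transversality of $B^k$ to the diagonal from $k$-normality, and cite \cite{LS} for the immersion and orientation claims. Your argument is in fact considerably more explicit than the paper's terse version, particularly in unpacking the boundary decomposition of $\Sigma_k^B$ via the shared-time-coordinate observation and in addressing the corner structure; the paper simply asserts that $B \circ P_1$ ``delivers a cobordism'' without spelling these steps out.
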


\begin{proof} By the definition of $k$-normality, the $k$-th power $B^k$ of $B$ is transversal to the diagonal $\tilde\Delta  \subset (\R \times Y \times [0, 1])^k$, a manifold with corners $\d Y \times \d[0, 1]$. Thus $\Xi_k^B := (B^k)^{-1}(\tilde\Delta) \subset (N)^k$ will be a compact smooth manifold of dimension $n-k +2$. Its projection $P: (N)^k \to N$ on the first factor $N$ is an immersion \cite{LS}. So the composition of $B \circ P:\, \Xi_k \to \R \times Y \times [0, 1]$ delivers a cobordism between the pair of maps
$$\b_0 \circ (p_0)_1: (\Sigma_k(\b_0), \d\Sigma_k(\b_0)) \to (\R \times Y \times \{0\}, \R \times \d Y \times \{0\})$$
 $$\b_1 \circ (p_1)_1: (\Sigma_k(\b_1), \d \Sigma_k(\b_1)) \to (\R \times Y \times \{1\}, \R \times \d Y \times \{1\}).$$\smallskip
By \cite{LS}, if $M_0, M_1, N,$ and $Y$ are orientable, then so are $\Sigma_k^{\b_0}$, $\Sigma_k^{\b_1}$, and $\Xi_k^B$.
\end{proof}

\begin{corollary}\label{cor.Sigma_k_is_bordism_invariant} Let $\pi: \R \times Y \to Y$ be the obvious projection. Let  proper $k$-normal immersions $\b_0: (M_0, \d M_0) \to (\R \times Y,\, \R \times \d Y)$ and $\b_1: (M_1, \d M_1) \to (\R \times Y,\, \R \times \d Y)$ be cobordant with the help of a proper immersion\footnote{not necessary $k$-normal!} $B: (N, \delta N) \to (\R \times Y \times [0, 1],\, \R \times \d Y  \times [0, 1])$, as in Lemma \ref{lem.k-normal imm}.
 Then both maps $$\pi \circ \b_0 \circ (p_0)_1: (\Sigma_k^{\b_0},  \Sigma_k^{\b_0^\d}) \to (Y, \d Y) \text{\; and \;} \pi \circ \b_1 \circ (p_1)_1: (\Sigma_k^{\b_1},  \Sigma_k^{\b_1^\d}) \to (Y, \d Y)$$ define the same element $[(\Sigma_k^\b, \d\Sigma_k^\b) \to (Y, \d Y)]$ of the relative bordism group $\mathbf B_{n+1-k}(Y, \d Y)$. 

If $M_0, M_1, N,$ and $Y$ are orientable, then the two immersions $\b_0, \b_1$  define the same element of the oriented bordism group $\mathbf{OB}_{n+1-k}(Y, \d Y)$. 
\end{corollary}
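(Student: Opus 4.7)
The plan is to reduce to Lemma \ref{lem.Sigma_k_is_bordism_invariant}, which already handles the case of a $k$-normal cobordism, by perturbing the given immersion $B$ into a $k$-normal proper immersion $B'$ with the same boundary values $\b_0$ and $\b_1$. The corollary then follows by pushing forward, along the projection $\tilde\pi: \R \times Y \times [0,1] \to Y \times [0,1]$, the cobordism of $k$-fold self-intersection manifolds produced by Lemma \ref{lem.Sigma_k_is_bordism_invariant}.

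For the perturbation step, I rely on the fact that $k$-normality is a \emph{generic} condition on the space of proper immersions $N \to \R \times Y \times [0,1]$: the requirement that the $k$-fold product map $B^k$ be transversal to the deepest diagonal (and, separately, that $B|_{\delta N}$ satisfy the analogous condition) is both open and dense, by the standard multi-jet transversality theorem (cf.\ \cite{GG}, Theorem 4.13). Since $B|_{M_0} = \b_0$ and $B|_{M_1} = \b_1$ are already $k$-normal by hypothesis, and the condition is open, I can carry out the perturbation rel a collar neighborhood of $M_0 \sqcup M_1 \subset \d N$, using a bump-function cutoff whose support avoids this collar. By also keeping the support away from the corner $\d Y \times \{0,1\}$, I preserve properness of the immersion and its transversality to the lateral boundary $\R \times \d Y \times [0,1]$. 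The outcome is a $k$-normal proper immersion $B'$ with $B'|_{M_0} = \b_0$ and $B'|_{M_1} = \b_1$.

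Now Lemma \ref{lem.Sigma_k_is_bordism_invariant} applied to $B'$ supplies a compact smooth $(n-k+2)$-manifold $\Sigma_k^{B'} \subset (N)^k$, together with an immersion $P_1: \Sigma_k^{B'} \to N$; the composition $B' \circ P_1$ is a cobordism in $\R \times Y \times [0,1]$ between the pairs $(\Sigma_k^{\b_0}, \d \Sigma_k^{\b_0}) \to (\R \times Y \times \{0\}, \R \times \d Y \times \{0\})$ and $(\Sigma_k^{\b_1}, \d \Sigma_k^{\b_1}) \to (\R \times Y \times \{1\}, \R \times \d Y \times \{1\})$. Composing with $\tilde\pi$ and identifying the top and bottom of $Y \times [0,1]$ with $Y$ yields the required equality $[(\Sigma_k^{\b_0}, \d\Sigma_k^{\b_0}) \to (Y, \d Y)] = [(\Sigma_k^{\b_1}, \d\Sigma_k^{\b_1}) \to (Y, \d Y)]$ in $\mathbf B_{n+1-k}(Y, \d Y)$. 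The oriented version follows verbatim, since Lemma \ref{lem.Sigma_k_is_bordism_invariant} records orientability of $\Sigma_k^{B'}$ under the stated hypotheses on $M_0, M_1, N, Y$.

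The main obstacle I anticipate is executing the relative perturbation cleanly: $N$ is a manifold with corners (at $\d Y \times \d[0,1]$), and one must simultaneously preserve three open conditions -- being an immersion, being proper (transversal to the lateral boundary), and being $k$-normal on both the interior and $\delta N$ -- while keeping $B$ fixed on $M_0 \sqcup M_1$. This is routine but technically delicate; the cleanest implementation is to fix a smooth collar of $M_0 \sqcup M_1$ and of the corner locus on which no perturbation is performed, apply multi-jet transversality in the complement, and blend via a bump function supported away from these collars.
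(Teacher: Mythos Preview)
Your proposal is correct and follows essentially the same route as the paper: perturb the cobordism $B$ within proper immersions, relative to $M_0 \sqcup M_1$, into a $k$-normal one via the Thom Multijet Transversality Theorem, and then invoke Lemma~\ref{lem.Sigma_k_is_bordism_invariant}. Your discussion of the relative perturbation near corners is more careful than the paper's, which simply asserts the perturbation exists and concludes.
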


\begin{proof} By the Thom Multijet Transversality Theorem \cite{Th1}, we may smoothly perturb the cobordism map $B$ within the space of immersions, without changing it on $\d N \setminus \delta N = M_0 \sqcup M_1$, so that its $k$-th power $B^k$ will become transversal to the diagonal $\tilde\Delta  \subset (\R \times Y \times [0, 1])^k$. For such a perturbation, $B$ becomes $k$-normal, and $\Xi_k := (B^k)^{-1}(\tilde\Delta) \subset (N)^k$ is a compact manifold of dimension $n-k +2$. Now the claim follows from Lemma \ref{lem.Sigma_k_is_bordism_invariant}. 
\end{proof}

\begin{conjecture}\label{conj.separation} Any proper immersion $\b: (M, \d M) \to (\R\times Y, \R\times \d Y)$ admits a $C^\infty$-approximation $\tilde\b$ such that the immersion $\tilde\b$ is $k$-normal for all $k \in [2, n+1]$ and each map $\tilde\b \circ p_1 |: (\Sigma_k^{\tilde\b},  \Sigma_k^{\tilde\b^\d}) \to (\R \times Y, \R \times \d Y)$ is transversal to the $1$-foliation $\mathcal L$, generated by the fibers of the obvious projection $\pi: \R \times Y \to Y$. 
\hfill $\diamondsuit$
\end{conjecture}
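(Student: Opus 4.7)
The plan is a two-stage application of the Thom Multijet Transversality Theorem \cite{GG}, with a higher-jet refinement addressing the hard range of $k$.

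\emph{Stage 1: $k$-normality for all $k \in [2, n+1]$.} For each such $k$, consider the multi-$1$-jet bundle $J^1_k(M, \R \times Y)$ over the configuration space $M^{(k)}_*$ of ordered $k$-tuples of distinct points of $M$. The stratum $\D_k \subset J^1_k(M, \R\times Y)$ where the $k$ target points coincide has codimension $(k-1)(n+1)$. By the Thom Multijet Transversality Theorem, a $C^\infty$-residual set of immersions $\tilde\b$ has $k$-fold $1$-jet extension $j^1_k\tilde\b: M^{(k)}_* \to J^1_k(M, \R\times Y)$ transverse to $\D_k$; combined with the open condition that the $k$ tangent-hyperplane images be in general position, this is $k$-normality. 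Intersecting the residual sets over $k = 2, \ldots, n+1$ (and the analogous ones on $\d M$ with target $\R\times\d Y$) yields a single $\tilde\b$ that is $k$-normal in the full required range.

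\emph{Stage 2: Transversality of $g_k := \tilde\b \circ p_1|_{\Sigma_k^{\tilde\b}}$ to $\mathcal L$.} Given $k$-normality, $\Sigma_k^{\tilde\b}$ is smooth of dimension $n-k+1$, and at a point $\xi = (x_1, \ldots, x_k) \in \Sigma_k^{\tilde\b}$ above $y = \tilde\b(x_i)$, the image tangent space equals $\bigcap_{i=1}^k D\tilde\b_{x_i}(T_{x_i}M)$. Transversality of $g_k$ to $\mathcal L$ at $\xi$ amounts to this intersection omitting the vertical line $T_y\mathcal L$ -- equivalently, at least one branch of $\tilde\b(M)$ through $y$ is transverse to $\mathcal L$. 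Define the bad multijet subvariety $B_k \subset J^1_k(M, \R\times Y)$ to be the locus of $k$-jets with coincident target whose $k$ tangent hyperplanes all contain the vertical at that target. Each ``hyperplane contains vertical'' is a codimension-$1$ condition independent of the others, so $\mathrm{codim}(B_k) = (k-1)(n+1) + k$. A further $C^\infty$-generic perturbation of the Stage-$1$ immersion makes $j^1_k\tilde\b$ transverse to every $B_k$, giving a residual bad set in $M^{(k)}_*$ of dimension $kn - (k-1)(n+1) - k = n - 2k + 1$.

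\emph{Main obstacle.} For $k > (n+1)/2$ the dimension $n-2k+1$ is negative, so the bad locus is empty and strict transversality of $g_k$ to $\mathcal L$ is achieved automatically. For $2 \leq k \leq (n+1)/2$, the residual bad set inside $\Sigma_k^{\tilde\b}$ is generically a nonempty submanifold of dimension $n-2k+1$, and $1$-jet multijet transversality alone does not suffice. This is the hard part of the conjecture: one must stratify $B_k$ by the orders of tangency of individual branches to $\mathcal L$ and iteratively apply Thom--Boardman multijet transversality at $2$-jets, $3$-jets, and so on, reducing the dimension of the residual locus at each step until it is empty. Verifying that the relevant higher-jet strata are smooth of the expected codimension in the corresponding multijet bundles, and that successive perturbations can be performed while preserving $k$-normality and the transversality already secured at lower jet orders, is the principal technical difficulty of the proof.
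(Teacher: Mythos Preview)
The statement you are addressing is labeled a \emph{Conjecture} in the paper, and the paper offers no proof of it. Immediately after stating it, the author remarks only that the conjecture ``would imply that, by a perturbation, one can separate in $\R\times Y$ the self-intersections of $\b$ from its tangencies to $\mathcal L$'' and that ``it is well-known that, for $k \geq 2$, each individual map $(\tilde\b \circ p_1)|_{\Sigma_k^{\tilde\b}}$ can be perturbed to become transversal to $\mathcal L$'' --- the open issue being whether all of this can be achieved by perturbing $\tilde\b$ itself, simultaneously for all $k$, while preserving $k$-normality. There is therefore no paper proof to compare your proposal against.

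Your proposal is an outline, not a proof, and you say so yourself. Stage~1 (achieving $k$-normality for all $k\in[2,n+1]$) is indeed routine multijet transversality. In Stage~2 you correctly compute that the bad locus $B_k$ has codimension $(k-1)(n+1)+k$ in $J^1_k$, so that for $k>(n+1)/2$ the residual set in $M^{(k)}_\ast$ is generically empty; and you correctly flag the range $2\le k\le (n+1)/2$ as the genuine obstacle. Your proposed cure --- stratify $B_k$ by higher-order tangency data and iterate Thom--Boardman multijet transversality at successively higher jets --- is a natural line of attack, but you yourself label the verification that the higher-jet strata have the expected codimension, and that each new perturbation preserves the transversalities already secured, as ``the principal technical difficulty of the proof''. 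That unresolved difficulty is exactly why the author records the statement as a conjecture rather than a theorem. In short: your outline reaches the same wall the paper stops at, and does not get past it.
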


The conjecture would imply that, by a perturbation, one can separate in $\R\times Y$ the self-intersections of $\b$ from its tangencies to $\mathcal L$. It is well-known that, for $k \geq 2$, each individual map $(\tilde\b \circ p_1)|_{\Sigma_k^{\tilde\b}}$ can be perturbed to become transversal to $\mathcal L$.

\subsection{Constrained patterns of tangency to the foliation $\mathcal L$ and the classifying maps to the spaces $\mathcal P_d^{\mathbf c\Theta}$}

Let $X$ be a compact topological space. We denote by $C(X)$ the algebra of real continuous functions on $X$. Let 
$$\{P(u) = u^d + a_{d-1}u^{d-1} + \ldots + a_1 u + a_0\}$$ be a family of real monic polynomials on $X$, where the functional coefficients $a_i \in C(X)$. We fix a closed subposet $\Theta \subset \mathbf\Om_{\leq d}$ and assume that, for each point $x\in X$, the polynomial $$P(u, x) = u^d + a_{d-1}(x)u^{d-1} + \ldots + a_1(x) u + a_0(x)$$ has a real zero divisor whose combinatorial pattern $\om(x) \in \mathbf c\Theta := \mathbf \Om \setminus \Theta$.  

Evidently, such polynomial family $P(u)$ gives rise to a continuous map $\Phi_P: X \to \mathcal P_d^{\mathbf c\Theta}$, defined by the formula $x \to P(\sim, x)$, where the target space $\mathcal P_d^{\mathbf c\Theta}$ is an open subset of $\mathcal P_d \approx \R^d$. The preimage of $\mathbf c\Theta$-stratification of $\mathcal P_d^{\mathbf c\Theta}$ under $\Phi_P$ produces a an interesting stratification $\mathcal S(X, P)$ of the space $X$. 

When $X$ is a smooth compact manifold, it is possible to perturb the coefficients of $P$ to form a new family $\tilde P$ so that the perturbed map $\Phi_{\tilde P}$ will be transversal to each pure stratum $\{\mathcal P_d^{\om}\}_{\om \in \mathbf c\Theta}$. If $X$ has a boundary $\d X$, then we may  assume also that $\Phi_{\tilde P}\big |_{\d X}$ is transversal to each pure stratum $\mathcal P_d^{\om}\subset \mathcal P_d^{\mathbf c\Theta}$.

For such a polynomial family $\{\tilde P(u, x)\}_{x \in X}$, the stratification $\mathcal S(X, \tilde P)$ of $X$ mimics the geometry of the $\mathbf c\Theta$-stratification of the target space $\mathcal P_d^{\mathbf c\Theta}$ and gives rise to a variety of topological invariants of $X$, generated by the polynomial family. 
\smallskip

The polynomial family $P = \{P(u, x)\}_{x \in X}$ as above generates  loci 
\begin{eqnarray}
\d\mathcal E_P =_{\mathsf{def}} \; \{(u, x) | \; P(u, x) = 0 \} \subset \R \times X,\nonumber \\
\mathcal   E_P =_{\mathsf{def}} \; \{(u, x) | \; P(u, x) \leq 0 \} \subset \R \times X,
\end{eqnarray}
 which are maped on $X$ by the obvious projection $\pi: \R \times X \to X$. The fiber over $x \in X$ of the projection $\pi^\d: \d\mathcal E_P \to X$ is the support of a real zero divisor $D_\R(x)$ of $P(u, x)$ of a degree $\leq d$. For some $x \in X$, the $x$-fiber may be empty;  then the divisor $D_\R(x)$ is equal to zero.
\smallskip

This setting leads to a natural question: ``What are the maximal sets $\{U_\a \subset X\}_\a$ (open, closed, any ...) which admit a continuous section $\s_\a: U_\a \to \d\mathcal E_P$ of the projection $\pi^\d: \d\mathcal E_P \to X$?" In other words, given a particular root $u_\star$ of $P(u, x_\star)$, what is the \emph{maximal} set $U_{u_\star, x_\star} \subset X$ over which a global solution of the equation $\{P(u, x) = 0\}$, which extends $u_\star$, exists?  The question resembles the question about the maximal domain of an analytic continuation of a given analytic function $f$ (leading to the Riemannian surface associated with $f$). Perhaps, tackling this question deserves a different paper, dealing with the interplay between the fundamental groups $\pi_1(X)$ and $\pi_1(\mathcal P_d^{\mathbf c\Theta})$...
\smallskip

Let $C^k(\sim)$ denote the space of $k$ times continuously differentiable functions. When $X$ is a $C^k$-differentiable manifold and all the coefficients of polynomials $a_i \in C^k(X)$, then $\d\mathcal E_P$ is a $C^k$-differentiable hypersurface in $\R \times X$, since $\d\mathcal E_P$ admits the $C^k$-differentiable parametrization  $(u, a_{d-1}, \ldots, a_1) \to (u, a_{d-1}, \ldots, a_1, P(u) - P(0)).$ \smallskip

Now, given an appropriate hypersurface $\d \mathcal E \subset \R \times X$, we aim \emph{to reverse} the correspondence $P \leadsto (\mathcal E_P, \d \mathcal E_P)$ by constructing a polynomial family $P = P(\d \mathcal E)$ on $X$ that generates $\d\mathcal E$.\smallskip

For a given $d$, let us consider the crucial for us $(d+1)$-dimensional domain
\begin{eqnarray}\label{eq.E}
\mathcal E_d & =_{\mathsf{def}} & \big\{(u, P) \in  \R \times \mathcal P_d |\; P(u) \leq 0\big\} \text{\; and its boundary} \nonumber \\
\d\mathcal E_d & =_{\mathsf{def}} & \big\{(u, P) \in  \R \times \mathcal P_d |\; P(u) = 0\big\}. 
\end{eqnarray}

Since $\d\mathcal E_d$ is a graph of the map $(u, a_1, \dots , a_{d-1}) \longrightarrow  a_0 = -\sum_{i= 1}^{d-1} a_i u^i$, it follows that $\mathcal E_d$ is diffeomorphic to the half-space $\R^{d+1}_+$, and $\d\mathcal E_d$ to the space $\R^d$.  \smallskip

We denote by $\pi_d: \mathcal E_d \to \mathcal P_d$ the obvious projection map.  For $d \equiv 0 \mod 2$, its fibers are compact (are finite unions of closed intervals and singletons).
\smallskip

Let $Y$ be a smooth compact manifold, and $\b: (M, \d M) \to (\R \times Y, \R \times \d Y)$ an immersion. We denote by $\mathcal L$ the one-dimensional oriented foliation, defined by the fibers of the projection map $\pi: \R \times Y \to Y$, and by $\mathcal L_y$ the fiber $\pi^{-1}(y)$.\smallskip 
 
If $\d Y \neq \emptyset$, we add a collar to $Y$ and denote the resulting manifold by $\hat Y$. Similarly, if $\d M \neq \emptyset$, we add a collar to $M$, which results in a new manifold $\hat M$. We still assume that $\b(\d M) \subset \R \times \d Y$ and that $\hat \b(\hat M \setminus M) \subset \R \times (\hat Y\setminus Y)$. Using that $\b$ is transversal to $\R \times \d Y$, we may assume that $\b: M \to \R \times Y$ extends to an immersion  $\hat\b: \hat M \to \R \times \hat Y$. \smallskip

Next, for each point $b \in M$, we introduce a natural number 
$\mu^\b(b)$,  {\sf the multiplicity of tangency} between the 
$b$-labeled local branch of $\hat\b(\hat M)$ 
and the leaf of $\mathcal L$ through the point $a := \b(b)$. If a local branch $\tilde M$ of $\hat\b(\hat M)$ is given as the zero set of a locally defined smooth function $z: \R \times \hat Y \to \R$ which has $0$ as its regular value, then the multiplicity/order of tangency of the $\pi$-fiber $\mathcal L_y$ with $\tilde M$ at a point $a = (u, y)\in \tilde M \cap \mathcal L_y$ is defined as the natural number $\mu := \mu^\b(b)$ such that the jet $\mathsf{jet}^{\mu -1}_{a}(z|_{\mathcal L_y})=0$, but  $\mathsf{jet}^\mu_{a}(z|_{\mathcal L_y})\neq 0$.
In particular, if the branch is transversal to the leaf, then $\mu^\b(b) =1$.\smallskip

We fix a natural number $d$ and  assume that an immersion $\b: M \to \R \times Y$ is such that each leaf 
$\{\mathcal L_{ y}\}_{y \in Y}$ hits the image $\b(M)$ so that: 
\begin{eqnarray}\label{eq5.1}
  \mu^\b(y) =_{\mathsf{def}} \sum_{\{a\, \in \, \mathcal L_{y}\, \cap \, \b(M)\}} 
  \Big(\sum_{\{b\, \in \, \b^{-1}(a)\}} \mu^\b(b)\Big) \; \leq \; d.
\end{eqnarray}

Note that this assumption rules out the infinite multiplicity tangencies $\mu^\b(y)$. 
\smallskip

We order the points $\{a_i\}$ of $\mathcal L_{y} \cap \b(M)$ by the 
values of their projections on $\R$ and introduce {\sf the combinatorial pattern} 
$\om^\b(y)$ of $y \in Y$ as the ordered sequence of multiplicities 
$$\big\{\omega^\b_i(y) =_{\mathsf{def}} \sum_{\{b\, \in \, \b^{-1}(a_i)\}} \,
\mu^\b(b)\big\}_i.$$ We denote by $D^\b(y)$ the real divisor on $\mathcal L_y$, whose support is $\mathcal L_{y} \cap \b(M)$ and whose multiplicities are the $\{\omega^\b_i(y)\}_i$.\smallskip

In particular, the combinatorial type of $D^\b(y)$ is sensitive to the points $ y \in Y$ over which the multiple self-intersections of $\b$ reside. Note also that the definition of the divisor $D^\b(y)$ depends only on the image $\b(M) \subset \R \times Y$ of $M$.
\smallskip

A special case of the following theorem my be found in \cite{KSW1}, Proposition 3.1.

\begin{theorem}\label{th.LIFT} 
  Let $M$ and $Y$ be smooth compact $n$-manifolds. \smallskip
     
  For any  proper (in the sense of Definition \ref{def.im}) immersion $\b: (M, \d M) \to (\R \times Y,\, \R \times \d Y)$ which satisfies inequality
  \eqref{eq5.1} and the parity condition $\mu^\b(y) \equiv d \mod 2$, there exists a smooth map $\Phi^\b: Y \to \cP_d$ such that the locus
  $$\big\{(u, y) \in  \R \times Y |\;\; \Phi^\b(y)(u) = 0\big\} = 
	\b(\d X).$$

  If, for a given pair of closed posets $\Theta \subset \Lambda \subset \bfOm_{\langle d]}$, 
  the immersion $\b$ is such that no $\omega^\b(y)$ belongs to
  $\Theta$ and, for $y \in \d Y$, no $\omega^\b(y)$  belongs to
  $\Lambda$, then $\Phi^\b$ maps $Y$ to the open subset $\cP_d^{\mathbf c\Theta} \subset  \cP_d$ and $\d Y$ to the open subset $\cP_d^{\mathbf c\Lambda}  \subset  \cP_d$.  
 Moreover, any two such maps $\Phi^\b_0, \Phi^\b_1: (Y, \d Y) \to (\cP_d^{\mathbf c\Theta}, \cP_d^{\mathbf c\Lambda})$ are homotopic  as maps of pairs.
\end{theorem}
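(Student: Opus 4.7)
The plan is to define $\Phi^\beta: Y \to \mathcal P_d$ fiberwise by the explicit formula
$$\Phi^\beta(y)(u) \;:=\; (u^2+1)^{(d - \mu^\beta(y))/2} \cdot \prod_i (u - u_i(y))^{\omega_i^\beta(y)},$$
where the $u_i(y) \in \R$ are the coordinates of the points of $\mathcal L_y \cap \beta(M)$ and the $\omega_i^\beta(y)$ are their tangency multiplicities; the parity condition makes $(d - \mu^\beta(y))/2$ a nonnegative integer, so $\Phi^\beta(y)$ is a monic real polynomial of degree $d$ with real divisor $D^\beta(y)$. The nontrivial issue is smoothness of $\Phi^\beta$ in $y$, since the roots $u_i(y)$ can bifurcate or coalesce. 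I would handle this via the Malgrange preparation theorem. Fix $y_0 \in Y$ and enumerate $\mathcal L_{y_0} \cap \beta(M) = \{a_1, \ldots, a_q\}$ with preimages $\beta^{-1}(a_i) = \{b_{i,1}, \ldots, b_{i,r_i}\}$; each local branch of $\beta(M)$ through $b_{i,j}$ is the zero set of a smooth germ $z_{i,j}(u, y)$ whose order of vanishing in $u$ at $a_i$ is $\mu^\beta(b_{i,j})$. Malgrange preparation yields $z_{i,j} = h_{i,j} \cdot Q_{i,j}$ with $h_{i,j}$ a smooth unit and $Q_{i,j}(u, y)$ a monic Weierstrass polynomial in $u$ of degree $\mu^\beta(b_{i,j})$ with smooth coefficients in $y$. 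Uniqueness of the Weierstrass factor makes $Q_{i,j}$ intrinsic to the branch (invariant under multiplication of $z_{i,j}$ by a unit), so near $y_0$ the product $(u^2+1)^{(d-\mu^\beta(y_0))/2} \cdot \prod_{i,j} Q_{i,j}(u, y)$ is a smooth local lift of $\Phi^\beta$. Since the fiberwise formula is intrinsic, these local lifts agree on overlaps and glue to a globally smooth map $\Phi^\beta: Y \to \mathcal P_d$.

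For the poset constraints, note that by construction the combinatorial pattern of $\Phi^\beta(y)$ equals $\omega^\beta(y)$, which by hypothesis lies in $\mathbf c\Theta$ for every $y \in Y$ and in $\mathbf c\Lambda$ for $y \in \partial Y$; hence $\Phi^\beta$ is a map of pairs $(Y, \partial Y) \to (\mathcal P_d^{\mathbf c\Theta}, \mathcal P_d^{\mathbf c\Lambda})$, and its zero locus in $\R \times Y$ is exactly $\beta(M)$ with the prescribed multiplicities.

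For homotopy uniqueness I would use a straight-line interpolation. Any two maps $\Phi^\beta_0, \Phi^\beta_1$ of the stated form have the same real divisor $D^\beta(y)$ at every $y$, so both factor as $\Phi^\beta_k(y)(u) = P^{\mathrm{real}}(u, y) \cdot R_k(u, y)$ with common real factor $P^{\mathrm{real}}(u, y) := \prod_i (u - u_i(y))^{\omega_i^\beta(y)}$ and with $R_k(u, y)$ monic in $u$ of even degree $d - \mu^\beta(y)$ having no real roots. Such an $R_k$ is strictly positive on $\R$ (monic of even degree with no sign changes), and the set of monic polynomials of a fixed even degree that are strictly positive on $\R$ is convex; consequently $(1-t) R_0 + t R_1$ has the same properties, so the straight-line homotopy $\Phi^\beta_t := (1-t) \Phi^\beta_0 + t \Phi^\beta_1$ preserves the real divisor $D^\beta(y)$ for all $t \in [0, 1]$ and therefore stays in $\mathcal P_d^{\mathbf c\Theta}$, with $\Phi^\beta_t(\partial Y) \subset \mathcal P_d^{\mathbf c\Lambda}$. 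The principal obstacle in this plan is the smoothness step: the naive fiberwise formula is only continuous, and making it smooth across loci where $\omega^\beta(y)$ jumps (coalescing tangencies, appearing or disappearing self-intersections) is precisely what requires the full strength of Malgrange preparation together with the intrinsic nature of its Weierstrass factor.
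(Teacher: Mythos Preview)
Your overall strategy---Malgrange preparation locally, straight-line homotopy for uniqueness---matches the paper's, and your uniqueness argument is essentially the paper's own. But the gluing step has a real gap.

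The issue is your claim that ``these local lifts agree on overlaps.'' They do not. Your local lift near $y_0$ is $(u^2+1)^{(d-\mu^\beta(y_0))/2}\prod Q_{i,j}(u,y)$, where the exponent $(d-\mu^\beta(y_0))/2$ and the total degree $\mu^\beta(y_0)$ of $\prod Q_{i,j}$ are \emph{frozen} at the value for the center $y_0$. Near a different center $y_0'$ with $\mu^\beta(y_0')\neq\mu^\beta(y_0)$ you get a different polynomial. Concretely, take $d=2$ and a single branch tangent to $\mathcal L_{y_0}$ with order $2$: the Weierstrass factor is $Q(u,y)=u^2+a(y)u+b(y)$, which for $y$ just past $y_0$ has a pair of complex conjugate roots; meanwhile, centered at such a $y_0'$ (where $\mu^\beta(y_0')=0$) your local lift is the constant polynomial family $u^2+1$. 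These do not agree. For the same reason your fiberwise formula $(u^2+1)^{(d-\mu^\beta(y))/2}\prod (u-u_i(y))^{\omega_i^\beta(y)}$ is not even continuous in $y$: it jumps from $(u-u_0)^2$ to $u^2+1$ as the pair of real roots coalesce and go complex. So the local Malgrange lifts are \emph{not} local expressions of a single global formula.

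The paper handles this by observing that although the local lifts disagree, each of them lands, for every $y$, in the set of monic degree~$d$ polynomials whose real divisor equals $D^\beta(y)$, and this set is \emph{convex}. Hence a partition-of-unity combination $\sum_i\phi_i(y)\,\tilde P_{y_i}(u,y)$ is still monic of degree $d$ with the same real divisor, and is globally smooth. This convexity/partition-of-unity step is exactly what is missing from your argument; once you insert it, your proof coincides with the paper's.
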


\begin{proof} Let us denote by $\hat M^\circ$ the interior of $\hat M$ and by  $\hat Y^\circ$ the interior of $\hat Y$.
  
 We claim that the set $\hat\b(\hat M^\circ)$ may be viewed as the solution set of the equations 
  $$\big\{u^d + \sum_{j=0}^{d-1}a_j(y)\,u^j = 0\big\}_{y \in \hat Y^\circ},$$
 where $\{a_j: \hat Y^\circ \to \R\}_j$ are some smooth functions. 

  Let us justify this claim. By Lemma 4.1 from \cite{K2} and 
  Morin's Theorem \cite{Mor} (both based on the Malgrange Preparation Lemma), if a particular branch 
  $\hat\b(\hat M^\circ)_\kappa$ of $\hat\b(\hat M^\circ)$ is tangent to the leaf $\mathcal L_{y_\star}$ 
  at a point $a_\star = (\a, y_\star)$ with the order of tangency $j = j(a_\star, \kappa)$, then 
  there is a system of local coordinates $(\tilde u, \tilde x, \tilde z)$ in the 
  vicinity of $a \in \R \times \hat Y^\circ$ such that:

  \begin{itemize}
    \item[(1)] 
      $\hat\b(\hat M^\circ)_\kappa$ is given by the equation 
		  $\big\{ \tilde u^j + \sum_{k=0}^{j-1}\,\tilde x_k \, \tilde u^k = 0\big\}$,
		  where $\tilde x_k(y_\star) = 0$ and $\tilde u(\a) = 0$,

    \item[(2)] each nearby leaf $\mathcal L_{y}$ is given by the 
	    equations $\{\tilde x = \overrightarrow{const},\, \tilde z = \overrightarrow{const'}\}$. \end{itemize}
 
  Letting $\tilde u= u -\a$ and writing $\tilde x_k$'s as smooth functions of 
  $y \in \hat Y^\circ$, the same locus $\hat\b(\hat M^\circ)_\kappa$ can be given by the equation
   \begin{eqnarray}\label{eq5.2}
   \big\{P_{\a, \kappa}(u, y) := 
  (u-\a)^j + \sum_{j=0}^{j-1}a_{\kappa, k}(y)\,(u-\a)^k = 0\big\}, 
  \end{eqnarray} 
  where $a_{\kappa, k}: \hat Y^\circ \to \R$ are smooth functions, vanishing at 
  $y_\star$. 
  Therefore, there exists an open neighborhood $U_{y_\star}$ of 
  $y_\star$ in $\hat Y^\circ$ such that, in $\R \times U_{y_\star}$, 
  the locus $\hat \b(\hat M^\circ)$ is given by the monic polynomial equation 
  \begin{eqnarray}\label{eq5.3}
  \Big\{P_{y_\star}(u, y) := 
    \prod_{(\a,\, y_\star)\, \in \, \mathcal L_{y_\star}\, \cap\, \hat\b(\hat M^\circ)} 
    \Big(\prod_{\kappa \in A_\a} P_{\a, \kappa}(u, y)\Big) = 0 \Big\},
    \end{eqnarray}
  of degree $\mu^\b(y_\star) \leq d$ in $u$. Here the finite set $A_\a$ 
  labels the local branches of $\b(M)$ that contain the point 
  $(\a, y_\star) \in \mathcal L_{y_\star} \cap \b(M)$. 

  By multiplying $P_{y_\star}(u, y)$ with 
  $(u^2 +1)^{\frac{d- \mu^\b(y_\star)}{2}}$, we get a polynomial 
  $\tilde P_{y_\star}(u, y)$ of degree $d$. For each $y \in U_{y_\star}$, $\tilde P_{y_\star}(u, y)$ shares with $P_{y_\star}(u, y)$ the zero set $\hat\b(\hat M^\circ) \cap (\R \times U_{y_\star})$, as well as the divisors $D^{\hat\b}(y)$.  

  For each $y \in \hat Y^\circ$, we consider the space $\mathcal X_{\hat \b}(y)$ of monic polynomials $\tilde P(u)$ of degree $d$ such that their real divisors 
  coincide with the $\hat\b$-induced divisor $D^{\hat\b}(y)$. We view 
  $\mathcal X_{\hat \b} =_{\mathsf{def}}\, \coprod_{y \in \hat Y^\circ} \mathcal X_{\hat\b}(y)$ 
  as a subspace of $\hat Y^\circ \times \mathcal P_d$. It is equipped with the obvious 
  projection $p: \mathcal X_{\hat \b} \to \hat Y^\circ$. The smooth sections of the map $p$ are exactly the smooth functions $\tilde P(u, y)$ that interest us. By the previous argument, $p$ admits a \emph{local} smooth section over the vicinity of each $y_\star \in \hat Y^\circ$.
  
  Evidently, each $p$-fiber $\mathcal X_{\hat\b}(y)$ is a convex set. 
  Thus, given finitely many smooth 
  sections $\{\sigma_i\}_i$ of $p$, we conclude that 
  $\sum_i \phi_i \cdot \sigma_i$ 
  is again a section of $p$, provided that the smooth functions 
  $\phi_i: Y \to [0, 1]$ have the property $\sum_i \phi_i \equiv \mathbf 1$.
  Note that an individual term $\phi_i \cdot \sigma_i$ may not belong to $\mathcal P_d$ due to the failure of the polynomials to be monic.
  \smallskip

Since $Y$ is compact, it admits a finite cover by the open sets $\{U_{y_i} \subset \hat Y^\circ\}_i$ as above (see formulas (\ref{eq5.2}), (\ref{eq5.3})). 
  Let $\{\phi_i: Y \to [0, 1]\}_i$ be a smooth partition of unity, 
  subordinated to this finite cover. Then the monic $u$-polynomial 
  $$\tilde P(u,y) := \sum_i \phi_i(y)\cdot 
     \tilde P_{y_i}(u, y)$$ of degree $d$ has the desired 
  properties. In particular, its divisor $\tilde D^\b(y)= D^\b(y)$ for each 
  $y \in Y$. Thus, using $\tilde P(u, y)$, any immersion 
  $\b: (M, \d M) \to (\R \times Y,\, \R \times \d Y)$, such that: \smallskip
  
  (i) $\b$ is transversal to $\R \times \d Y$, 
  
  (ii) no $\omega^\b(y)$ belongs to $\Theta$,  
  
  (iii) for $y \in \d Y$, no $\omega^\b(y)$ belongs to $\Lambda$, \smallskip
  
\noindent  is realized by a smooth map $\Phi^\b: (Y, \d Y) \to (\cP_d^{\mathbf c\Theta}, \cP_d^{\mathbf c\Lambda})$ for which $\b(M) = \{\Phi^\b(y)(u) = 0\big\}$. 
  
If two such maps $\Phi^\b_0, \Phi^\b_1: (Y, \d Y) \to (\cP_d^{\mathbf c\Theta}, \cP_d^{\mathbf c\Lambda})$ share the same divisor $D^\b(y)$ for each $y \in Y$, then the quotients $\Phi^\b_0(y)/ \Phi^\b_1(y)$ and $\Phi^\b_1(y)/ \Phi^\b_0(y)$ are \emph{strictly positive}  rational functions  
and their numerators and denominators are monic non-vanishing polynomials of degree $d$.  
Such rational functions form a convex set $\mathcal Q_+(d)$ that retracts to the point-function $\mathbf 1$. Therefore $\Phi^\b_0, \Phi^\b_1$ produce a continuous map $\Phi^\b_0/ \Phi^\b_1$ into a contractible space $\mathcal Q_+(d)$. As a result, by the linear homotopy $\{(1-t)\Phi^\b_0+ t\Phi^\b_1\}_{t \in [0, 1]}$, $\Phi^\b_0, \Phi^\b_1$ are homotopic maps.
\end{proof}

\begin{figure}[ht]
\centerline{\includegraphics[height=2.7in,width=5in]{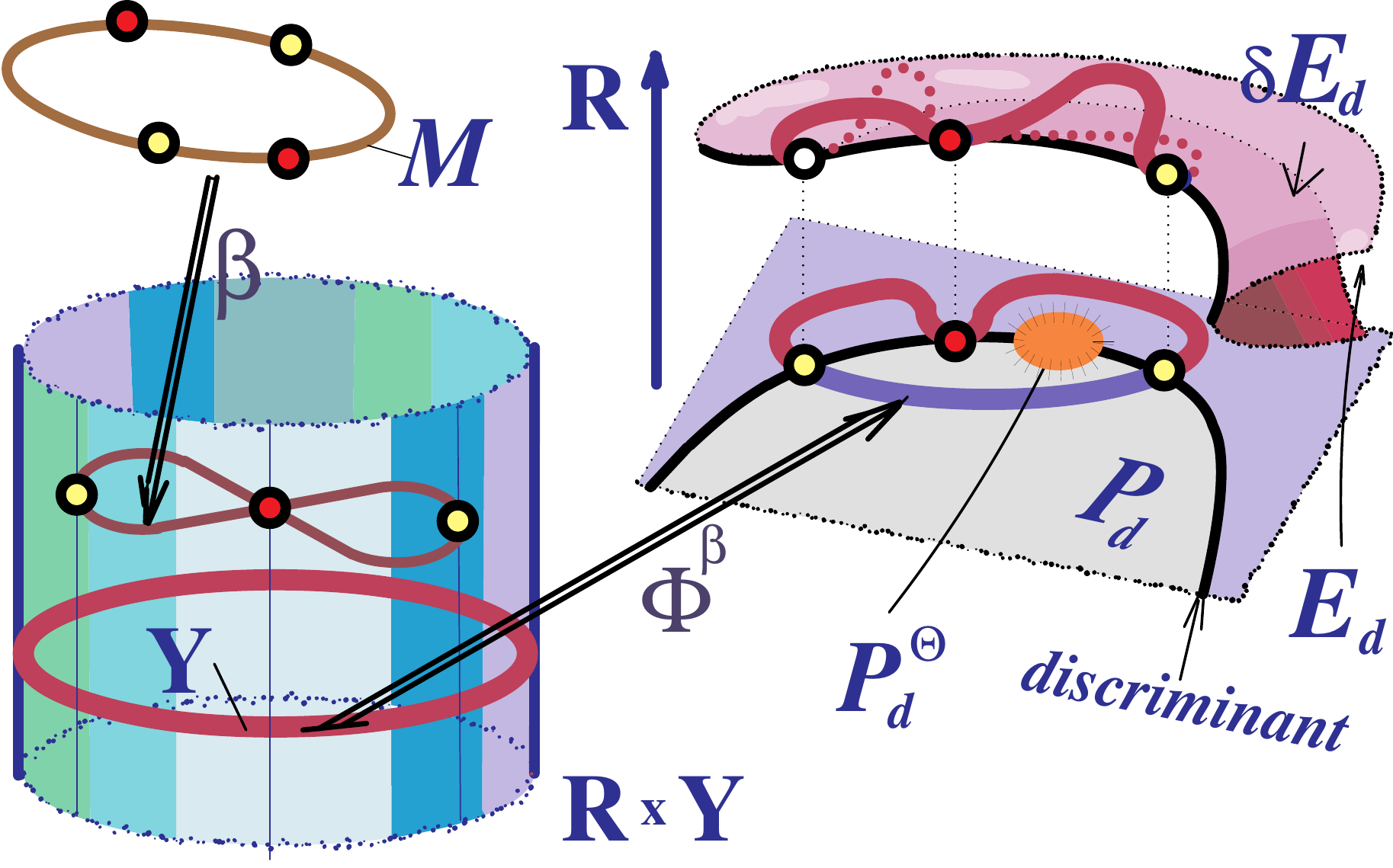}}
\bigskip
\caption{\small{An immersion $\b:M \to \R \times Y$ and the classifying map $\Phi^\b: Y \to \cP_d^{\mathbf c\Theta}$ its generates. Note that $\b(M) = (\mathsf{id}\times \Phi^\b)^{-1}\big((\R \times \Phi^\b(Y)) \cap \d\mathcal E_d\big)$}. To get the figure realistic in $3D$, we have chosen the simplest possible case $d=2$, $\Theta = \emptyset$. } 
\label{fig.quasitopy}
\end{figure}

Fig. 1, which illustrates the construction of the classifying map $\Phi^\b$, reveals an interesting duality: if $(u_\star, y_\star) \in \R \times Y$ is the point of {\it transversal} self-intersection of $\b(M) = ``\infty"$, then $y_\star$ is mapped by $\Phi^\b$ to a point where the curve $\Phi^\b(Y)$ is {\it tangent} to the discriminant variety $\mathcal D_2$; in contrast, if $(u_\ast, y_\ast) \in \R \times Y$ is the point of {\it tangency} of $\b(M)$ to $\mathcal L$, then $y_\ast$ is mapped by $\Phi^\b$ to a point  where  $\Phi^\b(Y)$ is {\it transversal} to $\mathcal D_2$.

\begin{definition}\label{def.E-regular} 
Let $Y$ be a smooth compact $n$-manifold and the domain $\mathcal E_d \subset \R \times \mathcal P_d$ be as in (\ref{eq.E}). A smooth map $\Phi: Y \to \mathcal P_d$ is called $(\d\mathcal E_d)$-{\sf regular} if the maps $\R \times Y \stackrel{\mathsf{id} \times \Phi}{\longrightarrow} \R \times \mathcal P_d$ and $\R \times \d Y \stackrel{\mathsf{id} \times \Phi^\d}{\longrightarrow} \R \times \mathcal P_d$ are transversal to the hypersurface $\d\mathcal E_d$.
 \hfill $\diamondsuit$
\end{definition}

Thus, for a $(\d\mathcal E_d)$-regular $\Phi$, the preimage $(\mathsf{id} \times \Phi)^{-1}(\d\mathcal E_d)$ is a compact smooth $n$-manifold $(M, \d M) \subset (\R \times Y,\, \R\times \d Y)$. 
Of course, if $\b: (M, \d M) \to (\R \times Y,\, \R\times \d Y)$ is an immersion, but not an embedding, then any associated map $\Phi^\b: (Y, \d Y) \to (\cP_d^{\mathbf c\Theta}, \cP_d^{\mathbf c\Lambda})$ is not $\d \mathcal E_d$-regular. \smallskip

Note that, in Fig. 1, we can perturb the map $\Phi^\b: Y \to \cP_d$ so that it will become  transversal to the discriminant variety. Such a perturbation $\tilde\Phi^\b$ may be assumed to be $(\d\mathcal E_d)$-regular. It will resolve the image $\b(M)$ into a nonsingular manifold $N \subset \R \times Y$. In general, $N$ may be topologically different from the original $M$. These observations are generalized in the next couple lemmas.

\begin{lemma}\label{lem.E-regular} For a compact smooth manifold $Y$, the $(\d\mathcal E_d)$-regular maps $\{\Phi: Y \to \mathcal P_d\}$ form an open and dense set in the space $C^\infty(Y, \mathcal P_d)$ of all smooth  maps.
\end{lemma}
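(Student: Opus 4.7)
The plan is to handle openness and density separately by standard transversality theory, with the only mild subtlety being the non-compactness of the source $\R\times Y$.

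First, I would record the geometric setup. In the affine coordinates $(a_0,\dots,a_{d-1})$ on $\mathcal P_d \cong \R^d$, the hypersurface $\d\mathcal E_d = \{P(u)=0\}$ is the smooth graph $a_0 = -\bigl(u^d + \sum_{k=1}^{d-1} a_k u^k\bigr)$, so it is a closed smoothly embedded codimension-one submanifold of $\R\times \mathcal P_d$ with trivial normal bundle. The key compactness observation is that the roots of a monic polynomial $P(u)=u^d+\sum a_k u^k$ lie in $|u|\le 1+\max_k |a_k|$, so for any smooth $\Phi:Y\to\mathcal P_d$ with $Y$ compact the preimage
\[
K_\Phi := (\mathsf{id}\times\Phi)^{-1}(\d\mathcal E_d)=\{(u,y)\in\R\times Y:\Phi(y)(u)=0\}
\]
is contained in $[-R_\Phi,R_\Phi]\times Y$ for some $R_\Phi<\infty$ depending continuously on $\Phi$, hence is compact; the same holds for the boundary preimage $K_\Phi\cap (\R\times\partial Y)$.

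For density I would invoke Thom's parametric transversality theorem. Using the affine structure on $\mathcal P_d$, for $s\in\R^d$ (viewed as a polynomial of degree $<d$) set $\Phi_s(y):=\Phi(y)+s$; this preserves monicness since only sub-leading coefficients shift. The evaluation map
\[
F:\R\times Y\times\R^d\to\R\times\mathcal P_d,\qquad F(u,y,s):=(u,\,\Phi(y)+s),
\]
is a submersion, because differentiating in the $s$-directions already surjects onto the $\mathcal P_d$-factor; in particular $F\pitchfork\d\mathcal E_d$. Thom's theorem then yields a residual (indeed full Lebesgue measure) set of $s\in\R^d$ for which the slice $F_s=\mathsf{id}\times\Phi_s$ is transversal to $\d\mathcal E_d$. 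Applying the same argument to the restriction $F|_{\R\times\partial Y\times\R^d}$ produces a second residual set making the boundary restriction transversal; intersecting the two gives $\Phi_s$ which is $(\d\mathcal E_d)$-regular for $s$ arbitrarily close to $0$, proving density.

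For openness, suppose $\Phi$ is $(\d\mathcal E_d)$-regular. By the compactness established above, $K_\Phi$ is compact, and for any $\tilde\Phi$ sufficiently close to $\Phi$ in the $C^1$ topology the continuous dependence of $R_\Phi$ on $\Phi$ confines $K_{\tilde\Phi}$ to a fixed compact neighborhood of $K_\Phi$ in $\R\times Y$. On this compact neighborhood the transversality condition is an open condition on the $1$-jet, so it persists for all $\tilde\Phi$ in a $C^1$-neighborhood of $\Phi$; the same argument applied on $\R\times\partial Y$ handles the boundary condition. Since $C^\infty$-topology is finer than $C^1$-topology, this gives openness in $C^\infty(Y,\mathcal P_d)$.

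The only real obstacle is the non-compactness of $\R\times Y$, which could in principle invalidate the naive use of openness of transversality; this is dissolved by the elementary coefficient bound on polynomial roots that keeps $K_\Phi$ compact and persistently so under small perturbations. Once that is in place, both parts reduce to textbook applications of Thom transversality.
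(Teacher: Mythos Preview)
Your proof is correct, and it takes a genuinely different route from the paper's.

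The paper recasts $(\d\mathcal E_d)$-regularity as a condition on the $1$-jet of $\Phi$: writing out the system that witnesses failure of transversality, the paper observes that for each fixed $u\in\R$ this cuts out an affine subbundle $\mathcal W(u)\subset J^1(Y,\mathcal P_d)$ of codimension $n+2$, so the union $\mathcal W=\bigcup_u \mathcal W(u)$ has codimension $n+1$ in the jet bundle. Then the Thom Jet Transversality Theorem is invoked to conclude that $\{\Phi:\mathsf{jet}^1(\Phi)\pitchfork\mathcal W\}$ is open and dense; since $\dim Y=n$, transversality to $\mathcal W$ forces $\mathsf{jet}^1(\Phi)(Y)\cap\mathcal W=\emptyset$, which is exactly $(\d\mathcal E_d)$-regularity. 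The boundary is handled by the same argument applied to $\d Y$.

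Your approach stays at the level of maps rather than jets: for density you use the finite-dimensional translation family $\Phi_s=\Phi+s$, $s\in\R^d$, note that $(u,y,s)\mapsto(u,\Phi(y)+s)$ is already a submersion, and apply parametric transversality (Sard) directly; for openness you invoke the Cauchy root bound to confine the zero locus to a compact box, on which transversality is a $C^1$-open condition. This is more elementary --- no jet bundles --- and has the virtue of making the non-compactness of $\R\times Y$ explicitly harmless. The paper's argument implicitly relies on the same root bound (to ensure $\mathcal W$ is closed in $J^1(Y,\mathcal P_d)$, which is needed for the openness part of Thom's theorem), but does not spell this out; your treatment is cleaner on this point. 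On the other hand, the paper's jet-space formulation packages openness and density into a single invocation and scales more naturally to the higher-order $(\d\mathcal E_d,k)$-regularity discussed later.
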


\begin{proof}
A smooth map $\Phi: Y \to \mathcal P_d$, given by $d$ functions-coefficients $x_{d-1}(y), \dots , x_1(y), x_0(y)$ on $Y$,  is $(\d\mathcal E_d)$-regular if and only if, in any local coordinate system $\vec y = \{y_1, \dots y_n\}$ on $Y$, the system 
\begin{eqnarray}\label{E_reg} 
 u^d + x_{d-1}u^{d-1} + \ldots x_1u + x_0 & = & 0 \nonumber  \\
d\, u^{d-1} + (d-1) x_{d-1}u^{d-2} + \ldots + x_1 & = & 0 \nonumber \\ 
\Big\{\frac{\d x_{d-1}}{\d y_j} u^{d-1} + \ldots \frac{\d x_{1}}{\d y_j}u + \frac{\d x_0}{\d y_j} & = & 0\Big\}_{j \in [1, n]}
\end{eqnarray}
of $(n+2)$ equations has no solutions in $\vec y$ for all $u$, and a similar property holds for $\d Y$. Indeed, $\d\mathcal E_d$ is given by the equation $\wp(u, \vec x) := u^d + x_{d-1}u^{d-1} + \ldots x_1u + x_0 = 0$. The pull-back $\Psi^\ast(\wp)$ of $\wp$ under the map $\Psi = (\mathsf{id}, \Phi): \R \times Y \to \R \times \mathcal P_d$ is the function $$u^d + x_{d-1}(y) u^{d-1} + \ldots x_1(y) u + x_0(y)$$ on $\R \times Y$. So the first equation in (\ref{E_reg})  defines the preimage of $\d\mathcal E$ under $\Psi$.  The transversality of $\Psi$ to $\d\mathcal E_d$ can be expressed as the non-vanishing of the $1$-jet of $\Psi^\ast(\wp)$ along the locus $\{\Psi^\ast(\wp) = 0\}$. In local coordinates on $\R \times Y$, the vanishing of $\mathsf{jet}^1(\Psi^\ast(\wp))$ is exactly the constraints imposed by (\ref{E_reg}).
\smallskip

Note that, for each $u \in \R$, the system (\ref{E_reg}) imposes $(n+2)$ \emph{affine} constraints on the functions $\{x_k: Y \to \R\}_{k \in [1,d]}$ and their first derivatives $\big\{\frac{\d x_k}{\d y_j}\big\}$. Thus for any $u$, (\ref{E_reg}) defines an affine subbundle $\mathcal W(u)$ of the jet bundle $\{\mathsf J^1(Y, \mathcal P_d) \to Y\}$. The union $\mathcal W = \bigcup_{u \in \R} \mathcal W(u)$ is a ruled variety, residing in $\mathsf J^1(Y, \mathcal P_d)$. Since $\text{codim}(\mathcal W(u)) = n+2$, the codimension of $\mathcal W$ in $\mathsf J^1(Y, \mathcal P_d)$ is $n+1$. 

Consider the jet map $\mathsf{jet}^1(\Phi): Y \to \mathsf J^1(Y, \mathcal P_d)$. By the Thom Transversality Theorem (see \cite{GG}, Theorem 4.13), the space of $\Phi$ for which $\mathsf{jet}^1(\Phi)$ is transversal to the subvariety $\mathcal W$ is open and dense (recall that $Y$ is compact). Since $Y$ is $n$-dimensional, this transversality implies that $(\mathsf{jet}^1(\Phi))(Y) \cap \mathcal W = \emptyset$ for an open and dense set of maps $\Phi$. 

A similar arguments apply to the smooth maps $\Phi^\d : \d Y \to \mathcal P_d$. Thus we may perturb first any given $\Phi: Y \to \mathcal P_d$ to insure the $(\d\mathcal E_d)$-regularity of $\Phi^\d = \Phi|_{\d Y}$ and then perturb $\Phi$ to insure its $(\d\mathcal E_d)$-regularity, while keeping the regularity of $\Phi^\d$.   

Therefore, the set of $(\d\mathcal E_d)$-regular maps $\Phi$ is open and dense in $C^\infty(Y, \mathcal P_d)$. 
\end{proof}

\begin{corollary} \label{cor.E-reg} 
Let $\Theta \subset  \Lambda \subset \mathbf\Om_{\langle d]}$ be closed subposets. For a compact smooth manifold $Y$, the $(\d\mathcal E_d)$-regular maps $\{\Phi: (Y, \d Y) \to \mathcal (P_d^{\mathbf c \Theta}, P_d^{\mathbf c\Lambda})\}$ form an open and dense set in the space of all smooth maps $C^\infty\big((Y, \d Y), (P_d^{\mathbf c \Theta}, P_d^{\mathbf c\Lambda})\big)$.
\end{corollary}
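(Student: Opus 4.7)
The plan is to deduce the corollary from Lemma \ref{lem.E-regular} by observing that $C^\infty\big((Y, \d Y), (\cP_d^{\mathbf c\Theta}, \cP_d^{\mathbf c\Lambda})\big)$ embeds into $C^\infty(Y, \cP_d)$ as an \emph{open} subspace, after which openness and density both transfer. First I would note that both $\cP_d^{\mathbf c\Theta}$ and $\cP_d^{\mathbf c\Lambda}$ are open in $\cP_d$. Since $\Theta$ and $\Lambda$ are closed subposets of the finite poset $\bfOm_{\langle d]}$, each of $\cP_d^\Theta$ and $\cP_d^\Lambda$ is a \emph{finite} union of sets $\sR_d^\omega$, and each $\sR_d^\omega$ is by definition the closure of $\mathring{\sR}_d^\omega$ in $\cP_d$. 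A finite union of closed sets is closed, so the complements $\cP_d^{\mathbf c\Theta}$ and $\cP_d^{\mathbf c\Lambda}$ are open.

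Openness of the set in question in $C^\infty\big((Y, \d Y), (\cP_d^{\mathbf c\Theta}, \cP_d^{\mathbf c\Lambda})\big)$ is then immediate: the subset of $C^\infty(Y, \cP_d)$ consisting of maps $\Phi$ with $\Phi(Y) \subset \cP_d^{\mathbf c\Theta}$ and $\Phi(\d Y) \subset \cP_d^{\mathbf c\Lambda}$ is open already in the $C^0$-topology (by compactness of $Y$ and $\d Y$ paired with openness of the target sets), hence open in the finer $C^\infty$-topology. Intersecting this open subspace with the open set of $(\d\cE_d)$-regular maps supplied by Lemma \ref{lem.E-regular} yields an open subset.

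For density, given any $\Phi \in C^\infty\big((Y, \d Y), (\cP_d^{\mathbf c\Theta}, \cP_d^{\mathbf c\Lambda})\big)$, I would fix a metric on $\cP_d \approx \R^d$ and pick $\e>0$ so that the closed $\e$-neighborhood of the compact set $\Phi(Y)$ is contained in $\cP_d^{\mathbf c\Theta}$ and the closed $\e$-neighborhood of $\Phi(\d Y)$ is contained in $\cP_d^{\mathbf c\Lambda}$. By Lemma \ref{lem.E-regular}, there exists a $(\d\cE_d)$-regular $\tilde\Phi \in C^\infty(Y, \cP_d)$ which $C^0$-approximates $\Phi$ to within $\e$; by the choice of $\e$, automatically $\tilde\Phi(Y) \subset \cP_d^{\mathbf c\Theta}$ and $\tilde\Phi(\d Y) \subset \cP_d^{\mathbf c\Lambda}$, so $\tilde\Phi$ lies in the required space and approximates $\Phi$ in $C^\infty$.

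The only point that deserves care is that $(\d\cE_d)$-regularity is a joint condition on $\Phi$ and on its restriction $\Phi|_{\d Y}$, so the approximation must respect both. This is handled exactly as in the proof of Lemma \ref{lem.E-regular}: one first perturbs $\Phi|_{\d Y}$ within $C^\infty(\d Y, \cP_d)$ by an amount $<\e/2$ to achieve $(\d\cE_d)$-regularity on the boundary, extends the perturbation to a collar of $\d Y$ via a smooth cutoff, and then perturbs in the interior of $Y$ by another amount $<\e/2$ supported away from $\d Y$ to achieve regularity on $Y$; the total perturbation is $<\e$, so the resulting map still sends $(Y,\d Y)$ into $(\cP_d^{\mathbf c\Theta},\cP_d^{\mathbf c\Lambda})$. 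No serious obstacle arises beyond this bookkeeping.
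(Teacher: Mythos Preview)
Your proposal is correct and takes essentially the same approach as the paper: both observe that $\cP_d^{\mathbf c\Theta}$ and $\cP_d^{\mathbf c\Lambda}$ are open in $\cP_d$ (since $\Theta,\Lambda$ are closed posets) and then invoke Lemma \ref{lem.E-regular}. The paper's proof is a terse two sentences, while you have unpacked the mechanism---openness of the relative mapping space in $C^\infty(Y,\cP_d)$, the $\varepsilon$-neighborhood argument for density, and the boundary-then-interior perturbation---but these are exactly the details implicit in the paper's ``therefore the claim follows.''
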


\begin{proof} Since $Y$ is compact and the posets $\Theta, \Lambda$ are closed, the target spaces $\mathcal P_d^{\mathbf c \Theta} \supset \mathcal P_d^{\mathbf c \Lambda}$ are both open in $\mathcal P_d$. Therefore the claim follows from Lemma \ref{lem.E-regular}.  
\end{proof}

\begin{lemma}\label{E_bounds} 
For a compact $n$-dimensional $Y$, any $\d \mathcal E_d$-regular smooth map $h: Y \to \cP_d^{\mathbf c\Theta}$ is realized by an \emph{embedding} $\b_h: (M, \d M) \hookrightarrow (\R\times Y,\, \R\times \d Y)$, where the smooth manifold $M$ is $n$-dimensional and $\mathsf{rk}(D\b_h) = n$. The manifold $M$ is orientable if $Y$ is.\smallskip

If $\d Y = \emptyset$ and and $d \equiv 0 \mod 2$, then $M$ is a boundary of a compact (orientable, if $Y$ is orientable) $(n+1)$-manifold $L \subset \R\times Y$. 
\end{lemma}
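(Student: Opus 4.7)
The plan is to define $M$ directly as the transversal preimage
\[
M := (\mathsf{id}\times h)^{-1}(\d\mathcal E_d) \subset \R\times Y,
\]
and to take $\b_h\colon M\hookrightarrow \R\times Y$ to be the inclusion. Definition 3.5 of $(\d\mathcal E_d)$-regularity says precisely that $\mathsf{id}\times h$ and its restriction $\mathsf{id}\times h|_{\R\times\d Y}$ are both transversal to the smooth hypersurface $\d\mathcal E_d\subset\R\times\mathcal P_d$ (whose smoothness was recorded in (3.5), where it is identified with a graph over $(u,a_1,\dots,a_{d-1})$). Hence $M$ is a smooth $n$-submanifold of $\R\times Y$, its trace $\d M := M\cap(\R\times\d Y)$ is a smooth $(n-1)$-submanifold of $\R\times\d Y$, and $M$ meets $\R\times\d Y$ transversally inside $\R\times Y$. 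The inclusion $\b_h$ is then automatically an embedding with $\mathsf{rk}(D\b_h)=n$ everywhere.

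Next I would establish compactness of $M$. For each $y\in Y$ the fibre $M\cap\pi^{-1}(y)$ is the real root set of the monic degree-$d$ polynomial $h(y)$, and by the standard Cauchy bound all its real roots lie in $|u|\le 1+\max_i|a_i(y)|$. Since $Y$ is compact and the coefficient functions $a_i(y)$ are continuous, they are uniformly bounded; hence $M$ is contained in a fixed slab $[-R,R]\times Y$. Being also closed in $\R\times Y$ (as the continuous preimage of the closed locus $\{\wp=0\}$), $M$ is compact. Orientability comes from observing that $\d\mathcal E_d$ two-sides the codimension-$0$ submanifold $\mathcal E_d$ and is cooriented by the gradient of $\wp(u,\vec x)=u^d+\sum_i x_iu^i$. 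Pulling this coorientation back through $\mathsf{id}\times h$ trivialises the normal line bundle of $M$ in $\R\times Y$; since $\R\times Y$ is orientable iff $Y$ is, $M$ inherits an orientation from any orientation of $Y$.

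For the final assertion, assume $\d Y=\emptyset$ and $d\equiv 0\bmod 2$, and set
\[
L := (\mathsf{id}\times h)^{-1}(\mathcal E_d) = \{(u,y)\in\R\times Y :\; h(y)(u)\le 0\}.
\]
Since $\mathcal E_d$ is a smooth manifold with boundary $\d\mathcal E_d$ and $\mathsf{id}\times h$ is transversal to that boundary (with no boundary of its own in the source), $L$ is a smooth $(n+1)$-dimensional submanifold of $\R\times Y$ with $\d L = M$. For $d$ even the monic polynomial $h(y)(u)$ tends to $+\infty$ as $|u|\to\infty$, uniformly in $y$ by compactness of $Y$ and continuity of the coefficients, so $L$ is contained in a compact slab $[-R',R']\times Y$ and is itself compact. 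As a codimension-$0$ submanifold of $\R\times Y$, its orientability coincides with that of $Y$.

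\textbf{Main obstacle.} No individual step is genuinely hard; the only substantive input is the two compactness estimates, which depend on the monicity of $h(y)$ and, in the case of $L$, on the parity $d\equiv 0\bmod 2$. If $d$ were odd then $\{h(y)(u)\le 0\}$ would contain a half-line $(-\infty,u_0(y)]$ for every $y$, and $L$ would fail to be compact even though $M$ still would be. The only real book-keeping subtlety is to verify simultaneously that $M$ is transversal to $\R\times\d Y$ and that its trace there is the genuine manifold boundary $\d M$; but this is exactly what the two-part formulation of $(\d\mathcal E_d)$-regularity in Definition 3.5 is designed to guarantee.
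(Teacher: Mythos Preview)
Your proposal is correct and follows essentially the same approach as the paper: define $M=(\mathsf{id}\times h)^{-1}(\d\mathcal E_d)$, use transversality for smoothness, trivialize the normal line bundle via the pulled-back coorientation of $\d\mathcal E_d$ for orientability, and set $L=\{(u,y):h(y)(u)\le 0\}$ for the bounding manifold. You are in fact more careful than the paper on two points it leaves implicit---the compactness of $M$ (via the Cauchy root bound and compactness of $Y$) and the verification that $\d M=M\cap(\R\times\d Y)$ is the genuine manifold boundary---so there is nothing to correct.
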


\begin{proof}
A $\d \mathcal E_d$-regular smooth map $h: Y \to \cP_d^{\mathbf c\Theta}$ produces the locus $M = (\mathsf{id} \times h)^{-1}(\d \mathcal E_d)$ which is a smooth manifold by the transversality of $\mathsf{id} \times h$ to $\d \mathcal E_d$. If $Y$ is orientable, so is $\R \times Y$. Thus, the the pull-back of the normal vector field to $\d \mathcal E_d$ in $\mathcal E_d$ helps to orient the   tangent bundle of $M$.
 
Given an embedding $\b: M \hookrightarrow \R\times Y$, $\dim M = n$, whose tangent patterns belong to $\mathbf{c}\Theta$ and using that $d \equiv 0 \mod 2$, we conclude that when $Y$ is closed, then $\b(M)$ bounds a compact $(n+1)$-manifold $X \subset \R \times Y$. Indeed, for each $y \in Y$, we consider the even degree $d$ real monic polynomial $P_y(u)$ whose real divisor is $\mathcal L_y \cap \b(M)$, counted with the multiplicities. Then we define $L \cap \mathcal L_y$ as the compact set of $u \in \R$ that satisfies $\{P_y(u) \leq 0\}$. The $Y$-family of such inequalities determines $L$.
\end{proof}

As a result,  any closed $n$-manifold $M$, which is \emph{not} (orientably, if $Y$ is orientable) cobordant to $\emptyset$, does not arise via $\d \mathcal E_d$-regular maps $h: Y \to \mathcal P_d$ for any closed $Y$ and $d \equiv 0 \mod 2$. \smallskip

 Consider the {\sf tautological function} $\wp: \R \times \mathcal P_d \to \R$ whose value at the point $(u, P)$ is $P(u)$.

\begin{definition}\label{def.kE-regular} 
Let $Y$ be a smooth compact $n$-manifold.  Let $k \leq d+1$.

We call a smooth map $\Phi: Y \to \mathcal P_d$  $(\d\mathcal E_d,  
k)$-{\sf regular} if the map $\R \times Y \stackrel{\mathsf{id} \times \Phi}{\longrightarrow} \R \times \mathcal P_d$ 
has the following properties: 

\begin{itemize}
\item the pull-back $(\mathsf{id} \times\Phi)^\ast(\wp)$ of the function $\wp$ in the vicinity $U_a$ of every point $$a = (u, y) \in \mathcal M_\Phi := \{(\mathsf{id}\times \Phi)^\ast(\wp) = 0\}$$  is locally a product of at most $\ell \leq k$ smooth functions $\{z_i: U_a \to \R\}_{i \in [1, \ell]}$  for each of which $0$ is a regular value, \smallskip

\item there is a natural number $q$ such that the $q$-jet equivalence classes of the local branches $\{z_i = 0\}_i$ of $\mathcal M_\Phi$ at $a$ are distinct for all points 
$a$,\footnote{The branches are $q$-separable in the sense of Definition \ref{def.q-separable}.} \smallskip

\item if $y \in \d Y$,  the restrictions $\{z_i |_{\R \times \d Y}\}_i$ have $0$ as a regular value, and the $q$-jet equivalence classes of local branches $\{z_i |_{\R \times \d Y} = 0\}_i$ are distinct as well. \hfill $\diamondsuit$
\end{itemize}
\end{definition}

Evidently,  $(\d\mathcal E_d, 1)$-regular map is $(\d\mathcal E_d)$-regular. \smallskip

Informally, we would like to think of $(\d\mathcal E_d, k)$-regular maps $\Phi$ as hypersurfaces that divide the space of $(\d\mathcal E_d, k-1)$-regular maps into chambers. Presently, this interpretation is just a wishful thinking... What is clear that, if a map $\Phi: Y \to \mathcal P_d$ is $(\d\mathcal E_d, k)$-regular, then there exists its open neighborhood in the space $C^\infty(Y, \mathcal P_d)$ that does not contain any $(\d\mathcal E_d, l)$-maps, where $l > k$.

\begin{lemma}\label{lem_kE-regular}
If a map $\Phi: Y \to \mathcal P_d$ is $(\d\mathcal E_d, k)$-regular, then the locus 
$$\mathcal M_\Phi =_{\mathsf{def}} \{(\mathsf{id}\times \Phi)^\ast(\wp) = 0\} \subset \R \times Y$$ is a compact $n$-dimensional set with singularities of the local types $\{\prod_{i=1}^{\ell} z_i = 0\}$, where $2 \leq \ell \leq k \leq d+1$, and each $z_i$ has $0$ as its regular value. 

Moreover, $\mathcal M_\Phi$ is the image of a smooth compact $n$-manifold $M$ under an immersion $\b: M \to   \R \times Y$. 
\end{lemma}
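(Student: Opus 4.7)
The plan is to address the three assertions in order: compactness of $\mathcal M_\Phi$, its local singularity structure (which is immediate from the hypothesis), and finally the construction of the smooth $n$-manifold $M$ together with the immersion $\b$. The local structure is essentially built into Definition \ref{def.kE-regular}: by the first bullet, each point $a \in \mathcal M_\Phi$ has a neighborhood $U_a$ on which $(\mathsf{id}\times \Phi)^\ast(\wp) = \prod_{i=1}^\ell z_i$ with $2 \leq \ell \leq k$ and each $z_i$ having $0$ as a regular value, and the local branches $\{z_i = 0\}$ are smooth hypersurfaces in the $(n+1)$-manifold $\R \times Y$, hence $n$-dimensional. That gives the required dimension and local normal form at once.

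For compactness I would argue as follows. The locus $\mathcal M_\Phi$ is closed in $\R \times Y$ as the zero set of a continuous function. Since $\Phi(y)$ is a monic polynomial of degree $d$ with coefficients depending continuously on $y$, and $Y$ is compact, standard root bounds give a uniform $R > 0$ such that every real root of $\Phi(y)$ lies in $[-R, R]$ for all $y \in Y$. Thus $\mathcal M_\Phi \subset [-R, R] \times Y$, a compact subset of $\R \times Y$, so $\mathcal M_\Phi$ is compact.

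The main work is the construction of $M$ and $\b$. I would choose a finite open cover $\{V_\alpha\}$ of $\mathcal M_\Phi$ on which $(\mathsf{id}\times \Phi)^\ast(\wp)$ factors as above, say $\prod_{i=1}^{\ell_\alpha} z_{\alpha, i}$, with each $\{z_{\alpha, i} = 0\} \cap V_\alpha$ a smooth $n$-dimensional hypersurface. Over $V_\alpha$ I set $M_\alpha := \coprod_{i=1}^{\ell_\alpha} \{z_{\alpha, i} = 0\}$, a disjoint union of smooth local sheets, with the tautological smooth map $\b_\alpha: M_\alpha \to V_\alpha \subset \R \times Y$. To glue these together into a global manifold, I use the $q$-jet separability hypothesis: at every point $a \in V_\alpha \cap V_\beta \cap \mathcal M_\Phi$ the local branches of $\mathcal M_\Phi$ through $a$ carry pairwise distinct $q$-jet equivalence classes, so the labels ``$i$'' in the $\alpha$-chart and ``$j$'' in the $\beta$-chart are canonically matched by equality of $q$-jet classes $\mathsf{j}^q_a$. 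This produces an equivalence relation on $\coprod_\alpha M_\alpha$ whose quotient $M$ is a smooth $n$-manifold. The maps $\b_\alpha$ assemble into a well-defined smooth $\b: M \to \R \times Y$ with image $\mathcal M_\Phi$, and since $\b$ is locally the inclusion of a smooth hypersurface with $\mathsf{rk}\, D\b = n$, it is an immersion. Finally, the third bullet of Definition \ref{def.kE-regular} guarantees that the same construction restricts over $\d Y$ to yield $\b(\d M) \subset \R \times \d Y$ with $\b$ transversal to $\R \times \d Y$; together with the compactness of $\mathcal M_\Phi$ and the bound $\ell \leq k \leq d+1$ on the cardinality of the preimages, this makes $M$ a compact smooth $n$-manifold and $\b$ a proper immersion.

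The part I expect to require the most care is checking that the gluing via $q$-jet equivalence classes is globally consistent: one must verify that the identification of branches is symmetric and transitive on triple overlaps $V_\alpha \cap V_\beta \cap V_\gamma$, and that the transition data are smooth. Both follow from the fact that $q$-jet equivalence of hypersurfaces at a fixed point is an equivalence relation (independent of the defining function) and that, within any single chart $V_\alpha$, each branch $\{z_{\alpha, i} = 0\}$ is a smooth submanifold whose $q$-jet at a varying point depends smoothly on that point; but this is the step where the $q$-jet hypothesis of Definition \ref{def.kE-regular} is doing essential work, and it is what rules out the kind of branch-permutation monodromy that would otherwise obstruct the resolution of $\mathcal M_\Phi$ into an immersed manifold.
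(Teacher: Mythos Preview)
Your argument is correct and rests on the same idea as the paper's --- separating the local branches of $\mathcal M_\Phi$ by their $q$-jet equivalence classes --- but the paper packages it more cleanly. Instead of choosing a cover $\{V_\alpha\}$, forming local resolutions $M_\alpha=\coprod_i\{z_{\alpha,i}=0\}$, and gluing them by matching $q$-jets (which forces you to worry about consistency on triple overlaps), the paper defines $M$ \emph{globally} as a subset of the jet bundle $\mathsf{J}^q(\R\times Y)$: to each $a\in\mathcal M_\Phi$ one attaches the finite set of distinct points $\{\mathsf{j}^q_a(\{z_i=0\})\}_i\subset\mathsf{J}^q_a(\R\times Y)$, and $M$ is the union of these. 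Each local branch $\{z_i=0\}$ then determines a smooth section $\sigma_i$ of $\mathsf{J}^q(\R\times Y)\to\R\times Y$ over that branch; $q$-separability makes these sections pairwise disjoint, so $M$ is a smooth $n$-manifold, and the restriction of the bundle projection to $M$ is the desired immersion $\b$. The cocycle condition you single out as delicate becomes automatic once $M$ is realized inside a global object. Conversely, you supply details the paper leaves implicit: an explicit compactness argument for $\mathcal M_\Phi$ via uniform root bounds on monic polynomials over the compact $Y$, and the verification that the construction behaves correctly along $\d Y$.
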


\begin{proof} The validation of the first claim is on the level of definitions. 
\smallskip

Recall that we denote by $\mathsf{J}^q(X) \to X$ the fibration over a smooth manifold $X$, whose fiber over a point $x \in X$ is $\mathsf{J}^q_x$, the space of $q$-equivalence classes of germs of smooth hypersurfaces in $X$ at $x$.

It remains to show that, if a map $\Phi: Y \to \mathcal P_d$ is 
$(\d\mathcal E_d, k)$-regular and, for some $q \geq 1$, the branches of the locus $\mathcal M_\Phi  \subset \R \times Y$ at their mutual intersections are $q$-separable, then $\mathcal M_\Phi$ is the immersed image of some $n$-manifold $M$. In fact, there is an obvious \emph{canonical resolution} $M \subset \mathsf{J^q}(\R \times Y)$ of $\mathcal M_\Phi$,  so that the projection $\b: M \subset \mathsf{J^q}(\R \times Y) \stackrel{\pi}{\longrightarrow} \R \times Y$ is the desired immersion. We just associate with each point $a \in \{\prod_{i=1}^{\ell} z_i = 0\}$ the (unordered) set of $\ell \leq k$ distinct points $\{\mathsf J^q_a(\{z_i = 0\})\}_{i \in [1, \ell]} \subset \mathsf{J}^q_a(\R \times Y)$. In this way, each local branch $\mathcal M_{\Phi, i} := \{z_i = 0\}$ of $\mathcal M_\Phi$ defines a smooth section $\s_i: \mathcal M_{\Phi, i} \to \mathsf J^q(\R \times Y)|_{\mathcal M_{\Phi, i}}$. Over the vicinity of $a$, thanks to the $q$-separability, all the sections $\{\s_i\}_i$ are disjoint. Since $\pi \circ \s_i = \mathsf{id}$, we conclude that $\b: M \to \mathcal M_\Phi$ is an immersion. 
\end{proof}

\begin{definition}\label{def.k-flat} 
We say that an immersion $\b: M \to \R \times Y$ is $k$-{\sf flat}, if  
for all $y \in Y$, the reduced multiplicity
\begin{eqnarray}\label{eq.k-flat}
  (\mu^{\b})'(y) =_{\mathsf{def}} \sum_{\{a\, \in \, \mathcal L_{y}\, \cap \, \b(M)\}} 
  \Big(\sum_{\{b\, \in \, \b^{-1}(a)\}} (\mu^\b(b) -1)\Big) \; < \; k.
\end{eqnarray}

In particular, if $\b$ is $k$-flat and $k$-normal, then the $k$-intersection manifold $\Sigma^\b_k = \emptyset$. \hfill $\diamondsuit$
\end{definition}

\subsection{Quasitopies of immersions \&  embeddings with constrained tangencies against the background of $\mathbf 1$-foliations $\mathcal L$: the case of general combinatorics} \hfill \break

We use the abbreviation ``$\mathsf{imm}$" for immersions and ``$\mathsf{emb}$" for regular embeddings. When the arguments work equally well for both types, we  use the abbreviation ``$\mathsf{i/e}$".
\smallskip

Fix two natural numbers $d \leq d'$, $d \equiv d' \mod 2$ and consider the embedding $\e_{d, d'}: \mathcal P_d \hookrightarrow \mathcal P_{d'}$, defined by the formula (\ref{eq.stable}).
Recall that it preserves the $\mathbf\Om$-stratifications of the two spaces, $\mathcal P_d$ and $\mathcal P_{d'}$, by the combinatorial types $\om$ of real divisors $D_\R(P)$, $D_\R(\e_{d, d'}(P))$. 

For a closed profinite (see Definition \ref{def.profinite}) poset $\Theta \subset \mathbf\Om$, the embeddings $\e_{d, d'}$ make it possible to talk about the stabilization in the homology of spaces $\mathcal P_d^{\mathbf c\Theta}$, as $d \to \infty$. With the help of $\{\e_{d, d'}\}$, it also makes sense to introduce the limit spaces $\bar{\mathcal P}_\infty^\Theta$ and $\mathcal P_\infty^{\mathbf c \Theta}$ \cite{KSW2}.
\smallskip 
 
For a smooth compact connected $n$-manifold $Y$,  
put $Z =_{\mathsf{def}}\, Y \times [0, 1]$ and $Z^\delta =_{\mathsf{def}}\, \d Y \times [0, 1]$. We denote by $\mathcal L^\bullet$ the $1$-dimensional oriented foliation of $\R \times Z$, produced by the fibers of the obvious projection $\Pi: \R \times Z \to Z$. \smallskip

In the Definition \ref{quasi_isotopy_of_ immersions} below, central to our investigation, we start with a quite general set of combinatorial input data: $\{n, d, d', \Theta, \Theta', \Lambda\}$. We will gradually restrict them, as we develop the theory. Fig. \ref{fig.quasitopy} may help the reader to follow our unfortunately cumbersome notations.

\begin{definition}\label{quasi_isotopy_of_ immersions} Let us fix natural numbers $d \leq d'$, $d \equiv d' \mod 2$, and a triple of closed subposets $\Theta' \subset  \Theta \subset \Lambda$ of the universal poset $\mathbf\Om$. Let $Y$ be a fixed smooth compact $n$-manifold and $M_0, M_1$ be two smooth compact $n$-manifolds. 

We say that two proper immersions/embeddings, $$\b_0: (M_0, \d M_0) \to (\R \times Y,\, \R \times \d Y) \text{\; and \;} \b_1: (M_1, \d M_1) \to (\R \times Y,\, \R \times \d Y),$$ are $(d, d'; \mathbf c\Theta,  \mathbf c\Lambda; \mathbf c\Theta')$-{\sf quasitopic}, if 
there exists a compact smooth  $(n+1)$-manifold $N$ with corners $\d M_0 \coprod \d M_1$ and the boundary $\d N = (M_0 \coprod M_1) \cup \delta N$,\footnote{$\delta N$ is the closure of the complimentary to $M_0 \coprod M_1$ portion of $\d N$.} and a smooth proper immersion/embedding $B: N \to \R \times Z$ such that:

\begin{itemize} 
\item $B|_{M_0} = \b_0$, $B|_{M_1} = \b_1$, and $B(\delta N) \subset \R \times \d Y \times [0, 1] := \R \times Z^\delta$,
\smallskip
  
\item for each $z \in Z$, the total multiplicity $m_B(z)$ (see (\ref{eq5.1})) of $B(N)$ with respect to the fiber $\mathcal L^\bullet_z$ is such that: $m_B(z) \leq d'$, $m_B(z) \equiv d' \mod 2$, 
and the combinatorial tangency pattern $\omega^B(z)$ of $B(N)$ with respect to $\mathcal L^\bullet_z$ belongs to the poset $\mathbf c\Theta'$,  
\smallskip
  
\item for each $z \in Y\times \d[0,1]$, the total multiplicity of $B(N)$ with respect to  $\mathcal L_z$ is such that: $m_B(z) \leq d$,  $m_B(z) \equiv d \mod 2$,  and the combinatorial tangency pattern $\omega^B(z)$ of $B(N)$ with respect to $\mathcal L_z$ belongs to the poset $\mathbf c\Theta$, 
\smallskip
   
\item for each $z \in Z^\delta$, the multiplicity of $B(N)$ with respect to  $\mathcal L^\bullet_z$ is such that: $m_B(z) \leq d'$,  $m_B(z) \equiv  d' \mod 2$, 
and the combinatorial tangency pattern $\omega^B(z)$ of $B(\delta N)$ with respect to $\mathcal L^\bullet_z$ belongs to the poset $\mathbf c\Lambda$.
\end{itemize}
\smallskip

We denote by $\mathcal{QT}^{\mathsf{imm}}_{d, d'}(Y, \d Y; \mathbf{c}\Theta, \mathbf{c}\Lambda; \mathbf{c}\Theta')$ the set of {\sf quasitopy classes of such immersions} $\b: (M, \d M) \to  \R \times (Y, \d Y)$. 
\smallskip
We use the notation $\mathcal{QT}^{\mathsf{emb}}_{d, d'}(Y, \d Y; \mathbf{c}\Theta, \mathbf{c}\Lambda; \mathbf{c}\Theta')$ for the set of {\sf quasitopy classes of  embeddings} $\b: (M, \d M) \hookrightarrow  \R \times (Y, \d Y)$.
Finally, we  use the neutral notation $\mathcal{QT}^{\mathsf{i/e}}_{d, d'}(Y, \d Y; \mathbf{c}\Theta, \mathbf{c}\Lambda; \mathbf{c}\Theta')$ for both. \smallskip

It is possible to build a parallel notion of quasitopies for {\sf oriented} $M$'s  by insisting that the cobordism $N$ is oriented as well.  We use the notation $\mathcal{OQT}^{\mathsf{i/e}}_{d, d'}(Y, \d Y; \mathbf{c}\Theta, \mathbf{c}\Lambda; \mathbf{c}\Theta')$
for these oriented quasitopy classes.
\hfill $\diamondsuit$
\end{definition}

When $Y$ is a closed manifold, we get $\mathcal{QT}^{\mathsf{i/e}}_{d, d'}(Y; \mathbf{c}\Theta; \mathbf{c}\Theta')$, a simplification of our settings. Also, when $\Theta = \Theta'$, we get another natural simplification: $\mathcal{QT}^{\mathsf{i/e}}_{d, d'}(Y, \d Y; \mathbf{c}\Theta, \mathbf{c}\Lambda; \mathbf{c}\Theta)$. Both special cases, $d' = d$ and $d' = d+2$, have significant applications. 
\smallskip

\begin{figure}[ht]
\centerline{\includegraphics[height=3.2in,width=4.3in]{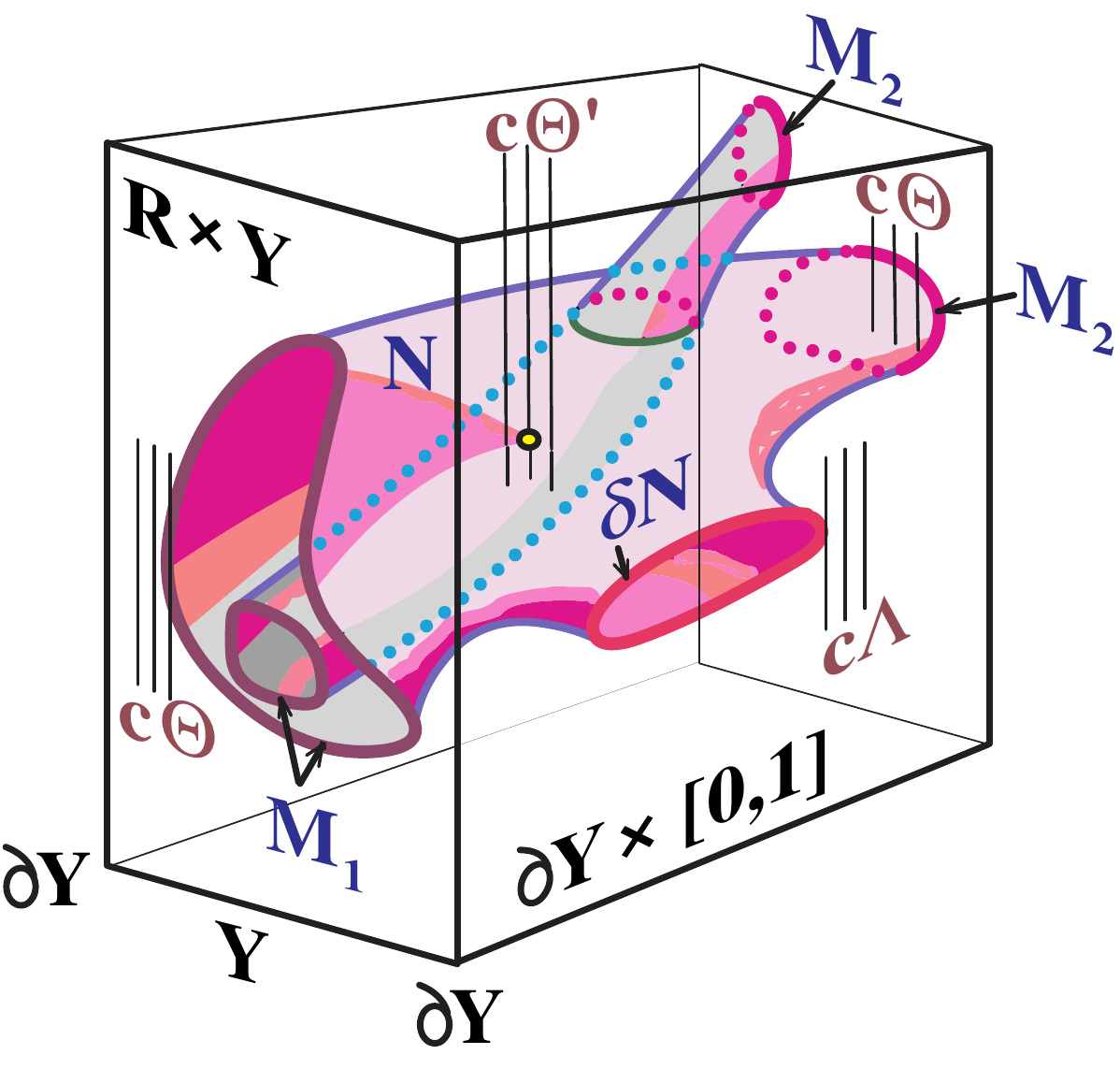}}
\bigskip
\caption{\small{The ingredients of Definition 3.7.}} 
\label{fig.quasitopy}
\end{figure}

\begin{remark} \emph{If in Definition \ref{quasi_isotopy_of_ immersions} we would require that the pair $(N, \delta N)$ is diffeomorphic to the pair $(M_0 \times [0, 1],\, \d M_0 \times [0, 1])$, then a more recognizable definition of {\sf pseudo-isotopy} would emerge. So the notion of {\sf quasitopy} is more flexible than the one of {\sf pseudo-isotopy}. It is closer to the notion of bordisms.}
\smallskip
\hfill $\diamondsuit$
\end{remark}

\begin{remark} \emph{The quasitopy $\mathcal{QT}^{\mathsf{i/e}}_{d, d'}(\sim,\; \d \sim; \mathbf{c}\Theta, \mathbf{c}\Lambda; \mathbf{c}\Theta')$ is covarientely functorial under the \emph{regular embeddings} $f: (Y, \d Y) \hookrightarrow (Y' , \d Y')$ of \emph{equidimentional} manifolds $Y, Y'$. At the same time, $\mathcal{QT}^{\mathsf{i/e}}_{d, d'}(\sim, \d \sim; \mathbf{c}\Theta, \mathbf{c}\Lambda; \mathbf{c}\Theta')$ is contravariantly  functorial under \emph{submersions} $g: (Y', \d Y') \to (Y , \d Y)$,  provided that the fibers of $g$ are closed manifolds. The contravariance is delivered via the pull back construction which involves $\mathsf{id}_\R \times g$ and $\b: (M, \d M) \to (\R \times Y, \R \times \d Y)$.} 
\hfill $\diamondsuit$
\end{remark}

Consider the group $\mathsf{Diff}(\mathcal L)$ of smooth 
diffeomorphisms of $\R \times Y$ that preserve the $1$-foliation $\mathcal L$ and its orientation. We denote by $\mathsf{Diff}_0(\mathcal L)$ its subgroup, generated by the  diffeomorphisms that are isotopic to the identity. Similar groups, $\mathsf{Diff}(\mathcal L^\bullet)$ and $\mathsf{Diff}_0(\mathcal L^\bullet)$ are available for the foliation $\mathcal L^\bullet$ on $\R \times Y \times [0,1]$. Note that the group $\mathsf{Diff}(Y)$ is, in the obvious way, a subgroup of $\mathsf{Diff}(\mathcal L)$, and the group $\mathsf{Diff}_0(Y)$ is a subgroup of $\mathsf{Diff}_0(\mathcal L)$.  \smallskip

If $\b: (M, \d M) \to (\R \times Y, \R \times \d Y)$ is a proper immersion/embedding as in Definition \ref{quasi_isotopy_of_ immersions}, then, for any $h \in \mathsf{Diff}_0(\mathcal L)$, the immersion/embedding $h \circ \b$ is   $(d, d'; \mathbf c\Theta,  \mathbf c\Lambda; \mathbf c\Theta')$-quasitopic to $\b$. In a similar way, $\mathsf{Diff}_0(\mathcal L^\bullet)$ acts on quasitopies of immersion/embedding. Therefore, in what follows, we may ignore the dependence of our constructions on the isotopies of the base manifold $Y$. 
\smallskip

\begin{definition}\label{def.m_hat}
For a given closed poset $\Theta_{\langle d]} \subset \mathbf \Om_{\langle d]}$, we denote by $\hat m(\mathbf c\Theta_{\langle d]})$ the maximum of entries $\om_i$ for all $\om = (\om_1, \ldots , \om_i, \ldots , \om_\ell) \in \mathbf c\Theta_{\langle d]}$. \hfill $\diamondsuit$
\end{definition}

\begin{proposition}\label{prop.k-normal_imm_to_bord} We adopt the notations of Corollary \ref{cor.Sigma_k_is_bordism_invariant}. Any proper immersion $\b: (M, \d M) \to (\R \times Y,\, \R \times \d Y)$, as in Definition \ref{quasi_isotopy_of_ immersions}, which is $k$-normal for all $k$ in the interval $[2,\, \min\{n+1,\, \hat m(\mathbf c\Theta_{\langle d]})\}]$, generates canonically smooth maps $$\big\{\pi \circ \b \circ p_1: (\Sigma_k^\b,  \Sigma_k^{\b^\d}) \to (Y, \d Y)\big\}_{k},$$
where $\Sigma^\b_k$ is the $k^{th}$ self-intersection manifold of $\b$. The relative non-oriented bordism classes $[\Sigma_k^\b] \in \mathbf B_{n-k+1}(Y, \d Y)$ of these maps $\pi \circ \b \circ p_1$ are invariants of the $(d, d'; \mathbf c\Theta,  \mathbf c\Lambda; \mathbf c\Theta')$-quasitopy class of $\b$.  

If $M, Y$ are oriented, then oriented bordism classes $[\Sigma_k^\b] \in \mathbf {OB}_{n-k+1}(Y, \d Y)$ are invariants of the oriented $(d, d'; \mathbf c\Theta,  \mathbf c\Lambda; \mathbf c\Theta')$-quasitopy class of $\b$. In particular, if $Y$ is closed, then the Pontryagin numbers of the manifold $\Sigma_k^\b$ are invariants of the quasitopy class of $\b$. 
\end{proposition}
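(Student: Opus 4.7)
The plan is to reduce the statement to the bordism invariance established in Corollary \ref{cor.Sigma_k_is_bordism_invariant}, after first showing that any quasitopy in the sense of Definition \ref{quasi_isotopy_of_ immersions} can be perturbed to a genuine $k$-normal cobordism without violating the forbidden tangency posets.

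First, I would construct the canonical maps. For any $\b$ that is $k$-normal with $k \in [2,\, \min\{n+1,\, \hat m(\mathbf c\Theta_{\langle d]})\}]$, Lemma \ref{lem.k-normal imm} together with the construction preceding it delivers the smooth $(n-k+1)$-manifold $\Sigma_k^\b$, the immersion $p_1: \Sigma_k^\b \to M$, and the composite smooth map $\pi \circ \b \circ p_1: (\Sigma_k^\b, \Sigma_k^{\b^\d}) \to (Y, \d Y)$. The upper bound $\hat m(\mathbf c\Theta_{\langle d]})$ on $k$ is forced by the tangency constraint $\om^\b(y) \in \mathbf c\Theta_{\langle d]}$: a transverse $k$-fold self-intersection located on a single fiber $\mathcal L_y$ would contribute an entry $\ge k$ to the multiplicity pattern $\om^\b(y)$, so if no admissible pattern has an entry that large, such a self-intersection cannot occur.

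To prove quasitopy invariance, I would take a quasitopy $B: (N, \delta N) \to (\R \times Z,\, \R \times Z^\delta)$ between $\b_0$ and $\b_1$. Mirroring the argument of Corollary \ref{cor.Sigma_k_is_bordism_invariant}, I would apply the Thom Multijet Transversality Theorem to perturb $B$ to $\tilde B$, keeping $B$ fixed on collars of $M_0 \sqcup M_1$ and of $\delta N$, so that $\tilde B^k$ becomes transversal to the diagonal $\tilde \Delta \subset (\R \times Z)^k$ for every admissible $k$; the perturbed $\tilde B$ is then simultaneously $k$-normal for all such $k$. The principal obstacle --- and what is new beyond Corollary \ref{cor.Sigma_k_is_bordism_invariant} --- is to verify that $\tilde B$ still obeys the quasitopy restrictions encoded by the posets $\Theta'$ (in the interior) and $\Lambda$ (on $Z^\delta$). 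Here I would exploit the closure of $\Theta'$ under the merge and insert operations to deduce that $\mathbf c\Theta'$ is upward closed in the partial order $\succ$ on $\bfOm$; since a small smooth perturbation of $B$ can only replace a tangency pattern $\om^B(z)$ by a generic splitting of itself (hence by a strictly greater element in the poset), the condition $\om^{\tilde B}(z) \in \mathbf c\Theta'$ persists, and likewise $\om^{\tilde B}(z) \in \mathbf c\Lambda$ persists on $Z^\delta$. The parity and norm bounds on $|\om^B(z)|$ are also preserved because a $C^\infty$-small perturbation cannot convert a complex-conjugate pair of tangencies into a pair of real ones.

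With the $k$-normal cobordism $\tilde B$ in hand, Lemma \ref{lem.Sigma_k_is_bordism_invariant} supplies the smooth $(n-k+2)$-manifold $\Sigma_k^{\tilde B}$ and the immersion $\tilde B \circ P_1: \Sigma_k^{\tilde B} \to \R \times Y \times [0,1]$. Composing with the projection $\Pi: \R \times Y \times [0,1] \to Y \times [0,1]$ yields a smooth map of pairs $(\Sigma_k^{\tilde B}, \Sigma_k^{\tilde B^\delta}) \to (Y \times [0,1],\, \d Y \times [0,1])$ whose restrictions at $t=0$ and $t=1$ recover $\pi \circ \b_0 \circ (p_0)_1$ and $\pi \circ \b_1 \circ (p_1)_1$. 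This is precisely a representative of a relative bordism, establishing $[\Sigma_k^{\b_0}] = [\Sigma_k^{\b_1}]$ in $\mathbf B_{n-k+1}(Y, \d Y)$. In the oriented setting, orientability of $\Sigma_k^{\tilde B}$ follows from that of $N$ and $Y$ via \cite{LS}, upgrading the bordism to an oriented one in $\mathbf{OB}_{n-k+1}(Y, \d Y)$. Pontryagin numbers, being oriented-bordism invariants, are then constant on oriented quasitopy classes whenever $Y$ is closed.
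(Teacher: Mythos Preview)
Your proposal follows essentially the same route as the paper's proof: perturb the quasitopy cobordism $B$ to be $k$-normal, using the closedness of $\Theta'$ (and $\Lambda$) to guarantee that the tangency constraints survive the perturbation, and then invoke Corollary~\ref{cor.Sigma_k_is_bordism_invariant}.

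One small technical point: you should \emph{not} keep $B$ fixed on a collar of $\delta N$. To obtain the correct manifold-with-corners structure on $\Sigma_k^{\tilde B}$ (so that its ``side'' $\Sigma_k^{\tilde B^\delta}$ lands in $\d Y\times[0,1]$ and furnishes the relative bordism), you also need $\tilde B|_{\delta N}$ to be $k$-normal; thus $B$ must be perturbed on $\delta N$ as well, relative only to $\d M_0 \sqcup \d M_1$. This causes no trouble with the $\mathbf c\Lambda$-constraint on $Z^\delta$, by precisely the same openness argument you already give for $\mathbf c\Theta'$. (Compare the proof of Corollary~\ref{cor.Sigma_k_is_bordism_invariant}, where $B$ is perturbed away from $M_0\sqcup M_1$ only.)
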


\begin{proof} If $\b$ is an immersion as in Definition \ref{quasi_isotopy_of_ immersions}, then its sufficiently small perturbation still has combinatorial tangency patterns which belong to $\mathbf c\Theta$, since $\Theta$ is a closed poset in $\mathbf\Om_{\langle d]}$. This claim is based on the behavior of real divisors of real polynomials under their perturbations \cite{K3}. Similarly, a sufficiently small perturbation of any cobordism $B$ between such immersions, still will have combinatorial tangency patterns which belong to $\mathbf c\Theta'$, since $\Theta'$ is a closed poset in $\mathbf\Om_{\langle d']}$. Therefore, by \cite{LS}, we may assume that $\b$, within its $(d, d'; \mathbf c\Theta,  \mathbf c\Lambda; \mathbf c\Theta')$-quasitopy class, is $k$-normal for all $k \leq n+1$ and $B$ is $k$-normal for all $k \leq n+ 2$. Revisiting Definition \ref{def.m_hat}, we notice that also $k \leq \hat m(\mathbf c\Theta_{\langle d]})$ by the very definition of quasitopies.  
Now the claim follows directly from Corollary \ref{cor.Sigma_k_is_bordism_invariant}.
\end{proof}

We are interested in two \emph{special cases} of Definition \ref{quasi_isotopy_of_ immersions} to be referred in what follows as {\sf the $\Lambda$-condition}: 
\begin{eqnarray}\label{special}
\mathbf{(1)} & \quad  d' \equiv d \equiv 0 \mod 2 \text{\; and \;} \mathbf{c}\Lambda = (\emptyset), \nonumber \\
\mathbf{(2)} & \quad  d' \equiv d \equiv 1 \mod 2 \text{\; and \;} \mathbf{c}\Lambda = (1). 
\end{eqnarray}

Case {\bf(1)} forces $\delta N = \emptyset$; so $M_0, M_1$ must be closed and $\d N = M_0\coprod M_1$. This is evidently the case when $\d Y = \emptyset$. Case {\bf(2)} forces $B|_{\delta N}$ to be transversal to the foliation $\mathcal L^\bullet |_{\R \times \d Y \times [0, 1]}$; so we get diffeomorphisms $B(\delta N) \approx \d Y \times [0, 1]$ and $\b_i(\d M_i) \approx \d Y$. 
\smallskip
\smallskip

Given two compact connected 
$n$-manifolds with boundaries, let $Y_1\# Y_2$ denote their {\sf connected sum}  and $Y_1\#_\d Y_2$ their {\sf boundary connected sum}. The $1$-handle $H \approx D^{n-1} \times [0,1]$ that participates in the connected sum operation is attached to $Y_1 \coprod Y_2$.  
In the special case $\dim Y_1 = \dim Y_2 = 1$, the manifolds $Y_1, Y_2$ are closed 
segments and $Y_1\#_\d Y_2$ is understood as a new segment, obtained by attaching one end of $Y_1$ to an end of $Y_2$.

If the boundaries of $Y_1$ and $Y_2$ are connected, the smooth topological type of $Y_1\#_\d Y_2$ does not depend on how the $1$-handle $H$ is attached to $Y_1\coprod Y_2$ to form $Y_1\#_\d Y_2$. In general, to avoid ambiguity of the operation $\#_\d$, we pick some elements $\kappa_1 \in \pi_0(\d Y_1)$ and $\kappa_2 \in\pi_0(\d Y_2)$.\smallskip

\begin{definition}\label{def.notations}
{\sf Under the $\Lambda$-condition (\ref{special}), we simplify our notations as follows:}\smallskip
\begin{eqnarray}\label{def.new def A}
\quad \quad  \mathsf{QT}^{\mathsf{i/e}}_{d, d'}(Y; \mathbf{c}\Theta; \mathbf{c}\Theta') =_{\mathsf{def}}\; \mathcal{QT}^{\mathsf{i/e}}_{d, d'}(Y, \d Y; \mathbf{c}\Theta, (\emptyset); \mathbf{c}\Theta'), \; \text{ for } d' \equiv d \equiv 0 \mod 2,
\end{eqnarray}
 
\begin{eqnarray}\label{def.new def B}
\quad \quad  \mathsf{QT}^{\mathsf{i/e}}_{d, d'}(Y; \mathbf{c}\Theta; \mathbf{c}\Theta') =_{\mathsf{def}}\; \mathcal{QT}^{\mathsf{i/e}}_{d, d'}(Y, \d Y; \mathbf{c}\Theta, (1); \mathbf{c}\Theta'), \; \text{ for } d' \equiv d \equiv 1 \mod 2,
\end{eqnarray}

 \begin{eqnarray}\label{def.new def_C}
\quad \quad  \mathsf G^{\mathsf{i/e}}_{d, d'}(n; \mathbf{c}\Theta; \mathbf{c}\Theta') =_{\mathsf{def}}\; \mathcal{QT}^{\mathsf{i/e}}_{d, d'}(D^n, \d D^n; \mathbf{c}\Theta, (\emptyset); \mathbf{c}\Theta'),  \text{ for } d' \equiv d \equiv 0 \mod 2,
\end{eqnarray}
 
\begin{eqnarray}\label{def.new def_D}
\quad \quad \quad \mathsf G^{\mathsf{i/e}}_{d, d'}(n; \mathbf{c}\Theta; \mathbf{c}\Theta') =_{\mathsf{def}}\; \mathcal{QT}^{\mathsf{i/e}}_{d, d'}(D^n, \d D^n; \mathbf{c}\Theta, (1); \mathbf{c}\Theta'),  \text{ for } d' \equiv d \equiv 1 \mod 2, 
\end{eqnarray}
where $D^n$ stands for the standard $n$-ball. \hfill $\diamondsuit$
\end{definition}

Assuming the $\Lambda$-condition (\ref{special}), for any the choice of $\kappa_1 \in \pi_0(\d Y_1)$ and $\kappa_2 \in\pi_0(\d Y_2)$, let us  introduce an operation
\begin{eqnarray}\label{eq_cup}
\quad \uplus: \mathsf{QT}^{\mathsf{i/e}}_{d, d'}(Y_1; \mathbf{c}\Theta; \mathbf{c}\Theta')  \times \mathsf{QT}^{\mathsf{i/e}}_{d, d'}(Y_2; \mathbf{c}\Theta; \mathbf{c}\Theta') \to  \mathsf{QT}^{{\mathsf{i/e}}}_{d, d'}(Y_1\#_\d Y_2; \mathbf{c}\Theta; \mathbf{c}\Theta').
\end{eqnarray}
Let $H \approx D^{n-1} \times D^1$ be a $1$-handle attached to $Y_1 \coprod Y_2$, to the preferred connected components $\kappa_1$ and $\kappa_2$  of $\d Y_1$ and $\d Y_2$. The result is the connected sum $Y_1\#_\d Y_2$. 
From now and on, we assume that the handles are attached so that the corners are smoothened and the resulting manifold has a smooth boundary. Different attachments of $H$ produce diffeomorphc connected sums, provided that the disks $D^{n-1} \times \{0\} \subset H$ and $D^{n-1} \times \{1\} \subset H$ are placed in the same connected components $(\d Y_1)_{\kappa_1}$ and $(\d Y_2)_{\kappa_2}$ of the boundaries $\d Y_1$ and $\d Y_2$. 
\smallskip

In case {\bf(1)} from (\ref{special}), $\mathbf{c}\Lambda = (\emptyset)$, thus forcing the two immersions $\b_i: M_i \to \R \times Y_i$ ($i=1,2$) to be immersions of \emph{closed}  manifolds.  With $(\d Y_1)_{\kappa_1}$ and $(\d Y_2)_{\kappa_2}$ being fixed, the map $\b_1 \coprod \b_2: M_1 \coprod M_2 \to \R \times (Y_1\#_\d Y_2)$ is a well-defined immersion within its quasitopy class. Note that by {\bf(1)} from (\ref{special}),  $(\b_1 \coprod \b_2)( M_1 \coprod M_2)$ is disjoint from $\R \times \d(Y_1\#_\d Y_2)$. Thus  we put $\b_1 \uplus \b_2 =_{\mathsf{def}} \b_1 \coprod \b_2$. 
\smallskip

In case {\bf(2)} from (\ref{special}), by an action of a diffeomorphism from $\mathsf{Diff}_0(\mathcal L_i)$, we insure that $\b_i(\d M_i) = \{0\} \times \d Y_i$, where $i = 1, 2$ and $\{0\} \in \R$. The $\mathsf{Diff}_0(\mathcal L_i)$-action does not change the quasitopy classes of $\{\b_i\}$. As we attach a $1$-handle $H$ to  $Y_1 \coprod Y_2$ to form $Y_1\#_\d Y_2$, we simultaneously attach the $1$-handle $\tilde H = \{0\} \times H$ to $M_1 \coprod M_2$ to form $M_1 \#_\d M_2$ and extend $\b_1 \coprod \b_2$ across $\tilde H$ to a new map $\b: M_1 \#_\d M_2 \to \R \times (Y_1\#_\d Y_2)$. On $\tilde H$, $\b$ is the obvious diffeomorphism $\tilde H \to H$.  If $M_1$ and $M_2$ are oriented, the handle $\tilde H$ is attached so that the preferred orientations extend across the handle, and then $H$ is attached so that the map $\tilde H \to H$ is a regular embedding. The transversality of $\b_i$ to $\R \times \d Y_i$, together with the property {\bf (2)} from (\ref{special}), allows us to smoothen the immersed manifold $\b(M_1 \#_\d M_2) = \b_1(M_1) \bigcup \tilde H \bigcup  \b_2(M_2)$ in the neighborhood of $\tilde H$. This smoothing construction again is well-defined within the relative quasitopy classes of $\b_1, \b_2$. So we put $\b_1 \uplus \b_2 =_{\mathsf{def}} \b$. \smallskip
 
To summarize, the operation $\uplus$ is well-defined for the elements of the sets $\mathcal{QT}^{\mathsf{i/e}}_d(Y, \d Y; \mathbf{c}\Theta, \hfill \break (\emptyset) ;  \mathbf{c}\Theta')$, where $d \equiv 0 \mod 2$, and for the elements of the sets $\mathcal{QT}^{\mathsf{i/e}}_d(Y, \d Y; \mathbf{c}\Theta, (1);  \mathbf{c}\Theta')$, where $d \equiv 1 \mod 2$.

In particular, by fixing a diffeomorphism  $\phi: D^n \#_\d D^n \approx D^n$, we get two interesting special cases of the operation $\uplus$:

\begin{eqnarray}\label{eq_D^n}
\uplus: \mathsf{G}^{\mathsf{i/e}}_{d, d'}(n; \mathbf{c}\Theta; \mathbf{c}\Theta') \times \mathsf{G}^{\mathsf{i/e}}_{d, d'}(n; \mathbf{c}\Theta; \mathbf{c}\Theta') 
 \to \mathsf{G}^{\mathsf{i/e}}_{d, d'}(n; \mathbf{c}\Theta; \mathbf{c}\Theta'), \\
 \uplus: \mathsf{OG}^{\mathsf{i/e}}_{d, d'}(n; \mathbf{c}\Theta; \mathbf{c}\Theta') \times \mathsf{OG}^{\mathsf{i/e}}_{d, d'}(n; \mathbf{c}\Theta; \mathbf{c}\Theta') 
 \to \mathsf{OG}^{\mathsf{i/e}}_{d, d'}(n; \mathbf{c}\Theta; \mathbf{c}\Theta'). \nonumber
\end{eqnarray}

\begin{proposition}\label{group} 
For any two closed subposets $\Theta' \subset  \Theta  \subset \mathbf\Om$, where $(\emptyset) \notin \Theta$ for $d \equiv 0 \mod 2$ and $(1) \notin \Theta$ for $d \equiv 1 \mod 2$, the operation $\uplus$ introduces a \emph{group structure} to the sets $\mathsf{G}^{\mathsf{i/e}}_{d, d'}(n; \mathbf{c}\Theta; \mathbf{c}\Theta')$ and $\mathsf{OG}^{\mathsf{i/e}}_{d, d'}(n; \mathbf{c}\Theta; \mathbf{c}\Theta')$ (see (\ref{def.new def_C}) and (\ref{def.new def_D})).
\smallskip
For $n > 1$, all these groups are abelian.
\end{proposition}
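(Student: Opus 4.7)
The plan is to verify the four group axioms in turn: associativity, existence of an identity, existence of inverses, and commutativity when $n\ge 2$. Associativity follows because the two bracketings of $D^n\#_\d D^n\#_\d D^n$ are canonically diffeomorphic by a diffeomorphism isotopic to the identity, and any such isotopy realizes an element of $\mathsf{Diff}_0(\mathcal L)$ and hence acts trivially on quasitopy classes. Commutativity for $n\ge 2$ is analogous: the swap diffeomorphism of $D^n\#_\d D^n$ is isotopic to the identity because for $n\ge 2$ any two disjoint embedded $n$-balls in the interior of $D^n$ are ambiently isotopic. For the identity I take $e:=[\emptyset]$ in case~\textbf{(1)}, admissible because $(\emptyset)\in \mathbf{c}\Theta$ by hypothesis, and $e:=[\b_0]$ in case~\textbf{(2)}, where $\b_0(y):=(0,y)$ is the zero section, admissible because $(1)\in\mathbf{c}\Theta$; the identity law $\b\uplus e=\b$ is verified by absorbing the $e$-summand into a collar via a $\mathsf{Diff}_0(\mathcal L)$-isotopy.

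The substantive step is the construction of inverses. I take $[\b]^{-1}:=[\tau\circ \b]$, where $\tau(u,y):=(-u,y)$ is the $\R$-reflection (with the orientation of $M$ reversed in the oriented case), and build an explicit quasitopy on $N:=M\times[0,1]$ by
\[
B(m,s) := \bigl(\pi_\R\b(m)\cos(\pi s),\; \Psi(s)(\pi_{D^n}\b(m)),\; \epsilon\sin(\pi s)\bigr),
\]
where $\Psi\colon[0,1]\to \mathsf{Emb}(D^n,\,D^n\#_\d D^n)$ is an isotopy of embeddings from the left inclusion $\iota_L$ to the right inclusion $\iota_R$ (threading through the $1$-handle) and $\epsilon>0$ is small. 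At $s=0$ and $s=1$ the image lies on the slice $\{t=0\}\subset \R\times(D^n\#_\d D^n)\times[0,1]$ and reproduces $\iota_L\circ\b$ and $\iota_R\circ\tau\b$ respectively, so $B|_{\d N}$ realizes $\b\uplus\tau\b$ as a ``cobordism with its $M_0$-boundary at $t=0$''; the interior of $N$ lifts into $t\in(0,\epsilon]$, with no boundary component at $t=1$ in case~\textbf{(1)}, while in case~\textbf{(2)} the cap at $t=1$ is provided by $\b_0$.

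The main technical obstacle is verifying that the tangency patterns of $B$ with respect to $\mathcal L^\bullet$ lie in $\mathbf{c}\Theta'$. A direct computation of $DB$ shows that at every point $(m,s)$ with $\cos(\pi s)\ne 0$ the tangency type of $B$ to $\mathcal L^\bullet$ coincides with that of $\b$ to $\mathcal L$, so patterns lie in $\mathbf{c}\Theta\subset \mathbf{c}\Theta'$ automatically. At the critical level $s=1/2$ the $s$-derivative can combine with the $M$-derivative to yield doubled patterns on a codimension-one slice of $Z$; these are removed by a Thom-transversality perturbation of $B$ within its quasitopy class, using the classifying polynomial formalism of Theorem~\ref{th.LIFT} to push the classifying map off the forbidden strata of $\mathcal P_{d'}^{\mathbf{c}\Theta'}$ without disturbing its boundary values. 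In case~\textbf{(2)} the additional freedom afforded by a nontrivial $\delta N\subset \R\times\d D^n\times[0,1]$ allows the cobordism to be routed monotonically in $t$, bypassing the critical level entirely. Together with the verifications above, this yields the group structure, which is abelian for $n\ge 2$.
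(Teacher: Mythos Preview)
Your overall architecture (associativity and commutativity via isotopies of the base ball, explicit neutral element) matches the paper, but your construction of the inverse has a genuine gap that the paper explicitly avoids.

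\textbf{The problem with your $\tau$.} You take $\tau(u,y)=(-u,y)$, the reflection in the $\R$-factor. But the combinatorial pattern $\omega^\b(y)=(\omega_1,\ldots,\omega_\ell)$ records multiplicities in the \emph{order induced by the oriented leaf} $\mathcal L_y$. Reflecting the $\R$-coordinate sends $\omega^\b(y)$ to its reversal $(\omega_\ell,\ldots,\omega_1)$. A closed poset $\Theta$ is closed under merges and inserts, but need \emph{not} be closed under reversal: for instance $\Theta=\langle(1,3)\rangle$ does not contain $(3,1)$. So if $\b$ hits the pattern $(3,1)\in\mathbf c\Theta$, then $\tau\circ\b$ hits $(1,3)\in\Theta$, and $\tau\circ\b$ is not even a legitimate element of the quasitopy set. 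The same issue contaminates your cobordism $B$: at intermediate $s$ the $\R$-coordinate is rescaled by $\cos(\pi s)$, and as $s\to 1$ the patterns get reversed.

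\textbf{How the paper handles it.} The paper's involution $\tau$ is a flip of $B^n=D^n\#_\d D^n$ \emph{in the base}, and the lift $\tilde\tau=\mathsf{id}_\R\times\tau$ leaves the $\R$-coordinate untouched. The null-cobordism is built by rotating $\R\times D^n$ inside $\R\times(D^2_+\times B^{n-1})\subset \R\times B^n\times[0,1+\epsilon]$ via the base rotations $\tau(\theta)$; since $\mathsf{id}_\R\times\tau(\theta)$ maps each $\mathcal L^\bullet$-leaf to another leaf by an orientation-preserving isometry of the $\R$-factor, \emph{all} tangency patterns are preserved for every $\theta$. No perturbation is needed, and the argument works for an arbitrary closed $\Theta$.

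\textbf{A secondary issue.} Your proposed fix at $s=1/2$ (``push the classifying map off the forbidden strata using Theorem~\ref{th.LIFT}'') is not available at this point in the paper. Theorem~\ref{th.LIFT} produces a classifying map \emph{from} an immersion; the reverse direction, producing a quasitopy from a homotopy of classifying maps, is exactly the content of Lemma~\ref{lem.E-reg} and Theorem~\ref{th.E-reg}, which are proved \emph{after} Proposition~\ref{group} and, in particular, feed into Corollary~\ref{D^n_A}. Invoking them here would be circular.
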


\begin{proof} In this proof, for a given map $\a: X \to X$ of a topological space $X$,  we {\sf denote} by $\tilde\a$ the product map $\mathsf{id}\times \a: \R \times X \to \R \times X$. 

The following arguments work equally well for $\mathsf {OG}^{\mathsf{i/e}}_{d, d'}(n; \mathbf{c}\Theta; \mathbf{c}\Theta')$ and $\mathsf G^{\mathsf{i/e}}_{d, d'}(n; \mathbf{c}\Theta; \mathbf{c}\Theta')$, the oriented and non-oriented quasitopies over the $n$-ball $D^n$.\smallskip  

Let $B^n = D^n \#_\d D^n$ be a $n$-ball, represented as a connected sum of two standard balls $D^n$. Let $H \approx D^{n-1} \times I$  be the $1$-handle that participates in the construction of the ball $B^n$ as a connected sum:  $B^n = D^n \cup H \cup D^n$. We fix a diffeomorphism $\chi: D^n \subset B^n$, with identifies the ball $D^n$ with the first ball in the connected sum $B^n$, and a diffeomorphism $\phi: B^n \approx D^n$.   Consider an isotopy $\{\psi_t: B^n \to B^n\}_{t \in [0, 1]}$ that starts with the identity map and terminates with the diffeomorphism $\psi_1$ whose image is $\chi(D^n)  \subset B^n$. With the help of $\phi^{-1}$, we transfer the isotopy $\psi_t$ to an isotopy $\{\eta_t: D^n \to D^n\}_{t \in [0, 1]}$. The isotopy $\psi_t$ lifts to the isotopy $\tilde\psi_t = \mathsf{id} \times \psi_t$ of $\R \times B^n$, and the isotopy $\eta_t$ lifts to the isotopy $\tilde\eta_t = \mathsf{id} \times \eta_t$ of $\R \times D^n$. \smallskip

For $d \equiv 0 \mod 2$, the neutral element $\b_\star$ is represented by $\b_\star: \emptyset \to \R \times D^n$ (by the ``empty" cylinder). For $d \equiv 1 \mod 2$, the neutral element $\b_\star$ is represented by the obvious embedding $\b_\star:  D^n \to \{0\} \times  D^n \subset \R \times  D^n$.  

We use $\tilde\eta_t$ to show that, for any immersion $\b: M \to \R \times D^n$ and $d \equiv 0 \mod 2$, the immersion $\tilde\phi \circ (\b \uplus \b_\star): M \#_\d \,\emptyset \to \R \times D^n$ is isotopic (and thus quasitopic) to $\b: M \to \R \times D^n$. 
 We identify $M \#_\d D^n$  with $M$ with the help of a diffeomorphism $\rho$. For $d \equiv 1 \mod 2$ and any immersion $\b: M \to \R \times D^n$, with the help of  $\tilde\eta_t$, the immersion $\tilde\phi \circ (\b \uplus \b_\star) \circ \rho^{-1} : M \to \R \times D^n$ is isotopic  to $\b$. This validates the existence of the neutral element $\b_\star$ for the operation $\uplus$ within the $(d, d; \mathbf c\Theta,  \mathbf c\Lambda; \mathbf c\Theta)$-quasitopy classes. Of course, any $(d, d; \mathbf c\Theta,  \mathbf c\Lambda; \mathbf c\Theta)$-quasitopy is automatically a $(d, d'; \mathbf c\Theta,  \mathbf c\Lambda; \mathbf c\Theta')$-quasitopy.
\smallskip

Consider an involution $\tau: B^n \to B^n$ that flips the two copies of $D^n$ in $B^n$ and has the ball $B^{n-1}$ in the middle of $1$-handle $H \approx B^{n-1} \times I$ as its fixed point set. 
Then, for any immersion $\b: M \to \R \times D^n$, the immersion $\tilde\phi \circ \tilde\tau \circ \tilde\chi \circ \b$ plays the role of the inverse $\b^{-1}$ with respect to $\uplus$.  Indeed, consider the unit half-disk $D^2_+$ and the unit half-cylinder $C^{n+1}_+ := D^2_+ \times B^{n-1}$, inscribed in the cylinder $B^n \times [0,\, 1+\e]$, $\e >0$. We will use the rotations $\tau(\theta)$ of $D^2 \times B^{n-1} \supset C^{n+1}_+$ around the axis $B^{n-1}$ at the angles $\theta \in [0, \pi]$, so that $\tau(\pi) = \tau$. 
Let $N = M \times [0, \pi]$. In the case $d \equiv 0 \mod 2$, we form an immersion $$A: N  \to \R \times C^{n+1}_+ \subset \R \times B^n \times [0, 1+\e],$$ defined by the formula $A(m, \theta) = \tilde\tau(\theta) \circ \b (m)$. 

Let $\{\underline 0\} \in \R$ denote the origin.
In the case $d \equiv 1 \mod 2$,  to satisfy Definition \ref{quasi_isotopy_of_ immersions} for $\mathbf c\Lambda = (1)$, we need to insure that
$$A(N) \cap \big\{\R \times \big(\d(B^n \times [0, 1+\e]) \setminus (B^n \times \{0\})\big)\big\} = \{\underline 0\} \times \big\{\d(B^n \times [0, 1+\e]) \setminus (B^n \times \{0\})\big\}.$$
Therefore, we further isotop $A$ \emph{radially} in each of the multipliers $D^2_+$ onto the rectangle $D^1 \times [0, 1+\e]$. The result is an isotopy of $A$ inside the cylinder $\{\underline 0\} \times B^n \times [0, 1+\e]$.  

Since $\{\tilde\tau(\theta)\}$ preserve the combinatorial $\mathcal L$-tangency patterns of $\b(M)$ within to the ambient foliation $\mathcal L^\bullet$ on $\R \times B^n \times [0, 1+\e]$,  we conclude that the cobordism $A: N \to \R \times B^n \times [0, 1+\e]$ delivers $(d, d; \mathbf c\Theta,  \mathbf c\Lambda; \mathbf c\Theta)$-quasitopy between $\b \uplus \b^{-1}$ and the neutral element $\b_\star$, provided that $\mathbf c\Lambda$ is as in the hypotheses of the proposition. \smallskip

Next, we need to verify the associativity of the operation $\uplus$. The argument is similar to the one that has validated that $\b \uplus \b_\star$ is $(d, d; \mathbf c\Theta,  \mathbf c\Lambda; \mathbf c\Theta)$-quasitopic to $\b$. It uses diffeomorphisms $\phi_{21}: D^n \#_\d D^n \to D^n$ and $\phi_{32}: D^n \#_\d D^n \#_\d D^n \to D^n \#_\d D^n$,  embeddings $\chi_1, \chi_2: D^n \to D^n \#_\d D^n$ on the first and the second ball, and the embeddings $\chi_{12}, \chi_{23}: D^n \#_\d D^n \to D^n \#_\d D^n \#_\d D^n$ on the first and the second ball and on the second and third ball in $D^n \#_\d D^n \#_\d D^n$, respectively. We leave the rest of the argument to the reader.
\smallskip

The validation of the fact that $\uplus$ is commutative for $n > 1$ is a bit more involved and similar to the classical proof of the fact that the homotopy groups $\pi_n(\sim)$ of spaces are commutative for $n > 1$ (see Fig. \ref{fig.commutativity}). Let us sketch this validation. 

\begin{figure}[ht]
\centerline{\includegraphics[height=3in,width=4.7in]{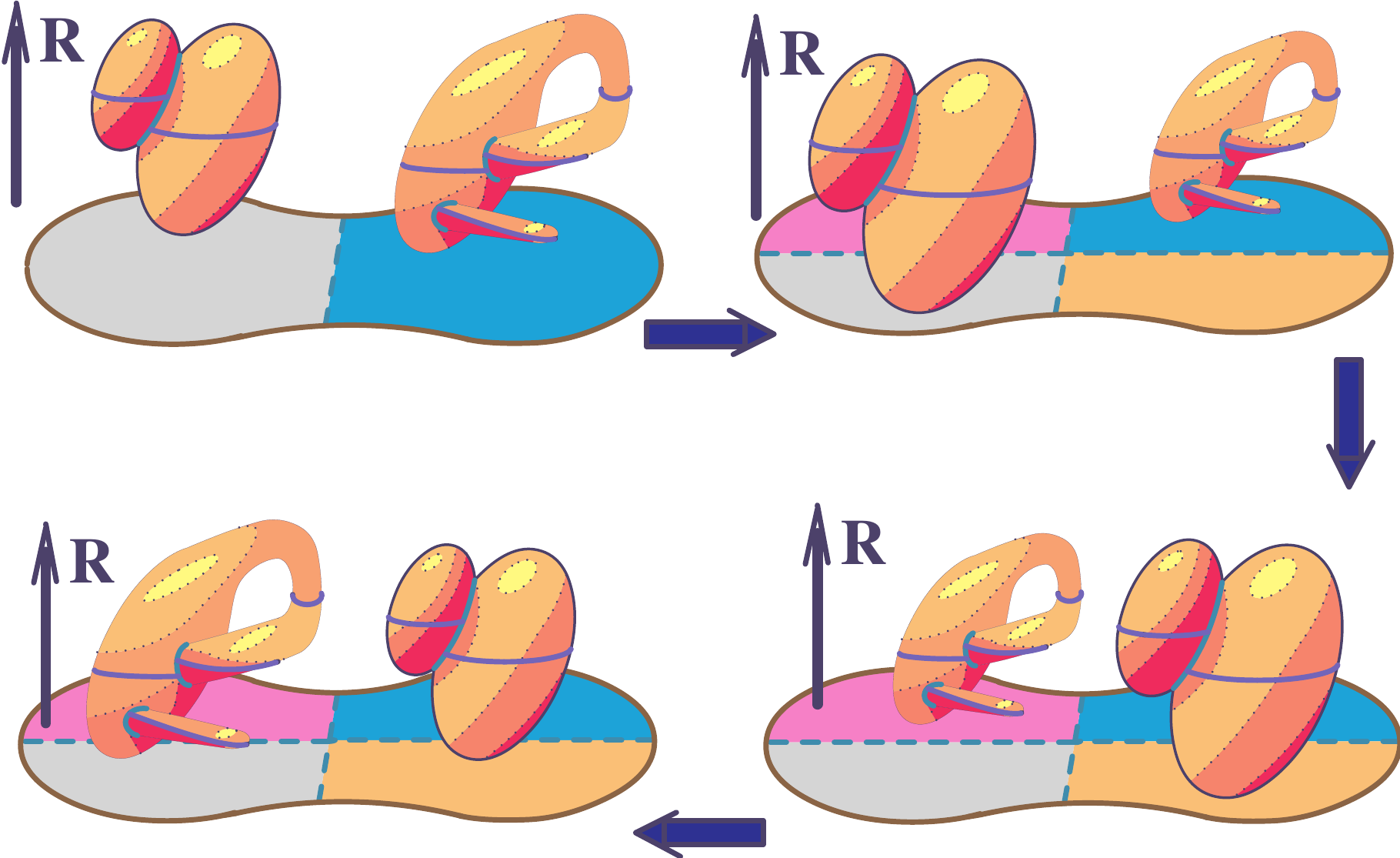}}
\bigskip
\caption{\small{Proving the commutativity of the operation $\uplus$ in the group $\mathsf{G}^{\mathsf{i/e}}_{d, d'}(n; \mathbf{c}\Theta; \mathbf{c}\Theta')$ for $n \geq 2$. The three bold arrows show the effects of isotopies in $\R \times B^n$.}} 
\label{fig.commutativity}
\end{figure}

As before, put $B^n = D^n_1 \cup H \cup D^n_2$.  We start with the case $d\equiv 0 \mod 2$ and two immersions $\b_i: M_i \to \R \times B^n$ ($i= 1,2$) such that the image of $\b_i$ belongs to the cylinder $\R \times D^n_i$. Consider an equatorial hyperplane $K^{n-1} \subset B^n$ (the horizontal punctured line in Fig. 3), transversal to the ball $B^{n-1} \subset H$ and such that $K^{n-1}$ is the fixed point set of another involution $\lambda$ on $B^n$. So the pair of $(n-1)$-balls $K^{n-1}$ and $B^{n-1}$ divide $B^n$ in the four regions (``quadrants"): $B^n_I, B^n_{II}, B^n_{III}, B^n_{IV}$, ordered counterclockwise around the axle $K^{n-1} \cap B^{n-1}$.  Each of these regions is homeomorphic to a $n$-ball. 

First, we choose an isotopy $\eta_{III}^t$ to compress $D^n_1 = B^n_{II}\cup B^n_{III}$ in the interior of the region $B^n_{III}$ and an isotopy $\eta_{I}^t$ to compress $D^n_2 = B^n_{I}\cup B^n_{IV}$ in the interior of the region $B^n_{I}$. We may assume that these isotopies are smooth in the interior of the relevant regions. We lift $\eta_{III}^t$ to an isotopy $\tilde\eta_{III}^t$ and compose it with $\b_1$.  We lift $\eta_{I}^t$ to an isotopy $\tilde\eta_{I}^t$ and compose it with $\b_2$. Let us denote these compositions by $\b_{1, III}$ and $\b_{2, I}$, respectively. Next, we choose an isotopy $\zeta^t_{II}$ to compress $B^n_I \cup B^n_{II}$ in the interior of $B^n_{II}$, and an isotopy $\zeta^t_{IV}$ to compress $B^n_{III} \cup B^n_{IV}$ in the interior of $B^n_{IV}$. The composition $\tilde\zeta^t_{II}\circ \b_{2,I}$ places the image of $M_2$ over the quadrant $B^n_{II}$ and composition $\tilde\zeta^t_{IV}\circ \b_{1,III}$ places the image $M_1$ over the quadrant $B^n_{IV}$. To complete the cycle that ``switches" $\b_1$ and $\b_2$, it remains to expand the image $\tilde\zeta^t_{II}\circ \b_{2,I}(M_2) \subset \R \times B^n_{II}$ into $\tilde\tau(M_2)$ and the image $\tilde\zeta^t_{IV}\circ \b_{1,III}(M_1) \subset \R \times B^n_{IV}$ into $\tilde\tau(M_1)$.\smallskip

In the case $d \equiv 1 \mod 2$, more careful arguments are needed to insure that $\b_1\uplus \b_2$ is $(d, d; \mathbf c\Theta,  (1); \mathbf c\Theta)$-quasitopic to $(\tilde\tau \circ \b_2) \uplus  (\tilde\tau \circ \b_1)$. They are similar to the ones we used to prove that $\b \uplus (\tilde\tau \circ \b)$ is $(d, d; \mathbf c\Theta,  (1); \mathbf c\Theta)$-quasitopic to the neutral element $\b_\star$. \end{proof}

\begin{remark} \emph{We stress that the ``$1$-dimensional" groups 
$\mathsf{G}^{\mathsf{i/e}}_{d, d'}(1; \mathbf{c}\Theta; \mathbf{c}\Theta')$ typically are \emph{not} commutative. There are quite a few examples (see Corollary \ref{cor.free_group} and Fig. 3) where, for $\Theta' = \Theta$ and $d' = d$, they are {\sf free groups} with the number of free generators being bounded from above by some quadratic functions in $d$. In general, these groups form an interesting class, whose presentations are well-understood \cite{KSW1} and whose group theoretical properties deserve a study.}
\hfill  $\diamondsuit$
\end{remark}

For a given group $\mathsf G$, we denote by $(\mathsf G)^r$ its $r$-fold direct product.

\begin{corollary}\label{cor_G^r-action} Let $\Theta' \subset  \Theta  \subset \mathbf\Om$ be two closed subposets, where $(\emptyset) \notin \Theta$ for $d \equiv 0 \mod 2$ and $(1) \notin \Theta$ for $d \equiv 1 \mod 2$. 

For any compact connected smooth $n$-dimensional manifold $Y$ with boundary that has $r$ connected components, the group $\big(\mathsf G^{\mathsf{i/e}}_{d, d'}(n; \mathbf{c}\Theta; \mathbf{c}\Theta')\big)^r$ acts, with the help of the operation $\uplus$ from (\ref{eq_cup}), on the set $\mathsf{QT}^{\mathsf{i/e}}_{d, d'}(Y; \mathbf{c}\Theta; \mathbf{c}\Theta')$.
Similarly,  $(\mathsf{OG}^{\mathsf{i/e}}_{d, d'}(n; \mathbf{c}\Theta; \mathbf{c}\Theta'))^r$ acts on $\mathsf{OQT}^{\mathsf{i/e}}_{d, d'}(Y; \mathbf{c}\Theta; \mathbf{c}\Theta')$.
\end{corollary}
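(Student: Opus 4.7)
The plan is to decompose the $r$-fold product action into $r$ individual commuting actions, one for each boundary component of $Y$. For each element $\kappa_j \in \pi_0(\d Y)$, $j = 1, \ldots, r$, I would specialize the operation $\uplus$ of (\ref{eq_cup}) to the pair $Y_1 = Y$, $Y_2 = D^n$, attaching the $1$-handle with one end at $(\d Y)_{\kappa_j}$ and the other end at the unique boundary component of $D^n$. Because the boundary connected sum $Y \#_\d D^n$ is diffeomorphic to $Y$ via a diffeomorphism $\phi_j$ supported in a collar neighborhood of $(\d Y)_{\kappa_j}$, the operation $\uplus$, post-composed with the lift $\tilde\phi_j = \mathsf{id} \times \phi_j$, yields a well-defined map
$$A_j: \mathsf{QT}^{\mathsf{i/e}}_{d, d'}(Y; \mathbf{c}\Theta; \mathbf{c}\Theta') \times \mathsf G^{\mathsf{i/e}}_{d, d'}(n; \mathbf{c}\Theta; \mathbf{c}\Theta') \to \mathsf{QT}^{\mathsf{i/e}}_{d, d'}(Y; \mathbf{c}\Theta; \mathbf{c}\Theta').$$

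Next I would verify that each $A_j$ is a group action. That the neutral element $\b_\star$ acts trivially follows from the same compressing isotopy argument used in the proof of Proposition \ref{group}: the isotopy shrinking the $D^n$ summand of $Y \#_\d D^n$ into a small collar, lifted to $\R \times (Y \#_\d D^n)$, exhibits $\tilde\phi_j \circ (\b \uplus \b_\star)$ as isotopic (hence quasitopic) to $\b$, consistent with the $\Lambda$-condition (\ref{special}) imposed in Definition \ref{def.notations}. The associativity axiom $A_j(\b, g_1 \uplus g_2) = A_j(A_j(\b, g_1), g_2)$ reduces to reassociating the iterated boundary connected sum $Y \#_\d D^n \#_\d D^n$, which is precisely the associativity step already carried out in Proposition \ref{group}.

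Finally, I would prove that for $i \neq j$ the actions $A_i$ and $A_j$ commute, so that the $r$ actions assemble into a single action of the direct product $\big(\mathsf G^{\mathsf{i/e}}_{d, d'}(n; \mathbf{c}\Theta; \mathbf{c}\Theta')\big)^r$. For distinct $\kappa_i, \kappa_j$, the two handles attached to $Y$ in forming $Y \#_\d D^n \#_\d D^n$ are supported in disjoint collar neighborhoods of $(\d Y)_{\kappa_i}$ and $(\d Y)_{\kappa_j}$, so the two attachments can be performed in either order with diffeomorphic outcomes; the diffeomorphism of $Y$ implementing the swap lies in $\mathsf{Diff}_0(Y)$, and its lift to $\mathsf{Diff}_0(\mathcal L)$ preserves quasitopy classes, as observed before Definition \ref{def.m_hat}. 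The main obstacle is bookkeeping: one must ensure that the identifying diffeomorphisms $\phi_j$ and all the isotopies used for verifying the identity and associativity axioms can be localized in disjoint collars of the various boundary components, so that the action at $\kappa_j$ leaves the data at the other $\kappa_i$'s untouched. Once this localization is set up, the oriented variant requires only the additional check that orientations extend across each handle, which is guaranteed by the orientation conventions fixed in Definition \ref{quasi_isotopy_of_ immersions} and the discussion preceding (\ref{eq_D^n}).
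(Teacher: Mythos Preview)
Your proposal is correct and follows essentially the same approach as the paper's own proof: define one $\mathcal G$-action per boundary component via the $\uplus$ operation with $Y_2=D^n$, invoke the arguments of Proposition \ref{group} for the action axioms, and observe that actions attached to distinct boundary components commute because the handle attachments are supported in disjoint collars. The paper's proof is terser---it simply says the commutation is ``evident'' and cites ``the arguments that follow (\ref{eq_cup})'' rather than spelling out the localization bookkeeping you describe---but the content is the same.
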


\begin{proof} Let $\mathcal G := \mathsf G^{\mathsf{i/e}}_{d, d'}(n;\mathbf{c}\Theta; \mathbf{c}\Theta')$. By Proposition \ref{group}, $\mathcal G$ is a group with respect to the $\uplus$ operation. By the arguments that follow (\ref{eq_cup}), for each choice of a connected component $\d_\kappa Y$ of $\d Y$,  the group $\mathcal G$ acts via formula (\ref{eq_cup}) on the set $\mathsf{QT}^{\mathsf{i/e}}_{d, d'}(Y; \mathbf{c}\Theta; \mathbf{c}\Theta')$. In the case of an even $d$ and $\mathbf{c}\Lambda = (\emptyset)$, for a pair $\b: M \to \R \times Y$, and $\b': M' \to \R \times D^n$, the action is defined by the formula $\b \coprod \b': M \coprod M' \to \R \times (Y \#_{\d_\kappa} D^n) \approx \R \times Y$, where $\#_{\d_\kappa}$ is the boundary connected sum that employs the connected component $\d_\kappa Y$.  In the case of an odd $d$ and $\mathbf{c}\Lambda = (1)$, for a given pair $\b: M \to \R \times Y$ and $\b': M' \to \R \times D^n$, the action is defined by the formula $\b \#_{\d_\kappa} \b': M \#_{\d_\kappa^\dagger} M' \to \R \times (Y \#_{\d_\kappa} D^n)$, where $\#_{\d_\kappa^\dagger}$ is a boundary connected sum using the single component of $\d M$, embedded by $\b$ in $\R \times \d_\kappa Y$, and the single component of $\d M'$, embedded by $\b'$ in $\R \times \d D^n$. 

For different choices of elements  $\kappa \in \pi_0(\d Y)$, these $\mathcal G_\kappa$-actions  evidently commute. Thus $\mathcal G^r$ acts on $\mathsf{QT}^{\mathsf{i/e}}_{d, d'}(Y; \mathbf{c}\Theta; \mathbf{c}\Theta')$, where $r = \#(\pi_0(\d Y))$.
Similar arguments work for the oriented case.
 \end{proof}

\begin{remark}\label{rem.orbits}
\emph{We will see soon that the groups $\mathsf G^{\mathsf{i/e}}_{d, d'}(n; \mathbf{c}\Theta; \mathbf{c}\Theta')$ may have elements of infinite order, as well as some torsion, as complex as the torsion of the homotopy groups of spheres. As a result, the $\mathsf G^{\mathsf{i/e}}_{d, d'}(n; \mathbf{c}\Theta; \mathbf{c}\Theta')$-orbits in $\mathsf{QT}^{\mathsf{i/e}}_{d, d'}(Y; \mathbf{c}\Theta; \mathbf{c}\Theta')$ may have complex and diverse periods. We have a limited understanding of the orbit-space of this action; however, for embeddings, this problem is reduced to the homotopy theory of the spaces of real polynomials with constrained real divisors. In some cases (see Proposition \ref{prop.framed_bordisms} and Example \ref{ex.Sigma10}), we can estimate the ``size" of the orbit-space $\mathsf{QT}^{\mathsf{i/e}}_{d, d'}(Y; \mathbf{c}\Theta; \mathbf{c}\Theta')\big/ \mathsf G^{\mathsf{i/e}}_{d, d'}(n; \mathbf{c}\Theta; \mathbf{c}\Theta')$.} 

\hfill  $\diamondsuit$
\end{remark}


Given two pairs of spaces $X_1 \supset A_1$ and $X_2 \supset A_2$, we denote by $[(X_1,A_1), (X_2, A_2)]$ the set of homotopy classes of continuous maps $g: X_1 \to X_2$ such that $g(A_1) \subset A_2$.\smallskip

\begin{definition}\label{triples}
Given three pairs of spaces $X_1 \supset A_1$,  $X_2 \supset A_2$, and $X_3 \supset A_3$ and a fixed continuous map $\e: (X_2, A_2) \to (X_3, A_3)$, we denote by 
\begin{eqnarray}\label{homotopy_of_tripples}
[[(X_1, A_1),\, \e: (X_2, A_2) \to (X_3, A_3)]]
\end{eqnarray}
the set of homotopy classes $[g]$ of continuous maps $g: (X_1, A_1) \to (X_2, A_2)$, modulo the following equivalence relation: $[g_0] \sim [g_1]$, where $g_0: (X_1, A_1) \to (X_2, A_2)$ and $g_1: (X_1, A_1) \to (X_2, A_2)$, if the compositions $\e\circ g_0$ and $\e \circ g_1$ are homotopic as maps from $(X_1, A_1) $ to $(X_3, A_3)$. 
\hfill  $\diamondsuit$
\end{definition}

If all the spaces above are locally compact $CW$-complexes, then the set  in (\ref{homotopy_of_tripples}) is countable or finite.\smallskip

The next two propositions deliver new characteristic homotopy classes of immersions/embeddings against a fixed  background of $1$-foliations of the product type. 

\begin{proposition}\label{th.IMMERSIONS} Let $\Theta' \subset  \Theta \subset \Lambda \subset \mathbf \Om$ be closed subposets. Let $M$, $Y$ be smooth compact $n$-manifolds. 
\begin{itemize}
\item Any proper immersion/embedding $\b: (M, \d M) \to (\R \times Y,\, \R \times \d Y)$, such that:
\begin{enumerate}
 \item  $m^\b(y) \leq d$ (see (\ref{eq5.1})) and $m^\b(y) \equiv d \mod 2$ for all $y \in Y$, 
 \item the combinatorial patterns $\{\omega^\b(y)\}_{y \in Y}$ belong to $\mathbf c\Theta$, 
 \item  the combinatorial patterns $\{\omega^\b(y)\}_{y \in \d Y}$, belong to $\mathbf c\Lambda$,
 \end{enumerate}
 generates a continuous map $\Phi^\b_d: (Y, \d Y) \to (\cP_d^{\mathbf c\Theta}, \cP_d^{\mathbf c\Lambda})$, whose homotopy class $[\Phi^\b]$ depends on $\b$ only. \smallskip
  
 \item Moreover, any such pair $$\b_0: (M_0, \d M_0) \to (\R \times Y,\, \R \times \d Y) \text{\, and \,} \; \b_1: (M_1, \d M_1) \to (\R \times Y,\, \R \times \d Y)$$ of $(d, d'; \mathbf c\Theta, \mathbf c\Lambda; \mathbf c\Theta')$-quasitopic immersions/embeddings generates  
 homotopic composit maps  $$\e_{d, d'}\circ\Phi^{\b_0}: (Y, \d Y) \to (\cP_{d'}^{\mathbf c\Theta'}, \cP_{d'}^{\mathbf c\Lambda}) \text{ and  } \e_{d, d'}\circ\Phi^{\b_1}: (Y, \d Y) \to (\cP_{d'}^{\mathbf c\Theta'}, \cP_{d'}^{\mathbf c\Lambda}).$$ 
 
 Thus we get a well-defined map 
\begin{eqnarray}\label{Y_dY}
 \Phi_{d, d'}(Y, \d Y; \mathbf{c}\Theta, \mathbf c\Lambda; \mathbf{c}\Theta'):\; \mathcal{QT}^{\mathsf{i/e}}_{d, d'}(Y, \d Y; \mathbf{c}\Theta, \mathbf c\Lambda; \mathbf{c}\Theta') \to \nonumber \\ 
  \to [[(Y, \d Y),\; \e_{d, d'}: (\cP_d^{\mathbf c\Theta}, \cP_d^{\mathbf c\Lambda}) \to (\cP_{d'}^{\mathbf c\Theta'}, \cP_{d'}^{\mathbf c\Lambda})]],  
 \end{eqnarray}
 from the (oriented) quasitopies of immersions/embeddings to the homotopy classes of triples of spaces, as introduced in Definition \ref{triples}.\smallskip
\end{itemize}
\end{proposition}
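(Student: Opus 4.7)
The first bullet follows immediately from Theorem \ref{th.LIFT}. Indeed, the hypotheses $(1)$--$(3)$ on $\beta$ are exactly the hypotheses of Theorem \ref{th.LIFT} for the pair of closed posets $\Theta \subset \Lambda \subset \mathbf{\Omega}_{\langle d]}$, so a smooth map $\Phi^\beta_d: (Y, \d Y) \to (\cP_d^{\mathbf c\Theta}, \cP_d^{\mathbf c\Lambda})$ exists, and the last assertion of the same theorem (the linear homotopy through the convex set $\mathcal Q_+(d)$ of strictly positive ratios of monic polynomials) shows that any two such maps are homotopic as maps of pairs. Thus $[\Phi^\beta] \in [(Y,\d Y),(\cP_d^{\mathbf c\Theta}, \cP_d^{\mathbf c\Lambda})]$ is an invariant of $\beta$.

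For the second bullet, the plan is to run exactly the same construction one dimension up. Given a $(d,d';\mathbf c\Theta,\mathbf c\Lambda;\mathbf c\Theta')$-quasitopy
\[
B:(N,\delta N)\to (\R\times Z,\,\R\times Z^\delta),\qquad Z=Y\times[0,1],
\]
between $\beta_0$ and $\beta_1$, I would apply Theorem \ref{th.LIFT} to the proper immersion/embedding $B$, with the ambient base manifold $Z$ (whose boundary is $Z^\delta \cup (Y\times\d[0,1])$), and with the triple $(d',\Theta',\Lambda)$ playing the role of $(d,\Theta,\Lambda)$ in that theorem. The quasitopy conditions on $B$ are precisely what the hypotheses of Theorem \ref{th.LIFT} require for this triple, so we obtain a smooth classifying map
\[
\Phi^B:(Z,\, Z^\delta)\longrightarrow (\cP_{d'}^{\mathbf c\Theta'},\, \cP_{d'}^{\mathbf c\Lambda})
\]
with $\{(u,z):\Phi^B(z)(u)=0\}=B(N)$.

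Next I would analyze the restrictions of $\Phi^B$ to the ``ends'' $Y\times\{0\}$ and $Y\times\{1\}$. Over each end, the quasitopy data forces multiplicities $\leq d$ and combinatorial patterns in $\mathbf c\Theta$, so the divisor $D^B(y,i)$ on the fiber $\mathcal L^\bullet_{(y,i)}$ is identical (as a divisor on $\R$) to the divisor $D^{\beta_i}(y)$ produced by $\beta_i$, multiplied by the stabilizing factor $(u^2+1)^{(d'-d)/2}$ tacked on in (\ref{eq.stable}). Thus $\Phi^B|_{Y\times\{i\}}$ and $\e_{d,d'}\circ\Phi^{\beta_i}$ are two smooth maps $(Y,\d Y)\to(\cP_{d'}^{\mathbf c\Theta'},\cP_{d'}^{\mathbf c\Lambda})$ with identical real divisors at every $y$. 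By the same convex--combination argument used at the end of the proof of Theorem \ref{th.LIFT} (the ratios of the two monic polynomial families lie in the contractible cone $\mathcal Q_+(d')$), these two maps are homotopic as maps of pairs. The assignment $t\mapsto \Phi^B|_{Y\times\{t\}}$ therefore supplies, after concatenation with these two homotopies at the ends, a homotopy between $\e_{d,d'}\circ \Phi^{\beta_0}$ and $\e_{d,d'}\circ\Phi^{\beta_1}$ in the space of maps $(Y,\d Y)\to (\cP_{d'}^{\mathbf c\Theta'},\cP_{d'}^{\mathbf c\Lambda})$. By the very definition of the equivalence relation in (\ref{homotopy_of_tripples}), this shows that $[\Phi^{\beta_0}]$ and $[\Phi^{\beta_1}]$ represent the same class in $[[(Y,\d Y),\,\e_{d,d'}:(\cP_d^{\mathbf c\Theta},\cP_d^{\mathbf c\Lambda})\to(\cP_{d'}^{\mathbf c\Theta'},\cP_{d'}^{\mathbf c\Lambda})]]$, so the map $\Phi_{d,d'}$ in (\ref{Y_dY}) is well-defined.

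The only delicate point is the identification of $\Phi^B|_{Y\times\{i\}}$ with $\e_{d,d'}\circ\Phi^{\beta_i}$ up to homotopy of pairs: a priori they are only equal as divisor-valued functions. The resolution, as indicated above, is that Theorem \ref{th.LIFT} already supplies the needed homotopy machinery (the convex set of positive ratios of monic polynomials), and the factor $(u^2+1)^{(d'-d)/2}$ appearing in (\ref{eq.stable}) is exactly what matches the divisors of $\Phi^B$ over the ends with those produced by the stabilization of $\Phi^{\beta_i}$. Everything else is bookkeeping of the three posets $\Theta'\subset\Theta\subset\Lambda$ and the fact that the open inclusions $\cP_{d'}^{\mathbf c\Theta'}\supset \e_{d,d'}(\cP_d^{\mathbf c\Theta})\supset \e_{d,d'}(\cP_d^{\mathbf c\Lambda})$ respect the stratifications and hence the relevant transversality perturbations made along the way.
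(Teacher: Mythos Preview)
Your proof is correct and follows essentially the same route as the paper: apply Theorem \ref{th.LIFT} to $\beta$ for the first bullet, and to the cobordism $B$ over $Z = Y\times[0,1]$ for the second, so that $\Phi^B$ furnishes the desired homotopy. You are in fact more explicit than the paper about why $\Phi^B|_{Y\times\{i\}}$ is homotopic (as a map of pairs) to $\e_{d,d'}\circ\Phi^{\beta_i}$, via the convex-combination argument from the end of Theorem~\ref{th.LIFT}; the paper simply asserts this.

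One small point the paper flags that you pass over: $Z = Y\times[0,1]$ is a manifold \emph{with corners} along $\partial Y \times \partial[0,1]$, so Theorem~\ref{th.LIFT} as stated does not literally apply. The paper handles this by noting that $B$ is transversal to both $\R\times Y\times\partial[0,1]$ and $\R\times\partial Y\times[0,1]$, so one may add a collar to $Y\times[0,1]$ and run the argument on an enlarged smooth $(n+1)$-manifold $\hat Z$ whose interior contains $Y\times[0,1]$. This is a routine technicality, but worth acknowledging.
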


\begin{proof} The two claims follow from Theorem \ref{th.LIFT},  being applied first to proper immersions $\b_0: M_0 \to \R \times Y$ and $\b_1:\b_1: M_1 \to \R \times Y$, and then to the proper immersion  $B: (N, \delta N) \to (\R \times Y \times [0, 1],\, \R \times \d Y \times [0, 1])$  of a relative cobordism $N$ between $M_0$ and $M_1 \cup \delta N$. There is only one technical wrinkle in this application: $N$ is a compact manifold \emph{with corners}  $\d M_0 \coprod \d M_1$; so Theorem \ref{th.LIFT} must be adjusted to such a cituation. Note that $B$ is transversal to both hypersurfaces, $\R \times Y \times \d[0, 1]$ and to $\R \times \d Y \times [0, 1]$. This fact allows to add a collar to $Y \times [0,1]$ and to extend the arguments of Theorem \ref{th.LIFT} to a new enlarged $(n+1)$-manifold $\hat Z$ whose interior contains $Y \times [0,1]$. 

Thus the map $\Phi^B: (Y \times [0,1],\, \d Y \times [0,1]) \to (\cP_{d'}^{\mathbf c\Theta'}, \cP_{d'}^{\mathbf c\Lambda})$ delivers a homotopy between the maps $\e_{d, d'}\circ\Phi^{\b_0}$ and $\e_{d, d'}\circ\Phi^{\b_1}$. Revisiting Definition \ref{triples}, this implies that (\ref{Y_dY}) is a well-defined map. In fact, similar map may be defined from the oriented quasitopies.

It will follow from formula (\ref{surjective_A}) below that the map in (\ref{Y_dY}) is \emph{surjective}. 
\end{proof}

\begin{corollary}\label{cor_cL_empty}
Under the hypotheses of Proposition \ref{th.IMMERSIONS} and assuming the $\Lambda$-condition (\ref{special}), 
we get a well-defined map 
 \begin{eqnarray}\label{Y_dY_A} 
 \Phi_{d, d'}(Y; \mathbf{c}\Theta; \mathbf{c}\Theta'):\; \mathsf{QT}^{\mathsf{i/e}}_{d, d'}(Y; \mathbf{c}\Theta; \mathbf{c}\Theta') \to \nonumber \\ 
  \to [[(Y, \d Y),\; \e_{d, d'}: (\cP_d^{\mathbf c\Theta}, pt) \to (\cP_{d'}^{\mathbf c\Theta'}, pt')]],  
 \end{eqnarray}
 where the base point $pt \in \cP_d^{\mathbf c\Lambda}$ and $pt' = \e_{d, d'}(pt)$. Similarly, a map from $\mathsf{OQT}^{\mathsf{i/e}}_{d, d'}(Y; \mathbf{c}\Theta; \mathbf{c}\Theta')$ to the same target is available.
 \end{corollary}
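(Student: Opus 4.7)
My plan is to deduce the corollary from Proposition \ref{th.IMMERSIONS} by showing that, under the $\Lambda$-condition (\ref{special}), the boundary target spaces $\cP_d^{\mathbf c\Lambda}$ and $\cP_{d'}^{\mathbf c\Lambda}$ are contractible, so that collapsing them to basepoints $pt$ and $pt'$ with $\e_{d, d'}(pt) = pt'$ does not alter the relevant triple-homotopy set.

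First I would establish the contractibility. In case \textbf{(1)} of (\ref{special}) one has $\mathbf{c}\Lambda = \{(\emptyset)\}$ with $d$ even, so $\cP_d^{\mathbf c\Lambda}$ consists of monic polynomials of even degree $d$ with no real roots; such polynomials are strictly positive on all of $\R$, and the set of strictly positive monic polynomials of a fixed even degree is a convex subset of $\cP_d$ (a convex combination of two such polynomials remains monic and strictly positive), hence contractible. In case \textbf{(2)} one has $\mathbf c\Lambda = \{(1)\}$ with $d$ odd; the unique factorization $P(u) = (u-a)\,Q(u)$ with $Q \in \cP_{d-1}^{(\emptyset)}$ produces a homeomorphism $\R \times \cP_{d-1}^{(\emptyset)} \xrightarrow{\;\approx\;} \cP_d^{(1)}$, and both factors are contractible by case (1). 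The identical arguments apply to $d'$, giving the contractibility of $\cP_{d'}^{\mathbf c\Lambda}$.

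Next I would show that, for any point $pt \in \cP_d^{\mathbf c\Lambda}$, the inclusion of pairs $(\cP_d^{\mathbf c\Theta}, pt) \hookrightarrow (\cP_d^{\mathbf c\Theta}, \cP_d^{\mathbf c\Lambda})$ induces a bijection on the relevant homotopy sets. Given a continuous map $g: (Y, \d Y) \to (\cP_d^{\mathbf c\Theta}, \cP_d^{\mathbf c\Lambda})$, the restriction $g|_{\d Y}$ lands in the contractible space $\cP_d^{\mathbf c\Lambda}$ and is therefore homotopic within $\cP_d^{\mathbf c\Lambda}$ to the constant map $pt$; using the Homotopy Extension Property for the CW pair $(Y, \d Y)$, this homotopy lifts to a homotopy of $g$ through maps of pairs $(Y, \d Y) \to (\cP_d^{\mathbf c\Theta}, \cP_d^{\mathbf c\Lambda})$, producing a map $\tilde g: (Y, \d Y) \to (\cP_d^{\mathbf c\Theta}, pt)$. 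The same argument, applied one dimension higher to the cylinder $(Y \times [0,1],\; \d Y \times [0,1] \cup Y \times \d[0,1])$, shows that the resulting class $[\tilde g] \in [(Y, \d Y), (\cP_d^{\mathbf c\Theta}, pt)]$ is independent of choices and of homotopy representatives, and analogous statements hold for $\cP_{d'}^{\mathbf c\Lambda}$.

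Finally, this natural bijection is compatible with the stabilization embedding $\e_{d,d'}: (\cP_d^{\mathbf c\Theta}, \cP_d^{\mathbf c\Lambda}) \to (\cP_{d'}^{\mathbf c\Theta'}, \cP_{d'}^{\mathbf c\Lambda})$, since $\e_{d,d'}(pt) = pt'$ and the contractibility on both levels intertwines with the composition equivalence relation of Definition \ref{triples}. Post-composing the map (\ref{Y_dY}) supplied by Proposition \ref{th.IMMERSIONS} with this identification produces the desired map (\ref{Y_dY_A}) on $\mathsf{QT}^{\mathsf{i/e}}_{d, d'}(Y; \mathbf{c}\Theta; \mathbf{c}\Theta')$ (as renamed in Definition \ref{def.notations}), and the oriented case is completely parallel. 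The main subtlety is bookkeeping — verifying that both the map-level and the homotopy-level identifications are compatible with the $\e_{d,d'}$-composition equivalence of Definition \ref{triples} — but this is a formal consequence of the contractibility once established.
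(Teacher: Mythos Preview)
Your proposal is correct and follows exactly the same approach as the paper: the paper's proof consists of two sentences, asserting that $\cP_d^{\mathbf c\Lambda}$ and $\cP_{d'}^{\mathbf c\Lambda}$ are contractible when $\mathbf c\Lambda = (\emptyset)$ or $(1)$, and concluding that the corollary follows from Proposition~\ref{th.IMMERSIONS}. You have simply filled in the details the paper leaves implicit---the convexity/factorization arguments for contractibility and the HEP reasoning for why contractibility lets one replace $\cP_d^{\mathbf c\Lambda}$ by a point in the triple-homotopy set.
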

 
 \begin{proof} If $\mathbf c\Lambda = (\emptyset)$ or $\mathbf c\Lambda = (1)$, then the spaces $\cP_d^{\mathbf c\Lambda}$ and $\cP_{d'}^{\mathbf c\Lambda}$ are contractible. So the corollary follows from Proposition \ref{th.IMMERSIONS}.
 \end{proof}
 
Let $Y$ be a connected $n$-dimensional manifold  with boundary. Let us pick a preferred component $\d Y_i$ of $\d Y$. Then the group  $\pi_n(\cP_d^{\mathbf c\Theta}, pt)\big/ \ker((\e_{d, d'})_\ast)$ acts naturally on the set $[[(Y, \d Y),\; \e_{d, d'}]]$ by forming the connected sum $g \#_{\d Y_i} f$ of maps $f: (D^n, \d D^n) \to (\cP_d^{\mathbf c\Theta}, pt)$ and $g: (Y, \d Y) \to (\cP_d^{\mathbf c\Theta}, pt)$.  As a result, we get an action of $\big(\pi_n(\cP_d^{\mathbf c\Theta}, pt)\big/ \ker((\e_{d, d'})_\ast)\big)^r$ on $[[(Y, \d Y),\; \e_{d, d'}]]$, where $r = \#(\pi_0(\d Y))$. Let us denote this action by $h \#_{\d_1} f_1 \#_{\d_2} \ldots \#_{\d_r} f_r$, where $h \in [[(Y, \d Y),\; \e_{d, d'}]]$, $f_i \in \pi_n(\cP_d^{\mathbf c\Theta}, pt)$, and $\#_{\d_i} f_i$ stands for forming a boundary connected sum map $h \#_\d f_i$ with $h$ by using the $i^{th}$ component of $\d Y$. 
\smallskip

{$\bullet$ \sf Assuming the $\Lambda$-condition (\ref{special}), in what follows, we use the abbreviation}  
$$\Phi_{d, d'}(n; \mathbf{c}\Theta; \mathbf{c}\Theta')=_{\mathsf{def}} \Phi_{d, d'}(D^n, \d D^n; \mathbf{c}\Theta, \mathbf c\Lambda; \mathbf{c}\Theta').$$
Combining Proposition \ref{group} with Proposition \ref{th.IMMERSIONS}, leads to the following result.

\begin{proposition}\label{prop_(D, dD)} Under the $\Lambda$-condition (\ref{special}),
\begin{itemize}
\item the map in (\ref{Y_dY}) generates a group homomorphism
\begin{eqnarray}\label{D_dD}
\Phi_{d, d'}(n; \mathbf{c}\Theta 
; \mathbf{c}\Theta'):\;\; \mathsf G^{\mathsf{i/e}}_{d, d'}(n; \mathbf{c}\Theta
;  \mathbf{c}\Theta') \to \nonumber \\ 
\to \pi_n(\cP_d^{\mathbf c\Theta}, pt)\big/ \ker\big\{(\e_{d, d'})_\ast: \pi_n(\cP_d^{\mathbf c\Theta}, pt) \to \pi_n(\cP_{d'}^{\mathbf c\Theta'}, pt')\big\},
 \end{eqnarray}
where the group operation in $\mathsf G^{\mathsf{i/e}}_{d, d'}(n; \mathbf{c}\Theta
;  \mathbf{c}\Theta')$ is ``\,$\uplus$".\footnote{By Corollary \ref{D^n_A},  $\Phi_{d, d'}^{\mathsf{emb}}(n; \mathbf{c}\Theta
; \mathbf{c}\Theta')$ is an isomorphism, and $\Phi_{d, d'}^{\mathsf{imm}}(n; \mathbf{c}\Theta 
; \mathbf{c}\Theta')$ is an epimorphism.} 
\smallskip

For $d \equiv 0 \mod 2$, the base points $pt \in \cP_d^{\mathbf c\Theta}$ resides in the chamber $\mathsf R_d^{(\emptyset)}$ of positive monic polynomials, and for $d \equiv 1 \mod 2 $, in the chamber $\mathsf R_d^{(1)}$ of monic polynomials with a single simple real root. Similar choices of the base point are made for $pt' \in \cP_{d'}^{\mathbf c\Theta'}$.
\smallskip

\item Let $r = \#(\pi_0(\d Y))$. The $r^{th}$ power of the homomorphism $\Phi_{d, d'}(n; \mathbf{c}\Theta; \mathbf{c}\Theta')$ from (\ref{D_dD}) generates a representation of the  group $\big(\mathsf G^{\mathsf{i/e}}_{d, d'}(n;\mathbf{c}\Theta;  \mathbf{c}\Theta')\big)^r$  in the group $$\Big(\pi_n\big(\cP_d^{\mathbf c\Theta}, pt\big)\big/ \ker\big\{(\e_{d, d'})_\ast: \pi_n(\cP_d^{\mathbf c\Theta}, pt) \to \pi_n(\cP_{d'}^{\mathbf c\Theta'}, pt')\big\}\Big)^r.$$  
With the help of this representation, the map in (\ref{Y_dY}) is \emph{equivariant} with respect to the  $\big(\mathsf G^{\mathsf{i/e}}_{d, d'}(n; \mathbf{c}\Theta;  \mathbf{c}\Theta')\big)^r$-action on the set $\mathsf{QT}^{\mathsf{i/e}}_{d, d'}(Y; \mathbf{c}\Theta; \mathbf{c}\Theta')$ and with respect to the 
$\big(\pi_n(\cP_d^{\mathbf c\Theta}, pt)\big/ \ker\big\{(\e_{d, d'})_\ast: \pi_n(\cP_d^{\mathbf c\Theta}, pt) \to \pi_n(\cP_{d'}^{\mathbf c\Theta'}, pt')\big\}\big)^r \text {- action }$
 on the set $[[(Y, \d Y),\; \e_{d, d'}: (\cP_d^{\mathbf c\Theta}, \cP_d^{\mathbf c\Lambda}) \to (\cP_{d'}^{\mathbf c\Theta'}, \cP_{d'}^{\mathbf c\Lambda})]].$ 
 \smallskip
 
\item Similar claims hold for the oriented quasitopies $\mathsf{OQT}^{\mathsf{i/e}}_{d, d'}(Y; \mathbf{c}\Theta; \mathbf{c}\Theta')$ and the \hfill\break$\big(\mathsf{OG}^{\mathsf{i/e}}_{d, d'}(n; \mathbf{c}\Theta;  \mathbf{c}\Theta')\big)^r$-action on them.
\end{itemize}
\end{proposition}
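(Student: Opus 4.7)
The plan is to verify the three bullet points by combining Proposition \ref{th.IMMERSIONS}, Proposition \ref{group}, and Corollary \ref{cor_cL_empty}, with the key geometric input being the match between the boundary connected sum $\uplus$ and the $\pi_n$-multiplication via the classical pinch construction.

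First I would identify the target in (\ref{D_dD}). Under the $\Lambda$-condition, a map $g: (D^n, \d D^n) \to (\cP_d^{\mathbf c\Theta}, pt)$ represents an element of $\pi_n(\cP_d^{\mathbf c\Theta}, pt)$ via the quotient $D^n/\d D^n = S^n$. Two such maps $g_0, g_1$ satisfy $[g_0] \sim [g_1]$ in the sense of Definition \ref{triples} precisely when $\e_{d,d'}\circ g_0$ and $\e_{d,d'}\circ g_1$ are based-homotopic, i.e., $(\e_{d,d'})_\ast[g_0] = (\e_{d,d'})_\ast[g_1]$. Hence the target of Corollary \ref{cor_cL_empty}'s map for $Y = D^n$ canonically equals $\pi_n(\cP_d^{\mathbf c\Theta}, pt)\big/\ker((\e_{d,d'})_\ast)$, the group appearing on the right of (\ref{D_dD}).

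The central step is the homomorphism property. Given $\b_1, \b_2: (M_i, \d M_i) \to (\R \times D^n, \R \times \d D^n)$ with classifying maps $\Phi^{\b_1}, \Phi^{\b_2}$, I realize $D^n = D^n_1 \#_\d D^n_2$ by attaching a $1$-handle $H$ to $D^n_1 \sqcup D^n_2$. Over each $D^n_i$, Theorem \ref{th.LIFT} reproduces $\Phi^{\b_i}$ up to homotopy. Over $H$ the two cases of the $\Lambda$-condition (\ref{special}) force $\b_1 \uplus \b_2$ to be either empty (for even $d$) or the standard transverse slice $\{0\} \times H$ (for odd $d$); in either case the tangency pattern over $H$ is exactly the one defining the base point $pt \in \cP_d^{\mathbf c\Theta}$, so the polynomial family produced by Theorem \ref{th.LIFT} can be taken to be constant at $pt$ over $H$ and a collar. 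After identifying $D^n_1 \#_\d D^n_2 \approx D^n$ and collapsing $\d D^n$, the resulting classifying map factors through the pinch $S^n \to S^n \vee S^n$ followed by $\Phi^{\b_1} \vee \Phi^{\b_2}$, and hence represents $[\Phi^{\b_1}] + [\Phi^{\b_2}]$ in $\pi_n$. I expect the handle portion to be the main obstacle: one must combine the partition-of-unity convexity at the end of the proof of Theorem \ref{th.LIFT} with the homotopy uniqueness clause from Proposition \ref{th.IMMERSIONS} to justify replacing the classifying polynomial family over $H$ by the constant map to $pt$ and then smoothly interpolating back to $\Phi^{\b_i}$ over $D^n_i$.

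For the second bullet, the same argument localizes over a single boundary component $\d_i Y$ of a general compact $Y$. Attaching a handle at the $i$th component to form $Y \#_{\d_i} D^n$ corresponds, after collapsing $\d Y$, to the wedge pinch $(Y, \d Y) \to (Y, \d Y) \vee (D^n, \d D^n)$ followed by $\Phi^{\b} \vee \Phi^{\b'}$; this is exactly the $\pi_n(\cP_d^{\mathbf c\Theta}, pt)$-action on $[[(Y, \d Y), \e_{d,d'}]]$ introduced before the proposition. Actions at distinct boundary components commute (they take place in disjoint collars), yielding the claimed $\big(\mathsf G^{\mathsf{i/e}}_{d,d'}(n; \mathbf c\Theta; \mathbf c\Theta')\big)^r$-equivariance of $\Phi_{d,d'}(Y; \mathbf c\Theta; \mathbf c\Theta')$. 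Finally, the oriented case goes through verbatim: the handle attachments, the pinch construction, and the partition-of-unity interpolation can all be chosen to preserve any fixed orientation on $M, N, Y$, exactly as in Proposition \ref{group}, so both the homomorphism and the equivariance lift without modification to $\mathsf{OG}^{\mathsf{i/e}}_{d,d'}$ acting on $\mathsf{OQT}^{\mathsf{i/e}}_{d,d'}$.
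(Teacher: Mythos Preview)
Your proposal is correct and follows essentially the same approach as the paper. The paper's own proof is much terser: it cites Proposition~\ref{th.IMMERSIONS} and Corollary~\ref{cor_G^r-action} for the well-definedness of the map and the group action, briefly checks that the $\pi_n/\ker(\e_{d,d'})_\ast$-action on the target set is well-defined, and then asserts that the equivariance formula $\Phi(Y)(\b \uplus_1 \g_1 \ldots \uplus_r \g_r) = \Phi(Y)(\b)\, \#_{\d_1} \Phi(D^n)(\g_1) \ldots \#_{\d_r}\, \Phi(D^n)(\g_r)$ is ``on the level of definitions'' given Theorem~\ref{th.LIFT}. Your write-up unpacks precisely what that assertion means: over the attached $1$-handle the $\Lambda$-condition forces the tangency pattern to be $(\emptyset)$ or $(1)$, so the classifying polynomial family may be chosen constant at $pt$ there, and the resulting map is the pinch composition $\Phi^{\b_1}\vee\Phi^{\b_2}$. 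This is the content the paper leaves implicit, and your level of detail is appropriate.
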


\begin{proof} By Proposition \ref{th.IMMERSIONS}, we get a group representation  $\Phi_{d, d'}(n; \mathbf{c}\Theta, \mathbf c\Lambda; \mathbf{c}\Theta')$ from (\ref{D_dD}). By Corollary \ref{cor_G^r-action}, the group $\big(\mathsf G^{\mathsf{i/e}}_{d, d'}(n; \mathbf{c}\Theta; \mathbf{c}\Theta')\big)^r$ acts on 
$\mathsf{QT}^{\mathsf{i/e}}_{d, d'}(Y; \mathbf{c}\Theta; \mathbf{c}\Theta')$. Employing Corollary \ref{cor_cL_empty}, we get a well-defined map $\Phi(Y) =_{\mathsf{def}} \Phi_{d, d'}(Y, \d Y;  \mathbf{c}\Theta, \mathbf c\Lambda; \mathbf{c}\Theta')$ from (\ref{Y_dY}). 

If a map $F: (Y, \d Y) \to (\cP_d^{\mathbf c\Theta}, pt)$, being composed with $\e_{d, d'}$, becomes null-homotopic in $(\cP_d^{\mathbf c\Theta'}, pt')$, then for any map $G: (D^n, \d D^n) \to (\cP_d^{\mathbf c\Theta}, pt)$ such that $\e_{d, d'} \circ G$ is null-homotopic in $(\cP_d^{\mathbf c\Theta'}, pt')$, the map $\e_{d, d'} \circ (G \#_\d F)$ is null-homotopic in $(\cP_d^{\mathbf c\Theta'}, pt')$ as well.  Thus the action of the group $\pi_n((\cP_d^{\mathbf c\Theta}, pt))/ \ker \{\e_{d, d'}\}$ on the set $[[(Y, \d Y),\; \e_{d, d'}: (\cP_d^{\mathbf c\Theta}, \cP_d^{\mathbf c\Lambda}) \to (\cP_{d'}^{\mathbf c\Theta'}, \cP_{d'}^{\mathbf c\Lambda})]]$ is well-defined.
Now, thanks to Theorem \ref{th.LIFT}, it is on the level of definitions to verify the equivariant property of $\Phi(Y)$, namely, that
 $$\Phi(Y)(\b \uplus_1 \g_1 \uplus_2 \g_2 \ldots   \uplus_r \g_r) = \Phi(Y)(\b)\, \#_{\d_1} \Phi(D^n)(\g_1) \ldots  \#_{\d_r}\, \Phi(D^n)(\g_r),$$
 where $\b \in \mathsf{QT}^{\mathsf{i/e}}_{d, d'}(Y; \mathbf{c}\Theta; \mathbf{c}\Theta')$ and $\g_i \in \mathsf G^{\mathsf{i/e}}_{d, d'}(n; \mathbf{c}\Theta;  \mathbf{c}\Theta')$, or $\b \in \mathsf{OQT}^{\mathsf{i/e}}_{d, d'}(Y; \mathbf{c}\Theta; \mathbf{c}\Theta')$ and $\g_i \in \mathsf{OG}^{\mathsf{i/e}}_{d, d'}(n; \mathbf{c}\Theta;  \mathbf{c}\Theta')$.
\end{proof}

\begin{definition}\label{reduction} 
Let $\Theta' \subset  \Theta \subset \Lambda \subset \bfOm$ be closed subposets.

Consider the obvious map $\mathcal{QT}^{\mathsf{i/e}}_{d, d'}(Y, \d Y; \mathbf{c}\Lambda, \mathbf{c}\Lambda; \mathbf{c}\Theta') \to \mathcal{QT}^{\mathsf{i/e}}_{d, d'}(Y, \d Y; \mathbf{c}\Theta, \mathbf{c}\Lambda; \mathbf{c}\Theta')$. 

\noindent We denote by $^\dagger\mathcal{QT}^{\mathsf{i/e}}_{d, d'}(Y, \d Y; \mathbf{c}\Theta, \mathbf{c}\Lambda; \mathbf{c}\Theta')$ its image. It is represented by immersions/ embeddings $\b: (M, \d M) \to (\R \times Y, \R \times \d Y)$, whose combinatorial tangency patterns to the foliation $\mathcal L$ belong to the open subposet $\mathbf{c}\Lambda \subset \mathbf{c}\Theta$. We call such elements {\sf combinatorially trivial}. 

 Then we introduce the quotient set by the formula:
\begin{eqnarray}
\qquad \mathbf{QT}^{\mathsf{i/e}}_{d, d'}(Y, \d Y; \mathbf{c}\Theta, \mathbf{c}\Lambda; \mathbf{c}\Theta') =_{\mathsf{def}} \qquad\qquad\qquad\qquad \\
 \mathcal{QT}^{\mathsf{i/e}}_{d, d'}(Y, \d Y; \mathbf{c}\Theta, \mathbf{c}\Lambda; \mathbf{c}\Theta')\big/\; ^\dagger\mathcal{QT}^{\mathsf{i/e}}_{d, d'}(Y, \d Y; \mathbf{c}\Lambda, \mathbf{c}\Lambda; \mathbf{c}\Theta').  \qquad  \qquad \diamondsuit \nonumber 
\end{eqnarray}
\end{definition}

Under the $\Lambda$-condition (\ref{special}), we get
\begin{eqnarray}\label{eq_dagger}
\mathbf{QT}^{\mathsf{i/e}}_{d, d'}(Y, \d Y; \mathbf{c}\Theta, \mathbf{c}\Lambda; \mathbf{c}\Theta') = \\ 
= \mathcal{QT}^{\mathsf{i/e}}_{d, d'}(Y, \d Y; \mathbf{c}\Theta, \mathbf{c}\Lambda; \mathbf{c}\Theta') =_{\mathsf{def}} \mathsf{QT}^{\mathsf{i/e}}_{d, d'}(Y; \mathbf{c}\Theta; \mathbf{c}\Theta'), \nonumber
\end{eqnarray}
since $^\dagger\mathcal{QT}^{\mathsf{i/e}}_{d, d'}(Y, \d Y; \mathbf{c}\Lambda, \mathbf{c}\Lambda; \mathbf{c}\Theta')$ is represented by a single element, an empty embedding or the embedding $\mathsf{id}: Y \to \{0\} \times Y \subset \R \times Y$. In particular, if $\d Y = \emptyset$, then we get the bijection (\ref{eq_dagger}). When $Y$ is \emph{orientable},  similar  definitions/constructions of $\mathbf{OQT}^{\mathsf{i/e}}_{d, d'}(Y, \d Y; \mathbf{c}\Theta, \mathbf{c}\Lambda; \mathbf{c}\Theta')$ are available for the oriented quasitopies.
 
\begin{lemma}\label{lem.E-reg} 
Let $\Theta' \subset  \Theta \subset \Lambda \subset \bfOm$  be closed subposets,  and let  $Y$ be a compact manifold. We pick a pair of natural numbers  $d \leq d'$ of the same parity.
\begin{itemize} 
\item 
\noindent Any smooth $(\d \mathcal E_d)$-regular map $h: (Y, \d Y) \to (\cP_d^{\mathbf c\Theta}, \cP_d^{\mathbf c\Lambda})$  produces a proper regular \emph{embedding} $\b_h: (M, \d M) \hookrightarrow (\R\times Y,\, \R \times \d Y)$ of some compact $n$-manifold $M$. The combinatorial patterns of $\b_h$, relative to the foliation $\mathcal L$, belong to $\mathbf c\Theta$ and of its restriction to $\d M$ belong to $\mathbf c\Lambda$. Moreover, $\Phi^{\b_h} = h$. \smallskip

\item Any pair $h_0, h_1: (Y, \d Y) \to (\cP_d^{\mathbf c\Theta}, \cP_d^{\mathbf c\Lambda})$ of 
$(\d\mathcal E_d)$-regular  maps such that $\e_{d,d'} \circ h_0$ and $\e_{d,d'} \circ h_1$ are linked by a  smooth $(\d\mathcal E_{d'})$-regular homotopy  $$H: (Y \times [0,1], \d Y \times [0,1]) \to (\cP_{d'}^{\mathbf c\Theta'}, \cP_{d'}^{\mathbf c\Lambda})$$ gives rise to a $(d, d'; \mathbf c\Theta, \mathbf c\Lambda; \mathbf c\Theta')$-quasitopy $B_H: N \hookrightarrow \R\times Y\times [0, 1]$ between $\b_{h_0}$ and $\b_{h_1}$. Moreover, $\Phi^{B_H} = H$. 
\item If $Y$ is orientable, then so are $M$ and $N$.
\end{itemize}
\end{lemma}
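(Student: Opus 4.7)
The plan is to construct $\b_h$ and $B_H$ as the inclusions of the preimages $M := (\mathsf{id}\times h)^{-1}(\d\mathcal E_d)$ and $N := (\mathsf{id}\times H)^{-1}(\d\mathcal E_{d'})$, respectively. The first bullet is essentially a refinement of Lemma \ref{E_bounds} that keeps track of boundary combinatorics, while the second is its parametric version with an extra ``time'' coordinate.

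For the first bullet, the $(\d\mathcal E_d)$-regularity of $h$ means that both $\mathsf{id}\times h$ and its restriction over $\R\times\d Y$ are transversal to the hypersurface $\d\mathcal E_d$. Hence $M$ is a compact smooth $n$-manifold with $\d M = M\cap(\R\times\d Y)$, and the inclusion $\b_h: M\hookrightarrow \R\times Y$ is a proper embedding in the sense of Definition \ref{def.im} (the required transversality of $\b_h$ to $\R\times\d Y$ is precisely the second transversality condition). For each $y\in Y$, the fiber $\mathcal L_y\cap M$ equals the support of the real divisor of the polynomial $h(y)$, and at every $b=(u_0,y)\in M$ the tangency multiplicity $\mu^{\b_h}(b)$ equals the multiplicity of $u_0$ as a root of $h(y)$; consequently $\om^{\b_h}(y)$ is precisely the real root multiplicity pattern of $h(y)$. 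Since $h(y)\in\cP_d^{\mathbf c\Theta}$ for $y\in Y$ and $h(y)\in\cP_d^{\mathbf c\Lambda}$ for $y\in\d Y$, the patterns land in $\mathbf c\Theta$ and in $\mathbf c\Lambda$ respectively. Finally, the polynomial family $y\mapsto h(y)$ itself satisfies the defining property of a classifying family for $\b_h$ in Theorem \ref{th.LIFT}, so the homotopy class $\Phi^{\b_h}$ is represented by $h$.

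For the second bullet, the $(\d\mathcal E_{d'})$-regularity of $H$ (and of its restriction over $\R\times\d Y\times[0,1]$) shows that $N$ is a compact smooth $(n+1)$-manifold with corners $\d M_0\coprod\d M_1$ and with $\d N = (M_0\coprod M_1)\cup\delta N$, where $M_i := N\cap(\R\times Y\times\{i\})$ and $\delta N := N\cap(\R\times\d Y\times[0,1])$. The key matching of the ends follows from the observation that, since $H|_{Y\times\{i\}} = \e_{d,d'}\circ h_i$ and the factor $(u^2+1)^{(d'-d)/2}$ in the definition of $\e_{d,d'}$ has no real zeros, the real divisor of $\e_{d,d'}(h_i(y))$ coincides with that of $h_i(y)$ both in support and in multiplicities. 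Thus $M_i$ is canonically identified with the manifold built from $h_i$ via the first bullet, and $B_H|_{M_i}=\b_{h_i}$. The combinatorial conditions in Definition \ref{quasi_isotopy_of_ immersions} for a $(d, d'; \mathbf c\Theta,  \mathbf c\Lambda; \mathbf c\Theta')$-quasitopy then follow from $H(Y\times[0,1])\subset\cP_{d'}^{\mathbf c\Theta'}$, $H(\d Y\times[0,1])\subset\cP_{d'}^{\mathbf c\Lambda}$, and the inclusion $\e_{d,d'}(\cP_d^{\mathbf c\Theta})\subset\cP_{d'}^{\mathbf c\Theta}\subset\cP_{d'}^{\mathbf c\Theta'}$ (since $\Theta'\subset\Theta$ and $\e_{d,d'}$ preserves multiplicity patterns). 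The equality $\Phi^{B_H}=H$ is argued just as in the first bullet.

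Orientability is handled uniformly. By (\ref{eq.E}), $\d\mathcal E_d$ is the graph of $a_0 = -\sum_{i\geq 1} a_i u^i$, so it has globally trivial and canonically cooriented normal bundle in $\R\times\cP_d$. Transversality implies that the normal bundles of $M$ in $\R\times Y$ and of $N$ in $\R\times Y\times[0,1]$ are pullbacks of this trivial line bundle, hence trivial; combined with orientability of $\R\times Y$ and $\R\times Y\times[0,1]$ when $Y$ is orientable, this gives orientability of $M$ and $N$. The one genuinely delicate point, and the main technical obstacle, is the corner analysis of $N$ near $\d Y\times\d[0,1]$: one must check that the two separate transversality hypotheses on $H$ (over $Y\times[0,1]$ and over $\d Y\times[0,1]$) combine to endow $N$ with the correct structure of a manifold with corners having the stated face decomposition. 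This is handled by passing to a product collar neighborhood of $\d Y\times\d[0,1]$, on which $H$ can be straightened, reducing the issue to product transversality in a standard chart.
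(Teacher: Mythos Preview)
Your proof is correct and follows essentially the same approach as the paper: both construct $M$ and $N$ as transversal preimages of $\d\mathcal E_d$ and $\d\mathcal E_{d'}$, read off the combinatorial patterns from the targets of $h$ and $H$, invoke Theorem~\ref{th.LIFT} for $\Phi^{\b_h}=h$ and $\Phi^{B_H}=H$, and deduce orientability from triviality of the normal line bundle to $\d\mathcal E_d$. The paper is slightly more explicit on one point you touch only implicitly: it spells out that $(\mathsf{id}\times(\e_{d,d'}\circ h))^\ast(\wp_{d'}) = (u^2+1)^{(d'-d)/2}\cdot(\mathsf{id}\times h)^\ast(\wp_d)$, so that not only the zero sets but also the nonvanishing of the $1$-jet transfers from $h_i$ to $\e_{d,d'}\circ h_i$, ensuring the ends of $N$ are smooth and match $\b_{h_i}$; conversely, your discussion of the corner structure near $\d Y\times\d[0,1]$ is more explicit than the paper's.
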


\begin{proof}  The first claim of the lemma follows from Definition \ref{def.E-regular}, since for a $\d \mathcal E_d$-regular map, the preimage of the hypersurface $\d\mathcal E_d \subset \R \times \mathcal P_d$ under $\mathsf{id}\times h$ is a regularly embedded codimension $1$ submanifold $M$ of $\R \times Y$, and the preimage of $\d\mathcal E_d$ under the map $\mathsf{id} \times h|_{\d Y}$, where $h|_{\d Y}$ is a $\d \mathcal E_d$-regular map, is the boundary $\d M$, regularly embedded in $\R \times \d Y$. Recall that the map $\mathsf{id} \times h$ pulls back the universal function $\wp(u, x): \R \times \R^d \to \R$ to the function $(\mathsf{id} \times h)^\ast(\wp)$ on $\R \times Y$ for which $0$ is a regular value. 
Using this function $(\mathsf{id} \times h)^\ast(\wp)$,  the arguments from Theorem \ref{th.LIFT} imply that $\Phi^{\b_h} = h$.  \smallskip

Let us validate the second claim.  Recall that $\e_{d,d'}(\cP_d^{\mathbf c\Lambda}) \subset \cP_{d'}^{\mathbf c\Lambda}$, since $\e_{d,d'}$ preserves the combinatorial types $\om$ of real divisors of real polynomials.

By definition of $\e_{d, d'}$, we get $\e_{d, d'}^\ast(\wp_{d'}) = (u^2+1)^{(d'-d)/2} \cdot  \wp_{d}$.   
Thus $$\big(\mathsf{id} \times (\e_{d, d'}\circ h)\big)^\ast(\wp_{d'}) = (\mathsf{id} \times h)^\ast(\wp_d) \cdot Q,$$ where $Q: \R \times Y \to \R_+$ is a smooth positive function.  So the zero loci  $$\{\big(\mathsf{id} \times (\e_{d, d'}\circ h)\big)^\ast(\wp_{d'}) = 0\} \text{ and } \{\big(\mathsf{id} \times h\big)^\ast(\wp_{d}) = 0\}$$ coincide. Moreover, using that $Q > 0$,  if the $1$-jet of $\big(\mathsf{id} \times h\big)^\ast(\wp_{d})$ does not vanish along $\{\big(\mathsf{id} \times h\big)^\ast(\wp_{d}) = 0\}$, then the $1$-jet of $(\mathsf{id} \times h)^\ast(\wp_d) \cdot Q$ does not vanish along the shared locus. 
Therefore, if $(\mathsf{id} \times h)^\ast(\wp_d)$ has $0$ is a regular value, then 
$\big(\mathsf{id} \times (\e_{d, d'}\circ h)\big)^\ast(\wp_{d'})$ has $0$ as a regular value as well. 
\smallskip

Now, given two $\d \mathcal E_d$-regular maps $h_0, h_1: (Y, \d Y) \to (\cP_d^{\mathbf c\Theta}, \cP_d^{\mathbf c\Lambda})$ such that $\e_{d,d'} \circ h_0$ and $\e_{d,d'} \circ h_1$ are linked by a  smooth $\d \mathcal E_{d'}$-regular homotopy  $$H: (Y \times [0,1], \d Y \times [0,1]) \to (\cP_{d'}^{\mathbf c\Theta'}, \cP_{d'}^{\mathbf c\Lambda}),$$
we conclude that  the preimage of $\d\mathcal E_{d'}$ under $\mathsf{id}\times H$ is a regularly embedded codimension $1$ submanifold $N \subset \R \times Y \times [0, 1]$ with corners $\d M_0 \coprod \d M_1$. Its boundary $\d N = (\d M_0 \coprod \d M_1) \bigcup \delta N$, where $\delta N = H^{-1}(\d \mathcal E_{d'}) \cap (\R \times \d Y \times [0,1]).$ 
Moreover, for $i = 0, 1$, the transversality of $\mathsf{id}\times H$ to $\d \mathcal E_{d'}$ implies the transversality of $$\b_i: M_i = (\e_{d,d'} \circ h_i)^{-1}(\d \mathcal E_{d'}) \hookrightarrow \R \times Y$$ to $\R \times \d Y$, and the transversality of $B: N = H^{-1}(\d \mathcal E_{d'}) \hookrightarrow \R \times Y \times [0,1]$ to $\R \times \d Y  \times [0,1]$. 
Using the pull-back $(\mathsf{id}\times H)^\ast(\wp_{d'})$, again by the arguments from Theorem \ref{th.LIFT}, we get that $\Phi^{B_H} = H$.

Finally, if $Y$ is oriented, then so are $\R \times Y$ and $\R \times Y \times [0, 1]$. Then the pull-backs under $\mathsf{id}\times h$ or $\mathsf{id}\times H$ of the normal fields  $n(\d\mathcal E_d, \mathcal E_d)$ or $n(\d\mathcal E_{d'}, \mathcal E_{d'})$ help to orient $M$ or $N$.   
\end{proof}

Lemma \ref{lem.Ek-reg} below is a natural generalization of Lemma \ref{lem.E-reg} from the case $k=1$ to the case of a general $k$.  It is based on Lemma \ref{lem_kE-regular}. We skip the proof of Lemma \ref{lem.Ek-reg}, since we will not rely on it. However, its formulation may give a bit wider perspective of our effort.

\begin{lemma}\label{lem.Ek-reg}
Let $\Theta' \subset  \Theta \subset \Lambda \subset \bfOm$  be closed sub-posets, and $Y$ a compact manifold. Let $k \leq d$, $k' \leq d'$, $d \leq d'$, $k \leq k'$, and $d \equiv d' \mod 2$.
\begin{itemize} 

\item Any smooth $(\d \mathcal E_d, k)$-regular map $h: (Y, \d Y) \to (\cP_d^{\mathbf c\Theta}, \cP_d^{\mathbf c\Lambda})$  produces a proper immersion $\b_h: (M, \d M) \hookrightarrow (\R\times Y,\, \R \times \d Y)$ of some compact $n$-manifold $M$. The combinatorial patterns of $\b_h$, relative to the foliation $\mathcal L$, belong to $\mathbf c\Theta$ and of its restriction to $\d M$ to $\mathbf c\Lambda$. Moreover, $\Phi^{\b_h} = h$. \smallskip

\item Any pair $h_0, h_1: (Y, \d Y) \to (\cP_d^{\mathbf c\Theta}, \cP_d^{\mathbf c\Lambda})$ of 
$(\d \mathcal E_d, k)$-regular  maps such that $\e_{d,d'} \circ h_0$ and $\e_{d,d'} \circ h_1$ are linked by a  smooth $(\d \mathcal E_{d'}, k')$-regular homotopy  $$H: (Y \times [0,1],\, \d Y \times [0,1]) \to (\cP_{d'}^{\mathbf c\Theta'},\, \cP_{d'}^{\mathbf c\Lambda}),$$ where $k \leq k' \leq d'$, gives rise to a $(d, d'; \mathbf c\Theta, \mathbf c\Lambda; \mathbf c\Theta')$-quasitopy $B_H: N \to \R\times Y\times [0, 1]$ between the immersions $\b_{h_0}$ and $\b_{h_1}$. Moreover, $\Phi^{B_H} = H$.  \hfill $\diamondsuit$
\end{itemize}
\end{lemma}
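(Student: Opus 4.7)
The plan is to apply Lemma \ref{lem_kE-regular} directly, using its canonical jet-space resolution to extract the required immersion from the zero locus of $(\mathsf{id}\times h)^\ast(\wp_d)$, and then to read off the combinatorial tangency patterns from the real divisors of $\{h(y)\}_{y \in Y}$. For the cobordism claim, the same construction applied to the homotopy $H$ will produce $B_H$, and the compatibility with $\b_{h_0}, \b_{h_1}$ at the two ends will rest on the key identity $\e_{d,d'}^\ast(\wp_{d'}) = (u^2+1)^{(d'-d)/2}\cdot \wp_d$.

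First, I would set $\mathcal M_h := \{(\mathsf{id}\times h)^\ast(\wp_d) = 0\} \subset \R \times Y$, and invoke Lemma \ref{lem_kE-regular} to produce a compact smooth $n$-manifold $M$ (with $\d M$ obtained as the canonical resolution of the restriction of the locus over $\d Y$, using the $q$-separability of the boundary branches built into $(\d\mathcal E_d,k)$-regularity) together with the immersion $\b_h: (M, \d M) \to (\R \times Y, \R \times \d Y)$ with image $\mathcal M_h$. Properness in the sense of Definition \ref{def.im} is guaranteed by the third bullet of Definition \ref{def.kE-regular}: the restrictions of the local defining functions $\{z_i\}$ to $\R \times \d Y$ still have $0$ as a regular value, so $\b_h$ is transversal to $\R \times \d Y$.

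Next, I would identify the combinatorial tangency patterns of $\b_h$ along $\mathcal L$ with the patterns of the real divisors $D_\R(h(y))$. For each $y \in Y$, the polynomial $h(y)(u)$ restricted to the fibre $\mathcal L_y$ has its real zero set equal to $\mathcal M_h \cap \mathcal L_y$, counted with the local orders of vanishing; because the canonical resolution preserves these orders branch-by-branch, the divisor $D^{\b_h}(y)$ reproduces $D_\R(h(y))$. Hence $\om^{\b_h}(y) \in \mathbf c\Theta$ for all $y \in Y$ and $\om^{\b_h}(y) \in \mathbf c\Lambda$ for $y \in \d Y$. The identity $\Phi^{\b_h} = h$ then follows: both $h$ and the classifying map $\Phi^{\b_h}$ of Theorem \ref{th.LIFT} realise the \emph{same} monic polynomial family over $Y$, so the homotopy-uniqueness clause of that theorem forces equality on the nose (in fact $h$ itself is an admissible choice of $\Phi^{\b_h}$, since $\b_h(M) = \{h(y)(u) = 0\}$ by construction).

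For the second claim, I would apply exactly the same recipe to $H: (Y\times[0,1], \d Y\times[0,1]) \to (\cP_{d'}^{\mathbf c\Theta'}, \cP_{d'}^{\mathbf c\Lambda})$, obtaining a compact $(n+1)$-manifold $N$ with corners and an immersion $B_H: N \to \R \times Y\times [0,1]$ whose image is $\mathcal M_H$. The restriction to $Y \times \{0\}$ reproduces $\b_{h_0}$ because $(\mathsf{id}\times(\e_{d,d'}\circ h_0))^\ast(\wp_{d'}) = (u^2+1)^{(d'-d)/2}\cdot (\mathsf{id}\times h_0)^\ast(\wp_d)$; the positive smooth factor $(u^2+1)^{(d'-d)/2}$ does not alter the zero locus, the regularity of $0$, nor the $q$-jet equivalence classes of the local branches, so the canonical resolution of $\mathcal M_H$ over $Y\times\{0\}$ is canonically identified with that of $\mathcal M_{h_0}$, and similarly at $t=1$. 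Thus $B_H$ glues with $\b_{h_0}, \b_{h_1}$ into a genuine cobordism, and $\Phi^{B_H} = H$ by the same uniqueness argument as before. Orientability transfers from $Y$ via the pull-back of the normal co-orientation of $\d\mathcal E_{d'}$ in $\mathcal E_{d'}$, as in the proof of Lemma \ref{E_bounds}.

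The main obstacle is combinatorial bookkeeping in the quasitopy conditions of Definition \ref{quasi_isotopy_of_ immersions}. One must verify three distinct containments simultaneously: the tangency patterns over the interior $Z = Y\times[0,1]$ lie in $\mathbf c\Theta'$ (from the target of $H$), the patterns over $Y \times \d[0,1]$ lie in $\mathbf c\Theta$ (because $h_0, h_1$ map into $\cP_d^{\mathbf c\Theta}$ and $\e_{d,d'}$ preserves combinatorial divisor types), and the patterns over $Z^\delta = \d Y \times [0,1]$ lie in $\mathbf c\Lambda$ (from the second component of the target pair of $H$). Each follows from the preservation of divisor types under $\e_{d,d'}$ combined with the appropriate component of the $(\d\mathcal E_{d'}, k')$-regularity hypothesis, but the parity and multiplicity constraints $m_B(z) \leq d'$, $m_B(z) \equiv d' \mod 2$ need to be tracked carefully along the different strata of $\d(Y\times[0,1])$.
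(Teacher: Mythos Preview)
Your proposal is correct and follows precisely the route the paper itself indicates: the paper explicitly \emph{omits} the proof of this lemma, stating only that it ``is a natural generalization of Lemma~\ref{lem.E-reg} from the case $k=1$ to the case of a general $k$'' and ``is based on Lemma~\ref{lem_kE-regular}.'' Your argument executes exactly that plan---invoking the canonical jet-space resolution of Lemma~\ref{lem_kE-regular} to produce the immersion, reading the tangency patterns from the real divisors, and handling the cobordism via the identity $\e_{d,d'}^\ast(\wp_{d'}) = (u^2+1)^{(d'-d)/2}\cdot \wp_d$ just as in the proof of Lemma~\ref{lem.E-reg}---so there is nothing further to compare.
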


The next theorem provides homotopy-theoretical invariants of the quasitopy classes of immersions and embeddings into $\R \times Y$. For embeddings, they compute the quasitopies.

\begin{theorem}\label{th.E-reg}
Let $\Theta' \subset \Theta \subset  \Lambda \subset \mathbf\Om$  be closed subposets, and $Y$ be a smooth compact manifold. Assume that $d' \geq d$ and $d' \equiv d \mod 2$.\smallskip

\noindent $\bullet$ Then there is a canonicalal bijection 
\begin{eqnarray}\label{bold_bijection}
\mathbf{\Phi}^{\mathsf{emb}}: \mathbf{QT}^{\mathsf{emb}}_{d, d'}(Y, \d Y; \mathbf{c}\Theta, \mathbf{c}\Lambda; \mathbf{c}\Theta') \stackrel{\approx}{\longrightarrow} \nonumber \\
\stackrel{\approx}{\longrightarrow} \big[\big[(Y, \d Y),\; \e_{d, d'}: (\cP_d^{\mathbf c\Theta}, \cP_d^{\mathbf c\Lambda}) \to (\cP_{d'}^{\mathbf c\Theta'}, \cP_{d'}^{\mathbf c\Lambda})\big]\big].
\end{eqnarray}
$\bullet$ Assuming the $\Lambda$-condition (\ref{special}), we get a bijection
\begin{eqnarray}\label{emb_homotopy_A}
\qquad\qquad \Phi^{\mathsf{emb}}: \mathsf{QT}^{\mathsf{emb}}_{d, d'}(Y; \mathbf{c}\Theta; \mathbf{c}\Theta') \stackrel{\approx}{\longrightarrow} 
 \big[\big[(Y, \d Y),\; \e_{d, d'}: (\cP_d^{\mathbf c\Theta}, \cP_d^{\mathbf c\Lambda}) \to (\cP_{d'}^{\mathbf c\Theta'}, \cP_{d'}^{\mathbf c\Lambda})\big]\big]
\end{eqnarray}
and a surjection 
\begin{eqnarray}\label{surjective_A}
\qquad\qquad \Phi^{\mathsf{imm}}: \mathsf{QT}^{\mathsf{imm}}_{d, d'}(Y; \mathbf{c}\Theta; \mathbf{c}\Theta') \longrightarrow 
 \big[\big[(Y, \d Y),\; \e_{d, d'}: (\cP_d^{\mathbf c\Theta}, \cP_d^{\mathbf c\Lambda}) \to (\cP_{d'}^{\mathbf c\Theta'}, \cP_{d'}^{\mathbf c\Lambda})\big]\big].
\end{eqnarray}
By Proposition \ref{prop.right_inverse} below, $\Phi^{\mathsf{imm}}$ admits a right inverse.
\smallskip

\noindent $\bullet$ If $Y$ is orientable, the similar claims are valid for oriented quasitopies.
\end{theorem}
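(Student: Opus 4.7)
The forward map $\mathbf{\Phi}^{\mathsf{emb}}$ sends the quasitopy class of a proper embedding $\beta$ to the class of its classifying map $\Phi^{\beta}:(Y,\partial Y)\to(\mathcal P_d^{\mathbf c\Theta},\mathcal P_d^{\mathbf c\Lambda})$ supplied by Theorem~\ref{th.LIFT}. By Proposition~\ref{th.IMMERSIONS}, this assignment is already well-defined on $\mathcal{QT}^{\mathsf{emb}}_{d,d'}$ with values in $\bigl[\bigl[(Y,\partial Y),\,\varepsilon_{d,d'}\bigr]\bigr]$; I would then check that it descends to the quotient $\mathbf{QT}^{\mathsf{emb}}$ by observing that a combinatorially trivial embedding $\beta$ has $\Phi^{\beta}$ entirely inside $\mathcal P_d^{\mathbf c\Lambda}$, so that after postcomposition with $\varepsilon_{d,d'}$ any two such land inside $\mathcal P_{d'}^{\mathbf c\Lambda}$ and become $\sim$-equivalent in Definition~\ref{triples} (in the $\Lambda$-condition case these chambers are contractible, so the equivalence is automatic).

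\textbf{Surjectivity.} Given a homotopy class in the target, choose a smooth representative $h:(Y,\partial Y)\to(\mathcal P_d^{\mathbf c\Theta},\mathcal P_d^{\mathbf c\Lambda})$. By Corollary~\ref{cor.E-reg}, the $(\partial\mathcal E_d)$-regular maps are open and dense among smooth maps of pairs, so a small perturbation produces a $(\partial\mathcal E_d)$-regular map $\tilde h$ homotopic to $h$ as a map of pairs. Lemma~\ref{lem.E-reg} then yields a proper embedding $\beta_{\tilde h}:(M,\partial M)\hookrightarrow(\mathbb R\times Y,\,\mathbb R\times\partial Y)$ with $\Phi^{\beta_{\tilde h}}=\tilde h$, and hence $\mathbf{\Phi}^{\mathsf{emb}}([\beta_{\tilde h}])=[h]$.

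\textbf{Injectivity.} Suppose $\mathbf{\Phi}^{\mathsf{emb}}([\beta_0])=\mathbf{\Phi}^{\mathsf{emb}}([\beta_1])$. First replace each $\beta_i$ by $\beta_{\tilde h_i}$ where $\tilde h_i$ is a $(\partial\mathcal E_d)$-regular representative of $[\Phi^{\beta_i}]$: a smooth homotopy from $\Phi^{\beta_i}$ to $\tilde h_i$ inside $\mathcal P_d^{\mathbf c\Theta}$, stabilized via $\varepsilon_{d,d'}$ and then perturbed to be $(\partial\mathcal E_{d'})$-regular, supplies a quasitopy through Lemma~\ref{lem.E-reg}. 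By hypothesis the two $(\partial\mathcal E_{d'})$-regular maps $\varepsilon_{d,d'}\circ\tilde h_0$ and $\varepsilon_{d,d'}\circ\tilde h_1$ are linked by some homotopy $H:(Y\times I,\partial Y\times I)\to(\mathcal P_{d'}^{\mathbf c\Theta'},\mathcal P_{d'}^{\mathbf c\Lambda})$. Using a relative version of Corollary~\ref{cor.E-reg} on the $(n+1)$-manifold with corners $Y\times I$, I would approximate $H$ by a $(\partial\mathcal E_{d'})$-regular homotopy $\tilde H$ with the \emph{same} endpoints (which are already $(\partial\mathcal E_{d'})$-regular). The second bullet of Lemma~\ref{lem.E-reg} then turns $\tilde H$ into a $(d,d';\mathbf c\Theta,\mathbf c\Lambda;\mathbf c\Theta')$-quasitopy $B_{\tilde H}$ between $\beta_{\tilde h_0}$ and $\beta_{\tilde h_1}$. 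Concatenating the three quasitopies $\beta_0\leadsto\beta_{\tilde h_0}\leadsto\beta_{\tilde h_1}\leadsto\beta_1$ along their shared boundary slices gives the required quasitopy $\beta_0\leadsto\beta_1$.

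\textbf{The $\Lambda$-condition, immersions, and orientations.} Under the $\Lambda$-condition, the identification $\mathbf{QT}^{\mathsf{emb}}=\mathsf{QT}^{\mathsf{emb}}$ from~(\ref{eq_dagger}) promotes the first bullet to (\ref{emb_homotopy_A}). Since every proper embedding is a proper immersion, the forgetful map $\iota:\mathsf{QT}^{\mathsf{emb}}_{d,d'}\to\mathsf{QT}^{\mathsf{imm}}_{d,d'}$ satisfies $\Phi^{\mathsf{imm}}\circ\iota=\Phi^{\mathsf{emb}}$; bijectivity of $\Phi^{\mathsf{emb}}$ then forces $\Phi^{\mathsf{imm}}$ to be surjective and exhibits $\iota\circ(\Phi^{\mathsf{emb}})^{-1}$ as a right inverse. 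For the oriented versions one repeats verbatim, using the last sentences of Lemma~\ref{lem.E-reg} (pull-back of the outward normal of $\partial\mathcal E_d$ orients $M$) and of Lemma~\ref{E_bounds}. \textbf{The main obstacle} I expect is the relative approximation step in the injectivity argument: one needs $(\partial\mathcal E_{d'})$-regular homotopies with prescribed $(\partial\mathcal E_{d'})$-regular endpoints to be dense. This calls for a relative Thom transversality argument applied to the jet-bundle stratification $\mathcal W\subset\mathsf J^1(Y\times I,\mathcal P_{d'})$ used in Lemma~\ref{lem.E-regular}, keeping the $1$-jet of the homotopy fixed over $Y\times\partial I$.
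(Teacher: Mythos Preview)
Your proposal is correct and follows essentially the same strategy as the paper: surjectivity via $(\partial\mathcal E_d)$-regular approximation (Corollary~\ref{cor.E-reg}) followed by Lemma~\ref{lem.E-reg}, and injectivity by converting a homotopy in the target into a $(\partial\mathcal E_{d'})$-regular one and reading off the quasitopy cobordism from Lemma~\ref{lem.E-reg}. Your identification of the relative transversality step (perturbing $H$ to be $(\partial\mathcal E_{d'})$-regular while keeping its already-regular ends fixed) as the delicate point is apt; the paper invokes the same step in one line (``approximated by a $(\partial\mathcal E_{d'})$-regular homotopy $H^\dagger$ which coincides with $\psi_0$ on $Y\times\{0\}$'') without further comment, and indeed it follows from the relative Thom transversality argument you sketch.

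Two small remarks. First, your preliminary move of replacing $\beta_i$ by $\beta_{\tilde h_i}$ is unnecessary: for an \emph{embedding} $\beta$, the classifying map $\Phi^\beta$ produced in Theorem~\ref{th.LIFT} is already $(\partial\mathcal E_d)$-regular, since the partition-of-unity construction gives $\beta(M)$ as the zero locus of $(\mathsf{id}\times\Phi^\beta)^\ast(\wp)$ with nonvanishing gradient along it (all the local normals point the same way). So one may take $\tilde h_i=\Phi^{\beta_i}$ directly. Second, your injectivity argument treats the general case of two embeddings with the same image, whereas the paper's written argument only addresses the case where the image is the ``combinatorially trivial'' class and then declares the bijection; your version is the cleaner reading of what is actually needed, and the paper's argument adapts verbatim to it.
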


\begin{proof}  By Proposition \ref{th.IMMERSIONS}, the map $\Phi^{\mathsf{emb}}$ from (\ref{emb_homotopy_A}) is well-defined.
\smallskip

First we show that the map $\Phi^{\mathsf{emb}}$  is surjective.
 Recall that $\cP_d^{\mathbf c\Lambda}\subset \cP_d^{\mathbf c\Theta}$ both are open subspaces of $\cP_d \approx \R^d$, and $\cP_{d'}^{\mathbf c\Lambda}\subset \cP_{d'}^{\mathbf c\Theta}$ are open subspaces of $\cP_{d'} \approx \R^{d'}$. Thus any continuous map $h: (Y, \d Y) \to (\cP_d^{\mathbf c\Theta}, \cP_d^{\mathbf c\Lambda})$ may be approximated by a smooth map $\tilde h$  in the relative homotopy class of $h$. Similarly, any continuous map $H: (Y \times [0, 1], \d Y\times [0, 1]) \to (\cP_{d'}^{\mathbf c\Theta'}, \cP_{d'}^{\mathbf c\Lambda})$ may be approximated by a smooth map $\tilde H$ which shares with $H$ its relative homotopy class. 
 
By Corollary \ref{cor.E-reg}, $\tilde h$ may be approximated further by a $(\d\mathcal E_d)$-regular map $h^\dagger$ which is in the relative homotopy class of $h$. By Lemma \ref{lem.E-reg}, $h^\dagger$ is realized by a proper imbedding of some compact $n$-manifold $\b: (M, \d M) \subset (\R \times Y,\, \R \times \d Y)$ whose tangency to $\mathcal L$ patterns  belong to $\mathbf c\Theta$, while the tangency patterns of $\b(\d M)$ belong to $\mathbf c\Lambda$. Since the target set of the map 
 $\Phi^{\mathsf{emb}}$ is a quotient of the set $[(Y, \d Y), (\cP_d^{\mathbf c\Theta}, \cP_d^{\mathbf c\Lambda})]$ by an equivalence relation, we conclude that $\Phi^{\mathsf{emb}}$ is surjective.\smallskip
 
Now we show that the map $\mathbf\Phi^{\mathsf{emb}}$ from (\ref{bold_bijection}) is well-defined and injective.  Consider some proper $(\d\mathcal E_d)$-regular embedding $\b_0: (M, \d M) \subset  (\R \times Y, \R \times \d Y)$ such that the map $\psi_0 =_{\mathsf{def}} \e_{d, d'} \circ \Phi^{\mathsf{emb}}(\b_0)$ is homotopic to a map $\psi_1: (Y, \d Y) \to (\cP_{d'}^{\mathbf c\Theta'}, \cP_{d'}^{\mathbf c\Lambda})$ whose image is contained in $\cP_{d'}^{\mathbf c\Lambda}$. Again, by Corollary \ref{cor.E-reg}, this homotopy $H: (Y, \d Y) \times [0, 1] \to (\cP_{d'}^{\mathbf c\Theta'}, \cP_{d'}^{\mathbf c\Lambda})$ can be approximated by a $(\d\mathcal E_{d'})$-regular homotopy $H^\dagger$ which coincides with $\psi_0$ on $Y \times \{0\}$. Let $N = (\mathsf{id}\times H^\dagger)^{-1}(\d \mathcal E_{d'})$. By Lemma \ref{lem.E-reg},  $H^\dagger$ is produced by a cobordism $$B: (N, \delta N) \subset (\R \times Y \times [0, 1], \R \times \d Y \times [0, 1])$$ whose other end $\b_1: (M, \d M) \subset  (\R \times Y \times \{1\}, \R \times \d Y \times \{1\})$ has tangency patters (to the fibration $\mathcal L^\bullet$) that belong $\mathbf c\Lambda$. In other words, $\b_0 \in \, ^\dagger\mathcal{QT}^{\mathsf{emb}}_{d, d'}(Y, \d Y; \mathbf{c}\Lambda, \mathbf{c}\Lambda; \mathbf{c}\Theta')$, which represents the trivial element of the quotient $\mathbf{QT}^{\mathsf{emb}}_{d, d'}(Y, \d Y; \mathbf{c}\Theta, \mathbf{c}\Lambda; \mathbf{c}\Theta')$. Thus we get the bijection (\ref{bold_bijection}). 

When $\mathbf c\Lambda= (\emptyset) \text{ or } (1)$, the set $^\dagger\mathcal{QT}^{\mathsf{emb}}_{d, d'}(Y, \d Y; \mathbf{c}\Lambda, \mathbf{c}\Lambda; \mathbf{c}\Theta')$ consists of a single element. Hence we get the bijection in (\ref{emb_homotopy_A}). 

Finally, by Proposition \ref{th.IMMERSIONS}, the map $\Phi^{\mathsf{imm}}$ in (\ref{surjective_A}) is surjective, since the map $\Phi^{\mathsf{emb}}$ factors through it.

Assuming that $Y$ is oriented, the preimages  of the loci $\d\mathcal E_{d}$ and $\d\mathcal E_{d'}$ under regular maps are oriented manifolds. This remark validates the last claim.
 \end{proof}

\begin{corollary}\label{cor.homotopy_type} Under the hypotheses of Theorem \ref{th.E-reg}, including the  $\Lambda$-condition (\ref{special}), 
the quasitopy set $\mathsf{QT}^{\mathsf{emb}}_{d, d'}(Y, \d Y; \mathbf{c}\Theta,  \mathbf{c}\Theta')$ depends only on the \emph{homotopy type} of the pair $(Y, \d Y)$. 
 \smallskip

If $Y$ is orientable, then the orientation forgetting map $$\mathsf{OQT}^{\mathsf{emb}}_{d, d'}(Y, \d Y; \mathbf{c}\Theta,  \mathbf{c}\Theta') \approx \mathsf{QT}^{\mathsf{emb}}_{d, d'}(Y, \d Y; \mathbf{c}\Theta,  \mathbf{c}\Theta')$$ is a bijection. 
In particular, 
$\mathsf{OG}^{\mathsf{emb}}_{d, d'}(n; \mathbf{c}\Theta;  \mathbf{c}\Theta') \approx \mathsf G^{\mathsf{emb}}_{d, d'}(n; \mathbf{c}\Theta;  \mathbf{c}\Theta')$ is a group isomorphism. 
\end{corollary}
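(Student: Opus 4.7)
The plan is to deduce both assertions as formal consequences of the bijection in Theorem \ref{th.E-reg}, specifically the version (\ref{emb_homotopy_A}) together with its oriented analogue promised in the last bullet of that theorem.

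First I would address the homotopy invariance claim. By Theorem \ref{th.E-reg}, the classifying map $\Phi^{\mathsf{emb}}$ identifies $\mathsf{QT}^{\mathsf{emb}}_{d,d'}(Y;\mathbf{c}\Theta;\mathbf{c}\Theta')$ with the set $[[(Y,\d Y),\, \e_{d,d'}\colon (\cP_d^{\mathbf c\Theta}, \cP_d^{\mathbf c\Lambda}) \to (\cP_{d'}^{\mathbf c\Theta'}, \cP_{d'}^{\mathbf c\Lambda})]]$ from Definition \ref{triples}. By construction this target set is a quotient of the pointed-pair homotopy set $[(Y,\d Y),(\cP_d^{\mathbf c\Theta},\cP_d^{\mathbf c\Lambda})]$ by the equivalence relation $[g_0]\sim[g_1]$ iff $\e_{d,d'}\circ g_0 \simeq \e_{d,d'}\circ g_1$ as maps of pairs. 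Both ingredients of the quotient are invariants of the homotopy type of the pair $(Y,\d Y)$, because the homotopy classes $[(Y,\d Y),(X,A)]$ depend functorially on $(Y,\d Y)$ only up to homotopy. Hence the right-hand side of (\ref{emb_homotopy_A}), and therefore the left-hand side, depends only on the homotopy type of $(Y,\d Y)$.

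For the second assertion, I would invoke the oriented version of Theorem \ref{th.E-reg}: when $Y$ is orientable, the classifying map produces a bijection
\[
\Phi^{\mathsf{emb}}_{\mathsf{or}}\colon \mathsf{OQT}^{\mathsf{emb}}_{d,d'}(Y;\mathbf{c}\Theta;\mathbf{c}\Theta') \stackrel{\approx}{\longrightarrow} \big[\big[(Y,\d Y),\,\e_{d,d'}\colon (\cP_d^{\mathbf c\Theta},\cP_d^{\mathbf c\Lambda})\to (\cP_{d'}^{\mathbf c\Theta'},\cP_{d'}^{\mathbf c\Lambda})\big]\big]
\]
into \emph{the same} target as for the unoriented quasitopies. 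The orientation-forgetting map
\[
\Psi\colon \mathsf{OQT}^{\mathsf{emb}}_{d,d'}(Y;\mathbf{c}\Theta;\mathbf{c}\Theta') \longrightarrow \mathsf{QT}^{\mathsf{emb}}_{d,d'}(Y;\mathbf{c}\Theta;\mathbf{c}\Theta')
\]
is well-defined, and it is straightforward to check that it fits into a commutative triangle with $\Phi^{\mathsf{emb}}$ and $\Phi^{\mathsf{emb}}_{\mathsf{or}}$: the classifying map $\Phi^\b$ depends only on the zero locus $\b(M)\subset \R\times Y$ and its tangency data to $\mathcal L$, not on a choice of orientation on $M$. Since the two horizontal arrows are bijections and the diagram commutes, $\Psi$ must be a bijection as well.

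The particular case $Y=D^n$ specializes to the claimed isomorphism $\mathsf{OG}^{\mathsf{emb}}_{d,d'}(n;\mathbf{c}\Theta;\mathbf{c}\Theta')\approx \mathsf{G}^{\mathsf{emb}}_{d,d'}(n;\mathbf{c}\Theta;\mathbf{c}\Theta')$: both sides carry a group structure from the boundary connected sum operation $\uplus$ (Proposition \ref{group}), and the orientation-forgetting map is manifestly a homomorphism with respect to $\uplus$ since the $\uplus$-construction on the oriented side is simply the oriented version of the non-oriented one. The only mild subtlety I anticipate is verifying that the target set $[[(Y,\d Y),\,\e_{d,d'}]]$ really is a plain invariant of the homotopy type of $(Y,\d Y)$ despite the presence of the auxiliary map $\e_{d,d'}$; this is immediate because the equivalence relation only uses homotopy of compositions $\e_{d,d'}\circ g$ in the target pair, a construction natural in the source pair up to homotopy.
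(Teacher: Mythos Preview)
Your proof is correct. For the first claim (homotopy invariance) your argument coincides with the paper's: both simply read it off from the bijection (\ref{emb_homotopy_A}), noting that the target homotopy set depends only on the homotopy type of $(Y,\d Y)$.

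For the second claim you take a slightly different route from the paper. You use the oriented analogue of (\ref{emb_homotopy_A}) to set up a commutative triangle in which both classifying maps $\Phi^{\mathsf{emb}}$ and $\Phi^{\mathsf{emb}}_{\mathsf{or}}$ are bijections onto the \emph{same} target, forcing the orientation-forgetting map $\Psi$ to be a bijection. The paper instead argues geometrically: it observes that any representative embedding $\b$ arises as $(\mathsf{id}\times\Phi^\b)^{-1}(\d\mathcal E_d)$ for a $(\d\mathcal E_d)$-regular map, so $M$ has trivial normal bundle in $\R\times Y$ and is therefore orientable whenever $Y$ is; the same remark for cobordisms $B$ gives injectivity. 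Your argument is cleaner and more formal, leaning on the already-stated oriented version of Theorem \ref{th.E-reg}; the paper's argument is more self-contained and makes the geometric reason (trivial normal bundle) explicit. Both are valid, and in fact the paper's geometric observation is precisely what underlies the oriented bijection you are invoking.
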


\begin{proof} The first claim of the corollary follows instantly from formula (\ref{emb_homotopy_A}) and its analogue for the oriented quasitopies.  \smallskip

The rest of the claims can be derived from the following already familiar observation. Since any element $\b \in  \mathsf{QT}^{\mathsf{emb}}_{d, d'}(Y; \mathbf{c}\Theta,  \mathbf{c}\Theta')$ is represented by a map $\mathsf{id}\times \Phi^\b : \R \times (Y, \d Y) \to \R \times (\cP_d^{\mathbf c\Theta}, pt)$ which is transversal to $\d \mathcal E_d$, the manifold $M := (\mathsf{id}\times \Phi^\b)^{-1}(\d \mathcal E_d)$ has a trivial normal bundle in $\R \times Y$. Thus $M$ is orientable, when $Y$ is. The same argument applies to the cobordisms that connect quasitopic $\b$'s. Therefore, when $Y$ is orientable, the orientation forgetful map from the second claim is onto. For a similar reason, applied to cobordisms, the orientation forgetful map is injective. 
\end{proof}

\begin{remark}
\emph{
The first claim of Corollary \ref{cor.homotopy_type} is a bit surprising, since not any homotopy equivalence $h: (Y, \d Y) \to (Y', \d Y')$ of smooth $n$-dimensional pairs may be homotoped to a diffeomorphism. Thus it is not clear how to transfer directly a regular embedding $\b: (M, \d M) \to (\R \times Y, \R \times \d Y)$ to a regular  embedding $\b': (M', \d M') \to (\R \times Y', \R \times \d Y')$ without using the bijection $\Phi^{\mathsf{emb}}$. Similarly, it is unclear whether $\mathsf{QT}^{\mathsf{imm}}_{d, d'}(Y; \mathbf{c}\Theta,  \mathbf{c}\Theta')$ depends only on the homotopy type of the pair $(Y, \d Y)$. Conjecturally, it does.\hfill $\diamondsuit$ 
}
\end{remark}

\begin{corollary}\label{D^n_A}
Assume that $d' \geq d$,  $d' \equiv d \mod 2$, and the $\Lambda$-condition (\ref{special})  is in place.

Then the group homomorphism in (\ref{D_dD}) is a \emph{group isomorphism} for embeddings and a \emph{split epimorphism} for the immersions. \smallskip

 With the help of this group isomorphism (and by Theorem \ref{prop_(D, dD)}), the map in (\ref{emb_homotopy_A}) is equivariant, and so is the map in (\ref{surjective_A}) with the help of the epimorhism of the corresponding groups.
\end{corollary}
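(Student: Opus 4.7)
The plan is to combine Theorem \ref{th.E-reg} with Proposition \ref{prop_(D, dD)}, specialized to $Y = D^n$ and $\d Y = S^{n-1}$, and then to splice in the obvious forgetful map from embeddings to immersions. First I would observe that for this choice of $Y$ the pair $(D^n, \d D^n)$ is homotopy equivalent to the pointed sphere $(S^n, pt)$, so
$$\big[\big[(D^n, \d D^n),\; \e_{d, d'}: (\cP_d^{\mathbf c\Theta}, pt) \to (\cP_{d'}^{\mathbf c\Theta'}, pt')\big]\big] \;\cong\; \pi_n(\cP_d^{\mathbf c\Theta}, pt)\big/ \ker\big((\e_{d, d'})_\ast\big),$$
because two pointed maps become equivalent in the sense of Definition \ref{triples} precisely when their $\e_{d,d'}$-compositions are homotopic in $\pi_n(\cP_{d'}^{\mathbf c\Theta'}, pt')$. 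This identifies the target of (\ref{emb_homotopy_A}) and (\ref{surjective_A}) with the group appearing in the statement of (\ref{D_dD}).

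For the embedding claim, Theorem \ref{th.E-reg} supplies a bijection $\Phi^{\mathsf{emb}}$ from $\mathsf G^{\mathsf{emb}}_{d,d'}(n; \mathbf c\Theta; \mathbf c\Theta')$ to this target, while Proposition \ref{prop_(D, dD)} asserts that the same map is a group homomorphism from the $\uplus$-structure on the source to the quotient of $\pi_n$. A bijective group homomorphism is a group isomorphism, which settles the first claim. For immersions, (\ref{surjective_A}) gives only a surjection, but by the same appeal to Proposition \ref{prop_(D, dD)} it too is a homomorphism, hence an epimorphism. To split it, I would use the natural forgetful map $\iota: \mathsf G^{\mathsf{emb}}_{d,d'}(n; \mathbf c\Theta; \mathbf c\Theta') \to \mathsf G^{\mathsf{imm}}_{d,d'}(n; \mathbf c\Theta; \mathbf c\Theta')$ induced by regarding a regular embedding as an immersion. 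It is well-defined on quasitopy classes (an embedding-cobordism is in particular an immersion-cobordism) and it respects the operation $\uplus$, since the boundary-connected-sum construction of Proposition \ref{group} is insensitive to whether the pieces are embedded or merely immersed. By construction $\Phi^{\mathsf{imm}} \circ \iota = \Phi^{\mathsf{emb}}$, and since $\Phi^{\mathsf{emb}}$ is an isomorphism, the composition $s := \iota \circ (\Phi^{\mathsf{emb}})^{-1}$ is a homomorphism splitting of $\Phi^{\mathsf{imm}}$, as required.

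Finally, equivariance of (\ref{emb_homotopy_A}) and (\ref{surjective_A}) follows automatically by transporting the second bullet of Proposition \ref{prop_(D, dD)} through the (iso)morphisms just established: the $\uplus$-action on the source corresponds, under $\Phi^{\mathsf{emb}}$, to the standard action of $\pi_n(\cP_d^{\mathbf c\Theta}, pt)\big/ \ker((\e_{d,d'})_\ast)$ on itself on the target side, and the analogous transport works for immersions via the split epimorphism. The main obstacle, such as it is, is not located in this corollary itself but upstream, in the verification (already absorbed into Proposition \ref{prop_(D, dD)} and ultimately Proposition \ref{group}) that the manifold-level boundary-connected-sum over $D^n$ really does correspond to the homotopy-theoretic sum in $\pi_n$ on the polynomial-space side. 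With that reconciliation already in hand, Corollary \ref{D^n_A} is a formal consequence of the preceding results.
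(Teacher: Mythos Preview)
Your argument is correct and follows essentially the same route as the paper: specialize Theorem \ref{th.E-reg} to $Y=D^n$, invoke Proposition \ref{prop_(D, dD)} for the homomorphism property, and obtain the splitting from the forgetful map $\iota$ from embeddings to immersions together with the inverse of $\Phi^{\mathsf{emb}}$. The paper packages the splitting step as a forward reference to Proposition \ref{prop.right_inverse}, where the map you call $\iota$ is denoted $\mathcal A$ and its one-sided inverse $\mathcal R = (\Phi^{\mathsf{emb}})^{-1}\circ \Phi^{\mathsf{imm}}$ is constructed; your section $s = \iota\circ(\Phi^{\mathsf{emb}})^{-1}$ is precisely the splitting this yields.

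One small wording issue: the pair $(D^n,\partial D^n)$ is not literally homotopy equivalent to $(S^n,pt)$ as a pair of spaces. What you want is that any map $(D^n,\partial D^n)\to(X,pt)$ factors through the quotient $D^n/\partial D^n\cong S^n$, which gives the identification $[(D^n,\partial D^n),(X,pt)]\cong\pi_n(X,pt)$. Your conclusion is unaffected, but the phrasing should be adjusted.
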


\begin{proof} By Theorem \ref{th.E-reg}, the map in (\ref{emb_homotopy_A}) is bijective and the map in (\ref{surjective_A}) is split surjective for any $Y$; in particular, this is the case for $Y = D^n$. By Theorem \ref{prop_(D, dD)}, all these maps are group homomorphisms. By    Proposition \ref{prop.right_inverse} below, the map in (\ref{surjective_A}) is a split epimorphism.
\end{proof}

Letting $d' = d$ and $\Theta' = \Theta$ in Theorem \ref{th.E-reg}, we obtain the following two corollaries.

\begin{corollary}\label{cor.E-regA}
Let $\Theta \subset  \Lambda \subset \mathbf\Om$  be a pair of closed subposets, and $Y$ be a compact manifold. 

\noindent $\bullet$ Then there is a canonical bijection
\begin{eqnarray}
\mathbf{\Phi}^{\mathsf{emb}}: \mathbf{QT}^{\mathsf{emb}}_{d, d}(Y, \d Y; \mathbf{c}\Theta, \mathbf{c}\Lambda; \mathbf{c}\Theta) \stackrel{\approx}{\longrightarrow} [(Y, \d Y), (\cP_d^{\mathbf c\Theta}, \cP_d^{\mathbf c\Lambda})].
\end{eqnarray}
$\bullet$ Assuming the $\Lambda$-condition (\ref{special}),  
we get a bijection
\begin{eqnarray}\label{emb_homotopy}
\Phi^{\mathsf{emb}}: \mathsf{QT}^{\mathsf{emb}}_{d,d}(Y; \mathbf{c}\Theta; \mathbf{c}\Theta) \stackrel{\approx}{\longrightarrow} [(Y, \d Y), (\cP_d^{\mathbf c\Theta}, pt)] 
\end{eqnarray}
and a surjection  
\begin{eqnarray}\label{surjective}
\Phi^{\mathsf{imm}}: \mathsf{QT}^{\mathsf{imm}}_{d, d}(Y; \mathbf{c}\Theta; \mathbf{c}\Theta) \longrightarrow [(Y, \d Y), (\cP_d^{\mathbf c\Theta}, pt)]. \quad \diamondsuit
\end{eqnarray}
\end{corollary}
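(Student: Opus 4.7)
The plan is to obtain Corollary \ref{cor.E-regA} as a direct specialization of Theorem \ref{th.E-reg} by setting $d' = d$ and $\Theta' = \Theta$. The only content lies in unpacking what the triple-homotopy set from Definition \ref{triples} becomes in this degenerate case, which I expect to be routine.

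First I would observe that, by the defining formula (\ref{eq.stable}), the map $\e_{d,d}: \mathcal{P}_d \to \mathcal{P}_d$ is multiplication by $(u^2+1)^{(d-d)/2} = 1$, hence is the identity map on $\mathcal{P}_d$. In particular, it restricts to the identity on each subspace $\mathcal{P}_d^{\mathbf{c}\Theta}$ and $\mathcal{P}_d^{\mathbf{c}\Lambda}$. Substituting this into Definition \ref{triples}, the equivalence relation declaring $[g_0] \sim [g_1]$ whenever $\e_{d,d} \circ g_0$ is homotopic to $\e_{d,d} \circ g_1$ becomes the equality of homotopy classes. Consequently,
\[
\bigl[\bigl[(Y, \d Y),\; \e_{d, d}: (\cP_d^{\mathbf c\Theta}, \cP_d^{\mathbf c\Lambda}) \to (\cP_{d}^{\mathbf c\Theta}, \cP_{d}^{\mathbf c\Lambda})\bigr]\bigr] \;=\; [(Y, \d Y), (\cP_d^{\mathbf c\Theta}, \cP_d^{\mathbf c\Lambda})].
\]

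Next, I would apply (\ref{bold_bijection}) of Theorem \ref{th.E-reg} with these choices to read off the first bijection
$\mathbf{\Phi}^{\mathsf{emb}}: \mathbf{QT}^{\mathsf{emb}}_{d, d}(Y, \d Y; \mathbf{c}\Theta, \mathbf{c}\Lambda; \mathbf{c}\Theta) \stackrel{\approx}{\longrightarrow} [(Y, \d Y), (\cP_d^{\mathbf c\Theta}, \cP_d^{\mathbf c\Lambda})]$. Imposing the $\Lambda$-condition (\ref{special}) replaces $\cP_d^{\mathbf c\Lambda}$ with a contractible space, reducing it to the base point $pt$ (as in the proof of Corollary \ref{cor_cL_empty}), and by (\ref{eq_dagger}) the quotient $\mathbf{QT}^{\mathsf{emb}}_{d, d}$ coincides with $\mathsf{QT}^{\mathsf{emb}}_{d, d}$ under this condition. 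Inserting these identifications into (\ref{emb_homotopy_A}) of Theorem \ref{th.E-reg} yields the bijection (\ref{emb_homotopy}), and inserting them into (\ref{surjective_A}) yields the surjection (\ref{surjective}).

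Since every statement is obtained by mechanical substitution into results already proved, the main (and essentially only) obstacle is the verification that $\e_{d,d} = \mathsf{id}$ trivializes the equivalence relation in Definition \ref{triples}; everything else is a change of notation.
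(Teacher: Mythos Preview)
Your proposal is correct and follows exactly the paper's approach: the paper simply states that the corollary is obtained by ``letting $d' = d$ and $\Theta' = \Theta$ in Theorem \ref{th.E-reg}'' without further elaboration, and you have correctly spelled out the one nontrivial observation needed---that $\e_{d,d} = \mathsf{id}$ collapses the double-bracket set of Definition \ref{triples} to ordinary homotopy classes.
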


\begin{corollary}\label{D^n_A}
Let $\Theta \subset  \Lambda \subset \mathbf\Om_{\langle d]}$  be a pair of closed sub-posets. 
There exists a  group isomorphism (abelian for $n>1$)
\begin{eqnarray}\label{cor.group_iso}
\mathbf{\Phi}^{\mathsf{emb}}: \mathbf{QT}^{\mathsf{emb}}_{d, d}(D^n, \d D^n; \mathbf{c}\Theta, \mathbf{c}\Lambda; \mathbf{c}\Theta) \stackrel{\approx}{\longrightarrow} \pi_n(\cP_d^{\mathbf c\Theta}, \cP_d^{\mathbf c\Lambda}).
 \end{eqnarray}

Assuming the $\Lambda$-condition (\ref{special}), we get a group isomorphism 
\begin{eqnarray}
\Phi^{\mathsf{emb}}: \mathsf{OG}^{\mathsf{emb}}_{d, d}(n; \mathbf{c}\Theta; \mathbf{c}\Theta) \approx \mathsf{G}^{\mathsf{emb}}_{d, d}(n; \mathbf{c}\Theta; \mathbf{c}\Theta) \stackrel{\approx}{\longrightarrow} \pi_n(\cP_d^{\mathbf c\Theta}, pt).
\end{eqnarray} 
\end{corollary}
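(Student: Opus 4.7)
The plan is to derive this corollary by specializing the preceding results to the case $Y = D^n$. The set $[(D^n, \partial D^n), (\cP_d^{\mathbf c\Theta}, \cP_d^{\mathbf c\Lambda})]$ is, by definition, the relative homotopy group $\pi_n(\cP_d^{\mathbf c\Theta}, \cP_d^{\mathbf c\Lambda})$ (pointed if we insist on a basepoint in $\cP_d^{\mathbf c\Lambda}$), so the bijection content is automatic once Corollary \ref{cor.E-regA} is applied. The real work is in checking that the classifying bijection is compatible with the two group structures on either side.

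First I would invoke Corollary \ref{cor.E-regA} with $Y = D^n$ (and $d'=d$, $\Theta'=\Theta$) to obtain the bijection $\mathbf\Phi^{\mathsf{emb}}: \mathbf{QT}^{\mathsf{emb}}_{d,d}(D^n, \partial D^n; \mathbf c\Theta, \mathbf c\Lambda; \mathbf c\Theta) \stackrel{\approx}{\to} [(D^n, \partial D^n), (\cP_d^{\mathbf c\Theta}, \cP_d^{\mathbf c\Lambda})]$, and identify the right-hand side with $\pi_n(\cP_d^{\mathbf c\Theta}, \cP_d^{\mathbf c\Lambda})$ by definition. Next, by Proposition \ref{group}, the source carries the group structure $\uplus$ built from the boundary connected sum of immersions/embeddings over the identification $D^n \#_\partial D^n \approx D^n$. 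By Proposition \ref{prop_(D, dD)} (specialized to $d'=d$, $\Theta'=\Theta$), the map $\Phi_{d,d}(n;\mathbf c\Theta;\mathbf c\Theta)$ is already a group homomorphism into $\pi_n(\cP_d^{\mathbf c\Theta}, pt)/\ker((\e_{d,d})_\ast)$; since $\e_{d,d}$ is the identity, this kernel is trivial and the homomorphism lands in $\pi_n(\cP_d^{\mathbf c\Theta}, pt)$ itself. Thus the bijection from Corollary \ref{cor.E-regA} coincides with this homomorphism, hence is a group isomorphism.

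For the $\Lambda$-condition statement, the space $\cP_d^{\mathbf c\Lambda}$ becomes a single contractible chamber ($\mathsf R_d^{(\emptyset)}$ for even $d$, $\mathsf R_d^{(1)}$ for odd $d$), so $\pi_n(\cP_d^{\mathbf c\Theta}, \cP_d^{\mathbf c\Lambda}) \cong \pi_n(\cP_d^{\mathbf c\Theta}, pt)$. The quotient appearing in Definition \ref{reduction} collapses, as observed in \eqref{eq_dagger}, so $\mathbf{QT}^{\mathsf{emb}}_{d,d}(D^n,\partial D^n; \mathbf c\Theta, \mathbf c\Lambda; \mathbf c\Theta) = \mathsf{G}^{\mathsf{emb}}_{d,d}(n; \mathbf c\Theta; \mathbf c\Theta)$. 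To get the orientation-forgetting isomorphism $\mathsf{OG}^{\mathsf{emb}}_{d,d}(n; \mathbf c\Theta; \mathbf c\Theta) \approx \mathsf{G}^{\mathsf{emb}}_{d,d}(n; \mathbf c\Theta; \mathbf c\Theta)$ I would cite Corollary \ref{cor.homotopy_type}: since $D^n$ is orientable and since every embedding $\beta$ representing a quasitopy class can be realized as the transverse preimage $(\mathsf{id}\times \Phi^\beta)^{-1}(\partial\mathcal E_d)$, the normal bundle of $M$ in $\R\times D^n$ is trivial, so $M$ inherits an orientation from $D^n$; the same argument on cobordisms shows the forgetful map is both surjective and injective.

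The main obstacle, and the only place requiring care, is verifying that the group operation $\uplus$ (boundary connected sum over the factorization $D^n = D^n \#_\partial D^n$) transports under $\mathbf\Phi^{\mathsf{emb}}$ to the standard composition law on $\pi_n(\cP_d^{\mathbf c\Theta}, \cP_d^{\mathbf c\Lambda})$. This is essentially the content of the equivariance statement in Proposition \ref{prop_(D, dD)}, applied to $Y = D^n$ acted upon by itself (so both factors lie in $\mathsf{G}^{\mathsf{emb}}_{d,d}(n;\mathbf c\Theta;\mathbf c\Theta)$). Concretely, if $\beta_1, \beta_2: (M_i,\partial M_i) \hookrightarrow (\R\times D^n, \R\times \partial D^n)$ have classifying maps $\Phi^{\beta_1}, \Phi^{\beta_2}: (D^n, \partial D^n) \to (\cP_d^{\mathbf c\Theta}, \cP_d^{\mathbf c\Lambda})$, then placing $\beta_1$ and $\beta_2$ in disjoint half-balls of $D^n \#_\partial D^n$ and extending across the $1$-handle produces the classifying map $\Phi^{\beta_1 \uplus \beta_2}$ which, after identification $D^n \#_\partial D^n \approx D^n$, is precisely the representative of $[\Phi^{\beta_1}] + [\Phi^{\beta_2}]$ in the customary presentation of $\pi_n$ via connected sum along a distinguished boundary hemisphere. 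Once this bookkeeping is spelled out, the isomorphism follows.
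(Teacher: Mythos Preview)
Your proposal is correct and follows essentially the same approach as the paper's own proof, which is the one-liner ``The claim follows instantly from Theorem \ref{th.E-reg} and Proposition \ref{group}.'' You spell out more details than the paper does---in particular, you invoke Proposition \ref{prop_(D, dD)} to justify that $\Phi^{\mathsf{emb}}$ is a group homomorphism and Corollary \ref{cor.homotopy_type} for the oriented/unoriented identification---but these are exactly the ingredients the paper is implicitly appealing to.
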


\begin{proof} The claim follows instantly from Theorem \ref{th.E-reg} and Proposition \ref{group}. 
\end{proof}

The next proposition is an attempt ``to reverse the direction" of the maps in (\ref{Y_dY}), (\ref{D_dD}). 

\begin{proposition}\label{prop.right_inverse} 
Let $\Theta' \subset \Theta \subset  \Lambda \subset \mathbf\Om$  be closed subposets, and $Y$ be a smooth compact manifold. Assume that $d' \geq d$ and $d' \equiv d \mod 2$.

Then the obvious  map 
\begin{eqnarray}\label{emb_im}
\mathcal A:\; \mathcal{QT}^{\mathsf{emb}}_{d, d'}(Y, \d Y; \mathbf{c}\Theta, \mathbf{c}\Lambda; \mathbf{c}\Theta') \to \mathcal{QT}^{\mathsf{imm}}_{d, d'}(Y, \d Y; \mathbf{c}\Theta,\mathbf{c}\Lambda; \mathbf{c}\Theta') 
\end{eqnarray}
is injective. That is, if two embeddings $\b_1: (M_1, \d M_1) \hookrightarrow (\R \times Y, \R \times \d Y)$ and $\b_2: (M_2, \d M_2) \hookrightarrow (\R \times Y,\, \R \times \d Y)$ are quaitopic via immersions, then they are quaitopic via embeddings.

Moreover, there exists a surjective ``resolution map"
\begin{eqnarray}\label{emb_imAA}
\mathcal{R}:\; \mathcal{QT}^{\mathsf{imm}}_{d, d'}(Y, \d Y; \mathbf{c}\Theta, \mathbf{c}\Lambda; \mathbf{c}\Theta') \to \mathcal{QT}^{\mathsf{emb}}_{d, d'}(Y, \d Y; \mathbf{c}\Theta, \mathbf{c}\Lambda; \mathbf{c}\Theta') 
\end{eqnarray}
that serves as the {\sf right inverse} of $\mathcal A$. In other words, there exists a {\sf canonical resolution} of any such immersion into an embedding, well-defined within the relevant quasitopy classes.\smallskip

If $Y$ is orientable, similar claims hold for the oriented quasitopies and the analogues of  maps $\mathcal A$, $\mathcal R$ that are available for them.
\end{proposition}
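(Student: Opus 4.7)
The plan is to exploit the classifying-map construction of Proposition \ref{th.IMMERSIONS}, the density of $(\d\mathcal{E}_d)$-regular maps (Corollary \ref{cor.E-reg}), and the passage from regular maps to embeddings in Lemma \ref{lem.E-reg}. The guiding principle is that an immersion, or a cobordism that is only an immersion, can be perturbed at the level of its classifying map to a $(\d\mathcal{E}_d)$-regular one, which via Lemma \ref{lem.E-reg} returns a genuine embedding of the same quasitopy type.

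First I would prove injectivity of $\mathcal{A}$. Let embeddings $\b_0,\b_1$ be linked by an immersion cobordism $B:N \to \R\times Y\times[0,1]$. By Proposition \ref{th.IMMERSIONS}, $B$ has a classifying map $\Phi^B:(Y\times[0,1],\d Y\times[0,1])\to(\cP_{d'}^{\mathbf{c}\Theta'},\cP_{d'}^{\mathbf{c}\Lambda})$ whose restrictions at $t=0,1$ are $\e_{d,d'}\circ\Phi^{\b_0}$ and $\e_{d,d'}\circ\Phi^{\b_1}$. Since $\b_0,\b_1$ are embeddings, their classifying maps are $(\d\mathcal{E}_d)$-regular, and the second bullet of Lemma \ref{lem.E-reg} shows that the compositions with $\e_{d,d'}$ remain $(\d\mathcal{E}_{d'})$-regular. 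A relative version of Corollary \ref{cor.E-reg} (Thom multijet transversality rel $Y\times\{0,1\}$) then lets me perturb $\Phi^B$, keeping it fixed on the boundary, to a $(\d\mathcal{E}_{d'})$-regular map; Lemma \ref{lem.E-reg} delivers an embedded cobordism with the prescribed endpoints, proving $[\b_0]^{\mathsf{emb}}=[\b_1]^{\mathsf{emb}}$.

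Next I would define $\mathcal{R}$ by resolving each immersion via its classifying map. Given an immersion $\b$, perturb $\Phi^\b$ through Corollary \ref{cor.E-reg} to a $(\d\mathcal{E}_d)$-regular map $\tilde\Phi^\b$ staying inside the open set $\cP_d^{\mathbf{c}\Theta}$; let $\tilde\b$ be the embedding extracted by Lemma \ref{lem.E-reg}, and set $\mathcal{R}([\b]^{\mathsf{imm}}):=[\tilde\b]^{\mathsf{emb}}$. Independence of the choice of perturbation is verified by applying the same procedure to $\Phi^B$ for any immersion cobordism $B$ between $\b_0$ and $\b_1$: since the endpoint maps $\tilde\Phi^{\b_0},\tilde\Phi^{\b_1}$ are already $(\d\mathcal{E}_{d'})$-regular, one perturbs $\Phi^B$ rel the boundary into a $(\d\mathcal{E}_{d'})$-regular map and extracts an embedded cobordism between $\tilde\b_0,\tilde\b_1$ via Lemma \ref{lem.E-reg}. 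For an embedding $\b$ the trivial (identity) perturbation suffices, yielding $\mathcal{R}\circ\mathcal{A}=\mathrm{id}_{\mathcal{QT}^{\mathsf{emb}}}$, which immediately gives the surjectivity of $\mathcal{R}$.

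The remaining and most subtle task is $\mathcal{A}\circ\mathcal{R}=\mathrm{id}_{\mathcal{QT}^{\mathsf{imm}}}$, that is, each immersion $\b$ must be immersion-quasitopic to its resolution $\tilde\b$. The natural candidate is the cobordism coming from the straight-line homotopy $H_t:=(1-t)\Phi^\b+t\tilde\Phi^\b$ in $\cP_d$; taking $\tilde\Phi^\b$ close enough to $\Phi^\b$ keeps $H_t$ inside the open set $\cP_d^{\mathbf{c}\Theta}$, so $\e_{d,d'}\circ H$ is a valid homotopy in $\cP_{d'}^{\mathbf{c}\Theta'}$. Perturbing it rel $t\in\{0,1\}$ into a $(\d\mathcal{E}_{d'},k)$-regular map, with $k$ large enough to accommodate the self-intersection multiplicities of $\b$, and then applying Lemma \ref{lem.Ek-reg} produces the desired immersion cobordism. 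The main obstacle is the compatibility at $t=0$: Lemma \ref{lem.Ek-reg} constructs the canonical jet-resolution of the zero locus in $\R\times Y\times[0,1]$, and one must verify that for a $q$-separable immersion $\b$ whose combinatorial type lies in $\mathbf{c}\Theta$, this canonical resolution agrees with $\b$ up to a diffeomorphism of the domain $M$---a fact implicit in the construction of $\Phi^\b$ in Theorem \ref{th.LIFT} via the Morin-Malgrange normal forms, but one that must be pinned down carefully to ensure the boundary condition of the cobordism is the original $\b$ and not merely an immersion sharing its image. The oriented case is entirely parallel, carrying pulled-back coorientations of $\d\mathcal{E}_\bullet$ through every perturbation.
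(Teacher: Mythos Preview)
Your steps 1--4 are essentially the paper's argument, only unpacked: the paper wraps everything into the formula $\mathcal R := (\Phi^{\mathsf{emb}})^{-1}\circ\Phi^{\mathsf{imm}}$, invoking the bijectivity of $\Phi^{\mathsf{emb}}$ already established in Theorem~\ref{th.E-reg}. From $\Phi^{\mathsf{emb}}=\Phi^{\mathsf{imm}}\circ\mathcal A$ one reads off $\mathcal R\circ\mathcal A=\mathsf{id}$, which simultaneously gives the injectivity of $\mathcal A$ and the surjectivity of $\mathcal R$. Your direct transversality argument for injectivity and your explicit construction of $\mathcal R$ by perturbing $\Phi^\b$ into a $(\d\mathcal E_d)$-regular map are precisely the ingredients of the proof of Theorem~\ref{th.E-reg}, redone in situ.

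The genuine error is step 5. You are misreading the statement: the proposition asserts only $\mathcal R\circ\mathcal A=\mathsf{id}$ (despite the paper's wording ``right inverse''), \emph{not} $\mathcal A\circ\mathcal R=\mathsf{id}$. The latter is false in general. Indeed, Corollary~\ref{cor.fibers_of_R} exhibits a split extension
\[
1 \to \mathsf K \to \mathsf{QT}^{\mathsf{imm}}_{d,d'}(n;\mathbf c\Theta;\mathbf c\Theta') \stackrel{\mathcal R}{\longrightarrow} \mathsf{QT}^{\mathsf{emb}}_{d,d'}(n;\mathbf c\Theta;\mathbf c\Theta') \to 1
\]
with $\mathsf K$ typically nontrivial; Example~\ref{ex.3.1} shows the figure-$\infty$ immersion in $\R\times D^1$ lies in $\mathsf K$ and is \emph{not} immersion-quasitopic to any embedding (Lemma~\ref{odd_intersections}). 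So the obstacle you flagged---matching the canonical jet-resolution with the original $\b$ at $t=0$---is not a technical wrinkle but an honest obstruction: an immersion and its resolved embedding generally have different domains $M$ and are not immersion-quasitopic. Drop step 5 entirely; the proposition is complete after $\mathcal R\circ\mathcal A=\mathsf{id}$.
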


\begin{proof} In the proof, to simplify the notations further, put \hfill\break $\mathcal{QT}^{\mathsf{imm}}(Y) =_{\mathsf{def}}\; \mathcal{QT}^{\mathsf{imm}}_{d, d'}(Y, \d Y; \mathbf{c}\Theta,  \mathbf{c}\Lambda; \mathbf{c}\Theta')$, \; $\mathcal{QT}^{\mathsf{emb}}(Y) =_{\mathsf{def}}\; \mathcal{QT}^{\mathsf{emb}}_{d, d'}(Y, \d Y; \mathbf{c}\Theta, \mathbf{c}\Lambda; \mathbf{c}\Theta')$.

Using the map (\ref{Y_dY}), we see that the map $\Phi^{\mathsf{emb}}(Y)$  is a composition of  the map $\mathcal A$ and the map $$\Phi^{\mathsf{imm}}(Y):\; \mathcal{QT}^{\mathsf{imm}}(Y) \to \big[\big[(Y, \d Y),\; \e_{d, d'}: (\cP_d^{\mathbf c\Theta}, \cP_d^{\mathbf c\Lambda}) \to (\cP_{d'}^{\mathbf c\Theta'}, \cP_{d'}^{\mathbf c\Lambda})\big]\big].$$ 
However, by formula (\ref{emb_homotopy_A}) from Theorem \ref{th.E-reg}, we have a bijective map $$\Phi^{\mathsf{emb}}(Y): \mathcal{QT}^{\mathsf{emb}}(Y) \approx \big[\big[(Y, \d Y),\; \e_{d, d'}: (\cP_d^{\mathbf c\Theta}, \cP_d^{\mathbf c\Lambda}) \to (\cP_{d'}^{\mathbf c\Theta'}, \cP_{d'}^{\mathbf c\Lambda})\big]\big].$$

Hence we define the desired surjective map $\mathcal R$ by the formula $\mathcal R = \Phi^{\mathsf{emb}}(Y)^{-1}\circ\,\Phi^{\mathsf{imm}}(Y)$. It is on the level of definitions to check that $\mathcal R \circ \mathcal A = \mathsf{id}$. 

If $Y$ is orientable, the same arguments hold for the oriented quasitopies since $\mathcal{OQT}^{\mathsf{emb}}(Y) \hfill\break \approx \mathcal{QT}^{\mathsf{emb}}(Y)$ by Corollary \ref{cor.homotopy_type}.
\end{proof}

\begin{remark}\emph{ Of course, applying the resolution $\mathcal R$ to an immersion $\b: M \to \R\times Y$ produces an embedding $\b': M' \to \R\times Y$, where the topology of $M'$ is quite different from the topology of $M$. However, the \emph{sets} $\b'(M')$ and $\b(M)$ can be assumed to be arbitrary close in the Hausdorff distance in $\R\times Y$. } \hfill $\diamondsuit$ 
\end{remark}

\begin{corollary}\label{cor.fibers_of_R}
Each fiber of the resolution map $\mathcal{R}$ consists of the elements that share the same $\Phi^{\mathsf{imm}}$-value in $[[(Y, \d Y),\; \e_{d, d'}: (\cP_d^{\mathbf c\Theta}, \cP_d^{\mathbf c\Lambda}) \to (\cP_{d'}^{\mathbf c\Theta'}, \cP_{d'}^{\mathbf c\Lambda})]].$
\smallskip   

In particular, for $Y = D^n$, we get a splittable groups' extension
\begin{eqnarray}\label{group_extension}
1 \to \mathsf K \to \mathsf{QT}^{\mathsf{imm}}_{d, d'}(n; \mathbf{c}\Theta; \mathbf{c}\Theta') \stackrel{\mathcal R}{\rightarrow} \mathsf{QT}^{\mathsf{emb}}_{d, d'}(n; \mathbf{c}\Theta; \mathbf{c}\Theta') \to 1,
\end{eqnarray}
whose kernel $\mathsf K \subset \mathsf{QT}^{\mathsf{imm}}_{d, d'}(n; \mathbf{c}\Theta; \mathbf{c}\Theta')$ consists of quasitopies of immersions that belong to the kernel of the map in (\ref{Y_dY_A}) with $Y = D^n$. Via the operation $\uplus$, the subgroup $\mathsf  K$ acts on the fibers of the map $\mathcal R$ from (\ref{emb_imAA}).\smallskip
\end{corollary}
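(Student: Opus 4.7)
The plan is to read off both claims from the factorization $\mathcal R = (\Phi^{\mathsf{emb}}(Y))^{-1}\circ \Phi^{\mathsf{imm}}(Y)$ that was introduced in the proof of Proposition \ref{prop.right_inverse}, combined with the fact that $\Phi^{\mathsf{emb}}(Y)$ is a bijection by Theorem \ref{th.E-reg}. Since $\Phi^{\mathsf{emb}}(Y)$ is injective, two immersions $[\b_0],[\b_1]\in \mathcal{QT}^{\mathsf{imm}}_{d, d'}(Y, \d Y; \mathbf{c}\Theta, \mathbf{c}\Lambda; \mathbf{c}\Theta')$ satisfy $\mathcal R([\b_0])=\mathcal R([\b_1])$ if and only if $\Phi^{\mathsf{imm}}(Y)([\b_0])=\Phi^{\mathsf{imm}}(Y)([\b_1])$. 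This yields the first assertion.

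To upgrade the observation to a group-theoretic statement, I would specialize to $Y=D^n$ and assume the $\Lambda$-condition \eqref{special}. By Proposition \ref{group}, both $\mathsf{QT}^{\mathsf{imm}}_{d, d'}(n; \mathbf{c}\Theta; \mathbf{c}\Theta')$ and $\mathsf{QT}^{\mathsf{emb}}_{d, d'}(n; \mathbf{c}\Theta; \mathbf{c}\Theta')$ are groups under $\uplus$, and Proposition \ref{prop_(D, dD)} (together with its embedding counterpart) gives that $\Phi^{\mathsf{imm}}$ and $\Phi^{\mathsf{emb}}$ are group homomorphisms into $\pi_n(\cP_d^{\mathbf c\Theta},pt)/\ker((\e_{d,d'})_\ast)$. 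Hence $\mathcal R = (\Phi^{\mathsf{emb}})^{-1}\circ \Phi^{\mathsf{imm}}$ is a group homomorphism as well. The inclusion $\mathcal A\colon \mathsf{QT}^{\mathsf{emb}}_{d,d'}(n;\mathbf c\Theta;\mathbf c\Theta')\to \mathsf{QT}^{\mathsf{imm}}_{d,d'}(n;\mathbf c\Theta;\mathbf c\Theta')$ is manifestly a homomorphism, since the operation $\uplus$ is defined by the same boundary connected sum construction in both settings; and $\mathcal R\circ\mathcal A=\mathsf{id}$ by Proposition \ref{prop.right_inverse}. This produces the split short exact sequence \eqref{group_extension}, with $\mathsf K:=\ker \mathcal R$.

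To identify $\mathsf K$ concretely, observe that because $\Phi^{\mathsf{emb}}(D^n)$ is a bijection, $\mathcal R([\b])$ is the neutral element if and only if $\Phi^{\mathsf{imm}}(D^n)([\b])$ is the neutral class, i.e.\ if and only if $[\b]$ lies in the kernel of the map \eqref{Y_dY_A} specialized to $Y=D^n$. Finally, the $\uplus$-action of $\mathsf K$ on fibers of $\mathcal R$ is just the standard left-translation action of the kernel on cosets: for any $[\b]$ with $\mathcal R([\b])=[\b']^{\mathsf{emb}}$ and any $[\kappa]\in\mathsf K$, the element $[\kappa]\uplus [\b]$ again satisfies $\mathcal R([\kappa]\uplus[\b])=\mathcal R([\kappa])\uplus \mathcal R([\b])=[\b']^{\mathsf{emb}}$, so $\mathsf K$ permutes each fiber simply transitively, as claimed.

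The only subtle point I anticipate is checking that the naive inclusion $\mathcal A$ really respects $\uplus$, i.e.\ that the boundary-connected-sum construction used to define the group law on embeddings agrees on the nose (not just up to quasitopy) with its immersion counterpart; this is immediate from inspection of the construction following \eqref{eq_cup}, since $\uplus$ never modifies the immersion away from a neighborhood of the attaching handle $\tilde H$, where the map is already a regular embedding.
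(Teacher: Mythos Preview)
Your proof is correct and follows essentially the same approach as the paper: both use the factorization $\mathcal R=(\Phi^{\mathsf{emb}})^{-1}\circ\Phi^{\mathsf{imm}}$ together with the bijectivity of $\Phi^{\mathsf{emb}}$ to identify the fibers of $\mathcal R$ with the fibers of $\Phi^{\mathsf{imm}}$, then specialize to $Y=D^n$ for the split extension, and finally use the equivariance $\Phi^{\mathsf{imm}}(\b_1\uplus\b_2)=\Phi^{\mathsf{imm}}(\b_1)\uplus\Phi^{\mathsf{imm}}(\b_2)$ to show that $\mathsf K$ preserves fibers of $\mathcal R$ for general $Y$.

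One small caution: your final sentence asserts that $\mathsf K$ permutes each fiber \emph{simply transitively}, ``as claimed''. The corollary does not claim this; it only claims that $\mathsf K$ acts on the fibers of the map $\mathcal R$ from (\ref{emb_imAA}), which is stated for a general $Y$. Simple transitivity is immediate for $Y=D^n$ (kernel acting on cosets), but for general $Y$ neither transitivity nor freeness of the $\mathsf K$-action on a fiber follows from what you have written: two immersions sharing the same $\Phi^{\mathsf{imm}}$-value in $[[(Y,\d Y),\e_{d,d'}]]$ need not differ by an element of $\mathsf K$ under $\uplus$. This does not affect the validity of your proof of the corollary as stated---you simply prove more than needed in the $D^n$ case and should drop the phrase ``simply transitively, as claimed''.
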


\begin{proof} We use the notations from the previous proof. By Proposition \ref{prop.right_inverse}, each fiber of $\mathcal{R}$ consists of the elements that share the same 
$\Phi^{\mathsf{imm}}$-value in $[[(Y, \d Y),\; \e_{d, d'}: (\cP_d^{\mathbf c\Theta}, \cP_d^{\mathbf c\Lambda}) \to (\cP_{d'}^{\mathbf c\Theta'}, \cP_{d'}^{\mathbf c\Lambda})]].$    
For $Y = D^n$, this implies that the group $\mathcal{QT}^{\mathsf{imm}}(D^n)$ is a semidirect product $\mathsf K \bowtie \mathcal{QT}^{\mathsf{emb}}(D^n)$ of the groups $\mathsf K$ and  $\mathcal{QT}^{\mathsf{emb}}(D^n)$. Again,  $\mathsf K$ is characterized by the property of having trivial $\Phi^{\mathsf{imm}}$-values. 

Since, for $\b_1 \in \mathcal{QT}^{\mathsf{imm}}(Y)$ and $\b_2 \in \mathcal{QT}^{\mathsf{imm}}(D^n)$, we have $\Phi^{\mathsf{imm}}(\b_1 \uplus \b_2) = \Phi^{\mathsf{imm}}(\b_1) \uplus \Phi^{\mathsf{imm}}(\b_2)$, we conclude that $\Phi^{\mathsf{imm}}(\b_1 \uplus \mathsf K) = \Phi^{\mathsf{imm}}(\b_1) \uplus 0 = \Phi^{\mathsf{imm}}(\b_1)$. Therefore $\mathsf K$ acts on fibers of the map $\mathcal R$.
\end{proof}

Since $\b \in \mathcal{QT}^{\mathsf{imm}}_{d, d'}(Y, \d Y; \mathbf{c}\Theta, \mathbf{c}\Lambda; \mathbf{c}\Theta')$ and $\mathcal R(\b) \in \mathcal{QT}^{\mathsf{emb}}_{d, d'}(Y, \d Y; \mathbf{c}\Theta, \mathbf{c}\Lambda; \mathbf{c}\Theta')$ share the same invariants  $\Phi^{\mathsf{imm}}(\b) = \Phi^{\mathsf{emb}}(\mathcal R(\b)) \in \big[\big[(Y, \d Y),\; \e_{d, d'}: (\cP_d^{\mathbf c\Theta}, \cP_d^{\mathbf c\Lambda}) \to (\cP_{d'}^{\mathbf c\Theta'}, \cP_{d'}^{\mathbf c\Lambda})\big]\big]$, we should be looking for new invariants that distinguish between $\b$ and $\mathcal R(\b)$. \smallskip
 
Relying on Proposition \ref{prop.k-normal_imm_to_bord}, 
we may use the bordism classes $[\Sigma_k^\b] \in \mathbf B_{n-k+1}(Y, \d Y)$ (or $[\Sigma_k^\b] \in \mathbf{OB}_{n-k+1}(Y, \d Y)$ when $Y$ is oriented) of the the $k$-self-intersection manifolds $\{\Sigma_k^\b\}_{k\geq 2}$ of  $\b$ to produce the desired distinguishing invariants.  
In a sense, these quasitopy invariants ``ignore" the foliations $\mathcal L, \mathcal L^\bullet$. Evidently, they vanish  on $\mathcal{QT}^{\mathsf{emb}}_{d, d'}(Y, \d Y; \mathbf{c}\Theta, \mathbf{c}\Lambda; \mathbf{c}\Theta')$. 

Therefore, for $\dim(Y)= n$, assuming that $\min\{n+1,\, \hat m(\mathbf c\Theta_{\langle d]})\} \geq 2$ (see Definition \ref{def.m_hat}), the correspondence $\b \leadsto \{\pi \circ \b \circ p_1: (\Sigma_k^\b, \d \Sigma_k^\b) \to (Y, \d Y)\}_k$ produces a map 
\begin{eqnarray}\label{eq.Sigma-bordisms}
\mathcal{BB}:\, \mathcal{QT}^{\mathsf{imm}}_{d, d'}(Y, \d Y; \mathbf{c}\Theta, \mathbf{c}\Lambda; \mathbf{c}\Theta')\big / \mathcal{QT}^{\mathsf{emb}}_{d, d'}(Y, \d Y; \mathbf{c}\Theta, \mathbf{c}\Lambda; \mathbf{c}\Theta') \to \\ \to \bigoplus_{k \in [2,\, \min\{n+1,\, \hat m(\mathbf c\Theta_{\langle d]})\}]} \mathbf B_{n-k+1}(Y, \d Y), \nonumber
\end{eqnarray}
which has a potential to discriminate between  $\mathcal{QT}^{\mathsf{imm}}_{d, d'}(Y, \d Y;  \mathbf{c}\Theta, \mathbf{c}\Lambda; \mathbf{c}\Theta')$ and $ \mathcal{QT}^{\mathsf{emb}}_{d, d'}(Y, \d Y; \hfill\break\mathbf{c}\Theta, \mathbf{c}\Lambda; \mathbf{c}\Theta')$. Lemma \ref{odd_intersections} provides the simplest example where it does.
\smallskip

Next, we ``mix" the tangency patterns of immersions $\b$ to $\mathcal L$ with the self-intersections of $\b$ (which also influence the $\mathcal L$-tangency patterns). For simplicity, we assume the $\Lambda$-condition and that $Y$ is oriented. 

Let $\b: (M, \d M) \to (\R \times Y, \R \times \d Y)$ be an immersion whose quasitopy class belongs to $\mathcal{QT}^{\mathsf{imm}}_{d, d'}(Y, \d Y;  \mathbf{c}\Theta, \mathbf{c}\Theta')$. Let $\mathsf A$ be an abelian group. We pick a cohomology class $\theta \in H^{n-k+1}(\cP_d^{\mathbf c \Theta}, pt ; \mathsf A)$ and evaluate its pull-back $(\Phi^\b \circ \pi \circ \b \circ p_1)^\ast(\theta)$ on the relative fundamental class $[\Sigma_k^\b, \d \Sigma_k^\b]$. This construction leads to the following proposition. 

\begin{proposition}\label{prop.evaluation} Let $\Theta' \subset \Theta \subset \mathbf\Om$ be closed posets. We assume the $\Lambda$-condition. Let $Y$ be an oriented smooth compact manifold. Pick a positive integer $k \leq \min\{\hat m(\mathbf c\Theta_{\langle d]}, n+1\}$. 

For any cohomology class $\theta \in H^{n-k+1}(\cP_d^{\mathbf c \Theta}, pt ; \mathsf A)$, the evaluation
$$\chi_k(\theta, \b) := \big\langle (\Phi^\b \circ \pi \circ \b \circ p_1)^\ast(\theta),\;  [\Sigma_k^\b, \d \Sigma_k^\b] \big\rangle \in \mathsf A$$
is an invariant of the quasitopy class $[\b] \in \mathcal{QT}^{\mathsf{imm}}_{d, d'}(Y, \d Y;  \mathbf{c}\Theta, \mathbf{c}\Theta')$. 

If $\b$ is an embedding, then $\chi_k(\theta, \b) = 0$ for $k \geq 2$. 
\end{proposition}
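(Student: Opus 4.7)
The strategy is to decouple the two factors in $\chi_k(\theta,\b)$: the singular bordism class $\bigl[\pi\circ\b\circ p_1\colon(\Sigma_k^\b,\d\Sigma_k^\b)\to(Y,\d Y)\bigr]$ and the homotopy class of the classifying map $\Phi^\b\colon(Y,\d Y)\to(\cP_d^{\mathbf c\Theta},\mathit{pt})$, and then observe that the pairing of a pulled-back cohomology class against the oriented fundamental class of a singular manifold depends only on these two data. First, within the given quasitopy class we use the Thom Multijet Transversality argument from Proposition \ref{prop.k-normal_imm_to_bord} to perturb $\b$ to a $k$-normal immersion (keeping the combinatorial patterns in $\mathbf c\Theta$, since $\Theta$ is closed); Lemma \ref{lem.k-normal imm} then makes $\Sigma_k^\b$ a smooth compact oriented $(n-k+1)$-manifold with boundary $\d\Sigma_k^\b\subset\d Y$, and Theorem \ref{th.LIFT} provides a classifying map $\Phi^\b$ well defined up to relative homotopy. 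The composition $F_\b:=\Phi^\b\circ\pi\circ\b\circ p_1$ is thus a singular oriented $(n-k+1)$-manifold with boundary in $(\cP_d^{\mathbf c\Theta},\mathit{pt})$, and $\chi_k(\theta,\b)=\langle F_\b^\ast(\theta),[\Sigma_k^\b,\d\Sigma_k^\b]\rangle$ is a well-defined element of $\mathsf A$ for a fixed representative.

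Next, I verify that $\chi_k$ is constant on quasitopy classes. Given two such $k$-normal representatives $\b_0,\b_1$ and a quasitopy $B\colon(N,\delta N)\to(\R\times Y\times[0,1],\R\times\d Y\times[0,1])$ between them, I perturb $B$ rel $\d N$ (as in the proof of Corollary \ref{cor.Sigma_k_is_bordism_invariant}) so that $B$ itself becomes $k$-normal. Lemma \ref{lem.Sigma_k_is_bordism_invariant} then delivers an oriented cobordism $\Sigma_k^B$ with corners, together with an immersion $\pi\circ B\circ P_1\colon(\Sigma_k^B,\delta\Sigma_k^B)\to(Y\times[0,1],\d Y\times[0,1])$ joining the two boundary data. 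At the same time, Theorem \ref{th.LIFT} yields a classifying map $\Phi^B\colon(Y\times[0,1],\d Y\times[0,1])\to(\cP_{d'}^{\mathbf c\Theta'},\mathit{pt}')$ whose restrictions to the two ends are $\e_{d,d'}\circ\Phi^{\b_0}$ and $\e_{d,d'}\circ\Phi^{\b_1}$. The composition $\Phi^B\circ\pi\circ B\circ P_1$ therefore exhibits the singular manifolds $\e_{d,d'}\circ F_{\b_0}$ and $\e_{d,d'}\circ F_{\b_1}$ as bordant in $(\cP_{d'}^{\mathbf c\Theta'},\mathit{pt}')$. Since the pairing of a cohomology class with the fundamental class of a relative singular bordism vanishes on its boundary, I obtain $\chi_k(\theta,\b_0)=\chi_k(\theta,\b_1)$; this uses $\theta$ via the natural pullback $\e_{d,d'}^{\ast}\colon H^{n-k+1}(\cP_{d'}^{\mathbf c\Theta'},\mathit{pt}';\mathsf A)\to H^{n-k+1}(\cP_d^{\mathbf c\Theta},\mathit{pt};\mathsf A)$, applied to an extension of $\theta$ that exists because $\Phi^B$ realizes the quasitopy homotopy in the larger target.

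The main technical obstacle is this last compatibility step: the homotopy between the two classifying maps produced by Proposition \ref{th.IMMERSIONS} lives in $(\cP_{d'}^{\mathbf c\Theta'},\mathit{pt}')$ rather than in $(\cP_d^{\mathbf c\Theta},\mathit{pt})$. I would handle this by interpreting $\chi_k(\theta,\b)$ equivalently as $\bigl\langle(\e_{d,d'}\circ F_\b)^\ast(\tilde\theta),[\Sigma_k^\b,\d\Sigma_k^\b]\bigr\rangle$ for any lift $\tilde\theta$ with $\e_{d,d'}^\ast(\tilde\theta)=\theta$; the argument of the previous paragraph then applies verbatim, and the evaluation is independent of the choice of lift because the pairing already equals $\langle F_\b^\ast(\theta),[\Sigma_k^\b,\d\Sigma_k^\b]\rangle$ by naturality. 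In cases where no such lift exists in the range of $\e_{d,d'}^\ast$, the same argument still works after replacing $\theta$ by its image in the limit of the $\{\e_{d,d'}^\ast\}$-system, which is what is effectively used in the applications.

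Finally, the vanishing for embeddings is immediate: if $\b$ is an embedding and $k\ge2$, then by definition no $k$-tuple of distinct points of $M$ has a common $\b$-image, so $\Sigma_k^\b=\emptyset$, the relative fundamental class $[\Sigma_k^\b,\d\Sigma_k^\b]$ is zero, and $\chi_k(\theta,\b)=0$ for every $\theta$.
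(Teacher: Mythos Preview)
Your argument is essentially correct and parallels the paper's, but the packaging differs. The paper's proof is shorter because it decouples the two ingredients \emph{inside} $Y$: it rewrites $\chi_k(\theta,\b)$ as the Kronecker pairing of the class $(\Phi^\b)^\ast(\theta)\in H^{n-k+1}(Y,\d Y;\mathsf A)$ against the homology image of $[\Sigma_k^\b,\d\Sigma_k^\b]$ under $(\pi\circ\b\circ p_1)_\ast$. The first factor is declared invariant by the classifying-map machinery, the second by Corollary~\ref{cor.Sigma_k_is_bordism_invariant} (equivalently Proposition~\ref{prop.k-normal_imm_to_bord}), and the ``topological Stokes Theorem'' finishes. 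You instead push the entire composite $F_\b=\Phi^\b\circ\pi\circ\b\circ p_1$ to the classifying space and build a single oriented bordism there via $\Phi^B\circ\pi\circ B\circ P_1$. That is a legitimate and arguably more transparent route---one bordism argument instead of two separate invariance claims---but it costs you the explicit construction of $\Sigma_k^B$ and $\Phi^B$, which the paper avoids by staying in $Y$.

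You also deserve credit for noticing something the paper's proof glosses over: the homotopy between $\Phi^{\b_0}$ and $\Phi^{\b_1}$ supplied by a quasitopy (Proposition~\ref{th.IMMERSIONS}) lands in $(\cP_{d'}^{\mathbf c\Theta'},pt')$, not in $(\cP_d^{\mathbf c\Theta},pt)$, so invariance of $(\Phi^\b)^\ast(\theta)$ is only immediate when $\theta$ lies in the image of $\e_{d,d'}^\ast$. The paper simply cites Theorem~\ref{th.LIFT} here, which strictly speaking only gives well-definedness for a \emph{fixed} $\b$. Your proposed fix---replace $\theta$ by a lift $\tilde\theta$ with $\e_{d,d'}^\ast(\tilde\theta)=\theta$---is exactly right when such a lift exists, and covers the cases the paper actually uses (notably $d'=d$, $\Theta'=\Theta$, or the stable range). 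However, your fallback ``in cases where no such lift exists, pass to the limit of the $\e_{d,d'}^\ast$-system'' is not a proof of the proposition as stated; it changes which $\theta$ are admissible. So: your argument is sound under the lift hypothesis, and you have correctly located a point where the full generality of the statement is not fully justified in either version.

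The vanishing for embeddings is handled identically in both: $\Sigma_k^\b=\emptyset$ for $k\ge 2$.
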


\begin{proof}  By Corollary \ref{cor.Sigma_k_is_bordism_invariant}, the bordism class of $\pi \circ \b \circ p_1: (\Sigma_k^\b, \d \Sigma_k^\b) \to (Y, \d Y)$ is invariant under a change of $\b$ within its quasitopy class.  By Theorem \ref{th.LIFT}, the cocycle $(\Phi^\b)^\ast(\theta)$ is an invariant of the quasitopy class of $\b$.  By the topological Stokes' Theorem, the cocycle $(\Phi^\b)^\ast(\theta)$, being evaluated on the cycle $(\pi \circ \b \circ p_1)(\Sigma_k^\b, \d \Sigma_k^\b)$ is an invariant of $[\b] \in \mathcal{QT}^{\mathsf{imm}}_{d, d'}(Y, \d Y;  \mathbf{c}\Theta, \mathbf{c}\Theta')$. 
\end{proof}

Here is the simplest example of an invariant $\rho_\b := \frac{1}{n+1}\big[(\pi \circ \b \circ p_1): \Sigma_{n+1}^\b \to Y\big ]$, delivered by $\mathcal{BB}$ from (\ref{eq.Sigma-bordisms}), which does discriminate between the quasitopies of embeddings and immersions. 
 
\begin{lemma}\label{odd_intersections}
Let $\dim M = n$ and let $\rho_\b$ be the number of points in $Y$ where exactly $n+1$ branches of $\b(M)$ meet transversally in $\R \times Y$. If $\rho_\b \equiv 1 \mod 2$, then $\b \in \mathcal{QT}^{\mathsf{imm}}_{d, d'}(Y, \d Y; \mathbf{c}\Theta, \mathbf{c}\Lambda; \mathbf{c}\Theta')$ is nontrivial, provided $\mathbf{c}\Lambda = (\emptyset), (1)$. 
\end{lemma}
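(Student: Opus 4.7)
The idea is to show that $\rho_\b \bmod 2$ is a quasitopy invariant of $\b$ that vanishes on the trivial element. This uses the self-intersection bordism machinery of Proposition~\ref{prop.k-normal_imm_to_bord} at $k=n+1$, but with a refinement: since each transverse $(n+1)$-fold intersection point of $\b(M)$ contributes $(n+1)!$ ordered tuples to $\Sigma_{n+1}^\b = ((\b)^{n+1})^{-1}(\Delta)$, the raw class $[\Sigma_{n+1}^\b]\in \mathbf B_0(Y,\d Y)$ is an even multiple of a point for $n\geq 1$ and carries no information. The correct invariant is the free quotient $\Sigma_{n+1}^\b/S_{n+1}$ under the symmetric group permuting the factors of $M^{n+1}$; its cardinality equals exactly $\rho_\b$.

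By Thom's multi-jet transversality, as invoked in the proof of Proposition~\ref{prop.k-normal_imm_to_bord}, I may assume within the given quasitopy class that $\b$ is $(n+1)$-normal, making $\Sigma_{n+1}^\b$ a compact $0$-manifold on which $S_{n+1}$ acts freely. Moreover $\Sigma_{n+1}^\b$ has no boundary: the formal dimension of its $\d Y$-part is
\[
\dim(\d M)^{n+1} - \mathrm{codim}_{(\R\times\d Y)^{n+1}}(\Delta) = (n+1)(n-1) - n^2 = -1,
\]
so by transversality of $\b|_{\d M}$ there are no $(n+1)$-fold self-intersections of $\b(M)$ above $\d Y$. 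Hence $|\Sigma_{n+1}^\b/S_{n+1}|=\rho_\b$.

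Given a quasitopy $B:(N,\delta N)\to(\R\times Y\times[0,1],\,\R\times\d Y\times[0,1])$ from $\b_0:=\b$ to some $\b_1$, Corollary~\ref{cor.Sigma_k_is_bordism_invariant} lets me further perturb $B$ (keeping its ends fixed) to be $(n+1)$-normal; then $\Sigma_{n+1}^B$ is a compact smooth $1$-manifold carrying the same free $S_{n+1}$-action, so $\Sigma_{n+1}^B/S_{n+1}$ is a compact $1$-manifold. The main obstacle is to show that its boundary is exactly $(\Sigma_{n+1}^{\b_0}\sqcup\Sigma_{n+1}^{\b_1})/S_{n+1}$, with no additional contributions from $\delta N$ or from the corner strata. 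This is precisely where the $\Lambda$-condition~(\ref{special}) enters. When $\mathbf c\Lambda=(\emptyset)$ there is nothing to check because $\delta N=\emptyset$. When $\mathbf c\Lambda=(1)$, the hypothesis forces $B(\delta N)$ to be a graph section of $\pi:\R\times\d Y\times[0,1]\to \d Y\times[0,1]$, so $B|_{\delta N}$ is essentially injective and carries no self-intersections; an analogous dimension count applied inside $(\R\times\d Y\times[0,1])^{n+1}$ rules out the remaining mixed and corner contributions. Counting boundary points of $\Sigma_{n+1}^B/S_{n+1}$ modulo $2$ then yields $\rho_{\b_0}\equiv \rho_{\b_1}\pmod 2$.

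Finally, the trivial element of $\mathcal{QT}^{\mathsf{imm}}_{d, d'}(Y, \d Y;\mathbf c\Theta,\mathbf c\Lambda;\mathbf c\Theta')$ is represented by the empty immersion when $\mathbf c\Lambda=(\emptyset)$ and by the embedding $y\mapsto(0,y)$ when $\mathbf c\Lambda=(1)$; both have $\rho=0$. Hence if $\rho_\b\equiv 1\pmod 2$, then $[\b]$ must be nontrivial, proving the lemma.
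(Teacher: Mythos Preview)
Your proof is correct and follows essentially the same idea as the paper's: both exhibit a compact $1$-dimensional object interpolating between the $(n+1)$-fold self-intersection loci of $\b_0$ and $\b_1$, then count boundary points modulo $2$, using the $\Lambda$-condition to rule out contributions from $\delta N$. The only difference is packaging: the paper works directly with the image graph $\Gamma_B\subset \R\times Y\times[0,1]$ of $(n+1)$-fold points (with internal vertices of even valency at $(n+2)$-fold points) and applies the handshake lemma, whereas you lift to the smooth $1$-manifold $\Sigma_{n+1}^B/S_{n+1}$, which absorbs the $(n+2)$-fold points as interior points and reduces the count to the standard ``compact $1$-manifold has even boundary'' argument.
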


 \begin{proof}
 Let $F_\b$ denote the finite set of points, where exactly $n+1$ branches of $\b(M)$ meet transversally in $\R \times Y$.  Assume that $\b$ be quasitopic to trivial immersion with the help of a cobordism $B: N \to \R \times Y \times [0, 1]$. Then, in general, $B(N)$ may have points where exactly $n+2$ branches of $B(N)$ meet transversally. Let $E_B$ denote their set and let $\eta_B := \#(E_B)$. The points, where exactly $n+1$ branches of $B(N)$ meet transversally, form a graph $\Gamma_B \subset  \R \times Y \times [0, 1]$. If $\mathbf{c}\Lambda = (\emptyset), (1)$, then $\Gamma_B$ has $\rho_\b$ univalent vertices and $\eta_B$ vertices of valency $n+2$. Every edge of $\Gamma_B$ that does not terminate at a valency one vertex from $F_\b$ is attached to $E_B$. Thus counting the edges of $\Gamma_B$ we get $2(n+2)\eta_B - \rho_\b \equiv 0 \mod 2$. Therefore, when $\rho_\b \equiv 1 \mod 2$,  we get a contradiction with the assumption that $\b$ is quasitopic to trivial immersion. As a result, any immersion $\b$ with an odd number $\rho_\b$ is nontrivial in  $\mathcal{QT}^{\mathsf{imm}}_{d, d'}(Y, \d Y; \mathbf{c}\Theta, \mathbf{c}\Lambda; \mathbf{c}\Theta')$, provided $\mathbf{c}\Lambda = (\emptyset), (1)$.
\end{proof}

\begin{figure}[ht]
\centerline{\includegraphics[height=2.5in,width=4.7in]{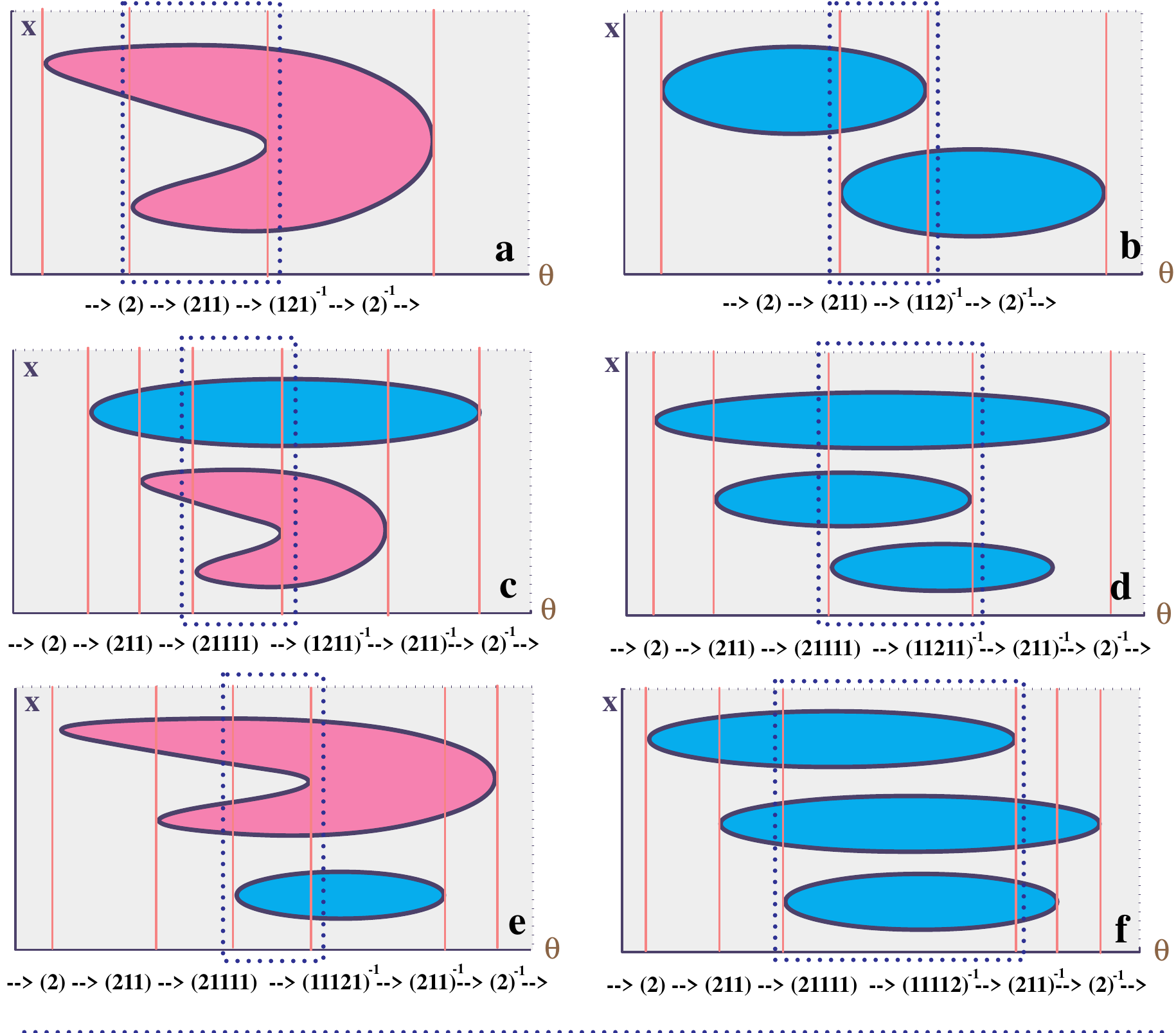}}
\bigskip
\caption{\small{Six generators $a, b, c, d, e, f$ of the free group $\mathsf{QT}^{\mathsf{emb}}_{6, 6}(1; \mathbf{c}\Theta; \mathbf{c}\Theta)$, where $\Theta \subset \mathbf\Om_{\langle 6]}$ is the closed poset generated by the elements $\om$ with two $2$'s or one $3$. The vertical lines mark crossing the walls of the $6$-dimensional chambers in $\cP_6^{\mathbf c\Theta}$.}} 
\label{fig.BLA}
\end{figure}

\begin{example}\label{ex.3.1} 
\emph{
Pick $n=1$, $d' = d= 6$. Let $\mathbf{c}\Theta' = \mathbf{c}\Theta$ consist of $\om \in \mathbf\Om_{\langle 6]}$  such that all their entries are $1$'s and $2$'s and a single $2$ is present at most. Put $Y = D^1$.  Then $\mathsf{QT}^{\mathsf{emb}}_{6, 6}(1; \mathbf{c}\Theta; \mathbf{c}\Theta) \approx \pi_1(\cP_6^{\mathbf c\Theta}, pt)$.  By \cite{KSW1}, Theorem 2.4, the  latter group is a free group $\mathsf F_6$ in $6$ generators (see Fig. 3). The group extension in (\ref{group_extension}) reduces to $$1 \to \mathsf K \to \mathsf{QT}^{\mathsf{imm}}_{6, 6}(1; \mathbf{c}\Theta; \mathbf{c}\Theta) \stackrel{\mathcal R}{\rightarrow} \mathsf F_6 \to 1.$$
The figure $\infty$, placed ``horizontally" in $\R \times D^1$ ($D^1$ being the horizontal direction) represents an immersion $\b: S^1 \to \R \times D^1$. We claim that $\b$ belongs to the kernel $\mathsf K$ from (\ref{group_extension}). Indeed, consider a flip $\tau: \R \times D^1 \to \R \times D^1$ whose fixed point set is the vertical line through the singularity of figure $\infty$. Due to the symmetry of $\infty$ with respect to $\tau$,  the path $\Phi^{\mathsf{imm}}(D^1, \d D^1) \subset (\cP_6^{\mathbf c\Theta}, pt)$ is relatively contractible. Thus $\infty$ is indeed in the kernel $\mathsf K$. By Lemma \ref{odd_intersections}, $\b$ is nontrivial in $\mathsf{QT}^{\mathsf{imm}}_{6, 6}(1; \mathbf{c}\Theta; \mathbf{c}\Theta)$. By the same lemma, the quasitopy class of any collection of smooth curves with an \emph{odd} number of crossings is nontrivial in $\mathsf{QT}^{\mathsf{imm}}_{6, 6}(1; \mathbf{c}\Theta; \mathbf{c}\Theta)$. At the same time, the immersion $\infty \uplus \infty \subset \R \times D^1$ is trivial in $\mathsf{QT}^{\mathsf{imm}}_{6, 6}(1; \mathbf{c}\Theta; \mathbf{c}\Theta)$ since it is the boundary of a horseshoe surface $N \approx \infty \times I \subset \R \times D^1 \times [0, 1]$ with the $\mathcal L^\bullet$-tangency patterns that belong to $\mathbf{c}\Theta$.  So the immersion $\b$, such that $\b(S^1) = \infty$, is an element of order two in the kernel $\mathsf K$. However, if we orient $S^1$, then $\infty$ becomes an element of infinite order in $\mathsf{ker}(\mathsf{OQT}^{\mathsf{imm}}_{6, 6}(1; \mathbf{c}\Theta; \mathbf{c}\Theta) \to \mathsf{OQT}^{\mathsf{emb}}_{6, 6}(1; \mathbf{c}\Theta; \mathbf{c}\Theta))$.} 

\emph{In fact, any  collection of curves $\b(M)$ in $\R \times D^1$ with the tangency patterns in $\mathbf{c}\Theta$, with an odd number of transversal crossings, and which is symmetric under an involution $\tau: \R \times D^1 \to \R \times D^1$ which preserves the oriented foliation $\mathcal L$,  belongs to $\mathsf K$ and has order $2$ there. Based on a sparse evidence, we conjecture that  $\mathsf K \approx \Z_2$. If this is true, then  $\mathsf{QT}^{\mathsf{imm}}_{6, 6}(1; \mathbf{c}\Theta; \mathbf{c}\Theta)$ is a semi-direct product $\Z_2 \bowtie \mathsf F_6$. 
} 
\hfill $\diamondsuit$
\end{example}

The next theorem describes one very general mechanism for generating {\sf characteristic cohomology classes} for immersions/embeddings $\b: M \to \R \times Y$ with $\Theta$-restricted tangency patterns to $\mathcal L$.
\begin{theorem}\label{th.characteristic classes} For a compact manifold $Y$, $d' =d$, and $\Theta' = \Theta$, assuming the $\Lambda$-condition, any immersion/embedding $\b: (M, \d M) \to (\R \times Y, \R \times \d Y)$ as in Proposition \ref{th.IMMERSIONS} induces a characteristic homomorphism $(\Phi^\b)^\ast$ from the (co)homology of the differential complex  $\{\d^\#: \Z[\Theta^\#_{\langle d]}] \to \Z[\Theta^\#_{\langle d]}]\}$ in (\ref{eq.quotient_complex}) to the cohomology $H^\ast(Y, \d Y; \Z)$. 

Quasitopic immersions/embeddings induce the same characteristic homomorphisms.
\end{theorem}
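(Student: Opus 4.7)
The plan is to assemble the homomorphism by concatenating the classifying map of Proposition \ref{th.IMMERSIONS} with the algebraic–combinatorial identification of Theorem \ref{thA}. First, I would invoke Proposition \ref{th.IMMERSIONS} to produce, from any immersion/embedding $\b$ satisfying the stated hypotheses, a canonical homotopy class of maps of pairs
$$\Phi^\b: (Y, \d Y) \longrightarrow (\cP_d^{\mathbf c\Theta}, \cP_d^{\mathbf c\Lambda}).$$
Under the $\Lambda$-condition (\ref{special}), the subspace $\cP_d^{\mathbf c\Lambda}$ is either the single chamber $\sR_d^{(\emptyset)}$ of strictly positive monic polynomials ($d$ even) or the chamber $\sR_d^{(1)}$ of monic polynomials with exactly one simple real root ($d$ odd); both are convex, hence contractible. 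Thus $\Phi^\b$ is canonically a based map into $(\cP_d^{\mathbf c\Theta}, pt)$ and induces a well-defined pullback
$$(\Phi^\b)^\ast: H^j(\cP_d^{\mathbf c\Theta}, pt; \Z) \longrightarrow H^j(Y, \d Y; \Z).$$

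Next, I would apply Theorem \ref{thA} to rewrite the source of this pullback in purely combinatorial terms. That theorem furnishes natural isomorphisms
$$H_{d-j}\big(\d^\#: \Z[\Theta^\#_{\langle d]}] \to \Z[\Theta^\#_{\langle d]}]\big) \;\approx\; \tilde H^j(\cP_d^{\mathbf c\Theta}; \Z) \;\approx\; H^j(\cP_d^{\mathbf c\Theta}, pt; \Z),$$
the last isomorphism holding because $pt$ sits in a contractible chamber, so $\tilde H^\ast$ of the ambient space agrees with the relative cohomology of the pair. Composing with $(\Phi^\b)^\ast$ yields the advertised characteristic homomorphism
$$H_{d-j}\big(\d^\#\big) \longrightarrow H^j(Y, \d Y; \Z),$$
which by abuse of notation I also denote $(\Phi^\b)^\ast$. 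The dual homological version is obtained identically from the second line of (\ref{eq.3.5A}).

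For the quasitopy invariance, suppose $\b_0, \b_1$ are $(d,d;\mathbf c\Theta, \mathbf c\Lambda; \mathbf c\Theta)$-quasitopic via a cobordism $B: (N, \delta N) \to (\R \times Z, \R \times Z^\delta)$, where $Z = Y \times [0,1]$ and $Z^\delta = \d Y \times [0,1]$. By the second bullet of Proposition \ref{th.IMMERSIONS}, applied with $d' = d$ and $\Theta' = \Theta$, the cobordism produces a continuous map $\Phi^B: (Z, Z^\delta) \to (\cP_d^{\mathbf c\Theta}, \cP_d^{\mathbf c\Lambda})$ restricting to $\Phi^{\b_0}$ and $\Phi^{\b_1}$ at the two ends; this is a relative homotopy between $\Phi^{\b_0}$ and $\Phi^{\b_1}$ as based maps, so the two pullbacks on $H^\ast(\cP_d^{\mathbf c\Theta}, pt; \Z)$ agree. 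Since the identification of Theorem \ref{thA} is intrinsic to $(\cP_d^{\mathbf c\Theta}, pt)$ and independent of $\b$, the resulting characteristic homomorphisms on $H_{d-j}(\d^\#)$ coincide.

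The argument is essentially a packaging of results already in place; the only point requiring care is verifying that the $\Lambda$-condition really does render $\Phi^\b$ a based map into $(\cP_d^{\mathbf c\Theta}, pt)$ and that the cobordism classifying map $\Phi^B$ sends the collared boundary $Z^\delta$ into the same contractible chamber $\cP_d^{\mathbf c\Lambda}$ used to define $pt$. This is immediate from the definition of quasitopy together with the convexity of the relevant chambers, so no serious obstacle is anticipated.
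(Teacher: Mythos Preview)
Your proposal is correct and follows essentially the same route as the paper's proof: invoke Proposition \ref{th.IMMERSIONS} for the classifying map, use the contractibility of $\cP_d^{\mathbf c\Lambda}$ under the $\Lambda$-condition to reduce to a based map, identify the source cohomology via Theorem \ref{thA}, and deduce quasitopy invariance from the homotopy delivered by the cobordism. One small correction: the chamber $\sR_d^{(1)}$ for odd $d$ is contractible but not convex (e.g., the midpoint of $(u+2)(u^2-2u+2)$ and $(u-2)(u^2+2u+2)$ in $\cP_3$ is $u^3-2u$, which has three simple real roots), so you should cite contractibility directly (as in Corollary \ref{cor_cL_empty}) rather than derive it from convexity.
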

\begin{proof} For any closed poset $\Theta \subset \mathbf\Om_{\langle d]}$, by Corollary 2.6 and Theorem 2.2 (\cite{KSW2}), the homology of the differential complex in (\ref{eq.quotient_complex})  
is isomorphic to $H^\ast(\mathcal P_d^{\mathbf c\Theta}, pt; \Z)$. By Proposition \ref{th.IMMERSIONS}, any immersion $\b: (M, \d M) \to (\R \times Y,\, \R \times \d Y)$ whose tangency patterns belong to $\mathbf c\Theta$ and the ones of $\b|_{\d M}$ to $\mathbf c\Lambda = (\emptyset) \text{ or } (1)$, produces a map $\Phi^\b: (Y, \d Y) \to (\mathcal P_d^{\mathbf c\Theta}, pt)$ whose homotopy class is determined by $\b$. This $\b$ induces a natural map $\b^\ast: H^\ast(\mathcal P_d^{\mathbf c\Theta}, pt; \Z) \to H^\ast(Y, \d Y; \Z)$ in cohomology, and therefore, a characteristic homomorphism $(\tilde\Phi^\b)^\ast: H^\ast(\Z[\Theta^\#], \d^\#) \to H^\ast(Y, \d Y; \Z)$. 

By Proposition \ref{th.IMMERSIONS}, $(d, d; \mathbf c\Theta, (\emptyset); \mathbf c\Theta)$-quasitopic immersions/embeddings $\b$ induce homotopic maps $\tilde\Phi^\b$ and hence  the same characteristic homomorphisms $(\tilde\Phi^\b)^\ast$.
\end{proof}

Thus various homomorphisms $(\tilde\Phi^\b)^\ast$ may distinguish between different quasitopy classes of immersions $\b$. We will soon exhibit multiple examples, where they do.


\subsection{Quasitopies of immersions/embeddings with special forbidden combinatorics $\Theta$ and the stabilization by $d$} 
In this subsection, we take advantage of a few results from \cite{V} and \cite{KSW2}, as formulated in Section 2, to advance computations of quasitopies of embeddings with special combinatorial patterns $\mathbf{c}\Theta$ and $\mathbf{c}\Theta'$ of tangency to the foliations $\mathcal L$ and $\mathcal L^\bullet$, respectively. \smallskip

We start with the Arnold-Vassiliev case of real polynomials with, so called, {\sf moderate singularities} \cite{Ar}, \cite{V}. Let $\Theta_{\max \geq k} \subset \mathbf\Om_{\langle d]}$ be the closed poset consisting of $\om$'s with the maximal entry $\geq k$. For $k \geq 3$, the cohomology $H^j(\cP^{\mathbf c\Theta_{\max \geq k}}, pt; \Z)$ is isomorphic to $\Z$  in each dimension $j$ of the form $(k- 2)m$, where the integer $m \leq d/k$, and vanishes otherwise \cite{Ar}.

Based on Vassiliev's computation of the cohomology ring $H^\ast(\cP^{\mathbf c\Theta_{\max \geq k}}, pt; \Z)$ (see \cite{V}, Theorem 1 on page 87), consider the graded ring $\mathcal Vass_{d, k}$, multiplicatively generated over $\Z$ by the elements $\{e_m\}_{m \leq d/k}$ of the degrees $\deg(e_m) = m(k-2)$, subject to the relations
\begin{eqnarray} 
e_l \cdot e_m = \frac{(l+m)!}{l! \cdot m!}\; e_{l+m} \text{\; for } k\equiv 0 \mod 2, \text{ and the relations } 
\end{eqnarray}
\begin{eqnarray}
e_1\cdot e_1 = 0, \quad e_1\cdot e_{2m} = e_{2m+1}, \nonumber \
\end{eqnarray}
\begin{eqnarray}
e_{2l} \cdot e_{2m} = \frac{(l+m)!}{l! \cdot m!}\; e_{2l+2m} \text{\; for } k\equiv 1 \mod 2.
\end{eqnarray}

Guided by Theorem \ref{th.characteristic classes} and employing Theorem \ref{th.LIFT}, we get the following claim. 

\begin{proposition}\label{prop.Vass} Let $k \geq 3$. Consider an immersion $\b: (M, \d M) \to (\R \times Y, \R \times \d Y)$ whose tangency patterns to the foliation $\mathcal L$ belong to $\mathbf c\Theta_{\max \geq k} \subset \mathbf\Om_{\langle d]}$ and the ones of $\b|_{\d M}$ either form an empty set $(\emptyset)$ when $d \equiv 0 \mod 2$, or the set $(1)$ when $d \equiv 1 \mod 2$. \smallskip

Then $\b$  generates a characteristic ring homomorphism $(\Phi^\b)^\ast: \mathcal Vass_{d, k} \to H^\ast(Y, \d Y; \Z)$. 
The homomorphism $(\Phi^\b)^\ast$ is an invariant of the quasitopy class of $\b$. 
In other words, we get a map 
$\Phi^\ast_{d, k}: \mathsf{QT}^{\mathsf{imm}}_{d, d}(Y; \mathbf{c}\Theta_{\max \geq k};  \mathbf{c}\Theta_{\max \geq k}) \to \mathsf{Hom_{ring}}\big(\mathcal Vass_{d, k},\; H^\ast(Y, \d Y; \Z)\big). \hfill \diamondsuit$
\end{proposition}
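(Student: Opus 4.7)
The plan is to combine Theorem \ref{th.characteristic classes} with Vassiliev's identification of the cohomology ring of $\cP_d^{\mathbf c\Theta_{\max \geq k}}$. First, I would apply Theorem \ref{th.LIFT} (or rather its packaged form in Proposition \ref{th.IMMERSIONS}) to the given immersion $\b$: the hypothesis that tangency patterns along $\mathcal L$ lie in $\mathbf c\Theta_{\max \geq k}$ and that boundary patterns are $(\emptyset)$ or $(1)$ depending on the parity of $d$ (i.e., the $\Lambda$-condition) guarantees a well-defined homotopy class of a map of pairs
\[
\Phi^\b : (Y, \d Y) \longrightarrow (\cP_d^{\mathbf c\Theta_{\max \geq k}},\, pt),
\]
with the base point $pt$ chosen as dictated by the $\Lambda$-condition.

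Next, I would pull back cohomology. The map $\Phi^\b$ induces a ring homomorphism
\[
(\Phi^\b)^\ast : H^\ast(\cP_d^{\mathbf c\Theta_{\max \geq k}},\, pt;\, \Z) \longrightarrow H^\ast(Y, \d Y;\, \Z).
\]
By Vassiliev's computation (\cite{V}, Theorem 1, p. 87), recalled just before the statement, the source ring is canonically isomorphic (as a graded ring) to $\mathcal{V}\!ass_{d,k}$ with its generators $\{e_m\}_{m\leq d/k}$ in degrees $m(k-2)$ and the binomial multiplication rules displayed above. Composing this identification with $(\Phi^\b)^\ast$ yields the desired ring homomorphism $\mathcal{V}\!ass_{d,k} \to H^\ast(Y, \d Y; \Z)$.

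It remains to verify quasitopy invariance, which is where one should be careful but which is not really an obstacle. By Proposition \ref{th.IMMERSIONS} (specialized to $d'=d$, $\Theta'=\Theta$, so we are in the setting of Corollary \ref{cor_cL_empty}), any two $(d,d;\mathbf c\Theta_{\max \geq k};\mathbf c\Theta_{\max \geq k})$-quasitopic immersions $\b_0,\b_1$ produce homotopic classifying maps $\Phi^{\b_0}\simeq \Phi^{\b_1}$ as maps of pairs $(Y,\d Y)\to(\cP_d^{\mathbf c\Theta_{\max \geq k}},pt)$. Homotopic maps of pairs induce identical homomorphisms on relative cohomology, so $(\Phi^{\b_0})^\ast = (\Phi^{\b_1})^\ast$. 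This is exactly the content of Theorem \ref{th.characteristic classes} applied to the poset $\Theta = \Theta_{\max\geq k}$; using Vassiliev's ring structure on the source refines the statement from a mere graded group homomorphism to a ring homomorphism.

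Putting the three steps together produces the announced map
\[
\Phi^\ast_{d,k} :\mathsf{QT}^{\mathsf{imm}}_{d,d}(Y;\mathbf c\Theta_{\max\geq k};\mathbf c\Theta_{\max\geq k}) \longrightarrow \mathsf{Hom}_{\mathrm{ring}}\bigl(\mathcal{V}\!ass_{d,k},\, H^\ast(Y,\d Y;\Z)\bigr),
\]
sending $[\b]$ to $(\Phi^\b)^\ast$. The only point requiring a modicum of care is checking that Vassiliev's isomorphism $H^\ast(\cP_d^{\mathbf c\Theta_{\max\geq k}},pt;\Z)\cong \mathcal{V}\!ass_{d,k}$ is natural enough to be composed with $(\Phi^\b)^\ast$ as a ring map rather than only as an additive map; this however is a structural feature of Vassiliev's construction (the multiplicative generators $e_m$ are pulled back from canonical cohomology classes on the discriminant complement), and no additional geometry of $\b$ is needed. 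Thus the proof reduces essentially to assembling Proposition \ref{th.IMMERSIONS}, Theorem \ref{th.characteristic classes}, and Vassiliev's ring calculation.
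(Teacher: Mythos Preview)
Your proposal is correct and follows essentially the same route as the paper: the paper's own justification reads ``Guided by Theorem \ref{th.characteristic classes} and employing Theorem \ref{th.LIFT}, we get the following claim,'' and you have simply unpacked this, using Proposition \ref{th.IMMERSIONS} to produce the classifying map $\Phi^\b:(Y,\d Y)\to(\cP_d^{\mathbf c\Theta_{\max\geq k}},pt)$, then composing the induced ring homomorphism in cohomology with Vassiliev's identification $H^\ast(\cP_d^{\mathbf c\Theta_{\max\geq k}},pt;\Z)\cong\mathcal Vass_{d,k}$, and invoking the quasitopy invariance of $[\Phi^\b]$.
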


\begin{remark} \emph{Note that if some generator $e_\ell \in \mathcal Vass_{d, k}$ is mapped by $(\Phi^\b)^\ast$ to zero in $H^{(k-2)\ell}(Y, \d Y; \Z)$, then the images of all $e_q$ with $q > \ell$ must be torsion elements. Thus some cohomology rings $H^\ast(Y, \d Y; \Z)$ may not be able to accommodate nontrivial images of the Vassiliev ring $\mathcal Vass_{d, k}$ in positive degrees.}  

\emph{In contrast, here is a case when the accommodation seems possible: take $Y = \C\P^3$, $k =4$, and $d = 12$.  Consider $9$-dimensional variety $\cP_{12}^{\Theta_{\max \geq 4}}$, its compliment  $\cP_{12}^{\mathbf c\Theta_{\max \geq 4}}$, and  the cohomology ring $\mathcal Vass_{12, 4} \approx H^\ast(\cP_{12}^{\mathbf c\Theta_{\max \geq 4}}; \Z)$. Then the ring homomorphism $\Phi^\ast: \mathcal Vass_{12,4} \to H^\ast(\C\P^3; \Z) \approx \Z[a]/\{a^4 = 0\}$ that sends  $\Phi^\ast(e_1) := 6a$, $\Phi^\ast(e_2) := 18a^2$, and $\Phi^\ast(e_3):= 36a^3$ embeds $\mathcal Vass_{12,4}$ as a subring of $\Z[a]/\{a^4 = 0\}$. 
We speculate that there is an embedding $\Phi: \C\P^3 \to \cP_{12}^{\mathbf{c}\Theta_{\max \geq 4}}$ that induces that $\Phi^\ast$. 
} 
\hfill $\diamondsuit$
\end{remark}

Propositions \ref{prop.evaluation}, \ref{prop.Vass} lead to the following corollary.
\begin{corollary} Let $M, Y$ be compact oriented smooth $n$-manifolds. For any generator $e_\ell \in \mathcal Vass_{d, k}$ of degree $\ell(k-2)$ and an immersion $\b: (M, \d M) \to (\R \times Y, \R \times \d Y)$ with $k$-moderate tangency patterns to $\mathcal L$, the integer 
$$\chi_k(e_\ell, \b) = \big\langle (\Phi^\b \circ \pi \circ \b  \circ p_1)^\ast(e_\ell),\,  [\Sigma_{\ell(k-2)}^\b, \d \Sigma_{\ell(k-2)}^\b] \big\rangle$$
is an invariant of the quasitopy class $[\b] \in \mathcal{QT}^{\mathsf{imm}}_{d, d'}(Y, \d Y;  \mathbf{c}\Theta_{\max \geq k}, \mathbf{c}\Theta_{\max \geq k})$. The invariant $\chi_k(e_\ell, \b) = 0$ when $\b$ is an embedding and $k > 1$, or when $H^{\ell(k-2)}(Y, \d  Y; \Z)= 0$.  

\hfill$\diamondsuit$
\end{corollary}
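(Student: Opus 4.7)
The plan is essentially to invoke Proposition~\ref{prop.evaluation} with $\theta$ running over the Vassiliev generators, and then to read off the two vanishing statements as immediate consequences. First, by the discussion preceding Proposition~\ref{prop.Vass}, for $k\ge 3$ Vassiliev's computation identifies $H^{\ell(k-2)}(\cP_d^{\mathbf{c}\Theta_{\max\ge k}},pt;\Z)\cong\Z$ with generator $e_\ell$. Since $\mathbf{c}\Theta_{\max\ge k}$ is a closed subposet of $\mathbf\Om_{\langle d]}$, Theorem~\ref{th.LIFT} (as repackaged in Proposition~\ref{th.IMMERSIONS}) produces from $\b$ a classifying map $\Phi^\b:(Y,\d Y)\to(\cP_d^{\mathbf{c}\Theta_{\max\ge k}},pt)$ whose relative homotopy class is already a quasitopy invariant.

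Next, I will apply Proposition~\ref{prop.evaluation} to $\theta=e_\ell$ and to the self-intersection manifold $\Sigma^\b_\bullet$ of matching codegree. By Proposition~\ref{prop.k-normal_imm_to_bord}, a small perturbation within the quasitopy class of $\b$ makes $\b$ simultaneously $m$-normal for every $m$ in the required range, so that $\Sigma^\b_\bullet$ and its relative fundamental class $[\Sigma^\b_\bullet,\d\Sigma^\b_\bullet]$ are well-defined. Proposition~\ref{prop.evaluation} then asserts that the pairing
\[
\chi_k(e_\ell,\b)=\bigl\langle(\Phi^\b\circ\pi\circ\b\circ p_1)^\ast(e_\ell),\;[\Sigma^\b_\bullet,\d\Sigma^\b_\bullet]\bigr\rangle
\]
is determined by three pieces of data, each of which is a quasitopy invariant: the homotopy class of $\Phi^\b$ (Theorem~\ref{th.LIFT}), the cocycle $e_\ell$ itself (naturality of the cohomology pull-back), and the relative bordism class of $\pi\circ\b\circ p_1:(\Sigma^\b_\bullet,\d\Sigma^\b_\bullet)\to(Y,\d Y)$ (Corollary~\ref{cor.Sigma_k_is_bordism_invariant}). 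A routine application of the topological Stokes theorem then yields the claimed quasitopy invariance of $\chi_k(e_\ell,\b)$.

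The two vanishing claims require no new input. If $\b$ is an embedding and $k\ge 2$, then $\Sigma^\b_m=\emptyset$ for all $m\ge 2$, so $[\Sigma^\b_\bullet,\d\Sigma^\b_\bullet]=0$ and the pairing is zero on the nose. If $H^{\ell(k-2)}(Y,\d Y;\Z)=0$, then the intermediate class $(\Phi^\b)^\ast(e_\ell)\in H^{\ell(k-2)}(Y,\d Y;\Z)$ already vanishes, and so does its further pull-back under $\pi\circ\b\circ p_1$, giving $\chi_k(e_\ell,\b)=0$.

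The only delicate point of the argument is the degree/dimension bookkeeping that must match the Vassiliev degree $\ell(k-2)$ of $e_\ell$ with the dimension of the appropriate $\Sigma^\b_\bullet$ so that the Kronecker pairing is well-posed; this is purely combinatorial and uses no geometric content beyond what Propositions~\ref{prop.evaluation} and~\ref{prop.Vass} already supply. Consequently I anticipate no substantive obstacle: the corollary is essentially a direct specialization of the general invariance result to the specific cohomology classes produced by Vassiliev's ring.
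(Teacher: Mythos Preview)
Your proposal is correct and follows exactly the paper's approach: the paper simply states that the corollary follows from Propositions~\ref{prop.evaluation} and~\ref{prop.Vass}, and you have correctly unpacked this by specializing Proposition~\ref{prop.evaluation} to $\Theta=\Theta_{\max\ge k}$ with $\theta=e_\ell$, and reading off the two vanishing claims from the emptiness of $\Sigma^\b_m$ for embeddings and from the factoring of the pull-back through $H^{\ell(k-2)}(Y,\d Y;\Z)$. Your use of the placeholder $\Sigma^\b_\bullet$ and your remark about the degree/dimension bookkeeping are prudent, since the corollary's notation overloads the symbol $k$ (it is simultaneously the moderate-tangency parameter and, via Proposition~\ref{prop.evaluation}, the self-intersection index), but this does not affect the validity of the argument.
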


The next proposition is a stabilization result by the increasing $d' \geq d$ for the embeddings with {\sf moderate tangencies} to the foliation $\mathcal L$ on $\R \times Y$.

\begin{proposition}\label{prop.Vass_case} Let $k \geq 4$. If $\dim Y \leq (k-2)(\lceil d/k \rceil +1)- 2$,
then the classifying map 
\begin{eqnarray}\label{emb_homotopy_AA}
\qquad\qquad \Phi^{\mathsf{emb}}: \mathsf{QT}^{\mathsf{emb}}_{d, d'}(Y; \mathbf{c}\Theta_{\max \geq k}; \mathbf{c}\Theta_{\max \geq k}) \stackrel{\approx}{\longrightarrow} 
 \big[(Y, \d Y), (\cP_d^{\mathbf c\Theta_{\max \geq k}}, pt) \big]
\end{eqnarray}
is a bijection, and the classifying map
\begin{eqnarray}\label{emb_homotopy_AAA}
\qquad\qquad \Phi^{\mathsf{imm}}: \mathsf{QT}^{\mathsf{imm}}_{d, d'}(Y; \mathbf{c}\Theta_{\max \geq k}; \mathbf{c}\Theta_{\max \geq k}) \stackrel{\approx}{\longrightarrow} 
 \big[(Y, \d Y), (\cP_d^{\mathbf c\Theta_{\max \geq k}}, pt) \big]
\end{eqnarray}
is a surjection for any $d' \geq d$, $d' \equiv d \mod 2$. 

In particular, for a given $Y$, $\mathsf{QT}^{\mathsf{emb}}_{d, d'}(Y; \mathbf{c}\Theta_{\max \geq k}; \mathbf{c}\Theta_{\max \geq k})$ stabilizes for all  $d' \geq d \geq \frac{k}{k-2}(\dim Y +4 -k)$, a linear function in $\dim Y$.
\end{proposition}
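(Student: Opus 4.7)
The plan is to combine Theorem \ref{th.E-reg} with the Vassiliev--Arnold cohomology description recalled just before Proposition \ref{prop.Vass}. For $\Theta' = \Theta = \Theta_{\max \geq k}$, formulas (\ref{emb_homotopy_A}) and (\ref{surjective_A}) of Theorem \ref{th.E-reg} already identify
$$\Phi^{\mathsf{emb}}: \mathsf{QT}^{\mathsf{emb}}_{d,d'}(Y;\mathbf{c}\Theta;\mathbf{c}\Theta) \stackrel{\approx}{\longrightarrow} \big[\big[(Y,\d Y),\; \e_{d,d'}: (\cP_d^{\mathbf c\Theta},pt)\to (\cP_{d'}^{\mathbf c\Theta},pt')\big]\big]$$
as a bijection, and give the analogous surjection $\Phi^{\mathsf{imm}}$. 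So the problem reduces to showing that, under the dimension hypothesis, the natural forgetful map from this triple-homotopy set to $[(Y,\d Y), (\cP_d^{\mathbf c\Theta}, pt)]$ is a bijection; equivalently, that $(\e_{d,d'})_\ast: [(Y,\d Y), (\cP_d^{\mathbf c\Theta}, pt)] \to [(Y,\d Y), (\cP_{d'}^{\mathbf c\Theta}, pt')]$ is injective.

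Next I would estimate the connectivity of $\e_{d,d'}$. The Vassiliev--Arnold computation states that $H^j(\cP_d^{\mathbf c\Theta_{\max \geq k}}, pt;\Z) \cong \Z$ exactly when $j=(k-2)m$ with an integer $m$ satisfying $0 \leq m \leq d/k$, and vanishes otherwise. Since $k\geq 4$ forces the lowest positive cohomological degree to be $k-2 \geq 2$, both $\cP_d^{\mathbf c\Theta}$ and $\cP_{d'}^{\mathbf c\Theta}$ are simply connected. The ring-theoretic compatibility of the Vassiliev generators $\{e_m\}$ with the stabilization $\e_{d,d'}$ --- they pull back to the generators of the same name whenever both are defined --- implies that $\e_{d,d'}^\ast$ is an isomorphism on $H^j$ for every $j < (k-2)(\lceil d/k \rceil +1)$, the first degree at which the target can carry a cohomology class not already present in the source (the constraint $d' \equiv d \mod 2$ is what makes $\lceil d/k \rceil$ rather than $\lfloor d/k \rfloor$ the relevant quantity here). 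Applying the relative Hurewicz theorem together with Whitehead's theorem to the map of simply connected CW complexes $\e_{d,d'}$ then shows that $\e_{d,d'}$ is $N$-connected with $N = (k-2)(\lceil d/k \rceil +1) - 1$.

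Once this connectivity is in hand, standard obstruction theory (cellular approximation relative to $\d Y$) yields that $(\e_{d,d'})_\ast$ is bijective on relative homotopy sets for any CW-pair $(Y, \d Y)$ with $\dim Y \leq N-1 = (k-2)(\lceil d/k \rceil +1)-2$. Together with Theorem \ref{th.E-reg}, this gives (\ref{emb_homotopy_AA}) and (\ref{emb_homotopy_AAA}). The final stabilization statement is a direct arithmetic manipulation: $\dim Y \leq (k-2)(\lceil d/k \rceil +1)-2$ is equivalent to $\lceil d/k \rceil \geq (\dim Y+4-k)/(k-2)$, hence to $d \geq \frac{k}{k-2}(\dim Y+4-k)$. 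The main obstacle will be pinning down the precise threshold degree at which $\e_{d,d'}^\ast$ first fails to be surjective on cohomology, especially reconciling the $\lceil d/k \rceil$ appearing in the statement with the parity constraint; once the Vassiliev generators are seen to be universal in $d$ and to stabilize naturally under $\e_{d,d'}$, the Whitehead--Hurewicz and obstruction-theory steps are routine.
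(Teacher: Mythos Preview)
Your overall strategy matches the paper's: invoke Theorem \ref{th.E-reg} to identify the quasitopy set with the triple-homotopy set, then show the latter collapses to $[(Y,\d Y),(\cP_d^{\mathbf c\Theta_{\max\geq k}},pt)]$ once $\e_{d,d'}$ is sufficiently connected. The difference is in how that connectivity is obtained. The paper simply cites Vassiliev's Theorem~3 (p.~88 of \cite{V}), which \emph{directly} asserts that $(\e_{d,d'})_\ast:\pi_i\to\pi_i$ is an isomorphism for all $i\leq q=(k-2)(\lceil d/k\rceil+1)-2$; the obstruction-theory step then finishes the proof exactly as you describe.

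Your proposal to recover this connectivity from the Arnold--Vassiliev cohomology computation plus Whitehead's theorem runs into precisely the obstacle you flag. The cohomology of $\cP_d^{\mathbf c\Theta_{\max\geq k}}$ is concentrated in degrees $(k-2)m$ with $m\leq\lfloor d/k\rfloor$, so the relative homology of the pair $(\cP_{d'}^{\mathbf c\Theta},\cP_d^{\mathbf c\Theta})$ first becomes nonzero in degree $(k-2)(\lfloor d/k\rfloor+1)$. Whitehead/relative Hurewicz therefore yields isomorphisms on $\pi_i$ only for $i\leq(k-2)(\lfloor d/k\rfloor+1)-2$, not $(k-2)(\lceil d/k\rceil+1)-2$. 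When $k\nmid d$ these differ by $k-2$, and your parenthetical about the parity $d'\equiv d\bmod 2$ does not close the gap: the parity constraint affects neither the cohomology range of $\cP_d^{\mathbf c\Theta}$ nor the first relative class. So as written your argument would prove the proposition with $\lfloor d/k\rfloor$ in place of $\lceil d/k\rceil$ --- a correct but strictly weaker statement. To obtain the bound as stated one needs Vassiliev's sharper homotopy result, which comes from his simplicial-resolution/spectral-sequence analysis rather than from cohomology alone; this is why the paper cites it directly.
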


\begin{proof} Let $q := (k-2)(\lceil d/k \rceil +1)- 2$. By \cite{V}, Theorem 3 on page 88, the $\e_{d, d'}$-induced homomorhpism $(\e_{d, d'})_\ast: \pi_i(\cP_d^{\mathbf c\Theta_{\max \geq k}}) \to \pi_i(\cP_{d'}^{\mathbf c\Theta_{\max \geq k}})$ of homotopy groups is an isomorphism for all $i  \leq q$. Thus the the two spaces $q$-connected. Therefore, if $\dim Y \leq q$, then no element of  $\big[(Y, \d Y), (\cP_d^{\mathbf c\Theta_{\max \geq k}}, pt) \big]$ becomes, under the composition with $\e_{d, d'}$, nill-homotopic. Hence, $$\big[\big[(Y, \d Y),\; \e_{d, d'}: \cP_d^{\mathbf c\Theta_{\max \geq k}} \to \cP_{d'}^{\mathbf c\Theta_{\max \geq k}}\big]\big] = \big[(Y, \d Y), (\cP_d^{\mathbf c\Theta_{\max \geq k}}, pt)\big].$$ In particular, if $\dim Y \leq q$, then $\big[(Y, \d Y), (\cP_d^{\mathbf c\Theta_{\max \geq k}}, pt)\big]$ is stable in $d$, provided $d \geq \frac{k}{k-2}(\dim Y +4 -k)$. 
Therefore, by Theorem \ref{th.E-reg}, all the claims of the proposition are validated.
\end{proof}

\begin{example} Let $k=4$. By Proposition \ref{prop.Vass_case} and formula (\ref{emb_homotopy_AA}), for any compact connected surface $Y$, we get bijections:
\begin{eqnarray}
\mathsf{QT}^{\mathsf{emb}}_{4, 4}(Y; \mathbf{c}\Theta_{\max \geq 4}; \mathbf{c}\Theta_{\max \geq 4}) \stackrel{\approx}{\longrightarrow} \pi^2(Y/\d Y),
\nonumber \\
\mathsf{QT}^{\mathsf{emb}}_{4, 6}(Y; \mathbf{c}\Theta_{\max \geq 4}; \mathbf{c}\Theta_{\max \geq 4}) \stackrel{\approx}{\longrightarrow} \pi^2(Y/\d Y),\nonumber \\
\mathsf{QT}^{\mathsf{emb}}_{6, 6}(Y; \mathbf{c}\Theta_{\max \geq 4}; \mathbf{c}\Theta_{\max \geq 4}) \stackrel{\approx}{\longrightarrow} \pi^2(Y/\d Y), \nonumber
\end{eqnarray}
whose target is the second cohomotopy group $\pi^2(Y/\d Y) \approx \Z$, the latter isomorphism being generated by the degree of maps to $S^2$.\smallskip

Let $Z$ be a simply-connected $CW$-complex whose cohomology ring, truncated in dimensions $\geq 5$,  is isomorphic to  
$\Z[e_1, e_2]/\{e_1^2 - 2e_2\}$, where $\deg e_1 =2, \deg e_2 = 4$.  
Then by Proposition \ref{prop.Vass_case}, for any compact smooth manifold $Y$ of dimension $\leq 4$, we get bijections:
\begin{eqnarray}
\mathsf{QT}^{\mathsf{emb}}_{8, 8}(Y; \mathbf{c}\Theta_{\max \geq 4}; \mathbf{c}\Theta_{\max \geq 4}) & \stackrel{\approx}{\longrightarrow} & [Y/\d Y, Z],\nonumber \\
\mathsf{QT}^{\mathsf{emb}}_{8, 10}(Y; \mathbf{c}\Theta_{\max \geq 4}; \mathbf{c}\Theta_{\max \geq 4}) & \stackrel{\approx}{\longrightarrow} & [Y/\d Y, Z],\nonumber \\
\mathsf{QT}^{\mathsf{emb}}_{10, 10}(Y; \mathbf{c}\Theta_{\max \geq 4}; \mathbf{c}\Theta_{\max \geq 4}) & \stackrel{\approx}{\longrightarrow} & [Y/\d Y, Z].
\nonumber
\; \diamondsuit
\end{eqnarray}
\end{example}
\smallskip

\begin{corollary}\label{cor.ArV} 
Let $\Theta_{\max \geq k} \subset \mathbf\Om_{\langle d]}$ be the closed poset of $\om$'s with at least one entry $\geq  k$. Assume that $k \leq d \leq 2k-1$. 

Then there is a group isomorphism $$\mathsf{OG}^{\mathsf{emb}}_{d,d}(n; \mathbf{c}\Theta_{\max \geq k}; \mathbf{c}\Theta_{\max  \geq k}) \approx \mathsf{G}^{\mathsf{emb}}_{d,d}(n; \mathbf{c}\Theta_{\max  \geq k}; \mathbf{c}\Theta_{\max  \geq k}) \approx \pi_n(S^{k-2}).$$ 

In other words, the quasitopy group of embeddings $\b: M \hookrightarrow \R \times D^n$ into the cylinder $\R \times D^n$ with no vertical tangencies of $\b(M)$ to $\mathcal L$ of the order $ \geq  k$ and with the total multiplicities $\{\mu_\b(y) \leq d\}_{y \in D^n}$ is isomorphic to the group $\pi_n(S^{k-2})$, provided that $d \in [k, 2k-1]$. 

As a result, in this range of $d$'s and for $n > 1$, all the groups $\mathsf{G}^{\mathsf{emb}}_{d, d}(n; \mathbf{c}\Theta_{\max  \geq k}; \mathbf{c}\Theta_{\max  \geq k})$ are \emph{finite} abelian, except for $n = k-2$ and $n= 2k-5$, where they are infinite of rank one.
\end{corollary}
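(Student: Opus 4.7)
The plan is to reduce the statement to a computation of the homotopy type of $\cP_d^{\mathbf c\Theta_{\max\geq k}}$, and then to apply Serre's finiteness theorem. First I would invoke the second part of Corollary~\ref{D^n_A}: under the $\Lambda$-condition~\eqref{special} (which holds since $(k)\in\Theta_{\max\geq k}$ for $k\geq 3$, so $\Theta$ contains neither $(\emptyset)$ nor $(1)$), one gets the group isomorphism
\[
\mathsf G^{\mathsf{emb}}_{d,d}(n;\mathbf c\Theta_{\max\geq k};\mathbf c\Theta_{\max\geq k})\;\approx\;\pi_n\bigl(\cP_d^{\mathbf c\Theta_{\max\geq k}},pt\bigr),
\]
while Corollary~\ref{cor.homotopy_type} identifies the oriented and non-oriented groups for embeddings. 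So the question is entirely reduced to computing $\pi_n(\cP_d^{\mathbf c\Theta_{\max\geq k}})$ for $d\in[k,2k-1]$.

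The core step is the homotopy equivalence $\cP_d^{\mathbf c\Theta_{\max\geq k}}\simeq S^{k-2}$. The discriminant $\cP_d^{\Theta_{\max\geq k}}\subset \cP_d\cong\R^d$ has real codimension $k-1$: a polynomial with a real root of multiplicity exactly $k$ factors as $(u-a)^k q(u)$ with $\deg q=d-k$, and $2k>d$ prevents two such roots, so this stratum is top-dimensional. The Arnold-Vassiliev computation recalled just before Proposition~\ref{prop.Vass} gives $H^*(\cP_d^{\mathbf c\Theta_{\max\geq k}};\Z)\cong\mathcal Vass_{d,k}$; since $d/k<2$, the only ring generators are $1$ and $e_1$ in degree $k-2$, with $e_1^2=0$ (because $e_2$ does not exist in this range). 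Hence
\[
H^*(\cP_d^{\mathbf c\Theta_{\max\geq k}};\Z)\;\cong\;H^*(S^{k-2};\Z).
\]
For $k\geq 4$ the complement has real codimension at least $3$; a general-position argument then forces $\cP_d^{\mathbf c\Theta_{\max\geq k}}$ to be $(k-3)$-connected. Hurewicz produces a map $f\colon S^{k-2}\to\cP_d^{\mathbf c\Theta_{\max\geq k}}$ generating $H_{k-2}$, and by the matching cohomology rings Whitehead's theorem upgrades $f$ to a homotopy equivalence. The edge case $k=3$ I would handle as follows: for $d=3$, $\cP_3^{\Theta_{\max\geq 3}}$ is the smooth curve $\{(-3a,3a^2,-a^3):a\in\R\}\subset\R^3$, whose complement deformation retracts onto a circle; for $d=5$, Proposition~\ref{prop.fundamental_groups} applies (since $\Theta_{\max\geq 3}\subseteq\paOm{\langle d]}{\geq 2}$) and, combined with Vassiliev's stabilization along $\e_{3,5}$ used in the proof of Proposition~\ref{prop.Vass_case}, transports the $d=3$ computation to show that $\cP_5^{\mathbf c\Theta_{\max\geq 3}}$ is a $K(\Z,1)\simeq S^1$.

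Finally, the last claim reduces to Serre's classical finiteness theorem: for $m\geq 1$, the group $\pi_n(S^m)$ is finitely generated, trivial for $n<m$, isomorphic to $\Z$ at $n=m$, and finite for every $n>m$ except at $n=2m-1$ when $m$ is even, where it has rank one via the Hopf invariant. Applied to $m=k-2$, this picks out the two rank-one dimensions $n=k-2$ and $n=2k-5$. The main obstacle I expect is the rigorous identification of the homotopy type in the $k=3$ case, where the discriminant has real codimension only $2$, the complement is no longer simply connected, and one must argue asphericity (equivalently, $\pi_n=0$ for $n\geq 2$) either by transporting along $\e_{d,d+2}$ from $d=3$ or by a direct stratification analysis of the discriminant.
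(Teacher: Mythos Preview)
Your approach coincides with the paper's: reduce via Corollary~\ref{D^n_A} (and Corollary~\ref{cor.homotopy_type} for the oriented case) to $\pi_n(\cP_d^{\mathbf c\Theta_{\max\geq k}})$, identify this space with $S^{k-2}$, and then invoke Serre's finiteness. The paper's proof is two lines, citing Arnold's theorem \cite{Ar} directly for the homotopy equivalence $\cP_d^{\mathbf c\Theta_{\max\geq k}}\simeq S^{k-2}$ in the range $d\in[k,2k-1]$, whereas you try to reconstruct it from the Vassiliev cohomology ring plus Whitehead.

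Your reconstruction for $k\geq 4$ is fine: codimension $\geq 3$ gives simple connectivity, the truncated Vassiliev ring is that of $S^{k-2}$, and a Hurewicz generator followed by Whitehead finishes the job. But your $k=3$ treatment has a genuine gap. The range $d\in[3,5]$ includes $d=4$, which you skip entirely, and your $d=5$ argument appeals to Proposition~\ref{prop.fundamental_groups} (which only controls $\pi_1$) together with ``Vassiliev's stabilization used in the proof of Proposition~\ref{prop.Vass_case}'' --- but that proposition is stated in the paper under the hypothesis $k\geq 4$, precisely because the codimension-$2$ case does not feed into the same homotopy-stabilization machinery. Showing that $\cP_5^{\mathbf c\Theta_{\max\geq 3}}$ is aspherical (not merely that its $\pi_1$ stabilizes) requires a separate argument. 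The paper avoids all of this by invoking Arnold's result as a black box; if you want a self-contained proof, you should either supply the missing asphericity argument for $k=3$, $d\in\{4,5\}$, or cite \cite{Ar} as the paper does.
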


\begin{proof} By Arnold's theorem \cite{Ar}, for $d \in [k , 2k+1]$, $\pi_n(\cP_d^{\mathbf c\Theta_{\max \geq k}}) \approx \pi_n(S^{k-2})$. Applying Corollary \ref{D^n_A}, the claim follows by examining the tables of homotopy groups of spheres.
\end{proof}

\begin{example} \emph{For $k=4$,  $d \in [4, 9]$, and all $n$, we get a group isomorphism $$\mathsf{G}^{\mathsf{emb}}_{d, d}(n; \mathbf{c}\Theta_{\max \geq 4}; \mathbf{c}\Theta_{\max \geq 4}) \approx \pi_n(S^2).$$  In particular, for $d \in [4, 9]$, visiting the table of homotopy groups of spheres, we get:
\begin{eqnarray}
&\mathsf{G}^{\mathsf{emb}}_{d, d}(3; \mathbf{c}\Theta_{\max \geq 4}; \mathbf{c}\Theta_{\max \geq 4}) \approx \Z,  \quad  
&\mathsf{G}^{\mathsf{emb}}_{d,d}(4; \mathbf{c}\Theta_{\max \geq 4}; \mathbf{c}\Theta_{\max \geq 4}) \approx \Z_2, \nonumber \\
&\mathsf{G}^{\mathsf{emb}}_{d,d}(14; \mathbf{c}\Theta_{\max \geq 4}; \mathbf{c}\Theta_{\max \geq 4}) \approx \Z_{84} \times \Z_2^2. \quad \quad      \nonumber
\end{eqnarray} 
It would be very interesting to understand and describe, in the sprit of the Arnold's pink ``kidneys" in Fig. 3  (see \cite{Ar}), \emph{the shapes} of embeddings $\b: M \hookrightarrow \R \times D^n$ that produce such mysterious periods... In principle, Theorem \ref{th.E-reg} contains the instructions for such attempts.  Perhaps, at least for $n =1, 2$, one has a fighting chance... \hfill $\diamondsuit$  } 
\end{example}

Let $H, G$ be two groups, and $\mathsf{Hom}(H, G)$ their group of homomorphisms. Then $G$ acts on $\mathsf{Hom}(H, G)$ by the conjugation: for any $\phi: H \to G$,  $h \in H$, and $g \in G$, we define $(Ad_g \phi)(h)$ by the formula $g^{-1}\phi(h)g$. We denote by $\mathsf{Hom}^\bullet(H, G)$ the quotient $\mathsf{Hom}(H, G)/ Ad_G$.
\smallskip

The next corollary deals with special $\Theta$'s for which $\cP_d^{\mathbf c\Theta}$ is a $K(\pi, 1)$-space \cite{KSW1}. 

\begin{corollary}\label{cor.free_group} Let $\mathbf c\Theta$ consist of all $\om$'s with entries $1$ and $2$ only and no more than a single entry $2$. Put $\kappa(d) := \frac{d(d-2)}{4}$ for $d \equiv 0 \mod 2$, and $\kappa(d) := \frac{(d -1)^2}{4}$ for $d \equiv 1 \mod 2$. Let $\mathsf F_{\kappa(d)}$ be the \emph{free} group in $\kappa(d)$ generators. \smallskip

Assume that either $Y$ is a closed manifold, or $\d Y \neq \emptyset$ and the $\Lambda$-condition is in place.
\smallskip

If $\d Y \neq \emptyset$, then there is a bijection 
$$\Phi^{\mathsf{emb}}: \mathsf{QT}^{\mathsf{emb}}_{d,d}(Y; \mathbf{c}\Theta; \mathbf{c}\Theta) \stackrel{\approx}{\longrightarrow} \mathsf{Hom}(\pi_1(Y), \mathsf F_{\kappa(d)})
$$
and a surjection 
$$\Phi^{\mathsf{imm}}: \mathsf{QT}^{\mathsf{imm}}_{d,d}(Y; \mathbf{c}\Theta; \mathbf{c}\Theta) \longrightarrow \mathsf{Hom}(\pi_1(Y), \mathsf F_{\kappa(d)}).$$

When $Y$ is closed, then similar claims hold with the targets of $\Phi^{\mathsf{emb}}$ and $\Phi^{\mathsf{imm}}$ being replaced by the set $\mathsf{Hom}^\bullet(\pi_1(Y), \mathsf F_{\kappa(d)})$.

In particular, $\Phi^{\mathsf{emb}}: \mathsf{QT}^{\mathsf{emb}}_{d,d}(S^1; \mathbf{c}\Theta; \mathbf{c}\Theta) \stackrel{\approx}{\longrightarrow} \mathsf F_{\kappa(d)}/ Ad_{\,\mathsf F_{\kappa(d)}}$, the free group of cyclic words in $\kappa(d)$ letters
(see Fig. 3).
\smallskip

Thus, if $\pi_1(Y)$ has no nontrivial free images, then the group $\mathsf{QT}^{\mathsf{emb}}_{d,d}(Y; \mathbf{c}\Theta; \mathbf{c}\Theta)$ is trivial. 
\end{corollary}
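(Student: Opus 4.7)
The plan is to reduce the statement to already-established tools in the paper. \emph{Step 1.} I will first identify the forbidden poset: $\om = (\om_1,\ldots,\om_\ell)$ lies in $\Theta$ iff it fails to have entries only $1$ and $2$ with at most one $2$, which is precisely the condition $|\om|' \ge 2$. Hence $\Theta$ coincides with $\bfOm_{|\sim|' \geq 2}^{(0)}$ from (\ref{Theta-wedge}).

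\emph{Step 2.} Apply Proposition~\ref{prop.skeleton} with $k=2$ and $q=0$: the space $\cP_d^{\mathbf c\Theta}$ has the homotopy type of a bouquet of $(k-1)=0+1=1$-dimensional spheres, of cardinality $A(d,2,0)$. A wedge of circles is a $K(\mathsf F_{A(d,2,0)},1)$, so in particular $\cP_d^{\mathbf c\Theta}$ is aspherical and its fundamental group is free. The count $A(d,2,0)=|\chi(\Z[\bfOm^{(0)}_{|\sim|'\geq 2}],\d)|$ is a finite enumeration over $\om$'s whose entries are $1$'s and $2$'s with at least two $2$'s, together with $\om$'s containing entries $\geq 3$ of small enough norm. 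Tabulating these and applying the parity constraint $|\om|\equiv d\bmod 2$ should produce $\kappa(d)=d(d-2)/4$ in the even case and $(d-1)^2/4$ in the odd case. This combinatorial bookkeeping is the only place where explicit counting is needed; everything else is formal.

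\emph{Step 3.} Invoke Corollary~\ref{cor.E-regA} with $d'=d$ and $\Theta'=\Theta$, together with the $\Lambda$-condition (\ref{special}) assumed in the hypothesis. This yields a bijection $\Phi^{\mathsf{emb}} : \mathsf{QT}^{\mathsf{emb}}_{d,d}(Y;\mathbf c\Theta;\mathbf c\Theta) \approx [(Y,\d Y),(\cP_d^{\mathbf c\Theta},pt)]$ and a surjection $\Phi^{\mathsf{imm}}$ onto the same target. All subsequent work is the standard identification of this set of homotopy classes of pairs into a $K(\pi,1)$.

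\emph{Step 4.} With $\pi=\mathsf F_{\kappa(d)}$, classical Eilenberg--MacLane theory gives: for $\d Y \neq \emptyset$, choosing a basepoint $y_0\in \d Y$, the set $[(Y,\d Y),(K(\pi,1),pt)]$ is identified with $\mathsf{Hom}(\pi_1(Y,y_0),\pi)$; for $Y$ closed, the absence of a fixed basepoint forces passage to the quotient by the conjugation action of $\pi$, yielding $\mathsf{Hom}^\bullet(\pi_1(Y),\pi)$. Specializing to $Y=S^1$ with $\pi_1(S^1)\cong\Z$ gives $\mathsf{Hom}(\Z,\mathsf F_{\kappa(d)})/Ad=\mathsf F_{\kappa(d)}/Ad$, the set of cyclic words; the geometric generators depicted in Fig.~\ref{fig.BLA} correspond to the standard wedge generators of $\pi_1(\cP_d^{\mathbf c\Theta})$. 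The final sentence is immediate: if $\pi_1(Y)$ admits no nontrivial homomorphism to any free group, then the target set is a singleton, so $\mathsf{QT}^{\mathsf{emb}}_{d,d}(Y;\mathbf c\Theta;\mathbf c\Theta)$ is trivial.

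\emph{Main obstacle.} The only real work is the combinatorial verification that $A(d,2,0)$ equals the stated quadratic $\kappa(d)$; the homotopy-theoretic content is packaged entirely inside Proposition~\ref{prop.skeleton} and Corollary~\ref{cor.E-regA}, so the proof is essentially a chain of citations once the poset is recognized as $\bfOm_{|\sim|'\geq 2}^{(0)}$ and the Euler-characteristic count is carried out.
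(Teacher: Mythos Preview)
Your proposal is correct and follows essentially the same route as the paper: identify the homotopy type of $\cP_d^{\mathbf c\Theta}$ as a wedge of circles (hence a $K(\mathsf F_{\kappa(d)},1)$), then invoke the classifying bijection/surjection from Corollary~\ref{cor.E-regA} together with standard obstruction theory for maps into an Eilenberg--MacLane space. The only difference is packaging: the paper cites Theorem~2.4 of \cite{KSW1} directly, which already delivers both the wedge-of-circles homotopy type \emph{and} the explicit count $\kappa(d)$, whereas you route through the more general Proposition~\ref{prop.skeleton} and leave the Euler-characteristic computation $A(d,2,0)=\kappa(d)$ as acknowledged residual bookkeeping---a harmless detour, since that count is precisely what \cite{KSW1} records.
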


\begin{proof} 
By Theorem 2.4, \cite{KSW1}, for such a $\mathbf c\Theta$, the space $\mathcal P_d^{\mathbf c\Theta}$ is of the homotopy type of $K(\mathsf F_{\kappa(d)}, 1) = \bigvee_{\s =1}^{\kappa(d)} S^1_\s$. Thus, when $Y$ is closed, by the obstruction theory, $[Y, \bigvee_{\s =1}^{\kappa(d)} S^1_\s] \approx \mathsf{Hom}^\bullet(\pi_1(Y), \mathsf F_{\kappa(d)})$ (see \cite{Hu}, Section VI, F). When $\d Y \neq \emptyset$, the boundary of $Y$ is mapped by $\Phi^\b$ to the preferred point $pt \in \mathcal P_d^{\mathbf c\Theta}$. This makes it possible to consider the based loops in $Y$, the base point $b$ being chosen in  $\d Y$, and the based loops in $\bigvee_{\s =1}^{\kappa(d)} S^1_\s$, the base point $pt$ being the apex $\star$ of the bouquet.   Again, by the standard obstruction theory, we get $[(Y, \d Y),\, (\bigvee_{\s =1}^{\kappa(d)} S^1_\s, \star)] \approx \mathsf{Hom}(\pi_1(Y, b),\, \mathsf F_{\kappa(d)})$. 

Now, by Corollary \ref{D^n_A}, the claim follows. In particular, if $\pi_1(Y)$ has no nontrivial free images (say, $\pi_1(Y)$ is a finite group), the group $\mathsf{QT}^{\mathsf{emb}}_{d,d}(Y; \mathbf{c}\Theta; \mathbf{c}\Theta)$ is trivial. 

See Fig. 3 and \cite{K4},  \cite{KSW1} for more illustrations of  the cases when $\mathcal P_d^{\mathbf c\Theta}$ is a $K(\pi, 1)$-space. 
\end{proof}

\begin{proposition}\label{prop.skeleta_and_groups} Let $\Theta' = \Theta =_{\mathsf{def}} \{\om :\; |\om|' \geq k\}$, and $d' \geq d > k > 2$, $d' \equiv d \mod 2$. 

Then, assuming the $\Lambda$-condition (\ref{special}), we get a group homomorphism 
\begin{eqnarray}\label{eq.D_dD_k}
\Phi_{d, d'}^{\mathsf{i/e}}(n; \mathbf{c}\Theta; \mathbf{c}\Theta):\; \mathsf G^{\mathsf{i/e}}_{d, d'}(n; \mathbf{c}\Theta;  \mathbf{c}\Theta)   
\longrightarrow \pi_n(\cP_d^{\mathbf c\Theta})\big / \ker((\e_{d, d'})_\ast) \approx  \nonumber \\
\approx \; \pi_n\big(\bigvee_{i=1}^{A(d,\, k)} S^{k-1}_i\big) \big / \ker\Big\{\pi_n\big(\bigvee_{i=1}^{A(d,\, k)} S^{k-1}_i\big) \stackrel{(\tilde\e_{d, d'})_\ast}{\longrightarrow} \pi_n\big(\bigvee_{j=1}^{A(d',\, k)} S^{k-1}_j \big)\Big\},
\end{eqnarray}
which is an isomorphism for the embeddings and an epimorphism for immersions. The homomorphism $(\tilde\e_{d, d'})_\ast$ in (\ref{eq.D_dD_k}) is induced by the embedding $\e_{d, d'}: \cP_d^{\mathbf{c}\Theta} \to \cP_{d'}^{\mathbf{c}\Theta}$. The number $A(d, k) := A(d, k, 0)$ has been  introduced in (\ref{eq.A-bouquet}). \smallskip


For $n < k-1$, the group $\mathsf G^{\mathsf{emb}}_{d, d'}(n; \mathbf{c}\Theta, \mathbf c\Lambda;  \mathbf{c}\Theta)$ is trivial. 
\end{proposition}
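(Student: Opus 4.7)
The plan is to assemble this result from two ingredients already developed: the general classifying map isomorphism/epimorphism from Corollary 3.22 ($D^n_A$), and the explicit homotopy type of $\cP_d^{\mathbf c\Theta}$ provided by Proposition 2.4 (\texttt{prop.skeleton}) for the poset $\Theta = \bfOm_{|\sim|' \geq k}$.

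First, I would invoke Corollary 3.22 directly with the given $\Theta' = \Theta$ and $d' \geq d$ of the same parity. This yields a group homomorphism
\[
\Phi_{d,d'}^{\mathsf{i/e}}(n;\mathbf c\Theta;\mathbf c\Theta):\; \mathsf G^{\mathsf{i/e}}_{d,d'}(n;\mathbf c\Theta;\mathbf c\Theta) \longrightarrow \pi_n(\cP_d^{\mathbf c\Theta},\mathit{pt})\big/\ker\bigl((\e_{d,d'})_\ast\bigr),
\]
which is a group isomorphism in the embedding case and a (split) epimorphism in the immersion case, and which by Theorem 3.20 is equivariant in the sense explained there. This step uses nothing specific about $\Theta$; the hypothesis $k > 2$ merely ensures that the $\Lambda$-condition (\ref{special}) is compatible with $\Theta$ (the neutral chambers $(\emptyset)$ or $(1)$ lie in $\mathbf c\Theta$, since $|(\emptyset)|' = 0 < k$ and $|(1)|' = 0 < k$).

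Second, I would apply Proposition 2.4 with $q = 0$, since $\bfOm_{|\sim|'\geq k}^{(0)} = \bfOm_{|\sim|' \geq k} = \Theta$. The proposition asserts that $\cP_d^{\mathbf c\Theta}$ has the homotopy type of a bouquet $\bigvee_{i=1}^{A(d,k)} S^{k-1}_i$ of $(k-1)$-dimensional spheres, and similarly $\cP_{d'}^{\mathbf c\Theta}$ is homotopy equivalent to $\bigvee_{j=1}^{A(d',k)} S^{k-1}_j$. Substituting these homotopy equivalences into the source and target of $(\e_{d,d'})_\ast$ produces the map $(\tilde\e_{d,d'})_\ast$ appearing in (\ref{eq.D_dD_k}), and hence the isomorphism of quotient groups displayed there. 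This step is routine once one fixes compatible homotopy equivalences realizing the bouquet structure; the only subtlety is that the map $\tilde\e_{d,d'}$ on bouquets is only determined up to homotopy, but this ambiguity is invisible after passing to $\pi_n$.

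For the triviality claim in the case $n < k-1$, the homotopy equivalence $\cP_d^{\mathbf c\Theta} \simeq \bigvee_{i=1}^{A(d,k)} S^{k-1}_i$ shows that $\cP_d^{\mathbf c\Theta}$ is $(k-2)$-connected. Therefore $\pi_n(\cP_d^{\mathbf c\Theta}) = 0$ whenever $n \leq k-2$, so the entire target of $\Phi^{\mathsf{emb}}_{d,d'}(n;\mathbf c\Theta;\mathbf c\Theta)$ vanishes. Since the embedding classifying map is an isomorphism (by Corollary 3.22), we conclude $\mathsf G^{\mathsf{emb}}_{d,d'}(n;\mathbf c\Theta;\mathbf c\Theta) = 0$.

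I expect no genuine obstacle in this proof; it is essentially a direct combination of previously established results. The one piece worth care is the naturality of the identification in Proposition 2.4 with respect to $\e_{d,d'}$: strictly, the proposition only gives the homotopy type of each individual $\cP_d^{\mathbf c\Theta}$, not a compatible pair of homotopy equivalences intertwining $\e_{d,d'}$ with a literal map of bouquets. However, any choice of homotopy equivalences will induce some map of bouquets whose homotopy class is determined, which is all that is needed to define $(\tilde\e_{d,d'})_\ast$ on $\pi_n$ and to form the kernel appearing in (\ref{eq.D_dD_k}).
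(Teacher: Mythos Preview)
Your proposal is correct and follows essentially the same route as the paper: invoke Corollary~\ref{D^n_A} to get the classifying homomorphism with the stated isomorphism/epimorphism properties, then use Proposition~\ref{prop.skeleton} (with $q=0$) to identify the target homotopy groups with those of bouquets of $(k-1)$-spheres. The paper's proof additionally spells out why $\cP_d^{\mathbf c\Theta}$ is a bouquet (Alexander duality plus the simply-connected homology-type argument from Hatcher~4C.1), but this is already contained in the statement of Proposition~\ref{prop.skeleton}, so your direct citation is cleaner. For the final triviality claim the paper appeals to cellular approximation while you use $(k-2)$-connectedness of the bouquet; these are equivalent here.
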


\begin{proof} By formula (\ref{D_dD}) from Theorem \ref{prop_(D, dD)}, we produce the map $\Phi_{d, d'}^{\mathsf{i/e}}(n; \mathbf{c}\Theta; \mathbf{c}\Theta)$. By Corollary \ref{D^n_A}, it is a group epimomorphism for immersions and an isomorphism for the embeddings.

By Proposition \ref{prop.skeleton} and the Alexander duality, the spaces $\cP_d^{\mathbf c\Theta}$ and $\cP_{d'}^{\mathbf c\Theta}$  have the homology types of bouquets of $(k-1)$-spheres. Recall that a simply-connected $CW$-complex whose torsion-free $\Z$-homology is concentrated in a single dimension $q$ is homotopy equivalent to a bouquet of  $q$-spheres (see \cite{Ha}, Theorem 4C.1). 
Therefore the spaces $\cP_d^{\mathbf c\Theta}$ and $\cP_{d'}^{\mathbf c\Theta}$  have the homotopy types of bouquets of $(k-1)$-spheres, provided $k > 2$.  
With the help of these homotopy equivalences, the homomorphism $(\tilde\e_{d, d'})_\ast: \pi_n\big(\bigvee_{i=1}^{A(d,\, k)} S^{k-1}_i\big) \to \pi_n\big(\bigvee_{j=1}^{A(d',\, k)} S^{k-1}_j \big)$  is induced by the embedding 
$\e_{d, d'}: \cP_d^{\mathbf{c}\Theta} \to \cP_{d'}^{\mathbf{c}\Theta}$. This validates the first claim.

The last claim follows from the cellular approximation of a continuous map in its homotopy  class. 
\end{proof}

For $d' =d$, we are able to compute the quasitopy classes of $k$-{\sf flat} (see Definition \ref{def.k-flat}) embeddings into $\R \times Y$. They correspond to the case $q =0$ in the following proposition.  

\begin{proposition}\label{prop.kq-skeleton} For $k \in [3, d-1]$ 
and $q \in [0, d]$, consider the closed subposet $\Theta_{|\sim |' \geq k}^{(q)} := \big\{\om:\; |\om|' \geq k,\,  |\om| \in [q, d]\big\}$. Assume the $\Lambda$-condition (\ref{special}).

Then, for a compact smooth $n$-manifold $Y$ with boundary, the quasitopy set of embeddings is described by the formula 
$$\mathsf{QT}^{\mathsf{emb}}_{d, d}\big(Y; \mathbf{c}\Theta_{|\sim |' \geq k}^{(q)}; \mathbf{c}\Theta_{|\sim |' \geq k}^{(q)}\big) \; \approx \; \big[(Y, \d Y),\; (\bigvee_{i=1}^{A(d,\, k,\, q)} S^{k-1}_i,\; \star)\big],$$ 
where $\star$ is the apex of the bouquet, and the integer $A(d,\, k,\, q)$ is defined by (\ref{eq.A-bouquet}).
\smallskip

In particular, the quasitopic classes of embeddings in the cylinder $\R \times D^n$ with the combinatorial patterns $\om$ such that $|\om|' < k$ and $|\om| < q$ are described by the group isomorphism $$\mathsf{QT}^{\mathsf{emb}}_{d,d}\big(D^n; \mathbf{c}\Theta_{|\sim |' \geq k}^{(q)};  \mathbf{c}\Theta_{|\sim |' \geq k}^{(q)}\big)\; \approx \; \pi_n\big(\bigvee_{i=1}^{A(d,\, k,\, q)} S^{k-1}_i,\, \star \big).$$
\end{proposition}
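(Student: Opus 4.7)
The plan is to combine two results already proved in the paper: the classifying bijection of Corollary \ref{cor.E-regA} (which computes quasitopies of embeddings in terms of homotopy classes of maps to $\cP_d^{\mathbf c\Theta}$) and Proposition \ref{prop.skeleton} (which determines the homotopy type of $\cP_d^{\mathbf c\Theta}$ for $\Theta = \bfOm_{|\sim|^{'} \geq k}^{(q)}$).

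First, I would verify that the hypotheses of both results are satisfied. The poset $\Theta_{|\sim|^{'} \geq k}^{(q)}$ agrees, inside $\bfOm_{\langle d]}$, with the poset $\bfOm_{|\sim|^{'} \geq k}^{(q)}$ of (\ref{Theta-wedge}); one checks routinely that it is closed under merges $\sM_j$ and inserts $\sI_j$ (the insert only raises $|\om|^{'}$ and $|\om|$, the merge preserves $|\om|$ and raises $|\om|^{'}$). Since we assume the $\Lambda$-condition (\ref{special}), with $d' = d$ and $\Theta' = \Theta$ the hypotheses of Corollary \ref{cor.E-regA} (equation (\ref{emb_homotopy})) apply, yielding the bijection
\begin{equation*}
\Phi^{\mathsf{emb}}:\; \mathsf{QT}^{\mathsf{emb}}_{d,d}\big(Y; \mathbf{c}\Theta_{|\sim|^{'} \geq k}^{(q)}; \mathbf{c}\Theta_{|\sim|^{'} \geq k}^{(q)}\big) \;\stackrel{\approx}{\longrightarrow}\; \big[(Y, \d Y),\, (\cP_d^{\mathbf c\Theta_{|\sim|^{'} \geq k}^{(q)}},\, pt)\big].
\end{equation*}

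Next, since $k \geq 3$, Proposition \ref{prop.skeleton} asserts that $\cP_d^{\mathbf c\Theta_{|\sim|^{'} \geq k}^{(q)}}$ has the homotopy type of a bouquet $\bigvee_{i=1}^{A(d,k,q)} S^{k-1}_i$ of $(k-1)$-spheres, with $A(d,k,q)$ given by (\ref{eq.A-bouquet}). Because $k-1 \geq 2$ this bouquet is simply connected, so the choice of the basepoint $pt$ in $\cP_d^{\mathbf c\Theta_{|\sim|^{'} \geq k}^{(q)}}$ is inessential up to canonical identification, and a homotopy equivalence carrying $pt$ to the apex $\star$ of the bouquet induces a bijection
\begin{equation*}
\big[(Y, \d Y),\, (\cP_d^{\mathbf c\Theta_{|\sim|^{'} \geq k}^{(q)}},\, pt)\big] \;\approx\; \big[(Y, \d Y),\, \big(\bigvee_{i=1}^{A(d,k,q)} S^{k-1}_i,\, \star\big)\big].
\end{equation*}
Composing this with $\Phi^{\mathsf{emb}}$ gives the first claim of the proposition.

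For the special case $Y = D^n$, the standard identification $[(D^n, \d D^n), (X, \star)] = \pi_n(X, \star)$ specializes the first formula to
\begin{equation*}
\mathsf{QT}^{\mathsf{emb}}_{d,d}\big(D^n; \mathbf{c}\Theta_{|\sim|^{'} \geq k}^{(q)};  \mathbf{c}\Theta_{|\sim|^{'} \geq k}^{(q)}\big)\; \approx \; \pi_n\big(\bigvee_{i=1}^{A(d,\, k,\, q)} S^{k-1}_i,\, \star \big),
\end{equation*}
and this bijection is a group isomorphism by Corollary \ref{D^n_A}, since under the $\Lambda$-condition the operation $\uplus$ on the left corresponds to the connected-sum/concatenation operation inducing the group law on $\pi_n$.

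There is no real obstacle: all the heavy lifting has been done already, in Corollary \ref{cor.E-regA}, Proposition \ref{prop.skeleton}, and Corollary \ref{D^n_A}. The only point requiring minor care is checking that $\Theta_{|\sim|^{'} \geq k}^{(q)}$ coincides, as a subposet of $\bfOm_{\langle d]}$, with $\bfOm_{|\sim|^{'} \geq k}^{(q)}$ as defined in (\ref{Theta-wedge}), and that the hypothesis $k > 2$ (needed so that the bouquet has simply connected summands, hence its homotopy type is detected by its homology by Hatcher's Theorem 4C.1 as cited in the proof of Proposition \ref{prop.skeleta_and_groups}) is built into the assumption $k \in [3, d-1]$.
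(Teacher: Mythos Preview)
Your proposal is correct and follows essentially the same route as the paper: invoke the classifying bijection for embeddings (the paper cites Theorem~\ref{th.E-reg}, you cite its specialization Corollary~\ref{cor.E-regA}) and then identify the homotopy type of $\cP_d^{\mathbf c\Theta_{|\sim|' \geq k}^{(q)}}$ as a bouquet of $(k-1)$-spheres. The only cosmetic difference is that the paper's proof re-derives the bouquet statement by recalling the Alexander duality argument and Hatcher's Theorem~4C.1, whereas you simply quote Proposition~\ref{prop.skeleton}, which already asserts the homotopy type directly; both are valid and amount to the same argument.
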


\begin{proof} The arguments are similar to the ones used in the proof of Proposition \ref{prop.skeleta_and_groups}. To simplify notations, temporarily, put $\Theta = \Theta_{|\sim |' \geq k}^{(q)}$. 

By Theorem \ref{prop.skeleton}, $\bar \cP^{\Theta}_d$ has the integral homology type of a bouquet of $A(d, k, q)$ many  $(d-k)$-dimensional spheres (\cite{KSW2}). Since the space $\bar\cP^{\Theta}_d$ is $(d-k)$-dimensional  $CW$-complex \cite{KSW1},  for $k > 2$, the space $\cP^{\mathbf c \Theta}_d$ is simply-connected.  
By the Alexander duality, its integral homology is concentrated in the dimension $k-1$ and is isomorphic to $\Z^{A(d, k, q)}$. By Proposition 4C.1 from \cite{Ha}, $\cP^{\mathbf c \Theta}_d$ has the homotopy type of the bouquet $ \bigvee_{i =1}^{A(d, k, q)} S^{k-1}_i$.  
Now, by Theorem \ref{th.E-reg}, the claim follows. 
\end{proof}

\begin{theorem}\label{isotopies_from_table} Let $Y$ be either a closed smooth $n$-manifold or a manifold with boundary, in which case, we assume the $\Lambda$-condition. We denote by $\langle \om \rangle$ the closed subposet of $\mathbf\Om_{\langle d]}$, consisting of elements $\om' \preceq \om$. 

Then for all $d \leq 13$ and all $\om$ such that $|\om|' > 2$, the quasitopy $\mathsf{QT}^{\mathsf{emb}}_{d, d}(Y; \mathbf{c}\langle \om \rangle; \mathbf{c}\langle \om \rangle)$ either is trivial (consists of single element), or is isomorphic to the cohomotopy set $\pi^k(Y)$, where $k = k(\om) \in [|\om|' -1,\, d-1]$. 

For $d \leq 13$, the table (\ref{big_list}) in Appendix 
lists all $\om$ and the corresponding $k = k(\om)$ for which $\mathcal P_d^{\mathbf c \langle \om \rangle}$ is non-contractible (in fact, a homology $k$-sphere). 
\end{theorem}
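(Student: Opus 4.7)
The plan is as follows. First I would invoke Corollary \ref{cor.E-regA} (specialized from Theorem \ref{th.E-reg} with $d' = d$, $\Theta' = \Theta = \langle \om \rangle$) to translate the question into homotopy theory: under the $\Lambda$-condition,
$$\mathsf{QT}^{\mathsf{emb}}_{d,d}(Y; \mathbf{c}\langle \om \rangle; \mathbf{c}\langle \om \rangle) \;\approx\; [(Y, \d Y), (\cP_d^{\mathbf c \langle \om \rangle}, pt)]$$
(for $Y$ closed, the relative brackets $\d Y$ disappear). Thus it suffices to identify the homotopy type of $\cP_d^{\mathbf c \langle \om \rangle}$ and show that, for the listed $\om$, it is either contractible or homotopy equivalent to a single sphere $S^{k}$.

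Second, I would verify that $\cP_d^{\mathbf c \langle \om \rangle}$ is simply-connected whenever $|\om|' > 2$. Both elementary operations $\sM_j$ and $\sI_j$ raise the reduced norm by one, so every $\om' \in \langle \om \rangle$ satisfies $|\om'|' \geq |\om|' > 2$; in particular $\langle \om \rangle \subset \bfOm_{|\sim|' \geq 3}$. Since the stratum $\mathring \sR^{\om'}_d \subset \cP_d \cong \R^d$ has codimension $|\om'|' \geq 3$, a standard transversality argument on loops and their bounding disks in $\R^d$ shows $\pi_1(\cP_d^{\mathbf c \langle \om \rangle}) = 0$.

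Third, for each $\om$ with $|\om|' > 2$ and $d \leq 13$ I would compute $\tilde H^\ast(\cP_d^{\mathbf c \langle \om \rangle}; \Z)$ by Theorem \ref{thA}, i.e., as the homology of the finite quotient complex $(\Z[\langle \om \rangle^\#], \d^\#)$ from (\ref{eq.quotient_complex}). Because $d \leq 13$ and $|\om|' > 2$, the posets $\langle \om \rangle$ are small and the computation can be carried out explicitly (by hand, or better by computer). The outcome — to be recorded in the appendix table \eqref{big_list} — is that for each such $\om$ the (reduced) cohomology is either zero or free of rank one, concentrated in a single dimension $k = k(\om)$. The constraint $k \in [|\om|' - 1, d - 1]$ then follows from the Alexander duality isomorphism $\tilde H^k(\cP_d^{\mathbf c \langle \om \rangle}; \Z) \approx H_{d-k-1}(\bar \cP_d^{\langle \om \rangle}; \Z)$: the right-hand side can be nonzero only when $0 \leq d - k - 1 \leq \dim \bar \cP_d^{\langle \om \rangle} \leq d - |\om|'$.

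Fourth, I would combine these inputs. When $\tilde H^\ast(\cP_d^{\mathbf c \langle \om \rangle}; \Z) = 0$, Whitehead's theorem (applied to the simply-connected CW space $\cP_d^{\mathbf c \langle \om \rangle}$) forces contractibility, so $[(Y, \d Y), (\cP_d^{\mathbf c \langle \om \rangle}, pt)]$ is a singleton and the quasitopy is trivial. When the cohomology is a single $\Z$ in degree $k$, Hatcher's Theorem 4C.1 (a simply-connected CW complex with torsion-free homology concentrated in one dimension is a bouquet of spheres in that dimension) produces a homotopy equivalence $\cP_d^{\mathbf c \langle \om \rangle} \simeq S^k$. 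Hence
$$\mathsf{QT}^{\mathsf{emb}}_{d,d}(Y; \mathbf{c}\langle \om \rangle; \mathbf{c}\langle \om \rangle) \;\approx\; [(Y, \d Y), (S^k, pt)] \;=\; \pi^k(Y),$$
the $k$-th cohomotopy set of $Y$. The main obstacle is the case-by-case bookkeeping in Step 3: as $d$ approaches $13$ the poset $\bfOm_{\langle d]}$ grows rapidly and the anti-commuting differentials $\d_\sM$, $\d_\sI$ require careful sign tracking; the non-obvious empirical content of the theorem is that, in this range, the cohomology never splits into several free summands or acquires torsion (unlike what happens once $|\om|' \leq 2$ or $d$ is larger).
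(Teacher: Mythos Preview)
Your proposal is correct and follows essentially the same route as the paper's proof, which is a one-liner: ``The claim follows by combining Theorem \ref{th.E-reg} with the table (\ref{big_list}) in Appendix.'' You have simply unpacked what that combination entails --- the bijection from Corollary \ref{cor.E-regA}, the simple-connectivity for $|\om|' > 2$, and the passage from a single $\Z$ in cohomology to a homotopy sphere via Hatcher's Theorem 4C.1 (the latter argument is used explicitly elsewhere in the paper, e.g.\ in the proofs of Propositions \ref{prop.skeleta_and_groups} and \ref{prop.kq-skeleton}, but is left implicit here).
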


\begin{proof}The claim follows by combining Theorem \ref{th.E-reg} with the table (\ref{big_list}) in Appendix.
\end{proof}

\begin{example}\emph{Let $d =8$ and $\om = (4)$. Then, for any closed $Y$, using the list (\ref{big_list}) in Appendix, we get a bijection $\mathsf{QT}^{\mathsf{emb}}_{8,8}(Y; \mathbf{c}\langle(4) \rangle; \mathbf{c}\langle(4) \rangle) \approx \pi^4(Y)$, the $4$-cohomotopy set of $Y$. In particular, the we get the following group isomorphism: $$\mathsf{QT}^{\mathsf{emb}}_{8,8}(S^7; \mathbf{c}\langle(4) \rangle; \mathbf{c}\langle(4) \rangle) \approx \pi_7(S^4) \approx \Z \times \Z_{12}.$$} 
\emph{Let $d =12$ and $\om = (11213)$. Then, for any closed $Y$, by (\ref{big_list}), we get a bijection $$\mathsf{QT}^{\mathsf{emb}}_{12,12}(Y; \mathbf{c}\langle(11213) \rangle; \mathbf{c}\langle(11213) \rangle) \approx \pi^6(Y).$$ In particular, we get an isomorphism  $\mathsf{QT}^{\mathsf{emb}}_{12,12}(S^9; \mathbf{c}\langle(11213) \rangle; \mathbf{c}\langle(11213) \rangle) \approx \pi_9(S^6) \approx \Z_{24}$.} 

\hfill $\diamondsuit$
\end{example}

\subsection{Relation between quasitopies and inner framed cobordisms}

For a compact smooth $Y$ of dimension $n \geq k-1$, Proposition \ref{prop.kq-skeleton} below points to a peculiar  connection between the quasitopies of embeddings $\mathsf{QT}^{\mathsf{emb}}_{d, d}\big(Y; \mathbf{c}\Theta_{|\sim |' \geq k}^{(q)}; \mathbf{c}\Theta_{|\sim |' \geq k}^{(q)}\big)$ and the {\sf inner framed cobordisms} $\mathcal{FB}^{k-1}_\Xi(Y)$ of $Y$ (see the definition below). 

\begin{definition}\label{def.framed_cobordisms}
The cobordisms $\mathcal{FB}^{k-1}_\Xi(Y)$ are based on codimension $k-1$ smooth  
closed submanifolds  $Z$ of  $Y$ of the form $Z = \coprod_{\s=1}^{A(d, k, q)} Z_\s$. 
The normal bundle $\nu(Z, Y)$ is framed. The disjoint ``components" $\{Z_\s\}_\s$ of $Z$ are marked with different colors $\s$ from a {\sf pallet} $\Xi$ of cardinality $A(k, d, q)$. 

Two such submanifolds $Z_0, Z_1 \subset Y$ are said to be {\sf inner cobordant} if there exist codimension $k-1$ 
submanifold  $W = \coprod_{\s=1}^{A(d, k, q)} W_\s$ of $Y \times [0,1]$, such that \hfill \break $\d W =  (Z_0\times \{0\}) \cup (Z_1\times \{1\})$;  moreover, $\d W_\s = \big((Z_0)_\s \times \{0\}\big) \coprod \big((Z_1)_\s  \times \{1\}\big)$ for each color $\s \in \Xi$. 
The normal bundle $\nu := \nu(W, Y \times [0,1])$ is framed, the restrictions $\nu|_{Y\times \{0\}} \approx \nu(Z_0, Y)$, $\nu|_{Y\times \{1\}} \approx \nu(Z_1, Y)$, and the framing of $\nu$ extends the framings of $\nu(Z_0, Y)$ and $\nu(Z_1, Y)$. 
\hfill $\diamondsuit$
\end{definition}

As with quasitopies of immersions, for any pair $Y_1$, $Y_2$ of compact smooth manifolds with boundary and any choice of $\kappa_1 \in \pi_0(\d Y_1)$, $\kappa_2 \in \pi_0(\d Y_2)$, an operation 
\begin{eqnarray}\label{eq.uplus_for_framed}
\uplus_{\mathcal F}: \mathcal{FB}^{k-1}_\Xi(Y_1) \times \mathcal{FB}^{k-1}_\Xi(Y_2) \to \mathcal{FB}^{k-1}_\Xi(Y_1 \#_\d Y_2)
\end{eqnarray}
may be defined in the obvious way as $Z_1 \coprod Z_2 \subset Y_1 \#_\d Y_2$. This operation $\uplus_{\mathcal F}$ converts the inner framed cobordisms $\mathcal{FB}^{k-1}_\Xi(D^{n})$ into a 
  group. Choosing $\kappa \in \pi_0(\d Y)$, we get a group action 
\begin{eqnarray}\label{eq.uplus_action_framed}
 \uplus_{\mathcal F}: \mathcal{FB}^{k-1}_\Xi(D^n) \times \mathcal{FB}^{k-1}_\Xi(Y) \to \mathcal{FB}^{k-1}_\Xi(Y).
\end{eqnarray}

\begin{proposition}\label{prop.framed} Let a pallet $\Xi$ be of the cardinality $A(d, k, q)$. For a compact smooth manifold $Y$ of dimension $n \geq k-1$, where $k \in [3, d-1]$, there is a bijection 
$$Th: \mathcal{FB}^{k-1}_\Xi(Y) \approx \mathsf{QT}^{\mathsf{emb}}_{d, d}\big(Y; \mathbf{c}\Theta_{|\sim |' \geq k}^{(q)}; \mathbf{c}\Theta_{|\sim |' \geq k}^{(q)}\big),$$
delivered by the Thom construction on the trivialized normal bundle $\nu(Z, Y)$, where \hfill\break $[Z \hookrightarrow Y] \in \mathcal{FB}^{k-1}_\Xi(Y)$. \smallskip
The bijection $Th$ respects the operations $\uplus_{\mathcal F}$ and $\uplus$ in framed inner bordisms and in the quasitopies: 
\begin{eqnarray}\label{eq.equivariant_framed}
Th(Z_1 \uplus_{\mathcal F} Z_2) = Th(Z_1) \uplus Th(Z_2).
\end{eqnarray}
\end{proposition}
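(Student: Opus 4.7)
The plan is to chain two bijections --- the quasitopy/homotopy identification already provided by Proposition~\ref{prop.kq-skeleton}, and the classical coloured Pontryagin--Thom correspondence on the framed cobordism side --- and then identify the composite with the Thom construction on $\nu(Z,Y)$ stated in the proposition. Proposition~\ref{prop.kq-skeleton} yields
\[
\Phi^{\mathsf{emb}}:\; \mathsf{QT}^{\mathsf{emb}}_{d, d}\big(Y; \mathbf{c}\Theta_{|\sim |' \geq k}^{(q)}; \mathbf{c}\Theta_{|\sim |' \geq k}^{(q)}\big) \stackrel{\approx}{\longrightarrow} \big[(Y, \d Y),\, \big(\bigvee_{\s \in \Xi} S^{k-1}_\s,\, \star\big)\big],
\]
while the coloured Pontryagin--Thom collapse assigns to each framed $Z = \coprod_\s Z_\s \subset Y$ a map $t_Z : (Y, \d Y) \to (\bigvee_\s S^{k-1}_\s, \star)$ by collapsing the complement of a framed tubular neighborhood $\nu_\s \approx Z_\s \times D^{k-1}$ of each $Z_\s$ to the basepoint and projecting the fibres onto $S^{k-1}_\s \approx D^{k-1}/\d D^{k-1}$. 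Since $n \geq k-1$ and the colours are distinct, the standard framed cobordism argument makes $[Z] \mapsto [t_Z]$ a bijection onto the same target. Define $Th := (\Phi^{\mathsf{emb}})^{-1} \circ [t_\bullet]$; this is bijective by construction.

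To justify calling $Th$ the Thom construction on $\nu(Z, Y)$ in the geometric sense, I would realise it by explicit embeddings. For each colour $\s \in \Xi$, fix a representative $\phi_\s : (D^{k-1}, \d D^{k-1}) \to (\cP_d^{\mathbf c\Theta}, pt)$ of the $\s$-th sphere under the homotopy equivalence $\cP_d^{\mathbf c\Theta} \simeq \bigvee_\s S^{k-1}_\s$ of Proposition~\ref{prop.skeleton}, which may be arranged to meet the closed stratum $\sR_d^{\omega_\star^{(\s)}}$ transversally at the centre of $D^{k-1}$. Using the framing to identify $\nu_\s \approx Z_\s \times D^{k-1}$, define $f_Z : Y \to \cP_d^{\mathbf c\Theta}$ by $(z, v) \in Z_\s \times D^{k-1} \mapsto \phi_\s(v)$ and by sending the complement of $\bigcup_\s \nu_\s$ to the basepoint $pt$. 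Approximate $f_Z$ by a smooth $(\d \mathcal E_d)$-regular map in its relative homotopy class (Corollary~\ref{cor.E-reg}) and apply Lemma~\ref{lem.E-reg} to obtain an embedding $\b_Z : M \hookrightarrow \R \times Y$ whose classifying map $\Phi^{\b_Z}$ is homotopic rel $\d Y$ to $f_Z$, and hence to $t_Z$ under the homotopy equivalence. Thus $Th([Z]) = [\b_Z]$, exactly the Thom construction described in the statement.

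Equivariance \eqref{eq.equivariant_framed} then follows: by \eqref{eq_cup} and \eqref{eq.uplus_for_framed}, both $\uplus$ and $\uplus_{\mathcal F}$ are realised by juxtaposition across a $1$-handle attached to preferred boundary components, and both the Pontryagin--Thom collapse and the construction of $f_Z$ are additive under disjoint union of framed submanifolds; hence $\b_{Z_1 \uplus_\mathcal F Z_2}$ is isotopic within its quasitopy class to $\b_{Z_1} \uplus \b_{Z_2}$. The main obstacle is in the middle step: verifying that $\Phi^{\b_Z}$ and $t_Z$ represent the \emph{same} wedge element, not merely classes of the same formal degree. This amounts to matching the orientation of $\nu(Z_\s, Y)$ supplied by the framing with the coorientation of the stratum $\sR_d^{\omega_\star^{(\s)}} \subset \cP_d$; the combinatorial differential $\d^\#$ of \eqref{eq.d+d} together with the duality isomorphisms of Theorem~\ref{thA} encodes precisely this sign bookkeeping, so the verification reduces to a calculation in the complex $(\Z[\Theta^\#], \d^\#)$.
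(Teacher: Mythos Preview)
Your proposal is correct and follows essentially the same route as the paper: compose the coloured Pontryagin--Thom correspondence $[Z]\mapsto[t_Z]$ with the inverse of the quasitopy/homotopy bijection $\Phi^{\mathsf{emb}}$ (equivalently, with a fixed homotopy equivalence $\tau:\cP_d^{\mathbf c\Theta}\simeq\bigvee_\s S^{k-1}_\s$), and read off equivariance from additivity under boundary connected sum.

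The one place where you diverge is the ``main obstacle'' you flag at the end, and it is spurious. You construct $f_Z$ colour-by-colour as (a smooth $(\d\mathcal E_d)$-regular approximation of) $\tau^{-1}\circ t_Z$, and Lemma~\ref{lem.E-reg} gives $\Phi^{\b_Z}=f_Z$; hence $\tau\circ\Phi^{\b_Z}$ and $t_Z$ agree in $[(Y,\d Y),(\bigvee_\s S^{k-1}_\s,\star)]$ by construction, with no separate orientation check needed. The paper never touches the combinatorial complex $(\Z[\Theta^\#],\d^\#)$ or the coorientations of the strata $\sR_d^{\omega_\star^{(\s)}}$ here: it simply fixes $\tau$ once, runs the Thom collapse in one direction and the transversal preimage of regular values $b_\s\in S^{k-1}_\s$ in the other, and observes these are inverse to one another. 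Your extra stipulation that $\phi_\s$ meet a particular stratum transversally is harmless but unnecessary for the argument.
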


\begin{proof} Given a framed embedding $Z := \coprod_{\s=1}^{A(d, k, q)} Z_\s \hookrightarrow Y$ of a closed smooth $Z$ of codimension $k-1$, the Thom construction on $\nu(Z, Y)$ produces a continuous map $T_Z: Y \to \bigvee_{\s=1}^{A(d, k, q)} S^{k-1}_\s$ to the bouquet. Since $Z$ is closed,  $\d Y$ is mapped to the base point $\star$ of the bouquet. 
The homotopy class of $T_Z$ depends only on the inner cobordism class of $Z \subset Y$ in $\mathcal{FB}^{k-1}_\Xi(Y)$.  

Let us fix a homotopy equivalence $\tau: (\cP^{\mathbf{c}\Theta_{|\sim |' \geq k}^{(q)}}, pt) \sim (\bigvee_{\s=1}^{A(d, k, q)} S^{k-1}_\s, \star).$
By Corollary \ref{cor.E-regA}, the homotopy class of the map $\tilde T_Z := \tau^{-1} \circ T_Z$ determines the quasitopy class of some embedding $\b_{\tilde T_Z}: M \subset \R \times Y$, whose combinatorial tangency patterns to $\mathcal L$ belong to the poset $\mathbf{c}\Theta_{|\sim |' \geq k}^{(q)}$ and the tangency patterns of $\b_{\tilde T_Z}: \d M \to \R \times \d Y$ to the poset $\mathbf c\Lambda = (\emptyset), (1)$. Thus the map $Th : Z \leadsto \b_{\tilde T_Z}$ is well-defined. 
\smallskip

To show that $Th$ is onto, we take any embedding $\b: M \subset \R \times Y$ of a $n$-manifold $M$, whose tangency patterns reside in $\mathbf c\Theta_{|\sim |' \geq k}^{(q)}$ and such that the tangency patterns of $\b: \d M \subset \R \times \d Y$ reside in $\mathbf c\Lambda$ as above. We use $\b$ to produce a map $\Phi^\b: (Y, \d Y) \to (\cP^{\mathbf{c}\Theta_{|\sim |' \geq k}^{(q)}}, pt)$. Composing $\Phi^\b$ with the homotopy equivalence 
$\tau$, we get a map $\tilde\Phi^\b := \tau \circ \Phi^\b$ from $Y$ to the bouquet such that $\d Y$ is mapped to the base point $\star$ of the bouquet. For each sphere $S^{k-1}_\s$ from the bouquet, we fix a point $b_\s \in S^{k-1}_\s$, different from $\star$, and a $(k-1)$-frame at $b_\s$. Perturbing $\tilde\Phi^\b$ within its homotopy class, we may assume that $\tilde\Phi^\b$ is transversal to each $b_\s$. Then the closed framed submanifold $Z_\b := (\tilde\Phi^\b)^{-1}(\coprod_{\s \in \Xi} b_\s) \subset Y$ is an element of  $\mathcal{FB}^{k-1}_\Xi(Y)$. Moreover, for a given $\b$, the map $\tau^{-1} \circ Th(Z_\b)$ is in the homotopy class of $\Phi^\b$. 

The injectivity of $Th$ has a similar validation, using the $(n+1)$-dimensional cobordism $B: N \to \R \times Y \times [0,1]$ that bounds an embedding $\b: M \to \R \times Y$ which  is quasitopic to the trivial embedding. Again, we perturb $\tau \circ \Phi^B: Y \times [0,1] \to \bigvee_{\s=1}^{A(d, k, q)} S^{k-1}_\s$ to make it transversal to $\coprod_{\s \in \Xi} b_\s$, thus producing a framed cobordism $W$ in $Y \times [0,1]$ which bounds  $Z_\b$.
\smallskip

Finally, the validation of the property $Th(Z_1 \uplus_{\mathcal F} Z_2) = Th(Z_1) \uplus Th(Z_2)$ is on the level of definitions. 
\end{proof}

For $k > 2$ and an oriented $n$-dimensional $Y$, where $\d Y \neq \emptyset$, Proposition \ref{prop.framed} gives a limited insight into the actions of the (abelian when $n \geq 2$) group  $\mathsf{G}^{\mathsf{emb}}_{d,d}(n; \mathbf{c}\Theta_{|\sim |' \geq k}^{(q)}; \mathbf{c}\Theta_{|\sim |' \geq k}^{(q)})$ on the set $\mathsf{QT}^{\mathsf{emb}}_{d, d}\big(Y; \mathbf{c}\Theta_{|\sim |' \geq k}^{(q)}; \mathbf{c}\Theta_{|\sim |' \geq k}^{(q)}\big)$.\smallskip

Here is one useful observation: if $\b: M \hookrightarrow \R\times Y$ is represented, via $Th^{-1}$,  by a closed normally framed submanifold $Z_\b = \coprod_{\s \in \Xi} Z_{\b, \s} \subset Y$, then the fundamental homology classes $\{[Z_{\b,\s}] \in H_{n-k+1}(Y; \Z)\}_{\s \in \Xi}$ are \emph{constant} along the $\mathsf{G}^{\mathsf{emb}}_{d,d}(n; \mathbf{c}\Theta_{|\sim |' \geq k}^{(q)}; \mathbf{c}\Theta_{|\sim |' \geq k}^{(q)})$-orbit through $Th(Z_\b)$. Indeed, using the property $Th(Z_1 \uplus_{\mathcal F} Z_2) = Th(Z_1) \uplus Th(Z_2)$, all the changes of $Z_\b$, produced by the $\uplus_{\mathcal F}$-action, are incapsulated in a ball $B^n \subset Y$, and thus their fundamental classes are null-homologous in $Y$. 

For a pallet $\Xi$ of cardinality $A(d, k, q)$, consider the subset $H_{n-k+1}^{[\Xi \sim fr]}(Y; \Z)$ of the group $\big(H_{n-k+1}(Y; \Z)\big)^{A(d, k, q)}$ that is realized by $A(d, k, q)$ \emph{disjoint} distinctly colored and normally framed closed submanifolds $\{Z_\s\}_{\s \in \Xi}$ of $Y$,  $\dim Z_\s = n-k+1$. For $2k - 2 > n$, by a general position argument, $H_{n-k+1}^{[\Xi \sim fr]}(Y; \Z)$ is a \emph{subgroup} of $\big(H_{n-k+1}(Y; \Z)\big)^{A(d, k, q)}$.\smallskip

Using Proposition \ref{prop.framed} and tracing the correspondence $[\b] \leadsto Th^{-1}([\b]) \leadsto \{[Z_{\b,\s}]\}_\s$, validates instantly to the following claim.

\begin{proposition}\label{prop.framed_bordisms} Let $Y$ be a compact smooth oriented $n$-manifold $Y$, where $n \geq k-1$ and $k \in [3, d-1]$. Assume that $\d Y \neq \emptyset$. 
Then the orbit-space of the $\mathsf{G}^{\mathsf{emb}}_{d,d}(n; \mathbf{c}\Theta_{|\sim |' \geq k}^{(q)}; \mathbf{c}\Theta_{|\sim |' \geq k}^{(q)})$-action on the set $\mathsf{QT}^{\mathsf{emb}}_{d, d}\big(Y; \mathbf{c}\Theta_{|\sim |' \geq k}^{(q)}; \mathbf{c}\Theta_{|\sim |' \geq k}^{(q)}\big)$ admits a surjection on the set $H_{n-k+1}^{[\Xi \sim fr]}(Y; \Z)$. 

\hfill $\diamondsuit$
\end{proposition}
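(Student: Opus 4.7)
The plan is to transfer everything to the side of inner framed cobordisms via the bijection $Th$ of Proposition \ref{prop.framed}, and then produce the surjection by taking tuples of fundamental classes. Write $\Theta := \Theta_{|\sim|' \geq k}^{(q)}$ and $A := A(d,k,q)$ for brevity. By Proposition \ref{prop.framed}, applied both to $Y$ and to $D^n$, we have compatible bijections $Th_Y: \mathcal{FB}^{k-1}_\Xi(Y) \approx \mathsf{QT}^{\mathsf{emb}}_{d,d}(Y; \mathbf{c}\Theta; \mathbf{c}\Theta)$ and $Th_{D^n}: \mathcal{FB}^{k-1}_\Xi(D^n) \approx \mathsf{G}^{\mathsf{emb}}_{d,d}(n; \mathbf{c}\Theta; \mathbf{c}\Theta)$, and these satisfy the equivariance $Th_Y(Z \uplus_{\mathcal F} Z') = Th_Y(Z) \uplus Th_{D^n}(Z')$ of (\ref{eq.equivariant_framed}). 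So the $\mathsf G^{\mathsf{emb}}_{d,d}$-action on $\mathsf{QT}^{\mathsf{emb}}_{d,d}(Y;\ldots)$ is identified with the $\mathcal{FB}^{k-1}_\Xi(D^n)$-action on $\mathcal{FB}^{k-1}_\Xi(Y)$ via (\ref{eq.uplus_action_framed}), and it suffices to exhibit a surjection of orbit-sets
\[
\mathcal{FB}^{k-1}_\Xi(Y) \big/ \mathcal{FB}^{k-1}_\Xi(D^n) \;\twoheadrightarrow\; H_{n-k+1}^{[\Xi \sim fr]}(Y; \Z).
\]

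First I would define the candidate map $\Psi$ at the level of representatives. Given $[Z \hookrightarrow Y] \in \mathcal{FB}^{k-1}_\Xi(Y)$ with $Z = \coprod_{\s \in \Xi} Z_\s$, each $Z_\s$ is a closed oriented (by the framing together with the orientation of $Y$) $(n-k+1)$-submanifold of $Y$, and the components are disjoint. Send $[Z]$ to the tuple $\{[Z_\s] \in H_{n-k+1}(Y;\Z)\}_{\s \in \Xi}$. By the very definition of $H_{n-k+1}^{[\Xi \sim fr]}(Y;\Z)$ as the image of $\Xi$-colored disjoint framings, this tuple lies in that subset. Well-definedness on inner cobordism classes is immediate: an inner framed cobordism $W = \coprod_\s W_\s \subset Y \times [0,1]$ between $Z_0, Z_1$ has $\partial W_\s = (Z_{0,\s}\times\{0\}) \sqcup (Z_{1,\s}\times\{1\})$, and projecting $W_\s$ to $Y$ gives a $\Z$-chain whose boundary is $Z_{0,\s} - Z_{1,\s}$, so the homology classes coincide.

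Next I would check invariance of $\Psi$ under the $\mathcal{FB}^{k-1}_\Xi(D^n)$-action. By construction of $\uplus_{\mathcal F}$ in (\ref{eq.uplus_for_framed})--(\ref{eq.uplus_action_framed}), the action $Z \mapsto Z \uplus_{\mathcal F} Z'$ modifies $Z$ only inside a small disc $B^n \subset Y$ that has been attached as a handle to some boundary component of $Y$. All of $Z'$ lies in $B^n$, which is contractible in $Y$, so each $Z'_\s$ is null-homologous in $Y$. Hence $[Z_\s \cup Z'_\s] = [Z_\s] \in H_{n-k+1}(Y;\Z)$, and $\Psi$ descends to the orbit-set.

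Finally, for surjectivity: given any tuple of classes in $H_{n-k+1}^{[\Xi \sim fr]}(Y;\Z)$, it is by definition realized by a collection of pairwise disjoint, normally framed, distinctly colored closed submanifolds $\{Z_\s\}_{\s \in \Xi}$ of $Y$. Their disjoint union $Z = \coprod_\s Z_\s$ is an element of $\mathcal{FB}^{k-1}_\Xi(Y)$ mapping to the prescribed tuple, which closes the argument. The only point that deserves caution is that the normal bundles must be trivialized in a coherent way to fit the framing data required by Definition \ref{def.framed_cobordisms}; this is automatic from the stated realizability by $\Xi$-colored framed submanifolds. The main conceptual obstacle is just the translation via $Th$ and the observation that $\uplus_{\mathcal F}$ is ``local'' in a ball, hence acts trivially on fundamental classes; beyond that, the proof is essentially a diagram chase.
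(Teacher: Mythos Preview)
Your proposal is correct and follows essentially the same approach as the paper: translate via the bijection $Th$ of Proposition~\ref{prop.framed} to inner framed cobordisms, send a colored framed submanifold to its tuple of fundamental classes, observe that the $\uplus_{\mathcal F}$-action is supported in a ball and hence homologically trivial, and read off surjectivity from the definition of $H_{n-k+1}^{[\Xi \sim fr]}(Y;\Z)$. The paper states this argument in the paragraph immediately preceding the proposition and then simply refers back to it; your version fills in the details carefully but adds no new ideas.
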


\begin{example}\label{ex.Sigma10} \emph{Take $k=3$,  $d = 6$, $q = 0$, and $n = 3$. Then $A(6, 3, 0) = 10$ and $\#\Xi= 10$. Thus we get a homotopy equivalence $\tau: \cP_6^{\mathbf{c}\Theta_{|\sim |' \geq 3}} \sim \bigvee_{\s=1}^{10} S^2_\s$. For a compact oriented $3$-manifold $Y$, any normally framed submanifold $Z$ acquires an orientation. Using the isomorphism 
$$Th: \mathcal{FB}^2_\Xi(Y) \approx \mathsf{QT}^{\mathsf{emb}}_{6, 6}\big(Y; \mathbf{c}\Theta_{|\sim |' \geq 3}; \mathbf{c}\Theta_{|\sim |' \geq 3}\big)$$ from Proposition \ref{prop.framed}, any collection of at most ten disjoint framed closed $1$-dimensional submanifolds $Z_1, \ldots Z_{10} \subset Y$ (that is, any \emph{framed link} $Z \subset Y$, colored with $10$ distinct colors at most) produces a quasitopy class of an embedding $\b: M^3 \to \R \times Y$, where $M$ is closed. Its tangency patterns $\om$, with the exception of $(\emptyset)$, have only entries from the list $\{1, 2, 3\}$ so that no more than one $3$ and no more than two $2$'s are present  in $\om$, while $|\om| \leq 6$.} 
\smallskip

\emph{The orbit-space of the $\mathsf{G}^{\mathsf{emb}}_{6,6}(3; \mathbf{c}\Theta_{|\sim |' \geq 3}; \mathbf{c}\Theta_{|\sim |' \geq 3})$-action on $\mathsf{QT}^{\mathsf{emb}}_{6, 6}\big(Y; \mathbf{c}\Theta_{|\sim |' \geq 3}; \mathbf{c}\Theta_{|\sim |' \geq 3}\big)$ admits a surjection onto the group $(H_1(Y; \Z))^{10}$. For example, for $Y = T^3 \setminus \textup{int}(D^3)$, where $T^3$ is the $3$-torus, the orbit-space is mapped onto the lattice $\Z^{30}$.
\hfill $\diamondsuit$}
\end{example}

\begin{corollary} We adopt the hypotheses of Proposition \ref{prop.kq-skeleton}. 
\begin{itemize}
\item For $\dim Y = k- 1$ and any choice of $\kappa \in \pi_0(\d Y)$,  the group $\mathsf{G}^{\mathsf{emb}}_{d,d}(k-1; \mathbf{c}\Theta_{|\sim |' \geq k}^{(q)};\hfill\break \mathbf{c}\Theta_{|\sim |' \geq k}^{(q)})$ acts freely and transitively on the set  $\mathsf{QT}^{\mathsf{emb}}_{d, d}\big(Y; \mathbf{c}\Theta_{|\sim |' \geq k}^{(q)}; \mathbf{c}\Theta_{|\sim |' \geq k}^{(q)}\big)$. Thus both sets are in a $1$-to-$1$ correspondence.  \smallskip
 
\item For a simply-connected $Y$, $\dim Y = k > 4$, the group $\mathsf{G}^{\mathsf{emb}}_{d,d}\big(k; \mathbf{c}\Theta_{|\sim |' \geq k}^{(q)}; \mathbf{c}\Theta_{|\sim |' \geq k}^{(q)}\big)$ acts freely and transitively on the set  $\mathsf{QT}^{\mathsf{emb}}_{d, d}\big(Y; \mathbf{c}\Theta_{|\sim |' \geq k}^{(q)}; \mathbf{c}\Theta_{|\sim |' \geq k}^{(q)}\big)$. Again, both sets are in a $1$-to-$1$ correspondence.  
\end{itemize}
\end{corollary}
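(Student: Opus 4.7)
The plan is to reduce both assertions, via the Grassmanian identification of Proposition \ref{prop.kq-skeleton}, to a question about actions on sets of homotopy classes of maps into a wedge of $(k-1)$-spheres, and then apply classical obstruction theory. Writing $W_N := \bigvee_{i=1}^{N} S^{k-1}_i$ with $N = A(d, k, q)$, Proposition \ref{prop.kq-skeleton} gives a homotopy equivalence $\cP_d^{\mathbf c\Theta_{|\sim|' \geq k}^{(q)}} \simeq W_N$. Combining Theorem \ref{th.E-reg}, Corollary \ref{D^n_A}, and Proposition \ref{prop_(D, dD)}, I would obtain bijections $\mathsf{QT}^{\mathsf{emb}}_{d,d}(Y; \mathbf c\Theta; \mathbf c\Theta) \approx [(Y, \d Y), (W_N, \star)]$ and $\mathsf G^{\mathsf{emb}}_{d,d}(n; \mathbf c\Theta; \mathbf c\Theta) \approx \pi_n(W_N, \star)$ that intertwine the $\uplus$-action at the boundary component $\kappa$ with the standard action of $\pi_n(W_N)$ on $[(Y, \d Y), (W_N, \star)]$ by boundary connected sum supported in a small collar of $\d_\kappa Y$. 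The task thus reduces to proving that this latter action is free and transitive in each case.

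For Case 1, where $\dim Y = k-1$, the compact manifold $Y$ with boundary has the homotopy type of a CW pair of dimension at most $k-1$. Since $W_N$ is $(k-2)$-connected with $\pi_{k-1}(W_N) = \Z^N$, Hopf's classification theorem yields $[(Y, \d Y), (W_N, \star)] \approx H^{k-1}(Y, \d Y; \Z^N)$. For $Y$ connected and orientable (which is the pertinent case—see Corollary \ref{cor.homotopy_type}), Poincar\'e--Lefschetz duality identifies $H^{k-1}(Y, \d Y; \Z) \approx H_0(Y; \Z) = \Z$. Under these identifications, the $\#_\kappa$-action factors as $\pi_{k-1}(W_N) \xrightarrow{\sim} H^{k-1}(D^{k-1}, \d D^{k-1}; \Z^N) \to H^{k-1}(Y, \d Y; \Z^N)$, where the first arrow is the Hurewicz/Hopf isomorphism on the disk and the second is induced by inclusion of a small closed $(k-1)$-disk near $\d_\kappa Y$. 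Both maps are isomorphisms, so the action is simultaneously free and transitive.

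For Case 2, $Y$ is a simply-connected compact $k$-manifold with boundary and hence orientable. Lefschetz duality gives $H^{k-1}(Y, \d Y; \Z) \approx H_1(Y; \Z) = 0$, so the primary obstruction lying in $H^{k-1}(Y, \d Y; \pi_{k-1}(W_N))$ vanishes; every map $(Y, \d Y) \to (W_N, \star)$ becomes null-homotopic after composition with the Postnikov projection $W_N \to K(\Z^N, k-1)$. Next, I would invoke Hilton's theorem: the condition $k > 4$ ensures $2k - 3 > k$, so all higher-order basic Whitehead products lie in dimensions strictly above $k$, whence $\pi_k(W_N) = \bigoplus_{i=1}^N \pi_k(S^{k-1}) = (\Z/2)^N$, using that $\pi_k(S^{k-1}) = \Z/2$ is in the stable range. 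The $k$-invariant of the second Postnikov stage pulls back trivially along the null map to $K(\Z^N, k-1)$, so the homotopy classes in question are classified by $H^k(Y, \d Y; (\Z/2)^N) \approx (\Z/2)^N$, the last isomorphism coming from $H^k(Y, \d Y; \Z) \approx H_0(Y; \Z) = \Z$. Again the $\#_\kappa$-action is transported to the inclusion-of-disk isomorphism, and is therefore free and transitive.

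The hard part will be the Postnikov bookkeeping in Case 2: one must verify that the set of homotopy classes of lifts of the trivial map $Y/\d Y \to K(\Z^N, k-1)$ to the second Postnikov stage is not merely a quotient of $H^k(Y, \d Y; (\Z/2)^N)$ but a genuine torsor under it, and that this torsor structure coincides with the geometric $\pi_k(W_N)$-action by boundary connected sum. The clean way to handle this is to exploit that the action of $[Y/\d Y, \Omega K(\Z^N, k-1)] = H^{k-2}(Y, \d Y; \Z^N)$ on lifts is itself trivial (by another appeal to the vanishing of $H^{k-1}(Y, \d Y; \Z) = 0$ and hence $H^{k-2}(Y, \d Y; \Z) = H_2(Y; \Z)$ acting through the appropriate Postnikov invariant), so that the torsor structure is unobstructed. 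The hypothesis $k > 4$ is used precisely to stabilize $\pi_k(S^{k-1}) = \Z/2$, to kill the Hilton Whitehead contributions, and to avoid the low-dimensional pathologies of simply-connected $4$-manifolds when carrying out this lift analysis.
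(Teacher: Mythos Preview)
Your approach via obstruction theory differs from the paper's, which works geometrically through the framed-cobordism bijection $Th$ of Proposition~\ref{prop.framed}: the paper shows that every $\Xi$-colored framed $0$-manifold (Case~1) or framed link (Case~2) in $Y$ can be absorbed into a half-ball meeting $\partial Y$, hence lies in the $\uplus_{\mathcal F}$-orbit of the empty submanifold. This establishes transitivity; the paper does not argue freeness separately. Your Case~1 argument via the Hopf classification is correct for orientable $Y$ and cleanly supplies both transitivity and freeness (the collapse $Y/\partial Y\to S^{k-1}$ has degree $\pm 1$, so the $\#_\kappa$-action is translation on $\mathbb Z^N$).

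In Case~2 the ``hard part'' you flag is a genuine gap that your sketch does not close, and in fact cannot be closed: the action of $H^{k-2}(Y,\partial Y;\mathbb Z)$ on lifts to the second Postnikov stage of $S^{k-1}$ is via $Sq^2\circ\rho_2$ (the looped $k$-invariant), which need not vanish. Indeed the freeness assertion is false as stated. Take $k=5$ and $Y=\mathbb{CP}^2\times[0,1]$, so that $Y/\partial Y\simeq\Sigma\mathbb{CP}^2$; from the Puppe sequence of $S^4\xrightarrow{\Sigma\eta}S^3\to\Sigma\mathbb{CP}^2$ one computes $[\Sigma\mathbb{CP}^2,S^4]=0$ (since $(\Sigma^2\eta)^*:\pi_4(S^4)\to\pi_5(S^4)$ is onto), while $\pi_5(S^4)=\mathbb Z/2$. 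Thus $\pi_5(W_N)=(\mathbb Z/2)^N$ acts on the one-point set $[(Y,\partial Y),(W_N,\star)]$: transitive but not free. Geometrically, a circle in a small $D^5\subset Y$ carrying the twisted framing (the generator of $\pi_5(S^4)$) becomes framed-null-cobordant in $Y$ because it bounds a hemisphere of $\mathbb{CP}^1\times\{\tfrac12\}$, whose normal bundle has odd Euler number---yet it is not framed-null-cobordant in $D^5$. So neither your Postnikov bookkeeping nor the paper's ball-absorption argument can deliver freeness in the second bullet; only transitivity survives.
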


\begin{proof} By Proposition \ref{prop.framed_bordisms}, there exists a bijection $$Th: \mathcal{FB}^{k-1}_\Xi(Y) \to \mathsf{QT}^{\mathsf{emb}}_{d, d}\big(Y; \mathbf{c}\Theta_{|\sim |' \geq k}^{(q)};\hfill\break \mathbf{c}\Theta_{|\sim |' \geq k}^{(q)}\big).$$ When $\dim Y = k-1$, the elements of $\mathcal{FB}^{k-1}_\Xi(Y)$ are represented by  finite sets $Z$ of framed oriented singletons, marked with $\#\Xi$ colors at most. (We may assume that no two singletons of the same color have opposite orientations.) Evidently, any such finite set $Z$ can be incapsulated into a standard smooth ball $D^{k-1} \subset Y$ such that $\d D^{k-1} \cap \d Y = D^{k-2}$, a standard $(k-2)$-ball. Therefore $Z$ belongs to  the $\mathcal{FB}^{k-1}_\Xi(D^{k-1})$-orbit of the empty collection $Z_\emptyset$. By (\ref{eq.equivariant_framed}), $Th(Z)$ belongs to the $\mathsf{G}^{\mathsf{emb}}_{d,d}(k-1; \mathbf{c}\Theta_{|\sim |' \geq k}^{(q)};\mathbf{c}\Theta_{|\sim |' \geq k}^{(q)})$-orbit of  the trivial embedding (the empty one for $d\equiv 0 \mod 2$ and the embedding $\b_\star: Y \to \{0\} \times Y \subset \R \times Y$ for $d \equiv 1 \mod 2$). This validates the first claim.\smallskip

For a simply-connected $Y$, $\dim Y = k > 4$, again by Proposition \ref{prop.framed_bordisms}, any element of $\mathsf{QT}^{\mathsf{emb}}_{d, d}\big(Y; \mathbf{c}\Theta_{|\sim |' \geq k}^{(q)}; \mathbf{c}\Theta_{|\sim |' \geq k}^{(q)}\big)$ is represented by a framed collection $Z$ of disjoint loops in $Y$, colored with $\#\Xi$ colors at most.  Using that $\pi_1(Y) =1$ and $\dim Y = k > 4$, we can bound each loop $Z_\s$ by a regularly embedded smooth $2$-disk $W_\s \subset \textup{int}(Y)$ so that all $W_\s$'s are disjoint. Therefore each loop $Z_\s$ is contained in a standard ball $D^k_\s$, the regular neighborhood of $W_\s$. By performing $1$-surgeries on $\coprod_\s D^k_\s$, we construct a $k$-ball $\tilde B^k$ that contains all $Z_\s$'s.  Adding a $1$-handle $H \subset Y \setminus \tilde B^k$ that connects $\d \tilde B^k$ with $\d Y$, we exhibit a ball $B^k := \tilde B^k \cup H$ which contains all the loops and such that $\d B^k \cap \d Y$ is a $(k-1)$-dimensional ball. Therefore $Z$ is, once more, in the orbit of the empty collection $Z_\emptyset$ of loops. By (\ref{eq.equivariant_framed}), $Th(Z)$ belongs to the $\mathsf{G}^{\mathsf{emb}}_{d,d}(k; \mathbf{c}\Theta_{|\sim |' \geq k}^{(q)};\mathbf{c}\Theta_{|\sim |' \geq k}^{(q)})$-orbit of the trivial embedding.

In fact, this conclusion is also valid for $3$-manifolds $Y$ with a \emph{spherical boundary}, but its validation is not as direct as for the case $k > 4$.  If $\pi_1(Y) =1$, 
 by the Perelman's solution of the Poincar\'{e} Conjecture \cite{P1}-\cite{P3}, we conclude that $Y$ is the  standard ball $D^3$. Thus any $\Xi$-colored and framed link $Z \subset Y$ is obviously contained in $D^3$. 
 
 As usual, the case $k = 4$ seems to be challenging. 
\end{proof}

\subsection{More about the stabilization of $\mathcal{QT}^{\mathsf{emb}}_{d, d'}(Y, \d Y; \mathbf{c}\Theta, \mathbf{c}\Lambda; \mathbf{c}\Theta')$ by the degrees $d, d'$}

We start with the embeddings of $1$-manifolds in the product $\R \times D^1$ and the forbidden poset $\Theta$ that consists of $\om$'s with the reduced norms $\geq 2$.

\begin{proposition} Let $d' \geq d+2$, $d' \equiv d \mod 2$. For a closed subposet 
$\Theta \subset \mathbf\Om_{\langle d],\, |\sim|' \geq 2}$, let $\hat\Theta_{\{d'\}}$ be the smallest subposet in $\mathbf\Om_{\langle d']}$ that contains $\Theta$. 
Then we get a stabilization by $d'$:
$$\mathsf{QT}^{\mathsf{emb}}_{d+2,\,d'}(D^1; \mathbf{c}\hat\Theta_{\{d+2\}}; \mathbf{c}\hat\Theta_{\{d'\}}) \approx \pi_1(\mathcal P_{d+2}^{\mathbf c \hat \Theta_{\{d+2\}}}, pt).$$
\end{proposition}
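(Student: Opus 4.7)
The plan is to combine Corollary \ref{D^n_A} (the $Y=D^n$ case of the main classification theorem for embeddings) with the fundamental-group stabilization result Proposition \ref{prop.fundamental_groups} quoted from \cite{KSW1}. The target $D^1$ is the standard $1$-ball, so the $\Lambda$-condition (\ref{special}) is automatically in force through the notation $\mathsf{QT}^{\mathsf{emb}}_{d+2,\,d'}(D^1;\cdot)$ (here $\mathsf{QT} = \mathsf{G}$ by Definition \ref{def.notations}, since $Y = D^1$). Applying Corollary \ref{D^n_A} with $n=1$ and posets $(\mathbf{c}\hat\Theta_{\{d+2\}}, \mathbf{c}\hat\Theta_{\{d'\}})$ yields a group isomorphism
\[
\mathsf{QT}^{\mathsf{emb}}_{d+2,\,d'}(D^1; \mathbf{c}\hat\Theta_{\{d+2\}}; \mathbf{c}\hat\Theta_{\{d'\}}) \;\approx\; \pi_1\bigl(\cP_{d+2}^{\mathbf{c}\hat\Theta_{\{d+2\}}}, pt\bigr)\Big/ \ker\bigl((\e_{d+2,d'})_\ast\bigr),
\]
where $(\e_{d+2,d'})_\ast\colon \pi_1(\cP_{d+2}^{\mathbf{c}\hat\Theta_{\{d+2\}}}) \to \pi_1(\cP_{d'}^{\mathbf{c}\hat\Theta_{\{d'\}}})$ is induced by the stabilizing embedding from (\ref{eq.stable}).

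Next I would verify that the hypothesis of Proposition \ref{prop.fundamental_groups} is met. Since $\Theta \subset \mathbf{\Omega}_{\langle d],\,|\sim|'\geq 2}$, the smallest closed subposet $\hat\Theta_{\{d+2\}} \subset \mathbf{\Omega}_{\langle d+2]}$ containing $\Theta$ is still contained in $\mathbf{\Omega}_{\langle d+2],\,|\sim|'\geq 2}$: the two elementary operations $\mathsf{M}_j$ and $\mathsf{I}_j$ only preserve or increase the reduced norm $|\om|'$, so every $\om$ adjoined to $\Theta$ when forming its closure has $|\om|' \geq 2$. Consequently Proposition \ref{prop.fundamental_groups} (with $\hat\Theta_{\{d+2\}}$ in the role of $\Theta$) applies and gives that the stabilization map induces an isomorphism $\pi_1(\cP_{d+2}^{\mathbf{c}\hat\Theta_{\{d+2\}}}) \cong \pi_1(\cP_{d'}^{\mathbf{c}\hat\Theta_{\{d'\}}})$; this isomorphism is precisely $(\e_{d+2,d'})_\ast$, being the one induced by the inclusion chain $\hat\Theta_{\{d+2\}} \subset \hat\Theta_{\{d'\}}$ via $\e$.

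Combining the two steps: $\ker\bigl((\e_{d+2,d'})_\ast\bigr) = 1$, so the quotient collapses and we obtain
\[
\mathsf{QT}^{\mathsf{emb}}_{d+2,\,d'}(D^1; \mathbf{c}\hat\Theta_{\{d+2\}}; \mathbf{c}\hat\Theta_{\{d'\}}) \;\approx\; \pi_1\bigl(\cP_{d+2}^{\mathbf{c}\hat\Theta_{\{d+2\}}}, pt\bigr),
\]
as claimed. The only non-formal input is Proposition \ref{prop.fundamental_groups}; the only obstacle in assembling the proof is the bookkeeping check that $\hat\Theta_{\{d+2\}}$ still lies in the $|\sim|'\geq 2$ range, which as noted follows immediately from the monotonicity of $|\sim|'$ under $\mathsf{M}_j$ and $\mathsf{I}_j$. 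Everything else is an instance of Corollary \ref{D^n_A} applied to the lowest-dimensional ball $D^1$.
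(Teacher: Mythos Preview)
Your proof is correct and follows essentially the same route as the paper's own argument. The paper cites Theorem \ref{th.E-reg} together with Proposition \ref{prop.fundamental_groups} (which it calls a Theorem there), while you invoke the specialized Corollary \ref{D^n_A} (itself a consequence of Theorem \ref{th.E-reg}) together with the same stabilization result; the logical content is identical. Your extra bookkeeping check that $\hat\Theta_{\{d+2\}}$ remains in the $|\sim|'\geq 2$ range is a nice touch, though strictly speaking Proposition \ref{prop.fundamental_groups} as stated already applies directly to the original $\Theta\subset\mathbf\Om_{\langle d],\,|\sim|'\geq 2}$, yielding the isomorphism $\pi_1(\cP_{d+2}^{\mathbf c\hat\Theta_{\{d+2\}}})\cong\pi_1(\cP_{d'}^{\mathbf c\hat\Theta_{\{d'\}}})$ without the need to re-base at $d+2$.
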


\begin{proof} The claim follows from Theorem \ref{th.E-reg}  by combining it with Theorem \ref{prop.fundamental_groups} (see also Theorem 2.15 from \cite{KSW1}), which claims that $(\e_{d+2, d'})_\ast: \pi_1(\cP_{d+2}^{\mathbf c\hat\Theta_{\{d+2\}}}) \approx \pi_1(\cP_{d'}^{\mathbf c\Theta_{\{d'\}}})$ is an isomorphism, induced by the embedding $\e_{d+2, d'}$ of the two spaces. See \cite{KSW1} for an explicit description of presentations of such fundamental groups $\pi_1(\mathcal P_{d+2}^{\mathbf c \hat\Theta_{\{d+2\}}})$. 
\end{proof}


\begin{theorem}\label{th.stable_embeddings} Let $\Theta$  be a closed poset such that $|\om|' \geq 3$ for any $\om \in \Theta$. Let $d' \geq d$ and $d' \equiv d \mod 2$.
Assume the $\Lambda$-condition (\ref{special}). 

For any compact smooth $Y$ such that $\dim Y  \leq d+2 - \psi_\Theta(d+2)$
 (see(\ref{eq.eta(Theta)})) we get  the classifying map 
 \begin{eqnarray}\label{stable, no 1, 2} 
 \Phi_{d, d+2}^{\mathsf{i/e}}(Y; \mathbf{c}\Theta; \mathbf{c}\Theta):\; \mathsf{QT}^{\mathsf{i/e}}_{d, d+2}(Y; \mathbf{c}\Theta; \mathbf{c}\Theta) \to [(Y, \d Y), (\cP_{d}^{\mathbf c\Theta}, pt)], 
\end{eqnarray}
which is bijective for embeddings and surjective for immersions. \smallskip 

For any profinite (in the sense of Definition \ref{def.profinite}) 
poset $\Theta$ and any  $Y$, the set $\mathsf{QT}^{\mathsf{emb}}_{d, d'}(Y; \mathbf{c}\Theta; \hfill\break \mathbf{c}\Theta)$  stabilizes  for all sufficiently big $d$.
\end{theorem}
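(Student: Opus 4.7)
The plan is to reduce the statement to a purely homotopy-theoretic stabilization question about $\pi_\ast(\cP_d^{\mathbf c\Theta}) \to \pi_\ast(\cP_{d+2}^{\mathbf c\Theta})$, and then feed it the homological stabilization already supplied by Theorem \ref{th.main_stab}.

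First, by Theorem \ref{th.E-reg} together with Corollary \ref{cor_cL_empty}, the classifying map $\Phi_{d,d+2}^{\mathsf{emb}}$ factors as a bijection $\mathsf{QT}^{\mathsf{emb}}_{d,d+2}(Y;\mathbf c\Theta;\mathbf c\Theta) \to \bigl[\bigl[(Y,\d Y),\e_{d,d+2}\colon(\cP_d^{\mathbf c\Theta},pt)\to(\cP_{d+2}^{\mathbf c\Theta},pt')\bigr]\bigr]$ followed by the tautological surjection from the latter onto $[(Y,\d Y),(\cP_d^{\mathbf c\Theta},pt)]$. By the very definition of the double-bracket set, the surjection becomes a bijection exactly when $(\e_{d,d+2})_\ast\colon [(Y,\d Y),(\cP_d^{\mathbf c\Theta},pt)]\to[(Y,\d Y),(\cP_{d+2}^{\mathbf c\Theta},pt')]$ is injective on homotopy classes. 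The immersion half of the theorem follows because $\Phi^{\mathsf{imm}}$ factors through $\Phi^{\mathsf{emb}}$ via the resolution map of Proposition \ref{prop.right_inverse}.

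To prove the injectivity, I would proceed by homotopy-theoretic stabilization. The hypothesis $|\om|' \geq 3$ for every $\om\in\Theta$ means $\bar\cP_d^\Theta$ sits inside the $(d-3)$-skeleton of $\bar\cP_d\cong S^d$; Alexander duality (Theorem \ref{thA}) immediately gives $\tilde H^0=H^1(\cP_d^{\mathbf c\Theta};\Z)=0$, and the inclusion $\cP_d^{\mathbf c\,\bfOm_{|\sim|'\geq 3}}\hookrightarrow \cP_d^{\mathbf c\Theta}$ combined with Proposition \ref{prop.kq-skeleton} (which gives a $2$-sphere bouquet on the smaller space) plus a Van Kampen argument yields $\pi_1(\cP_d^{\mathbf c\Theta})=1$. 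Applying Theorem \ref{th.main_stab} to the simply connected pair $(\cP_d^{\mathbf c\Theta},\cP_{d+2}^{\mathbf c\Theta})$, the mapping cylinder has vanishing relative homology through degree $M:=d+2-\psi_\Theta(d+2)$, and the relative Hurewicz theorem then forces $\pi_j(\cP_{d+2}^{\mathbf c\Theta},\cP_d^{\mathbf c\Theta})=0$ for $j\leq M$. Equivalently, the homotopy fiber of $\e_{d,d+2}$ is $(M-1)$-connected, i.e.\ $\e_{d,d+2}$ is an $M$-equivalence. With this connectivity in hand, standard obstruction theory applied to the CW pair $(Y\times I,Y\times \d I\cup\d Y\times I)$ of relative dimension $\dim Y+1$ shows that lifting a relative homotopy from $\cP_{d+2}^{\mathbf c\Theta}$ back to $\cP_d^{\mathbf c\Theta}$ encounters only obstructions in $H^{k+1}(\cdot;\pi_k(\mathrm{fib}))$, all of which vanish for $\dim Y\leq M$. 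I expect the main technical nuisance to be precisely this off-by-one dimensional book-keeping in the obstruction argument; keeping it aligned with the stated bound will require using the explicit form $\e_{d,d+2}(P)=(u^2+1)\cdot P$ together with the parity restriction, rather than only the abstract $M$-equivalence.

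For the second half, profinite stabilization, Definition \ref{def.profinite} guarantees that for each $q$ only finitely many $\om \in \Theta$ have $|\om|'\leq q$, so the finite set of maximal elements of $\Theta_{\langle d]}$ stabilizes as $d\to\infty$; in particular $\eta_\Theta(d)$ is eventually constant, whence $d+2-\psi_\Theta(d+2)=\tfrac{1}{2}(d+2-\eta_\Theta(d+2))\to\infty$. Thus, for any fixed compact $Y$, the dimensional hypothesis $\dim Y\leq d+2-\psi_\Theta(d+2)$ is satisfied for all sufficiently large $d$. The first part then identifies $\mathsf{QT}^{\mathsf{emb}}_{d,d'}(Y;\mathbf c\Theta;\mathbf c\Theta)$ with $[(Y,\d Y),(\cP_d^{\mathbf c\Theta},pt)]$, and the latter stabilizes to $[(Y,\d Y),(\cP_\infty^{\mathbf c\Theta},pt)]$ by Corollary \ref{cor.stable_homology} plus the Hurewicz/obstruction argument of the preceding paragraph, applied in the limit.
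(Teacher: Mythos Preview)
Your approach is essentially the same as the paper's: reduce via Theorem~\ref{th.E-reg} to showing that the double-bracket set coincides with $[(Y,\d Y),(\cP_d^{\mathbf c\Theta},pt)]$, use $|\om|'\geq 3$ for simple connectivity, feed Theorem~\ref{th.main_stab} into the Whitehead/relative Hurewicz theorem to upgrade homology stabilization to homotopy stabilization in the range $\leq d+2-\psi_\Theta(d+2)$, and finish with obstruction theory. The paper does exactly this, citing Whitehead's Theorem (7.13) and, for the profinite part, Lemma 4.3 of \cite{KSW2} for $d-\psi_\Theta(d)\to\infty$.

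Two small corrections. First, you have the direction of the ``tautological surjection'' backwards: by Definition~\ref{triples} the double-bracket set is a \emph{quotient} of $[(Y,\d Y),(\cP_d^{\mathbf c\Theta},pt)]$, so the natural map runs from the single-bracket set onto the double-bracket one; what you need (and correctly identify) is that this quotient map is injective, i.e., that $(\e_{d,d+2})_\ast$ is injective on relative homotopy classes. Second, your justification in the profinite paragraph over-claims: the set of maximal elements of $\Theta_{\langle d]}$ need not literally stabilize, nor need $\eta_\Theta(d)$ become eventually constant, for a profinite but not finitely generated $\Theta$. What is true (and what the paper invokes from \cite{KSW2}) is the weaker fact $d-\psi_\Theta(d)\to\infty$. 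Finally, the explicit form $\e_{d,d+2}(P)=(u^2+1)P$ plays no role in the obstruction-theory bookkeeping; the abstract $M$-equivalence suffices.
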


\begin{proof}The argument is a bit similar to the one we used in the proof of Proposition \ref{prop.skeleta_and_groups}.

Under the hypotheses, by Theorem \ref{th.main_stab}, the homomorphism $(\e_{d, d'})_\ast: H_j(\mathcal P_d^{\mathbf c\Theta}; \Z) \to H_j(\mathcal P_{d+2}^{\mathbf c\Theta}; \Z)$ is an isomorphism for all $j  < d+2 - \psi_\Theta(d+2)$. Since $|\om|' \geq 3$ for any $\om \in \Theta$, all the spaces $\{\cP_{d'}^{\mathbf c\Theta}\}_{d' \geq d}$ are simply connected. Hence,  $\e_{d, d+2}$ induces an isomorphism in the integral $j$-homology  
of $\mathcal P_{d}^{\mathbf c\Theta}$ and of $\mathcal P_{d+2}^{\mathbf c\Theta}$ for all $j  < d+2 - \psi_\Theta(d+2)$. By the Whitehead Theorem (the inverse Hurewicz Theorem) (see Theorem (7.13) in \cite{Wh}), $\e_{d, d+2}$ induces isomorphisms of the homotopy groups of $\mathcal P_{d}^{\mathbf c\Theta}$ and of $\mathcal P_{d+2}^{\mathbf c\Theta}$ in dimensions $\leq d +2 -\psi_\Theta(d+2)$. 
By standard application of the obstruction theory, if $\dim Y \leq d+2 - \psi_\Theta(d+2)$, no nontrivial element of $[(Y, \d Y), (\cP_{d}^{\mathbf c\Theta}, pt)]$ becomes, via $\e_{d, d+2}$, null-homotopic in $[(Y, \d Y), (\cP_{d+2}^{\mathbf c\Theta}, pt')]$. Thus, for such $Y$, we get $$[[(Y, \d Y),\; \e_{d, d+2}: (\cP_d^{\mathbf c\Theta}, pt) \to (\cP_{d+2}^{\mathbf c\Theta'}, pt')]] = [(Y, \d Y), (\cP_{d}^{\mathbf c\Theta}, pt)].$$ 

By Lemma 4.3 from \cite{KSW2}, for any closed profinite $\Theta$, $\lim_{d' \to \infty} d' - \psi_\Theta(d') = +\infty$. Hence, repeating the previous arguments, we get a similar conclusion for all $d' \geq d \gg \dim Y$. 

Therefore, by Theorem \ref{th.E-reg}, the all the claims of Theorem \ref{th.stable_embeddings}  follow. 
\end{proof}

\section{Appendix: Topology of $\mathcal P_d^{\mathbf c \langle \om \rangle}$ for small $d$}

In conclusion, let us state somewhat surprising results of one computer-assisted computation from \cite{KSW2}, Figures 6-8. The tables below are produced from these three figures with the help of the Alexander duality. 
For a given $\om \in \mathbf\Om$, we denote by $\langle \om \rangle$ the minimal closed poset that contains $\om$.
In (\ref{big_list}), for $d \le 13$, we list all compositions $\om$ for which the space $\cP_d^{\mathbf c \langle \om \rangle}$ is homologically nontrivial. In fact, for $d \le 13$, every homologically nontrivial $\cP_d^{\mathbf c \langle \om \rangle}$ is a \emph{homology} sphere! Moreover, at least for $\om$ with $|\om|' > 2$, all such spaces $\cP_d^{\mathbf c \langle \om \rangle}$ are \emph{homotopy spheres}. Unfortunately, the reason for such phenomena is mysterious to us... \smallskip

In the list  below, we denote by $``\mathbf{S^k}"$ the homology $k$-sphers.  We use the notation $``\mathbf{S^k_\star}"$ for the homotopy $k$-spheres. 
To make the writing of $\om$'s more compact, we adopt the following notation: for example, we abbreviate  $\om = (1132555)$ as $(1^2 3 2 5^3)$. 

\begin{eqnarray}\label{big_list}
\text{\bf{List of nontrivial $\Z$-homology types of }} \cP_d^{\mathbf c\langle \om \rangle} \text{\bf{ for }} 4 \leq d \leq 13: 
\end{eqnarray}

\small{ 
\noindent $\underline{\mathsf {d=4}}$. 

\noindent  $|\om|'=0$:\;  $\om=(1^2) \;\Rightarrow  \mathbf{S^0}$

\noindent $|\om|'=1$:\;   $\om=(2) \;\Rightarrow \mathbf{S^0}$ 

\noindent  $|\om|'=3$:\;   $\om=(4) \;\Rightarrow \mathbf{S^2_\star}$

\medskip

\noindent $\underline{\mathsf {d=5}}$.   

\noindent $|\om|'=0$:\;  $\om=(1^3)\;\Rightarrow \mathbf{S^0}$

\noindent $|\om|'=4$:\;  $\om=(5)\;\Rightarrow  \mathbf{S^3_\star}$

\medskip 

\noindent $\underline{\mathsf {d=6}}$. 

\noindent $|\om|'=0$:\; $\om=(1^4), (1^2) \;\Rightarrow \mathbf{S^0}$

\noindent $|\om|'=1$:\;  $\om=(2)\;\Rightarrow  \mathbf{S^0}$

\noindent $|\om|'=5$:\;  $\om=(6)\;\Rightarrow  \mathbf{S^4_\star}$

\medskip 

\noindent $\underline{\mathsf {d=7}}$.   

\noindent $|\om|'=0$:\;  $\om=(1^5), (1^3) \;\Rightarrow  \mathbf{S^0}$

\noindent  $|\om|'=6$:\;  $\om=(7)\;\Rightarrow \mathbf{S^5_\star}$

\medskip 

\noindent $\underline{\mathsf {d=8}}$.   

\noindent $|\om|'=0$:\;  $\om=(1^6), (1^4), (1^2)\;\Rightarrow \mathbf{S^0}$
\quad 

\noindent $|\om|'=1$:\;   $\om=(1,2,1)\;\Rightarrow \mathbf{S^4}$ 
\quad  $\om=(2)\;\Rightarrow \mathbf {S^0}$
\quad 

\noindent $|\om|'=2$:\;   $\om=(1,3), (3,1)\;\Rightarrow  \mathbf{S^4}$

\noindent $|\om|'=3$:\;   $\om=(4)\;\Rightarrow  \mathbf{S^4_\star}$

\noindent $|\om|'=7$:\;  $\om=(8)\;\Rightarrow \mathbf{S^6_\star}$. 

\medskip 
\noindent $\underline{\mathsf {d=9}}$.  

\noindent $|\om|'=0$:\;  $\om=(1^7), (1^5), (1^3)\;\Rightarrow \mathbf{S^0}$; 

\noindent $|\om|'=1$:\;  $\om=(1^2,2,1), (1,2,1^2), \;\Rightarrow  \mathbf{S^4}$;

\noindent  $|\om|'=2$:\;  $\om=(1^2,3), (1,3,1), (3,1^2) \;\Rightarrow \mathbf{S^4}$;

\noindent $|\om|'=8$:\;  $\om=(9)\;\Rightarrow \mathbf{S^7_\star}$.

\medskip
\noindent $\underline{\mathsf {d=10}}$.  

\noindent $|\om|'=0$:\;   $\om=(1^8), (1^6), (1^4), (1^2) \;\Rightarrow  \mathbf{S^0}$;

\noindent $|\om|'=1$:\;  $\om=(1^3,2,1), (1^2,2,1^2), (1,2,1^3), \;\Rightarrow \mathbf{S^4}$;
\quad  $\om=(2)\;\Rightarrow \mathbf{S^0}$;

\noindent $|\om|'=2$:\;  $\om=(1^3,3), (1^2,3,1), (1,3,1^2), (3,1^3)\;\Rightarrow \mathbf{S^4}$;

\noindent $|\om|'=9$:\; $\om=(10)\;\Rightarrow  \mathbf{S^8_\star}$.

\medskip 
\noindent $\underline{\mathsf {d=11}}$.   

\noindent $|\om|'=0$:\;   $\om=(1^9), (1^7), (1^5), (1^3)\;\Rightarrow  \mathbf{S^0}$;

\noindent $|\om|'=1$:\;  $\om=(1^4,2,1), (1^3,2,1^2), (1^2,2,1^3), (1,2,1^4)\;\Rightarrow  \mathbf{S^4}$;

\noindent $|\om|'=2$:\;   $\om=(1^4,3), (1^3,3,1), (1^2,3,1^2), (1,3,1^3), (3,1^4) \;\Rightarrow \mathbf{S^4}$;
  
  \quad \quad \quad  $\om=(1,2,1,2,1)\;\Rightarrow  \mathbf{S^6}$;
\smallskip

\noindent $|\om|'=3$:\;  $\om=(1,2,1,3), (3,1,2,1)\;\Rightarrow  \mathbf{S^6_\star}$;
\smallskip

\noindent $|\om|'=4$:\;  $\om=(3,1,3)\;\Rightarrow  \mathbf{S^6_\star}$;
\smallskip

\noindent $|\om|'=10$:\;  $\om=(11)\;\Rightarrow  \mathbf{S^9_\star}$.

\medskip 
\noindent $\underline{\mathsf {d=12}}$.   

\noindent $|\om|'=0$:\;  $\om=(1^{10}), (1^8), (1^6), (1^4), (1^2) \;\Rightarrow  \mathbf{S^0}$;

\noindent $|\om|'=1$:\;  $\om=(1^5,2,1), (1^4,2,1^2),(1^3,2,1^3), (1^2,2,1^4), (1,2,1^5) \;\Rightarrow \mathbf{S^4}$;

  \quad \quad \quad   $\om=(1,2,1)\;\Rightarrow  \mathbf{S^6}$;
 \quad 
 
  \quad \quad \quad   $\om=(2)\; \Rightarrow \mathbf{S^0}$;

\noindent $|\om|'=2$:\;  $\om=(1^5,3), (1^4,3,1), (1^3,3,1^2), (1^2,3,1^3), (1,3,1^4),(3,1^5) \;\Rightarrow  \mathbf{S^4}$;
\quad 

  \quad \quad \quad  $\om=(1^2,2,1,2,1), (1,2,1^2,2,1), (1,2,1,2,1^2), (1,3), (3,1)\;\Rightarrow  \mathbf{S^6}$;

  \quad \quad \quad  $\om=(1,2^2,1)\;\Rightarrow \mathbf{S^8}$; 
 \smallskip

\noindent $|\om|'=3$:\;  $\om=(1^2,2,1,3), (1,2,1^2,3), (1,2,1,3,1), (1,3,1,2,1), (3,1^2,2,1),$   

\quad \quad \quad   $(3,1,2,1^2), (4)\;\Rightarrow  \mathbf{S^6_\star}$;

 \quad \quad \quad  $\om=(1,2,3), (3,2,1), (1,4,1) \;\Rightarrow \mathbf{S^8_\star}$;
\smallskip 

\noindent $|\om|'=4$:\;  $\om=(1,3,1,3), (3,1,3,1)\;\Rightarrow \mathbf{S^6_\star}$;
\quad 

  \quad \quad \quad  $\om=(1,5), (5,1),(3^2)\;\Rightarrow  \mathbf{S^8_\star}$;
\smallskip

\noindent  $|\om|'=5$:\;  $\om=(6)\;\Rightarrow \mathbf{S^8_\star}$;
\smallskip
 
\noindent   $|\om|'=11$:\;  $\om=(12)\;\Rightarrow \mathbf{S^{10}_\star}$.

\medskip
 
\noindent $\underline{\mathsf {d=13}}$.  

\noindent $|\om|'=0$:\; $\om=(1^{11}), (1^9), (1^7), (1^5), (1^3)\;\Rightarrow \mathbf{S^0}$;
\smallskip

\noindent $|\om|'=1$:\;  $\om=(1^6,2,1), (1^5,2,1^2), (1^4,2,1^3), (1^3,2,1^4), (1^2,2,1^5), (1,2,1^6)\;\Rightarrow  \mathbf{S^4}$; 

  \quad \quad \quad  $\om=(1^2,2,1), (1,2,1^2) \;\Rightarrow  \mathbf{S^6}$;
\smallskip   

\noindent $|\om|'=2$:\;  $\om=(1^6,3), (1^5,3,1), (1^4,3,1^2), (1^2,1,3,1^3), (1^2,3,1^4), (1,3,1^5), (3,1^6) \;\Rightarrow  \mathbf{S^4}$; 

  \quad \quad \quad  $\om=(1^2,3), (1,3,1),(3,1^2), (1^3,2,1,2,1), (1^2,2,1^2,2,1), (1^2,2,1,2,1^2), (1,2,1^3,2,1), \hfill\break (1,2,1^2,2,1^2), (1,2,1,2,1^3) \;\Rightarrow \mathbf{S^6}$;

  \quad \quad \quad   $\om=(1^2,2^2,1), (1,2^2,1^2) \;\Rightarrow \mathbf{S^8}$; 
 \smallskip

\noindent $|\om|'=3$:\;  $\om=(1^3,2,1,3), (1^2,2,1^2,3), (1^2,2,1,3,1), (1^2,3,1,2,1), (1,2,1^2,3,1), \hfill\break (1,2,1,3,1^2),  (1,3,1^2,2,1), (1,3,1,2,1^2), (3,1^2,2,1^2), (3,1,2,1^3)  \;\Rightarrow \mathbf{S^6_\star}$;

  \quad \quad \quad  $\om=(1^2,2,3), (1,2,3,1), (1,3,2,1), (3,2,1^2), (1^2,4,1), (1, 4,1^2) \;\Rightarrow \mathbf{S^8_\star}$;

\noindent $|\om|'=4$:\;  $\om=(1^2,3,1,3), (1,3,1,3,1), (3,1,3,1^2), \;\Rightarrow \mathbf{S^6_\star}$;  

  \quad \quad \quad  $\om=(1,3^2), (3^2,1), (1^2,5), (1,5,1), (5,1^2)\;\Rightarrow \mathbf{S^8_\star}$;
\smallskip
 
\noindent $|\om|'=12$:\;  $\om=(13)\;\Rightarrow  \mathbf{S^{11}_\star}$.
\hfill $\diamondsuit$
}

\smallskip


{\it Acknowledgment:} The author is grateful to the Department of Mathematics of  Massachusetts  Institute of Technology for many years of hospitality.


\end{document}